\documentclass[11pt]{amsart}

\usepackage{amscd,amsmath,amssymb,amsfonts,verbatim}
\usepackage{graphicx}
\usepackage[active]{srcltx}
\usepackage[cmtip, all]{xy}
\usepackage[pdftex,colorlinks=true,citecolor=black,linkcolor=black,urlcolor=blue]{hyperref}

\setlength{\textwidth}{5.8in}             
\setlength{\textheight}{9in}
\setlength{\topmargin}{-.20in}
\setlength{\oddsidemargin}{.25in}
\setlength{\evensidemargin}{.25in}


\newtheorem{theorem}{Theorem}[section]
\newtheorem{proposition}[theorem]{Proposition}
\newtheorem{lemma}[theorem]{Lemma}
\newtheorem{corollary}[theorem]{Corollary}

\newtheorem{question}[theorem]{Question}

\theoremstyle{definition}

\newtheorem{definition}[theorem]{Definition}

\theoremstyle{remark}
\newtheorem{remark}[theorem]{Remark}
\newtheorem*{ack}{Acknowledgments}
\numberwithin{equation}{section}

\providecommand{\MU}{\mathop{\rm MU}\nolimits}

\newcommand{\sE}{{\mathcal E}}

\newcommand{\sL}{{\mathcal L}}
\newcommand{\sM}{{\mathcal M}}
\newcommand{\sN}{{\mathcal N}}
\newcommand{\sO}{{\mathcal O}}

\newcommand{\sS}{{\mathcal S}}

\newcommand{\sU}{{\mathcal U}}

\newcommand{\sZ}{{\mathcal Z}}
\newcommand{\A}{{\mathbb A}}

\newcommand{\C}{{\mathbb C}}

\newcommand{\bL}{{\mathbb L}}

\renewcommand{\P}{{\mathbb P}}
\newcommand{\Q}{{\mathbb Q}}

\newcommand{\Z}{{\mathbb Z}}

\newcommand{\alg}{{\rm alg}}
\newcommand{\num}{{\rm num}}
\newcommand{\cl}{{\rm cl}}

\newcommand{\CH}{{\rm CH}}
\newcommand{\mc}{\mathcal}

\newcommand{\red}{{\rm red}}
\newcommand{\codim}{{\rm codim}}

\newcommand{\Hom}{{\rm Hom}}

\newcommand{\Spec}{{\rm Spec \,}}

\newcommand{\Sch}{{\operatorname{\mathbf{Sch}}}}

\renewcommand{\max}{{\operatorname{\rm max}}}

\newcommand{\Sm}{{\mathbf{Sm}}}
\newcommand{\SmProj}{{\mathbf{SmProj}}}

\newcommand{\Sym}{{\operatorname{\rm Sym}}}

\newcommand{\eff}{{\operatorname{\rm eff}}}

\newcommand{\et}{{\text{\'et}}}

\newcommand{\ds}{{/\kern-3pt/}}

\newcommand{\colim}{\mathop{\text{colim}}}

\newcommand{\lci}{{l.c.i.\ }}

\newcommand{\Cor}{{\operatorname{Cor}}}

\newcommand{\ov}{\overline}

\newcommand{\Jac}{{\rm Jac}}

\newcommand{\tuborg}{\left\{\begin{array}{ll}}
\newcommand{\sluttuborg}{\end{array}\right.}

\begin{document}

\title{On numerical equivalence for algebraic cobordism}
\author{Anandam BANERJEE  and Jinhyun PARK}
\address{Department of Mathematical Sciences, KAIST, 291 Daehak-ro, Yuseong-gu, Daejeon, 305-701, Republic of Korea (South)}
\email{anandam@mathsci.kaist.ac.kr; anandamb@gmail.com}
\address{Department of Mathematical Sciences, KAIST, 291 Daehak-ro, Yuseong-gu,
Daejeon, 305-701, Republic of Korea (South)}
\email{jinhyun@mathsci.kaist.ac.kr; jinhyun@kaist.edu}

\keywords{algebraic cobordism, algebraic cycle, numerical equivalence}

\begin{abstract}We define and study the notion of numerical equivalence on algebraic cobordism cycles. We prove that algebraic cobordism modulo numerical equivalence is a finitely generated module over the Lazard ring, and it reproduces the Chow group modulo numerical equivalence. We show this theory defines an oriented Borel-Moore homology theory on schemes and oriented cohomology theory on smooth varieties.

We compare it with homological equivalence and smash-equivalence for cobordism cycles. For the former, we show that homological equivalence on algebraic cobordism is strictly finer than numerical equivalence, answering negatively the integral cobordism analogue of the standard conjecture $(D)$. For the latter, using Kimura finiteness on cobordism motives, we partially resolve the cobordism analogue of a conjecture by Voevodsky on rational smash-equivalence and numerical equivalence.
\end{abstract}

\subjclass[2010]{Primary 14F43; Secondary 55N22}

\maketitle

\section{Introduction}
In the theory of algebraic cycles, various adequate equivalences on them play essential roles (see \cite[\S~3.1--2]{Andre}). Now, the theory of algebraic cobordism, as pioneered by Levine and Morel in \cite{LM} (see also \cite{LP} and \S \ref{sec:cobordism}) allows one to study algebraic cycles and motives from a more general perspective. It is an interesting question to ask whether various adequate equivalences for algebraic cycles can be lifted up to the level of algebraic cobordism cycles. One such attempt for algebraic equivalence was made in \cite{KP} so that one obtained the theory $\Omega_{\alg} ^*(-)$ of algebraic cobordism modulo algebraic equivalence. 

The objective of the present work is to define and study the notion of numerical equivalence for algebraic cobordism cycles. In the classical situation of algebraic cycles, it has a long history and was the ground of ``enumerative geometry''. In simple words, this is based on counting the numbers of intersection points of two varieties, with suitable multiplicities. For cobordism cycles, this naive notion of counting does not work and this requires a bit of care.

This trouble can be overcome by treating the graded ring $\mathbb{L}$, called the Lazard ring (see \S \ref{sec:cobordism}), as a substitute for the ring $\mathbb{Z}$ of integers; instead of naively counting the number of intersection points, we ``count'' the $\mathbb{L}$-``intersection'' values of two cobordism cycles. Notice that such an idea of using $\mathbb{L}$ systematically instead of $\mathbb{Z}$ is not new. In \cite{LM},  $\mathbb{L}$ is the cobordism ring $\Omega^* (pt)$ of a point. In \cite{KP}, it was proved that the Griffiths group is a finitely generated abelian group if and only if the cobordism $\Omega^*_{\alg}$ is a finitely generated $\mathbb{L}$-module. Furthermore, the functor $- \otimes_{\mathbb{L}} \mathbb{Z}$ carries the cobordism groups to Chow groups with the respective adequate equivalences. 

We define numerical equivalence for algebraic cobordism cycles, and algebraic cobordism modulo numerical equivalence in \S \ref{sec:num} for smooth projective varieties over a field $k$ of characteristic $0$, exploiting the ring structure of $\Omega^* (X)$ as well as the push-forward $\pi_* : \Omega_* (X) \to \Omega_* (k)$ via the structure morphism $\pi: X \to \Spec (k)$.

When $X$ is a quasi-projective $k$-scheme, either $\Omega_* (X)$ may not be a ring, or there is no longer the push-forward via the structure map. However, using some standard lengthy arguments involving Hironaka desingularizations, we can naturally extend the functor $\Omega_* (X)$ defined on smooth projective varieties to the category $\Sch_k$ of quasi-projective $k$-schemes. This is done in \S \ref{sec:OBM}, and we prove that:

\begin{theorem}Let $k$ be a field of characteristic $0$.  The functor $\Omega_* ^{\num}$ on $\Sch_k$ defines an oriented Borel-Moore homology theory in the sense of \cite[Definition 5.1.3]{LM}, and the functor $\Omega^* _{\num}$ on $\Sm_k$, in cohomological indexing, defines an oriented cohomology theory in the sense of \cite[Definition 1.1.2]{LM}. In particular, $\Omega_* ^{\num}$ satisfies homotopy invariance, localization, and projective bundle formula. The theory $\Omega_* ^{\num}$ is generically constant, and satisfies the generalized degree formula in the sense of \cite[Theorem 4.4.7]{LM}.

We have natural surjective morphisms $\Omega_* \to \Omega_* ^{\alg} \to \Omega_* ^{\num}$ of oriented Borel-Moore homology theories on $\Sch_k$.
\end{theorem}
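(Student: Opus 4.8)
The plan is to build $\Omega^{\num}_*$ on $\Sch_k$ by first extending $\Omega^{\num}_*$ from smooth projective varieties to all of $\Sch_k$ using the standard machinery of Levine--Morel. Concretely, for a quasi-projective $k$-scheme $X$ one writes $X$ as a quotient of a smooth projective variety (or, more precisely, uses a presentation via cycles $[f\colon Y\to X]$ with $Y$ smooth projective) and defines $\Omega^{\num}_*(X)$ as the appropriate quotient of the free $\mathbb{L}$-module on such cobordism cycles modulo the relations generated by the numerical equivalence relation already defined on smooth projective targets, together with the relations forcing the Levine--Morel axioms (the ``double point'' relations, or equivalently the relations coming from $\Omega_*$). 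The key input is that numerical equivalence on a smooth projective $X$ is stable under the operations used in the construction --- push-forward along projective morphisms, smooth pull-back, external products, and the action of Chern classes of line bundles --- so that passing to the quotient $\Omega_* \twoheadrightarrow \Omega^{\num}_*$ is compatible with all the structural data. Since $\Omega^{\alg}_*$ sits in between, one may equally well define $\Omega^{\num}_*$ as a quotient of $\Omega^{\alg}_*$, and the surjections $\Omega_* \to \Omega^{\alg}_* \to \Omega^{\num}_*$ are then tautological once well-definedness is established.

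The heart of the matter is therefore to verify that the defining ideal of numerical equivalence is compatible with the oriented Borel-Moore homology structure. First I would check the projection-type compatibilities on smooth projective varieties: if $\alpha \in \Omega_*(X)$ is numerically trivial and $g\colon X \to X'$ is a morphism of smooth projective varieties, then $g_*\alpha$ is numerically trivial --- this follows from the projection formula, since pairing $g_*\alpha$ against $\Omega^*(X')$ and pushing to $\Omega_*(k)$ equals pairing $\alpha$ against $g^*\Omega^*(X')$ and pushing to $\Omega_*(k)$, which vanishes. Similarly, for the exterior product $\Omega_*(X)\otimes_{\mathbb{L}}\Omega_*(X')\to \Omega_*(X\times X')$ one uses that the pairing on $X\times X'$ restricted to products of classes from the two factors is the product of the pairings; and for Chern class operators $\tilde{c}_1(L)$ one uses that multiplication by $\tilde{c}_1(L)$ in $\Omega^*(X)$ is self-adjoint with respect to the pairing up to $\tilde{c}_1(L)$ acting on the other side, so it preserves the numerically trivial ideal. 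Smooth pull-back is dual to proper push-forward and is handled the same way. Once these compatibilities hold on smooth projective varieties, the Levine--Morel extension procedure (\cite[\S 2.4--2.5]{LM}) goes through verbatim with $\mathbb{Z}$ replaced by $\mathbb{L}$ and $\Omega_*$ replaced by $\Omega^{\num}_*$, yielding an oriented Borel-Moore homology theory; the axioms (homotopy invariance, localization, projective bundle formula, the extended homotopy property, cellular decomposition) are inherited because they are quotients of the corresponding statements for $\Omega_*$, which are already known.

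The remaining assertions are then formal consequences. The oriented cohomology theory $\Omega^*_{\num}$ on $\Sm_k$ is obtained from $\Omega^{\num}_*$ by the usual reindexing $\Omega^n_{\num}(X) := \Omega^{\num}_{\dim X - n}(X)$ for $X$ smuuth (irreducible), exactly as in \cite[\S 5.1]{LM}, and the ring structure descends from that on $\Omega^*$. Generic constancy and the generalized degree formula hold because $\Omega^{\num}_*$ is a quotient theory of $\Omega_*$ and these properties are stated in terms of the structure maps (the class of the generic point, residues along divisors) which are preserved by the quotient; alternatively one invokes \cite[Theorem 4.4.7]{LM} directly, noting that an oriented Borel-Moore homology theory that receives a surjection from $\Omega_*$ compatible with all data automatically satisfies the generalized degree formula since $\Omega_*$ is the universal such theory. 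Finally, the surjectivity of $\Omega_* \to \Omega^{\alg}_* \to \Omega^{\num}_*$ as morphisms of oriented Borel-Moore homology theories is immediate from the construction as successive quotients.

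I expect the main obstacle to be the bookkeeping in the extension step: one must make the definition of $\Omega^{\num}_*(X)$ for singular $X$ independent of the chosen presentation, and check that numerical equivalence --- which is \emph{a priori} defined only via the ring structure and the degree map on smooth projective varieties --- generates a well-defined sub-functor of $\Omega_*$ on all of $\Sch_k$ compatible with localization sequences. This is precisely the ``standard lengthy arguments involving Hironaka desingularizations'' alluded to in the introduction, and while each individual compatibility is routine, assembling them so that the Levine--Morel axioms can be invoked requires care; resolution of singularities (hence $\Char k = 0$) is used exactly here, to reduce every cobordism class on a singular scheme to push-forwards of classes from smooth projective varieties.
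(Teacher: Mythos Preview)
Your overall strategy is in the right ballpark and coincides with the paper's: show that numerical triviality defines a sub-$\mathbb{L}$-module $\mathcal{N}_*(X)\subset\Omega_*(X)$ stable under all the structural operations, then pass to the quotient. However, there are two places where your proposal is too optimistic, and the paper's execution differs in a way worth noting.

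First, the extension to $\Sch_k$. You propose to rerun the Levine--Morel construction with extra relations, but the paper does something more direct: for smooth quasi-projective $U$ it sets $\mathcal{N}_*(U):=j^*\mathcal{N}_*(\widetilde{X})$ for any smooth compactification $j\colon U\hookrightarrow\widetilde{X}$, proves independence of the compactification, then for a singular variety $U$ sets $\mathcal{N}_*(U):=f_*\mathcal{N}_*(U')$ for any resolution $f\colon U'\to U$, and again proves independence. This avoids rebuilding the theory from generators and relations; instead one works entirely inside $\Omega_*$ and checks that the subgroup so defined is preserved by the relevant maps. Your ``presentation via cycles $[Y\to X]$'' description does not give a concrete recipe for the ideal on a singular base.

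Second, and more seriously, your claim that localization, homotopy invariance, and the projective bundle formula are ``inherited because they are quotients'' is not correct as stated. Exactness of a quotient sequence is not automatic: for localization one must prove that the restriction $j^*\colon\mathcal{N}_*(X)\to\mathcal{N}_*(U)$ is \emph{surjective} (the paper does this as a separate lemma, using the explicit compactification/resolution definition of $\mathcal{N}_*$), and only then does a diagram chase give exactness of the quotient row. Homotopy invariance needs injectivity of $f^*$ on the quotient, which the paper obtains via the zero-section pull-back --- but that in turn requires knowing that pull-back along a \emph{regular embedding} preserves $\mathcal{N}_*$, a nontrivial statement that the paper proves in three steps via reduction to strict normal crossing divisors on smooth ambient varieties. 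You do not mention l.c.i.\ pull-back at all beyond the smooth case, and this is where the real work lies. The projective bundle formula likewise requires a genuine argument identifying $\mathcal{N}_*(\mathbb{P}(\mathcal{E}))$ with the image of $\bigoplus_j\mathcal{N}_{*-n+j}(X)$ under $\Phi_{X,\mathcal{E}}$, not just the surjectivity that comes for free.

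In short: your compatibility checks on smooth projective varieties (push-forward via projection formula, Chern classes via the ideal property) are correct and match the paper's opening lemmas, but the passage to $\Sch_k$ and the verification of the axioms each require specific arguments that do not reduce to ``take the quotient''.
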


If one focuses on cellular varieties, using the above theorem we obtain:

\begin{theorem}Let $X$ be a cellular variety, and let $r$ be the number of all cells. Then, the natural surjections $\Omega_* (X) \to \Omega_* ^{\alg} (X) \to \Omega_* ^{\num} (X)$ are isomorphisms of $\mathbb{L}$-modules, all isomorphic to the free $\mathbb{L}$-module of rank $r$. 
\end{theorem}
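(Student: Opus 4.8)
\emph{Plan of proof.} I would treat the smooth projective case first, the general cellular case being similar (via the localization sequence for $\Omega_*^{\num}$ supplied by the preceding theorem). So let $X$ be smooth projective cellular of dimension $n$. The statement will follow from the single claim that no nonzero element of $\Omega_*(X)$ is numerically equivalent to zero; the rest is formal. Granting the claim, $\Omega_*(X)\twoheadrightarrow\Omega_*^{\num}(X)$ is injective, hence an isomorphism, and since by the preceding theorem it factors through $\Omega_*^{\alg}(X)$, the two surjections $\Omega_*(X)\twoheadrightarrow\Omega_*^{\alg}(X)\twoheadrightarrow\Omega_*^{\num}(X)$ are both isomorphisms (a surjection that becomes invertible after composition with a surjection is invertible). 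Finally $\Omega_*(X)$ is itself free of rank $r$: the classes $\eta_1,\dots,\eta_r$ of the closures of the $r$ cells form an $\mathbb{L}$-basis of $\Omega^*(X)$, with $\eta_i\in\Omega^{c_i}(X)$, $c_i$ the codimension of the $i$-th cell closure --- the cobordism analogue of the classical cellular computation, which follows from homotopy invariance, the localization sequence and the projective bundle formula of \cite{LM}; under the canonical morphism $\Omega^*(X)\to\CH^*(X)$ (that is, $-\otimes_{\mathbb{L}}\Z$) it reduces to the standard cellular basis of the Chow ring. So all three groups in the statement are isomorphic, as $\mathbb{L}$-modules, to $\mathbb{L}^{\oplus r}$.

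For the claim I would exploit only the complementary-degree part of the numerical pairing. If $z\in\Omega^c(X)=\Omega_{n-c}(X)$ is numerically equivalent to zero, then in particular $\pi_*(z\cdot w)=0$ in $\Omega_0(\Spec k)=\Z$ for every $w\in\Omega^{n-c}(X)$. Since the canonical morphism $\Omega^*\to\CH^*$ is a morphism of oriented cohomology theories, it carries the cobordism product and the degree map $\pi_*\colon\Omega_0(X)\to\Omega_0(\Spec k)=\Z$ to the intersection product and the degree $\CH_0(X)\to\CH_0(\Spec k)=\Z$, it sends $(\eta_i)$ to the cellular basis of $\CH^*(X)$, and it is the identity on these degree-zero groups; hence the pairing $\Omega^c(X)\times\Omega^{n-c}(X)\to\Z,\ (z,w)\mapsto\pi_*(zw)$, is \emph{literally} the intersection pairing $\CH^c(X)\times\CH^{n-c}(X)\to\Z$ written in the cellular bases. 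For a cellular variety this last pairing is unimodular: a standard fact, since the cycle class map is a ring isomorphism onto the torsion-free cohomology ring of $X$, on which Poincar\'e duality holds --- equivalently, numerical equivalence on $\CH^*(X)$ is trivial. Thus the complementary pairing on $\Omega^*(X)$ is already perfect, so it has trivial radical, and a fortiori so does the (a priori finer) numerical equivalence; since numerical equivalence is easily seen to be graded, the claim follows.

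The step needing genuine care is the cobordism cellular computation and its compatibility with the specialization to Chow groups --- that $\Omega^*(X)$ is free on the cell classes and that this basis maps to the cellular basis of $\CH^*(X)$; one should either check that the localization/projective-bundle argument of \cite{LM} goes through for algebraic cobordism exactly as for Chow groups, or cite the precise statement. Everything else --- unimodularity of the intersection pairing of a cellular variety, compatibility of $-\otimes_{\mathbb{L}}\Z$ with products and degree maps, and the formal sandwich argument --- is routine. The preceding theorem enters only to guarantee that $\Omega_*^{\alg}$ and $\Omega_*^{\num}$ are oriented Borel--Moore homology theories on $\Sch_k$, so that the cell classes and the three surjections are defined and natural.
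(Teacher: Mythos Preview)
Your strategy has a genuine gap. You claim that the complementary-degree pairing
\[
\Omega^c(X)\times\Omega^{n-c}(X)\to\Z,\qquad (z,w)\mapsto\pi_*(z\cdot w)
\]
is ``literally'' the Chow intersection pairing and hence perfect. This is false: the pairing \emph{factors through} the surjections $\Omega^c(X)\to\CH^c(X)$ and $\Omega^{n-c}(X)\to\CH^{n-c}(X)$, so its left radical is exactly $\ker(\Omega^c(X)\to\CH^c(X))=(\bL_{>0}\cdot\Omega^*(X))\cap\Omega^c(X)$, which is nonzero whenever $X$ has cells of codimension $>c$. Concretely, take $X=\P^1$, $c=0$, and $z=a_1\cdot[pt]\in\Omega^0(\P^1)$ with $a_1\in\bL_1$ nonzero: then $z\cdot w\in\Omega^2(\P^1)=\Omega_{-1}(\P^1)=0$ for every $w\in\Omega^1(\P^1)$, yet $z\neq 0$. (This $z$ is \emph{not} numerically trivial --- pairing with $1_X\in\Omega^0(\P^1)$ gives $a_1\in\bL_1$ --- but your test against $\Omega^{n-c}$ alone cannot see this.) So restricting to $w$ of complementary codimension and landing in $\Z=\bL_0$ does not suffice; you must test against all of $\Omega^*(X)$ and use the full $\bL$-valued pairing. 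The argument can be repaired by an induction on $\bL$-degree: if $z=\sum_i a_i\eta_i$ with $a_i\in\bL_{c_i-c}$ and $d=\min\{c_i-c:a_i\neq 0\}$, pair against the $\eta_j$ with $c_j=n-c-d$; the only surviving terms give $\sum_{c_i=c+d} a_i\,(\eta_i\cdot_{\CH}\eta_j)=0$ in $\bL_d$, and unimodularity of the Chow pairing forces these $a_i=0$.

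By contrast, the paper's proof avoids the pairing entirely. Having already shown that $\Omega_*^{\num}$ is an oriented Borel--Moore homology theory with $\Omega_*^{\num}(\Spec k)=\bL$, the standard cellular induction (localization plus homotopy invariance) applies directly to $\Omega_*^{\num}$ and yields $\Omega_*^{\num}(X)\simeq\bL^{\oplus r}$. Since $\Omega_*(X)\simeq\bL^{\oplus r}$ as well (same argument, or \cite[Theorem~1.2(3)]{KP} for $\Omega_*^{\alg}$), the surjections are surjective $\bL$-linear maps between free $\bL$-modules of the same finite rank, hence isomorphisms. This is shorter, handles the general (not necessarily smooth projective) cellular case uniformly, and does not require knowing anything about Poincar\'e duality.
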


One important aspect of numerical equivalence for algebraic cycles is its finiteness property. We have its cobordism analogue, too:

\begin{theorem}Let $X\in \Sch_k$, where $k$ is a field of characteristic zero. Then, $(1)$ we have an isomorphism $\Omega_* ^{\num} (X) \otimes_{\mathbb{L}} \mathbb{Z} \simeq \CH_* ^{\num} (X)$, where $\CH_* ^{\num}$ denotes the Chow group of $X$ modulo numerical equivalence, and $(2)$ $\Omega_* ^{\num} (X)$ is a finitely generated $\mathbb{L}$-module.
\end{theorem}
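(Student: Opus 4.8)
The plan is to reduce $(1)$ to a lifting statement about numerically trivial cycles, and then to obtain $(2)$ as a formal consequence of $(1)$. Write $\rho_X\colon\Omega_*(X)\to\CH_*(X)$ for the canonical surjection $-\otimes_{\mathbb{L}}\mathbb{Z}$, whose kernel in degree $d$ is $\sum_{e\ge 1}\mathbb{L}_e\cdot\Omega_{d-e}(X)$, and let $\mathcal{N}_*(X)\subseteq\Omega_*(X)$ and $\mathcal{N}^{\CH}_*(X)\subseteq\CH_*(X)$ denote the subgroups of numerically trivial classes. Since $\rho$ is a ring map compatible with the projective push‑forwards and with the augmentation $\mathbb{L}\to\mathbb{Z}$, a numerically trivial cobordism class has numerically trivial image, so $\rho_X(\mathcal{N}_*(X))\subseteq\mathcal{N}^{\CH}_*(X)$; this produces an $\mathbb{L}$‑linear surjection $\Omega_*^{\num}(X)\otimes_{\mathbb{L}}\mathbb{Z}\twoheadrightarrow\CH_*^{\num}(X)$, and unwinding definitions one sees it is an isomorphism exactly when $\rho_X(\mathcal{N}_*(X))=\mathcal{N}^{\CH}_*(X)$, i.e. when every numerically trivial cycle lifts to a numerically trivial cobordism class. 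For singular $X$ I would deduce this identity from the smooth projective case together with the construction of $\Omega^{\num}_*$ on $\Sch_k$ in \S\ref{sec:OBM} (resolution of singularities, localization and homotopy invariance), so it suffices to treat $X$ smooth projective of dimension $n$.

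For the lifting statement, fix $\bar\gamma\in\mathcal{N}^{\CH}_d(X)$ and an arbitrary lift $\gamma\in\Omega_d(X)=\Omega^{n-d}(X)$. For $\beta\in\Omega^c(X)$ one has $\pi_*(\gamma\cdot\beta)\in\mathbb{L}_{d-c}$, vanishing unless $0\le c\le d$, so numerical triviality of $\gamma$ is a condition at each Lazard level $d-c\in\{0,1,\dots,d\}$. At level $0$ (that is, $c=d$) the value $\pi_*(\gamma\cdot\beta)\in\mathbb{L}_0=\mathbb{Z}$ equals $\deg(\bar\gamma\cdot\rho_X(\beta))$, which is $0$ because $\bar\gamma$ is numerically trivial; so $\gamma$ is already trivial at level $0$. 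I would then argue by descending induction on the level: having modified $\gamma$ by an element of $\sum_{e\ge1}\mathbb{L}_e\cdot\Omega_{d-e}(X)=\ker\rho_X$ (which leaves the Chow class unchanged) so that it pairs trivially with $\Omega^c(X)$ for all $c\ge d-p$, one shows the residual obstruction at level $p+1$, the homomorphism $\Omega^{d-p-1}(X)\to\mathbb{L}_{p+1}$, $\beta\mapsto\pi_*(\gamma\cdot\beta)$, factors through $\CH^{d-p-1}(X)$ (using the inductive vanishing), and cancels it by adding $\sum_j\ell_j\cdot\eta_j$ with $\{\ell_j\}$ a $\mathbb{Z}$‑basis of the finitely generated free group $\mathbb{L}_{p+1}$ and $\eta_j\in\Omega_{d-p-1}(X)$ chosen so that the projection formula gives $\pi_*(\eta_j\cdot\beta)=\deg(\rho_X(\eta_j)\cdot\rho_X(\beta))$; running $p=0,\dots,d-1$ yields a numerically trivial lift of $\bar\gamma$.

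\textbf{The main obstacle} is precisely the cancellation in this last step: one must know that the $\mathbb{L}_{p+1}$‑valued obstruction functional lies in the image of $\CH_{d-p-1}(X)\otimes_{\mathbb{Z}}\mathbb{L}_{p+1}$ under the pairing, i.e. that its coordinates are represented by algebraic cycles. Presenting $\gamma$ via the Levine–Morel description $\sum_i\ell_i\,[W_i\to X]$ with $W_i$ smooth projective and using the projection formula gives $\pi_*(\gamma\cdot\beta)=\sum_i\ell_i\,\pi_{W_i,*}(f_i^*\beta)$; the summands with $\dim W_i=d-p-1$ are already of the desired representable shape $\ell_i\cdot\deg(\rho_X([W_i\to X])\cdot\rho_X(\beta))$, while the summands with $\dim W_i>d-p-1$ involve a positive‑dimensional push‑forward $\pi_{W_i,*}(f_i^*\beta)\in\mathbb{L}_{>0}$, and these must be controlled using the generalized degree formula \cite[Theorem 4.4.7]{LM} together with generic constancy of the theory. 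This is where I expect the real work to lie. (As a sanity check, after $-\otimes_{\mathbb{Z}}\mathbb{Q}$ the cobordism theory decouples as $\CH^*(-)_{\mathbb{Q}}\otimes_{\mathbb{Q}}\mathbb{L}_{\mathbb{Q}}$ and the statement is immediate, so the content is entirely integral.)

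Granting $(1)$, part $(2)$ follows by a graded Nakayama argument. The module $\Omega_*^{\num}(X)$ is graded over $\mathbb{L}$ and concentrated in the finitely many degrees $0,\dots,\dim X$. By $(1)$, its reduction modulo $\mathbb{L}_{>0}$ in degree $d$ is $\CH_d^{\num}(X)$, a finitely generated abelian group (classical in characteristic zero, since numerical equivalence is coarser than homological equivalence and the latter embeds into a finitely generated Weil Borel–Moore cohomology group). Concretely, $(1)$ says $\Omega_d^{\num}(X)$ modulo $\sum_{e\ge1}\mathbb{L}_e\cdot\Omega_{d-e}^{\num}(X)$ equals $\CH_d^{\num}(X)$; since each graded piece $\mathbb{L}_e$ of the Lazard ring is a finitely generated free abelian group and only finitely many $e$ occur, induction on $d$ shows that $\Omega_d^{\num}(X)$ — an extension of $\CH_d^{\num}(X)$ by a quotient of $\bigoplus_{e\ge1}\mathbb{L}_e\otimes_{\mathbb{Z}}\Omega_{d-e}^{\num}(X)$ — is a finitely generated abelian group. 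Hence $\Omega_*^{\num}(X)$ is finitely generated over $\mathbb{Z}$, a fortiori over $\mathbb{L}$.
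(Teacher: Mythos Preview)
For part~(2) your approach is the paper's: the smooth projective case is Theorem~\ref{thm:num}(2), which invokes the graded Nakayama lemma \cite[Lemma~9.8]{KP} exactly as you describe, and the extension to $\Sch_k$ is Theorem~\ref{thm:gen_finrk}, proved by compactification and the localization sequence, again as you outline.

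For part~(1) the paper proceeds quite differently. For general $X\in\Sch_k$ it simply \emph{defines} $\mathrm{Num}_*(X):=\mathcal{N}_*(X)\otimes_{\mathbb{L}}\mathbb{Z}$ (see \S\ref{sec:sext}), so the isomorphism is tautological there. For smooth projective $X$, where $\mathrm{Num}_*$ has its classical meaning, Theorem~\ref{thm:num}(1) is a one-paragraph diagram chase; its only nontrivial step is the implication ``$\phi(\alpha)\in\mathrm{Num}_*(X)\Rightarrow\alpha\in\mathcal{N}_*(X)+\mathbb{L}_{>0}\cdot\Omega_*(X)$'', which by the snake lemma applied to diagram~\eqref{eq:num} is exactly the surjectivity of $\mathcal{N}_*(X)\to\mathrm{Num}_*(X)$, i.e.\ your lifting statement $\rho_X(\mathcal{N}_*(X))=\mathcal{N}^{\CH}_*(X)$. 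The paper deduces it in one line from $\ker\phi=\mathbb{L}_{>0}\cdot\Omega_*(X)$, whereas you attempt an explicit construction by successive correction in the Lazard level.

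Your inductive scheme is reasonable but remains incomplete at the point you yourself flag, and that obstacle is genuine. The induced functional $\CH^{d-p-1}(X)\to\mathbb{L}_{p+1}$ need not a priori be of the form $\beta\mapsto\sum_j\ell_j\deg(z_j\cdot\beta)$, since the Chow intersection pairing is not perfect; so ``factors through $\CH$'' is insufficient. In your degree-formula expansion $\gamma=\sum_i\ell_i[W_i\to X]$, the terms with $\dim W_i=d-p-1$ do contribute in the representable shape $\ell_i\cdot\deg([W_i]\cdot\rho_X(\beta))$, but the terms with $\dim W_i>d-p-1$ contribute $\ell_i\cdot\pi_{W_i,*}(f_i^{\,*}\beta)$ with $\pi_{W_i,*}(f_i^{\,*}\beta)\in\mathbb{L}_{>0}$, and you give no argument that these are of the required form or that the earlier corrections force them to vanish. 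Until that is written out, the smooth projective case of~(1) is a real gap in your proposal.
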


For algebraic cycles, to each choice of a Weil cohomology theory, there corresponds a homological equivalence. For cobordism cycles, a notion of homological equivalence was introduced in \cite[\S 9]{KP} using complex cobordism as a cohomology theory, when $X$ is a smooth variety over $\mathbb{C}$. For more general case, one may use the theory of \'etale cobordism of \cite{Quick}. In \S \ref{sec:hom}, we first prove:

\begin{theorem}\label{thm:D_forward_intro}Let $X$ be a smooth projective variety over $\mathbb{C}$. Then, a homologically trivial cobordism cycle is also numerically trivial.
\end{theorem}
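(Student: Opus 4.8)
The plan is to show that homological triviality of a cobordism cycle $\alpha \in \Omega^*(X)$ forces every "numerical pairing" against an arbitrary cobordism cycle $\beta \in \Omega_*(X)$ to vanish in $\Omega_*(k) = \mathbb{L}$. Recall that numerical equivalence on $\Omega^*(X)$ is defined via the pairing $(\alpha, \beta) \mapsto \pi_*(\alpha \cdot \beta) \in \mathbb{L}$, where $\pi : X \to \Spec(k)$ is the structure morphism and $\alpha \cdot \beta$ uses the $\Omega^*(X)$-module structure on $\Omega_*(X)$. So the goal reduces to: if $\alpha$ maps to zero under the cycle class map $cl_\Omega : \Omega^*(X) \to \MU^{2*}(X(\mathbb{C}))$ (complex cobordism of the underlying analytic space, as in \cite[\S 9]{KP}), then $\pi_*(\alpha \cdot \beta) = 0$ in $\mathbb{L}$ for all $\beta$.

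The key step is naturality and multiplicativity of the cycle class map. First, $cl_\Omega$ is a ring homomorphism (resp.\ compatible with the module structures on $\Omega_*$ and $\MU_*$), so $cl_\Omega(\alpha \cdot \beta) = cl_\Omega(\alpha) \cdot cl_\Omega(\beta) = 0$ whenever $cl_\Omega(\alpha) = 0$. Second, $cl_\Omega$ commutes with proper push-forward, so $cl_\Omega^{pt}(\pi_*(\alpha \cdot \beta)) = \pi_*^{\MU}(cl_\Omega(\alpha \cdot \beta)) = 0$, where $cl_\Omega^{pt} : \Omega_*(k) \to \MU_*(pt)$ is the cycle class map over the point and $\pi_*^{\MU}$ is push-forward to a point in $\MU_*$. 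Finally, since $X$ is defined over $\mathbb{C}$, the cycle class map over the point $cl_\Omega^{pt} : \mathbb{L} = \Omega_*(\Spec \mathbb{C}) \to \MU_*(pt) = \pi_*\MU$ is an isomorphism — this is precisely the Quillen/Levine–Morel identification of the Lazard ring with the complex cobordism ring, and it is an isomorphism rather than merely a surjection because both sides are the universal formal group law ring / the coefficient ring of $\MU$. Therefore $\pi_*(\alpha \cdot \beta) = 0$ in $\mathbb{L}$, i.e.\ $\alpha$ is numerically trivial.

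I would organize the write-up in three short lemmas: (i) $cl_\Omega$ is a morphism of oriented Borel–Moore homology theories (hence multiplicative and compatible with proper push-forward) — this is essentially recalled from \cite[\S 9]{KP} or follows from the universal property of $\Omega_*$ in \cite{LM}; (ii) the induced map $\mathbb{L} \to \pi_*\MU$ on point spectra is an isomorphism; (iii) the diagram relating the $\Omega$-pairing and the $\MU$-pairing commutes. Then the theorem is immediate: $cl_\Omega^{pt}(\pi_*(\alpha\cdot\beta)) = 0$ and $cl_\Omega^{pt}$ injective give $\pi_*(\alpha\cdot\beta)=0$.

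The main obstacle I anticipate is purely bookkeeping: making sure the cycle class map $cl_\Omega$ genuinely respects the $\Omega^*(X)$-module structure on $\Omega_*(X)$ in the topological counterpart — i.e.\ that the cap-product in $\MU$-(co)homology matches the cup/cap structure compatibly under $cl_\Omega$ — and that proper push-forward along $\pi$ is compatible on the nose (no sign or orientation discrepancy). For $X$ smooth projective over $\mathbb{C}$ one can sidestep subtleties by using Poincaré duality on $X(\mathbb{C})$ to convert everything to the cohomological pairing $\MU^*(X)\otimes \MU^*(X) \to \MU^*(X) \xrightarrow{\pi_*} \MU^{*}(pt)$, where multiplicativity and functoriality of $cl_\Omega$ are standard. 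The only genuinely nontrivial input is step (ii), the isomorphism $\mathbb{L}\cong \pi_*\MU$, which is a foundational theorem rather than something to prove here.
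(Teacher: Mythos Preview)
Your proposal is correct and follows essentially the same argument as the paper: use that the cycle class map $\Phi^{\rm top}$ is a ring homomorphism commuting with push-forward, so $\Phi^{\rm top}(\pi_*(\alpha\cdot\gamma))=\pi_*(\Phi^{\rm top}(\alpha)\cdot\Phi^{\rm top}(\gamma))=0$, and then invoke the isomorphism $\Omega^*(k)\cong\MU^{2*}(pt)$ (cited as \cite[Corollary~1.2.11(1)]{LM}) to conclude $\pi_*(\alpha\cdot\gamma)=0$. Your detour through cap products and Poincar\'e duality is unnecessary here since $X$ is smooth projective and the pairing is just the ring product on $\Omega^*(X)$, but it does no harm.
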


The classical standard conjecture $(D)$ expects that homological equivalence is equivalent to numerical equivalence for algebraic cycles with $\mathbb{Q}$-coefficients. For cobordism cycles, we can ask whether the converse of Theorem \ref{thm:D_forward_intro} holds. Let's call it the statement $(D)_{\MU}$. It turns out that, this is related to the cycle class map $cl_T: \CH^* (X) \to \MU^* (X) \otimes_{\mathbb{L}} \mathbb{Z}$ of Totaro in \cite[Theorems 3.1, 4.1]{Totaro}. The statement $(D)_{\rm T}$ is that $\ker (cl_T) = {\rm Num}^* (X)$, where ${\rm Num} ^* (X)$ is the group of numerically trivial cycles. The statement $(D)_{\rm H}$ is that $\ker (cl) = {\rm Num} ^* (X)$ for the usual cycle class map $cl: \CH^* (X) \to H^{*} (X, \mathbb{Z})$. Then, we have:

\begin{theorem}\label{thm:standard_intro}Let $X$ be a smooth projective variety over $\mathbb{C}$. For the various statements of standard conjectures, $(1)$ $(D)_{\MU}$ is equivalent to $(D)_{\rm T}$, $(2)$ $(D)_{\rm T}$ is equivalent to $(D)_{\rm H}$ when the natural map $\MU^* (X) \otimes_{\mathbb{L}} \mathbb{Z} \to H^* (X, \mathbb{Z})$ is an isomorphism, and $(3)$ all three statements are equivalent with $\mathbb{Q}$-coefficients.
\end{theorem}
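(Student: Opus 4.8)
The plan is to establish the three equivalences by carefully tracking the relevant cycle class maps and the comparison map $\MU^*(X)\otimes_{\mathbb{L}}\mathbb{Z}\to H^*(X,\mathbb{Z})$. For part $(1)$, I would start from Theorem \ref{thm:D_forward_intro}, which already gives that homologically trivial (in the cobordism sense) cycles are numerically trivial; the content of $(D)_{\MU}$ is the converse. I would interpret ``homologically trivial cobordism cycle'' via the natural map $\CH^*(X)\to \MU^*(X)\otimes_{\mathbb{L}}\mathbb{Z}$, which is exactly Totaro's $cl_T$ after identifying the target with the degree-$0$ piece of the $\MU$-homology of the point tensored up. The key point is that the kernel of $cl_T$ is by definition the group of $\MU$-homologically trivial classes, so $(D)_{\MU}$ says precisely $\ker(cl_T)={\rm Num}^*(X)$, which is $(D)_{\rm T}$. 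The main thing to verify here is that the notion of homological triviality for cobordism cycles introduced in \cite[\S 9]{KP}, pushed down to Chow via $-\otimes_{\mathbb{L}}\mathbb{Z}$, matches the kernel of $cl_T$; this is essentially a bookkeeping compatibility between the Levine--Morel cobordism, complex cobordism $\MU$, and the universal property that sends $\Omega^*\to\CH^*$ and $\Omega^*\to\MU^*$.

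For part $(2)$, assume $cl_T$ and $cl$ differ only by the comparison map $\tau:\MU^*(X)\otimes_{\mathbb{L}}\mathbb{Z}\to H^*(X,\mathbb{Z})$, i.e. $cl=\tau\circ cl_T$. If $\tau$ is an isomorphism then $\ker(cl)=\ker(cl_T)$, and hence $(D)_{\rm T}\Leftrightarrow(D)_{\rm H}$ is immediate. The only subtlety is to confirm the factorization $cl=\tau\circ cl_T$, which follows from Totaro's construction: his $cl_T$ is the refinement of the topological cycle class map through $\MU$, and $\tau$ is the Thom/Conner--Floyd-type map to singular cohomology, so the composite recovers the usual $cl$. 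I would cite \cite[Theorems 3.1, 4.1]{Totaro} for this factorization rather than reprove it.

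For part $(3)$, after tensoring with $\mathbb{Q}$ the comparison map $\tau$ becomes an isomorphism: $\MU^*(X)\otimes_{\mathbb{L}}\mathbb{Z}\otimes\mathbb{Q}\cong H^*(X,\mathbb{Q})$, because rationally the Atiyah--Hirzebruch spectral sequence for $\MU$ degenerates and $\MU_*\otimes\mathbb{Q}\to H_*(pt,\mathbb{Q})$ is the projection onto the generator in degree $0$; concretely $\MU^*(X)\otimes_{\mathbb{L}}\mathbb{Z}$ is already $\tau$-isomorphic to $H^*$ integrally modulo torsion, and $\mathbb{Q}$-coefficients kill the torsion discrepancy. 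Then part $(2)$ with $\mathbb{Q}$-coefficients gives $(D)_{\rm T}\otimes\mathbb{Q}\Leftrightarrow(D)_{\rm H}\otimes\mathbb{Q}$, and part $(1)$ tensored with $\mathbb{Q}$ gives $(D)_{\MU}\otimes\mathbb{Q}\Leftrightarrow(D)_{\rm T}\otimes\mathbb{Q}$; chaining these yields the three-way equivalence. The main obstacle I anticipate is not in $(3)$, which is soft, but in pinning down precisely in $(1)$ that the $\MU$-homological equivalence on $\Omega^*(X)$ of \cite[\S 9]{KP} descends under $-\otimes_{\mathbb{L}}\mathbb{Z}$ to exactly $\ker(cl_T)$ and not to something a priori smaller or larger; this requires knowing that the classes $\Omega^*(X)\to\CH^*(X)$ and $\Omega^*(X)\to\MU^*(X)$ are jointly compatible with Totaro's factorization of $cl_T$ through $\MU$, which is where I would spend the most care.
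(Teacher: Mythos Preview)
Your treatment of parts (2) and (3) is essentially correct and matches the paper's approach: once one has the factorization $cl=\gamma\circ cl_T$, the equivalence $(D)_{\rm T}\Leftrightarrow(D)_{\rm H}$ under the hypothesis that $\gamma$ is an isomorphism is immediate, and rationally $\gamma$ is always an isomorphism because its kernel and cokernel are torsion.

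The genuine gap is in part (1). You have misread what $(D)_{\MU}$ asserts. It is \emph{not} a statement about algebraic cycles in $\CH^*(X)$; it is the assertion that $\ker\bigl(cl_\Omega:\Omega^*(X)\to\MU^*(X)\bigr)=\mathcal{N}^*(X)$ inside $\Omega^*(X)$. By contrast $(D)_{\rm T}$ is the assertion that $\ker(cl_T)={\rm Num}^*(X)$ inside $\CH^*(X)$. These live at different levels, related by $-\otimes_{\mathbb{L}}\mathbb{Z}$. What you call ``bookkeeping'' is in fact the content of two separate lemmas in the paper: that $\mathcal{H}_{\MU}^*(X)\otimes_{\mathbb{L}}\mathbb{Z}\simeq\mathcal{H}_{\rm T}^*(X)$ (Lemma~\ref{lem:stpdab_mut}) and that $\mathcal{N}^*(X)\otimes_{\mathbb{L}}\mathbb{Z}={\rm Num}^*(X)$ (Corollary~\ref{cor:stpdstc_hn}). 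These give the forward implication $(D)_{\MU}\Rightarrow(D)_{\rm T}$.

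But you have said nothing about the converse $(D)_{\rm T}\Rightarrow(D)_{\MU}$, and this is where the real work is. Knowing that the quotient $M^*:=\mathcal{N}^*(X)/\mathcal{H}_{\MU}^*(X)$ satisfies $M^*\otimes_{\mathbb{L}}\mathbb{Z}=0$ does \emph{not} formally imply $M^*=0$: the Lazard ring $\mathbb{L}$ is not local, and $-\otimes_{\mathbb{L}}\mathbb{Z}$ is just reduction modulo the ideal $\mathbb{L}_{>0}$. The paper closes this gap by invoking a graded Nakayama-type lemma \cite[Lemma~9.8]{KP}, which applies because $M^*$ is a graded $\mathbb{L}$-module bounded in degree (it is a subquotient of $\Omega^*(X)$ for $X$ finite-dimensional). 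Without this step, or an equivalent argument exploiting the grading, your proof of (1) does not go through.
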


When one works with algebraic cycles with $\mathbb{Q}$, one approach for the standard conjecture $(D)$ suggested by Voevodsky (\cite[Conjecture 4.2]{Voevodsky}) is to use the (rational) smash-nilpotence on algebraic cycles, first considered in \cite{Voevodsky} and \cite{Voisin}. More specifically, he conjectured that algebraic cycles with $\mathbb{Q}$-coefficients are rationally smash-nilpotent if and only if it is numerically trivial. Since we do not take $\mathbb{Q}$-coefficients at this moment, we can formulate the integral cobordism analogue of the Voevodsky conjecture: \emph{is a numerically trivial cobordism cycle, also (integrally) smash-nilpotent?} This conjecture is stronger than the standard conjecture $(D)_{\MU}$. Interestingly, combining Theorem \ref{thm:standard_intro} with a celebrated example in \cite{Totaro}, we deduce:

\begin{theorem}There is a smooth projective variety over $\mathbb{C}$ for which the standard conjecture $(D)_{\MU}$ fails. In particular, the homological equivalence is strictly finer than the numerical equivalence in general. It implies that the integral cobordism analogue of the Voevodsky conjecture also fails in general.
\end{theorem}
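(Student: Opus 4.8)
The plan is to feed Totaro's celebrated example into Theorem \ref{thm:standard_intro}. Recall from \cite[Theorems 3.1, 4.1]{Totaro} that the classical cycle class map $cl$ factors through $cl_T$, and that Totaro produces a smooth projective variety $X$ over $\mathbb{C}$ together with a class $\alpha \in \CH^* (X)$ which is homologically trivial, i.e. $cl(\alpha) = 0$ in $H^* (X, \mathbb{Z})$, but with $cl_T(\alpha) \neq 0$ in $\MU^* (X) \otimes_{\mathbb{L}} \mathbb{Z}$; this is precisely the assertion that Totaro's refined cycle class map is strictly finer than the topological one. Since $cl$ factors through $cl_T$, and since a homologically trivial cycle is numerically trivial (the degree pairing factoring through $H^*(X,\mathbb{Z})$), we have a chain of inclusions $\ker(cl_T) \subseteq \ker(cl) \subseteq {\rm Num}^* (X)$. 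Totaro's class $\alpha$ lies in ${\rm Num}^* (X)$ but not in $\ker(cl_T)$, so $\ker(cl_T) \subsetneq {\rm Num}^* (X)$; that is, the statement $(D)_{\rm T}$ fails for this $X$.

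Next I would invoke Theorem \ref{thm:standard_intro}(1): $(D)_{\MU}$ is equivalent to $(D)_{\rm T}$, so $(D)_{\MU}$ fails for $X$. Now $(D)_{\MU}$ is by definition the converse of Theorem \ref{thm:D_forward_intro}, so its failure means that $X$ carries a cobordism cycle which is numerically trivial but not homologically trivial with respect to $\MU$. Combined with Theorem \ref{thm:D_forward_intro} -- which gives the reverse implication unconditionally -- this exhibits, on $\Omega^* (X)$, a strict refinement of numerical equivalence by homological equivalence; since such an $X$ exists, this is the asserted ``strictly finer in general.''

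For the last assertion, recall from the discussion preceding the statement that the integral cobordism analogue of Voevodsky's conjecture -- every numerically trivial cobordism cycle is integrally smash-nilpotent -- is stronger than $(D)_{\MU}$: a smash-nilpotent cobordism cycle is in particular homologically trivial, so ``numerically trivial $\Rightarrow$ smash-nilpotent'' would entail ``numerically trivial $\Rightarrow$ homologically trivial,'' which is $(D)_{\MU}$. Hence the failure of $(D)_{\MU}$ for $X$ propagates at once to the failure of the Voevodsky analogue for $X$. The substance of all of this is imported -- Totaro's construction of $(X, \alpha)$ and the equivalence $(D)_{\MU} \Leftrightarrow (D)_{\rm T}$ of Theorem \ref{thm:standard_intro}(1) -- and the single point requiring genuine care is the implication ``smash-nilpotent $\Rightarrow$ homologically trivial'' over $\mathbb{Z}$ for cobordism cycles, which rests on a K\"unneth-type property of $\MU^*$; I expect this, not the bookkeeping among the various statements $(D)$, to be the only delicate ingredient.
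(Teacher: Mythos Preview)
Your proof is correct and follows essentially the same route as the paper: Totaro's example shows $(D)_{\rm T}$ fails for his $X$, whence by the equivalence $(D)_{\MU} \Leftrightarrow (D)_{\rm T}$ the conjecture $(D)_{\MU}$ fails (the paper phrases this as a contradiction via $\mathcal{H}_{\rm T}^* = \mathcal{H}_{\rm H}^*$, but the ingredients are identical). Your caution about the integral implication ``smash-nilpotent $\Rightarrow$ homologically trivial'' is well-placed; the paper itself only asserts this step in passing and gives the K\"unneth argument only after passing to $\mathbb{Q}$-coefficients.
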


Although the integral cobordism analogue of the Voevodsky conjecture is false, one may still ask if it works with the $\mathbb{Q}$-coefficients. The question on the rational cobordism analogue of Voevodsky conjecture was first raised in \cite[Remark~10.5]{KP}. A benefit of considering this question lies in that we do not need to worry about the choice of the cohomology theory such as $\MU$ in the definition of the homological equivalence. The rest of the paper is an attempt to check this for certain cases known for algebraic cycles. Recently, Sebastian \cite{Sebastian} proved part of the Voevodsky conjecture for $1$-cycles on smooth projective varieties dominated by products of curves. We prove the forward direction in \S \ref{sec:smash}, and the cobordism analogue of the result of Sebastian is answered in \S \ref{sec:Sebastian}. In the process, we develop the notion of Kimura finiteness on cobordism motives in \S \ref{sec:Kimura}. Here is a summary:

\begin{theorem}Let $X$ be a smooth projective variety over $k$ of characteristic $0$ and let $\alpha$ be a cobordism cycle over $X$. $(1)$ If $k$ has an embedding into $\C$ and $\alpha$ is smash nilpotent, then it is homologically trivial. $(2)$ If $\alpha$ is smash nilpotent, then it is numerically trivial. $(3)$ If $k = \bar{k}$ and $X$ is dominated by a product of curves, then any numerically trivial cobordism $1$-cycle $\alpha$ is smash nilpotent.
\end{theorem}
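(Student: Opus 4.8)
The plan is to treat the three parts separately: $(1)$ and $(2)$ should be essentially formal, resting on the multiplicativity of the realization and push-forward maps together with the fact that the Lazard ring $\bL\cong\Z[x_1,x_2,\dots]$ is an integral domain, while $(3)$ is the substantial part, where --- following Sebastian's argument for Chow groups --- I would reduce to a tensor-nilpotence statement for numerically trivial morphisms of Kimura-finite cobordism motives. For $(1)$, smash nilpotence of $\alpha$ means $\alpha^{\times N}=0$ in $\Omega^*(X^N)$ for some $N\ge 1$. I would apply the complex cobordism realization of \cite[\S 9]{KP} attached to $k\inj\C$; it is a ring homomorphism $\Omega^*(-)\to\MU^{2*}((-)(\C))$ compatible with exterior products, so the realization $u$ of $\alpha$ satisfies $u^{\times N}=0$. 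A K\"unneth argument for $\MU$ --- valid after tensoring with $\Q$, since $\MU_\Q$ is a wedge of suspensions of $H\Q$, and then over the fraction field of $\bL_\Q$, where a vanishing exterior power forces the class itself to vanish --- yields that the realization of $\alpha$ vanishes; that is, $\alpha$ is homologically trivial.

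For $(2)$, let $\alpha^{\times N}=0$ in $\Omega^*(X^N)$ and let $\beta\in\Omega^*(X)$ be arbitrary. Writing $p_i\colon X^N\to X$ for the projections and using that the $p_i^*$ are ring homomorphisms into the commutative ring $\Omega^*(X^N)$,
\[
(\alpha\cdot\beta)^{\times N}\;=\;\prod_{i=1}^{N}p_i^*(\alpha\cdot\beta)\;=\;\Bigl(\prod_{i=1}^{N}p_i^*\alpha\Bigr)\cdot\Bigl(\prod_{i=1}^{N}p_i^*\beta\Bigr)\;=\;\alpha^{\times N}\cdot\beta^{\times N}\;=\;0 .
\]
Pushing forward along $\pi^N\colon X^N\to\Spec k$, using that proper push-forward is compatible with exterior products and that the exterior product on $\Omega_*(\Spec k)=\bL$ is just the ring multiplication, one finds $\bigl(\pi_*(\alpha\cdot\beta)\bigr)^{N}=(\pi^N)_*\bigl((\alpha\cdot\beta)^{\times N}\bigr)=0$ in $\bL$. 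Since $\bL$ is an integral domain, $\pi_*(\alpha\cdot\beta)=0$; as $\beta$ was arbitrary, $\alpha$ is numerically trivial. (For $k$ admitting an embedding into $\C$ one could instead combine $(1)$ with Theorem \ref{thm:D_forward_intro} and descend along field extensions, but the argument above works uniformly.)

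For $(3)$, assume $k=\bar k$ and fix a dominant generically finite map $f\colon Y=C_1\times\cdots\times C_n\dashrightarrow X$ of degree $d$, with $\alpha$ a numerically trivial cobordism $1$-cycle on $X$. A standard correspondence argument built from the graph of $f$ reduces the problem, over $\bL_\Q$-coefficients, to the case $Y$ itself: it is enough to show a numerically trivial $1$-cycle on $Y$ is smash nilpotent. There the cobordism motive splits as $\mathfrak h_\Omega(Y)=\bigotimes_i\mathfrak h_\Omega(C_i)$ with $\mathfrak h_\Omega(C_i)=\mathbf 1\oplus\mathfrak h^1_\Omega(C_i)\oplus\mathbf 1(-1)$ (using a rational point of $C_i$), and by \S\ref{sec:Kimura} the summand $\mathfrak h^1_\Omega(C_i)$ is oddly finite-dimensional, hence $\mathfrak h_\Omega(Y)$ is Kimura-finite. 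The cycle $\alpha$ determines a numerically trivial morphism between Tate twists of summands of $\mathfrak h_\Omega(Y)$; the cobordism analogue of Kimura's nilpotence theorem forces this morphism to be tensor-nilpotent, and since exterior products of cycles correspond to tensor products of the associated motivic morphisms, this says that a high exterior power of $\alpha$ vanishes, i.e. $\alpha$ is smash nilpotent.

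The difficulty is entirely in $(3)$, and is twofold. Inside \S\ref{sec:Kimura} one must prove that $\mathfrak h^1_\Omega(C)$ is oddly finite-dimensional and that Kimura finiteness is stable under $\otimes_{\bL_\Q}$; since cobordism motives are modules over the non-field ring $\bL_\Q$, the Schur-functor computations underlying finite-dimensionality have to be carried out in this setting rather than quoted over $\Q$. The real crux, however, is the precise dictionary that sends a numerically trivial $1$-cycle on $Y$ to a numerically trivial morphism of cobordism motives to which Kimura's theorem applies: one must match $\Omega^{\num}$ with numerical equivalence of motivic morphisms, and it is exactly here that the restriction to $1$-cycles (through the coincidence of numerical and algebraic equivalence for $1$-cycles on products of curves) and the passage to $\Q$-coefficients (forced by Kimura finiteness) become essential. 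The correspondence reduction at the start of $(3)$, though classical, also needs some care in disposing of the loci where $f$ fails to be \'etale.
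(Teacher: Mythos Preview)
Your arguments for $(1)$ and $(2)$ are essentially those of the paper (Theorems~6.1 and~6.2): multiplicativity of the realization/push-forward plus the fact that $\bL_\Q$ is a polynomial ring, hence reduced. One small correction: smash nilpotence in this paper is taken rationally (Definition~10.1 of \cite{KP}), so you should work in $\Omega^*(X^N)_\Q$ throughout; this does not affect your argument.

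For $(3)$, your proposed route is genuinely different from the paper's, and it has a real gap. You invoke a ``cobordism analogue of Kimura's nilpotence theorem'' asserting that a numerically trivial morphism between Kimura-finite motives is $\otimes$-nilpotent. No such theorem is available: what Kimura proves (his Proposition~6.1, restated here as Proposition~\ref{prop:morphismparity}) is that morphisms between motives of \emph{different parity} are $\otimes$-nilpotent, with no hypothesis of numerical triviality; and the Andr\'e--Kahn results on Kimura--O'Sullivan categories give nilpotence of numerically trivial \emph{endomorphisms} under composition, not $\otimes$-nilpotence of arbitrary morphisms. The statement you need---``numerically trivial $\Rightarrow$ $\otimes$-nilpotent'' for morphisms between Kimura-finite motives---is precisely Voevodsky's conjecture restricted to the Kimura-finite world, and it is open already for Chow motives in codimensions $\geq 2$ on products of curves. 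Were your argument to go through, it would yield the result for cycles of every dimension on such $X$, not just $1$-cycles; but Sebastian's theorem, and the paper's Theorem~\ref{thm:Voev}, are limited to $1$-cycles for good reason. Your parenthetical that numerical and algebraic equivalence coincide for $1$-cycles on products of curves is also not known (it would say the Griffiths group vanishes there), so that cannot be the mechanism singling out $1$-cycles.

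What the paper actually does for $(3)$ is to follow Sebastian's geometric argument rather than a purely categorical one. Kimura finiteness enters only through Proposition~\ref{prop:morphismparity}, used to show (Proposition~\ref{prop:skew}) that skew cycles on an abelian variety are smash-nilpotent. This feeds into a computation on $\Jac(Y)$ showing that a carefully chosen ``modified diagonal'' $1$-cycle $\Delta_c$ on $Y^m$ is smash-nilpotent (Lemma~\ref{lem:Gamma}, Corollary~\ref{cor:DeltaSm}); projecting $\Delta_c$ back to $X=C_1\times\cdots\times C_N$ then expresses any $[Y\to X]$ as smash-equivalent to cycles supported on smaller products (Lemma~\ref{lem:smaller}), and an induction on $N$ finishes the proof. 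The passage from products of curves to varieties dominated by them (Corollary~\ref{cor:voefsab}) is via a relatively ample line bundle and the projection formula, not a graph-correspondence argument. The restriction to $1$-cycles is built into every step of this machinery, starting with the fact that $\Delta_c$ is a $1$-cycle and that the Beauville decomposition of its push-forward to the Jacobian lands in a range where the ``even'' pieces can be killed by the numerical constraints (S1), (S$2'$).
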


\noindent \textbf{Conventions:} We always suppose that the base field $k$ is of characteristic $0$. A $k$-scheme, or a scheme always means a quasi-projective schme of finite type over $k$, and a $k$-variety means an integral $k$-scheme. The category of $k$-schemes is denoted by $\Sch_k$ and the category of smooth $k$-schemes is denoted by $\Sm_k$ throughout the paper.

\section{Recollection of algebraic cobordism theories}
\label{sec:cobordism}
We recall some definitions on algebraic cobordism from \cite[\S 2.1--4]{LM}. Let $X$ be a $k$-scheme of finite type. 

\begin{definition}[{\cite[Definition~2.1.6]{LM}}]
A \emph{cobordism cycle} over $X$ is a family $(f:Y\to X,L_1,\ldots,L_r)$, where $Y$ is smooth and integral, $f$ is
projective, and $(L_1 , \ldots , L_r)$ is a finite sequence of $r \geq 0$ line bundles over $Y$. Its dimension is defined to be $\dim(Y)-r \in \Z$. An isomorphism $\Phi$ of cobordism cycles $(Y\to X,L_1,\ldots,L_r)\overset{\sim}{\to} (Y'\to X,L'_1,\ldots,L'_r)$
 is a triple $\Phi=(\phi:Y\to Y',\sigma,(\psi_1,\ldots,\psi_r))$ consisting of an 
isomorphism $\phi:Y\to Y$ of $X$-schemes, a bijection $\sigma : \{1, \ldots , r\} \overset{\sim}{\to} \{1, \ldots , r\}$, and
isomorphisms $\psi_i:L_i\overset{\sim}{\to} \phi^*L'_{\sigma(i)}$
 of lines bundles over $Y$ for all $i$. 
\end{definition}

Let $\sZ(X)$ be the free abelian group on the set of isomorphism classes of cobordism cycles over $X$. 
Grading by the dimension of cobordism cycles makes $\sZ_*(X)$ into a graded abelian group. The image of a cobordism cycle $(Y\to X,L_1,\ldots,L_r)$ in
$\sZ_*(X)$ is denoted by $[Y\to X,L_1,\ldots,L_r]$. When X is smooth and equidimensional,
the class $[{\rm Id}_X : X \to X] \in \sZ_d(X)$ is denoted often as $1_X$. 

\begin{definition}[{\cite[\S 2.1.2--3]{LM}}]$\;$\\
\vspace*{-6mm}\begin{enumerate}
\item For a projective morphism $g : X \to X'$ in $\Sch_k$, composition with $g$ defines the graded group homomorphism $g_* : \sZ_*(X)\to\sZ_*(X')$ given by $[f : Y \to X, L_1 , \ldots , L_r ] $ $\mapsto $ $[g \circ f : Y \to X' , L_1 , \ldots , L_r ]$, called the \emph{push-forward along $g$}.

\item  If $g : X \to X'$ is a smooth equidimensional morphism of relative dimension $d$,
the \emph{pull-back along $g$} is defined to be the homomorphism $
g^* : \sZ_*(X')\to\sZ_{*+d}(X)$, 
$[f : Y \to X' , L_1 , \ldots , L_r ]$ $\mapsto$ $[pr_2 : Y \times_{X'} X \to X, pr_1^* (L_1 ), \ldots , pr_1^* (L_r )].$

\item  Let $L$ be a line bundle on $X$. The homomorphism $\widetilde{c}_1(L) : \sZ_*(X)\to\sZ_{*-1}(X)$ defined by $[f : Y \to X, L_1 , \ldots , L_r ]$ $ \mapsto$ $[f : Y \to X, L_1 , \ldots , L_r , f^*(L)]$ is called the \emph{first Chern class operator of $L$}. If $X$ is smooth, the first Chern class $c_1(L)$ of
$L$ is defined to be the cobordism cycle $c_1(L) : = \widetilde{c}_1(L)(1_X)$.

\item  The \emph{external product} $\times : \sZ_*(X) \times \sZ_*(Y) \to \sZ_*(X \times Y )$ on the functor $\sZ_*$ is defined by
$[f : X' \to X, L_1 , \ldots , L_r ] \times [g : Y' \to Y, M_1 , \ldots , M_s ]$ $ \mapsto $
$[f \times g : X' \times Y' \to X \times Y, pr_1^*(L_1 ), \ldots , pr_1^*(L_r ), pr_2^*(M_1 ), \ldots , pr_2^*(M_s )].$
\end{enumerate}
\end{definition}

While for the Chow ring we have $c_1(L\otimes M)=c_1(L)+c_1(M)$, this is not always true for oriented cohomology theories (see \cite[Definition~1.1.2]{LM}) and addition has to be replaced by a formal group law: $c_1(L\otimes M)=F(c_1(L),c_1(M))$ for some power series $F$ in two variables. A commutative formal group law $(R,F_R)$ of rank $1$ consists of a ring $R$ and $F_R\in R[[u,v]]$ satisfying conditions analogous to the operations in a group. 
In \cite{Lazard}, Lazard showed that there exists a formal group law $(\bL,F_{\bL})$ of rank $1$ which is universal: for any other law $(R, F_R)$ there exists a unique morphism $\Phi_{(R, F_R)}:\bL\to R$ which maps the coefficients of $F_{\bL}$ onto those of $F_R$. The ring $\bL$, called the \emph{Lazard ring}, is isomorphic to the polynomial ring $\Z[a_{i}\vert i\geq 1]$, and can be made into a graded ring $\bL_*$ by assigning $\deg a_{i}=i$.
 See \cite[Section~1.1]{LM} for details. 
 
\begin{definition}[{\cite[Definitions~2.4.5,~2.4.10]{LM}}]
For $X \in \Sch_k$, algebraic cobordism $\Omega_*(X)$ is defined to be the quotient of $\sZ_*(X)\otimes\bL_*$ by the following three relations:
\item ${\rm (Dim)}$ If there is a smooth quasi-projective morphism $\pi:Y \to Z$ with line bundles $M_1 , \ldots , M_{s>\dim Z}$ on $Z$ with $L_i \overset{\sim}{\to} \pi^*M_i$ for $i=1,\ldots,s\leq r$, then $[f:Y\to X,L_1,\ldots,L_r ] = 0$.

\item ${\rm (Sect)}$ For a section $s : Y \to L$ of a line bundle $L$ on $Y$ with the associated smooth 
divisor $i : D \to Y$, we impose $[f : Y \to X, L_1 , \ldots , L_r , L] = [f \circ i : D \to X, i^*L_1 , \ldots , i^*L_r ].$

\item ${\rm (FGL)}$ For line bundles $L$ and $M$ on $X$, we impose the equality
$F_{\bL}(\widetilde{c}_1(L),\widetilde{c}_1(M))([f : Y \to X, L_1 , \ldots , L_r ]) =\widetilde{c}_1(L\otimes M)([f : Y \to X, L_1 , \ldots , L_r ]).$
By the relation ${\rm (Dim)}$, the
expression $F_{\bL}(\widetilde{c}_1(L),\widetilde{c}_1(M))$ is a finite sum so that the operator is well-defined.
\end{definition}
When $X$ is smooth and equidimensional of dimension $n$, the codimension of a cobordism $d$-cycle is defined to be $n-d$. We set 
$\Omega^{n-d}(X) : = \Omega_d(X)$, and $\Omega^*(X)$ is the direct sum of the groups over all codimensions. Levine and Morel showed (\cite[Theorem 1.2.6]{LM}) that algebraic cobordism $\Omega^*$ is a universal oriented cohomology theory on $\Sm_k$ in the sense of \cite[Definition 1.1.2]{LM}) when $k$ is a field of characteristic $0$.

\section{Numerical equivalence on cobordism cycles}\label{sec:num}

Let $X$ be a smooth projective variety over a field $k$ of characteristic 0. Consider the composition of maps 
\begin{equation}\label{eq:cobprod}
\Omega_*(X)\otimes\Omega_*(X)\overset{\times}{\to}\Omega_*(X\times X)\overset{\Delta^*_X}{\to}\Omega_*(X)\overset{\pi_*}{\to}\Omega_*(k),
\end{equation}  
where $\times$ is the external product of cobordism cycles, $\Delta_X^*$ is the l.c.i. pull-back by diagonal morphism as in \cite[\S 6.5.4]{LM}, and $\pi$ is the structure morphism $X\to\Spec(k)$. This gives a map of $\bL$-modules $\Omega_*(X)\longrightarrow \Hom_{\bL}(\Omega_*(X),\Omega_*(k))$. For $\alpha, \beta \in \Omega_* (X)$, we often write $\alpha \cdot \beta$ for $\Delta_X ^* (\alpha \times \beta)$.

\begin{definition}\label{def:numSmProj}
We say that a cobordism cycle is {\em numerically equivalent to} 0 if it is in the kernel of this map, $\mc{N}_*(X):=\ker (\Omega_*(X)\rightarrow \Hom_{\bL}(\Omega_*(X),\Omega_*(k)))$, and we let $\Omega_*^{\num}(X):=\Omega_*(X)/\mc{N}_*(X)$, which is algebraic cobordism modulo numerical equivalence.
\end{definition}

It is immediate from the definition that we have:

\begin{lemma}\label{lem:ideal_smp}The subgroup $\mathcal{N}_* (X) \subset \Omega_* (X)$ is an ideal.
\end{lemma}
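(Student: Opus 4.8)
The plan is to show that $\mathcal{N}_*(X)$ is closed under multiplication by arbitrary elements of $\Omega_*(X)$, i.e. that for $\alpha \in \mathcal{N}_*(X)$ and any $\gamma \in \Omega_*(X)$ we have $\gamma \cdot \alpha \in \mathcal{N}_*(X)$. Recall that $\alpha$ is numerically trivial precisely when $\pi_*(\beta \cdot \alpha) = 0$ in $\Omega_*(k)$ for every $\beta \in \Omega_*(X)$. So I would fix $\alpha \in \mathcal{N}_*(X)$, fix $\gamma \in \Omega_*(X)$, and check that $\pi_*\bigl(\beta \cdot (\gamma \cdot \alpha)\bigr) = 0$ for all $\beta$.

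The key point is associativity and commutativity of the product $\alpha \cdot \beta = \Delta_X^*(\alpha \times \beta)$ on $\Omega_*(X)$, which is part of the ring structure of $\Omega^*(X)$ for $X$ smooth (this is exactly the structure invoked in \eqref{eq:cobprod} and in Definition \ref{def:numSmProj}). Using this, $\beta \cdot (\gamma \cdot \alpha) = (\beta \cdot \gamma) \cdot \alpha$. Setting $\beta' := \beta \cdot \gamma \in \Omega_*(X)$, we get $\pi_*\bigl(\beta \cdot (\gamma \cdot \alpha)\bigr) = \pi_*(\beta' \cdot \alpha) = 0$, the last equality because $\alpha \in \mathcal{N}_*(X)$ and $\beta'$ is an admissible test class. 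Since $\beta$ was arbitrary, this shows $\gamma \cdot \alpha \in \mathcal{N}_*(X)$, so $\mathcal{N}_*(X)$ is an ideal. (That $\mathcal{N}_*(X)$ is a subgroup is immediate, being a kernel of an $\bL$-module map.)

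I would also note the bilinearity and $\bL$-linearity of all the maps involved ($\times$, $\Delta_X^*$, $\pi_*$ are all morphisms of $\bL$-modules, by \cite[\S 6.5.4, \S 2.1.2]{LM}), so the verification genuinely reduces to the associativity identity; no convergence or well-definedness subtleties arise beyond what is already established for the ring structure of $\Omega^*(X)$.

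The only mild obstacle is making sure the associativity $(\beta \cdot \gamma) \cdot \alpha = \beta \cdot (\gamma \cdot \alpha)$ is legitimately available: this is the associativity of the cup product on the oriented cohomology ring $\Omega^*(X)$ of a smooth projective variety, established in \cite[\S 6.5]{LM} via the projection formula and compatibility of l.c.i.\ pull-backs with external products. Once that is cited, the proof is a one-line manipulation, which is why the lemma is stated as ``immediate from the definition.''
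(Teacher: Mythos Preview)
Your argument is correct and is precisely the ``immediate from the definition'' verification the paper has in mind: once one knows $\Omega^*(X)$ is an associative (commutative) ring, $\pi_*\bigl(\beta\cdot(\gamma\cdot\alpha)\bigr)=\pi_*\bigl((\beta\cdot\gamma)\cdot\alpha\bigr)=0$ for all $\beta$ shows $\gamma\cdot\alpha\in\mathcal{N}_*(X)$. There is nothing to add.
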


\begin{remark}\label{rmk:point}
Note that when $X= \Spec (k)$, $\Omega_* (X)$ is just the Lazard ring $\mathbb{L}$, and the product \eqref{eq:cobprod} is just the product of the ring $\mathbb{L}$. Since $\mathbb{L}$ is a polynomial ring in infinitely many variables, it has no zero-divisor. In particular, $\mathcal{N}_* (\Spec (k)) = 0$. Hence, we deduce that $\Omega_* ^{\num} (\Spec (k)) = \mathbb{L}$.
\end{remark}

For the canonical map $\phi:\Omega_*(X)\to \CH_*(X)$ commutes with pull-backs and push-forwards and respects the product on the Chow group, one easily checks that $\phi$ maps $ \mathcal{N}_* (X)$ into the group $\mathrm{Num}_*(X)$ of algebraic cycles numerically equivalent to $0$. This gives a well-defined map $\phi^\num:\Omega^\num_*(X)\longrightarrow \CH^\num_*(X).$

Recall that $\phi$ factors through the canonical morphism $\Omega_*(X)\otimes_{\bL}\Z \overset{\overline{\phi}}{\to} \CH_*(X)$, which is an isomorphism of Borel-Moore weak homology theories on $\Sch_k$ (see \cite[Definition~4.1.9]{LM}). 

\begin{theorem}\label{thm:num}
Let $X$ be a smooth projective variety over a field $k$ of characteristic $0$.
\begin{enumerate}
\item $\phi^\num$ induces an isomorphism $ \overline{\phi}^\num\;:\;\Omega^{\num} _* (X) \otimes_{\mathbb{L}} \mathbb{Z} \simeq \CH_* ^{\num} (X)$.

\item $\Omega^{\num}_* (X)$ is a finitely generated $\mathbb{L}$-module.
\end{enumerate}
\end{theorem}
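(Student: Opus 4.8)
The plan is to prove the two statements in order, using the known comparison results between $\Omega_*$ and $\CH_*$ and the classical finiteness of numerical equivalence for Chow groups. For part (1), the starting point is the isomorphism $\overline{\phi}:\Omega_*(X)\otimes_{\bL}\Z\xrightarrow{\sim}\CH_*(X)$, which is compatible with products, pull-backs and push-forwards. I would first show that, after tensoring the defining pairing $\Omega_*(X)\to \Hom_{\bL}(\Omega_*(X),\Omega_*(k))$ with $\Z$ over $\bL$, one obtains a pairing on $\CH_*(X)$ which, up to the identification $\Omega_*(k)\otimes_{\bL}\Z\simeq\Z$, is exactly the classical intersection-and-degree pairing defining numerical equivalence. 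Granting this, right-exactness of $-\otimes_{\bL}\Z$ gives a surjection $\Omega_*^{\num}(X)\otimes_{\bL}\Z\twoheadrightarrow\CH_*^{\num}(X)$, so the content is injectivity, i.e.\ that $\mathcal{N}_*(X)\otimes_{\bL}\Z$ surjects onto $\mathrm{Num}_*(X)$ inside $\CH_*(X)$. The mechanism for this is the key structural fact about $\Omega_*$: a cobordism class numerically trivial over $\Z$ lifts to one numerically trivial over $\bL$, because $\Omega_*(k)=\bL$ is a polynomial ring and the comparison $\Omega_*(X)\otimes_{\bL}\Z\simeq \CH_*(X)$ lets one promote a $\Z$-numerical triviality statement to an $\bL$-module statement degree by degree. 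I would make this precise by choosing a homogeneous $\bL$-basis-type presentation of $\Omega_*(X)$ (using that $\Omega_*(X)$ is generated in each degree by finitely many classes, via the generalized degree formula / the structure results of \cite{LM}) and tracking the pairing matrix.

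For part (2), the idea is to bootstrap from the classical statement that $\CH_*^{\num}(X)$ is a finitely generated abelian group, together with part (1) and the structure of $\bL$. Since $\overline{\phi}^{\num}:\Omega_*^{\num}(X)\otimes_{\bL}\Z\simeq\CH_*^{\num}(X)$ is finitely generated over $\Z$, I would use a graded Nakayama-type argument: $\bL_*$ is a graded ring with $\bL_0=\Z$ and $\bL_{>0}$ as its graded maximal-ideal analogue, and $\Omega_*^{\num}(X)$ is a graded $\bL_*$-module which is bounded below (dimensions of cobordism cycles on a fixed $X$ are bounded above by $\dim X$ and, modulo numerical equivalence, should be bounded below as well — this needs an argument, see below). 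A bounded-below graded module over a graded-connected ring such that $M/\bL_{>0}M$ is a finitely generated abelian group is itself finitely generated over $\bL_*$: lift generators of $M/\bL_{>0}M$ and run the standard graded induction. So the finite generation of $\Omega_*^{\num}(X)$ follows once one knows it is bounded below.

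The main obstacle, and the step I would spend the most care on, is precisely this boundedness-below of $\Omega_*^{\num}(X)$ — equivalently, that the numerical-equivalence quotient kills the negative-dimensional part of $\Omega_*(X)$, or at least makes it bounded. Concretely one must show that for $i$ sufficiently negative, every cobordism $i$-cycle $\alpha$ lies in $\mathcal{N}_*(X)$; the pairing $\alpha\cdot\beta\mapsto\pi_*(\Delta_X^*(\alpha\times\beta))$ lands in $\Omega_*(k)=\bL$, whose graded pieces in negative degree vanish, so if $\beta$ ranges over classes of bounded dimension the product is forced into $\bL_{<0}=0$ once $i$ is negative enough — but one must check $\Omega_*(X)$ is generated by classes of dimension $\le\dim X$ (true) and that this makes the pairing identically zero in low degree. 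A cleaner route may be: use part (1) plus the fact that $\Omega_*(X)$ is generated as an $\bL$-module in degrees $\le \dim X$ (from \cite{LM}), so $\Omega_*^{\num}(X)$ is too; combined with $\Omega_*^{\num}(X)\otimes_{\bL}\Z$ being finitely generated, graded Nakayama closes the argument without a separate boundedness-below lemma, since an $\bL_*$-module generated in a single bounded range of degrees with finitely generated reduction mod $\bL_{>0}$ is automatically finitely generated. I would present the proof along this second route, isolating the statement "$\Omega_*^{\num}(X)$ is generated over $\bL$ in degrees $\le\dim X$" and the graded Nakayama lemma as the two inputs, and reduce everything else to part (1).
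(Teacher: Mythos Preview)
Your approach is essentially the paper's: for (1) you identify that injectivity of $\overline{\phi}^{\num}$ amounts to surjectivity of $\mathcal{N}_*(X)\to\mathrm{Num}_*(X)$, which the paper phrases as the computation $\ker(\phi^{\num})=\bL_{>0}\cdot\Omega_*^{\num}(X)$ via the same diagram chase, and for (2) you use a graded Nakayama argument, which is exactly the content of \cite[Lemma~9.8]{KP} cited by the paper. Your extended detour through boundedness-below in (2) is unnecessary --- your ``second route'' (that $\Omega_*^{\num}(X)$ is generated over $\bL$ in degrees $\le\dim X$, plus Nakayama on the finitely generated quotient $\CH_*^{\num}(X)$) is the correct one and is precisely what the paper does in one line.
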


\begin{proof}
(1) Consider the following commutative diagram with exact rows:
\begin{equation}\label{eq:num}
\xymatrix{0 \ar[r] & \mc{N}_*(X)\ar[r] \ar[d] & \Omega_*(X) \ar[r] \ar[d]_{\phi}  & \Omega_*^\num(X) \ar[r] \ar[d]_{\phi^\num} & 0 \\
0 \ar[r] & \mathrm{Num}_*(X) \ar[r]  & \CH_*(X) \ar[r]  & \CH^{\num}_*(X) \ar[r]  & 0.}
\end{equation}Since $\phi$ is surjective by \cite[Theorem 1.2.19]{LM}, we immediately deduce that $\phi^{\num}$ is surjective from the right square.

Let's compute $\ker (\phi ^{\num})$. For $\alpha \in \Omega_* (X)$, let $\ov{\alpha} \in \Omega_* ^{\num} (X)$ be its image. Suppose $\ov{\alpha} \in \ker (\phi^{\num})$. Then, by \eqref{eq:num}, $\phi (\alpha) \in {\rm Num}_* (X)$. Since $\ker (\phi) = \mathbb{L}_{>0} \cdot \Omega_* (X)$ by \emph{loc. cit.}, we have $\alpha \in \mathcal{N}_* (X) + \mathbb{L}_{>0} \cdot \Omega_* (X)$. Thus, modulo $\mathcal{N}_* (X)$, we have $\ov{\alpha} \in \mathbb{L}_{>0} \cdot \Omega_* ^{\num} (X)$, i.e. $\ker (\phi^{\num}) \subset \mathbb{L}_{>0} \cdot \Omega_* ^{\num} (X)$. That $\ker (\phi^{\num}) \supset \mathbb{L}_{>0} \cdot \Omega_* ^{\num} (X)$ is obvious. Hence, the induced homomorphism $\ov{\phi}^{\num}: \Omega_* ^{\num} (X) \otimes_{\mathbb{L}} \mathbb{Z} \to \CH_* ^{\num} (X)$ is an isomorphism because $\Omega_* ^{\num} \otimes_{\mathbb{L}} \mathbb{Z} \simeq \Omega_* ^{\num} (X) / \mathbb{L}_{>0} \cdot \Omega_* ^{\num} (X) = \Omega_* ^{\num} (X) / \ker (\phi^{\num})$.

(2) 
This follows easily from \cite[Lemma~9.8]{KP} since $\CH^* _{\num} (X)$ is finitely generated. In fact, $\Omega^*_{\num}(X)$ is generated by any set of elements that map via $\phi$ to a set of generators of $\CH^* _{\num} (X)$.
\end{proof}

\begin{corollary}\label{cor:stpdstc_hn}
Let $X$ be a smooth projective variety over a field $k$ of characteristic $0$. Then,  $\mathcal{N}^* (X) \otimes_{\mathbb{L}} \mathbb{Z} = {\rm Num}^* (X)$.
\end{corollary}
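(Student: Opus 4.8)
The plan is to derive Corollary \ref{cor:stpdstc_hn} directly from Theorem \ref{thm:num} and the exact diagram \eqref{eq:num}. First I would observe that $\mathcal{N}^*(X) = \mathcal{N}_*(X)$ under the codimension/dimension reindexing, so it suffices to produce a natural isomorphism $\mathcal{N}_*(X)\otimes_{\bL}\Z \xrightarrow{\sim} \mathrm{Num}_*(X)$ compatible with the map induced by $\phi$. The clean way is to tensor the top row of \eqref{eq:num} with $\Z$ over $\bL$ and compare with the bottom row via $\phi$, $\phi^{\num}$, and the restriction of $\phi$ to $\mathcal{N}_*(X)$.

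Concretely, I would carry out the following steps. (i) Tensoring the short exact sequence $0\to\mathcal{N}_*(X)\to\Omega_*(X)\to\Omega^{\num}_*(X)\to 0$ with $\Z$ over $\bL$ gives a right-exact sequence $\mathcal{N}_*(X)\otimes_{\bL}\Z \to \CH_*(X) \to \CH^{\num}_*(X)\to 0$, where I have used the isomorphism $\ov\phi:\Omega_*(X)\otimes_{\bL}\Z\xrightarrow{\sim}\CH_*(X)$ from \cite[Theorem~1.2.19]{LM} and the isomorphism $\ov\phi^{\num}:\Omega^{\num}_*(X)\otimes_{\bL}\Z\xrightarrow{\sim}\CH^{\num}_*(X)$ from Theorem \ref{thm:num}(1). (ii) Chasing the commutative diagram, the image of $\mathcal{N}_*(X)\otimes_{\bL}\Z$ in $\CH_*(X)$ is exactly the kernel of $\CH_*(X)\to\CH^{\num}_*(X)$, namely $\mathrm{Num}_*(X)$; this gives a surjection $\mathcal{N}_*(X)\otimes_{\bL}\Z\surj\mathrm{Num}_*(X)$. (iii) For injectivity, I would apply the snake lemma to the map of short exact sequences given by \eqref{eq:num}: since $\phi$ and $\phi^{\num}$ are surjective with $\ker\phi = \bL_{>0}\cdot\Omega_*(X)$ and $\ker\phi^{\num}=\bL_{>0}\cdot\Omega^{\num}_*(X)$, the connecting map and a diagram chase show that $\ker(\mathcal{N}_*(X)\to\mathrm{Num}_*(X))$ equals $\bL_{>0}\cdot\mathcal{N}_*(X)$, i.e. the restriction of $\phi$ to $\mathcal{N}_*(X)$ already has kernel $\bL_{>0}\cdot\mathcal{N}_*(X)$. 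Equivalently, one shows $\mathcal{N}_*(X)\cap \bL_{>0}\cdot\Omega_*(X) = \bL_{>0}\cdot\mathcal{N}_*(X)$, which is where the real content sits.

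The main obstacle is precisely step (iii): showing $\mathcal{N}_*(X)\cap\bL_{>0}\cdot\Omega_*(X)=\bL_{>0}\cdot\mathcal{N}_*(X)$, or equivalently that $\mathrm{Tor}_1^{\bL}(\Omega^{\num}_*(X),\Z)=0$ so that the tensored top row stays left-exact. Here I would use that $\Omega^{\num}_*(X)$ is a finitely generated $\bL$-module (Theorem \ref{thm:num}(2)) together with the fact that $\bL$ is a polynomial ring, hence a graded connected regular ring; combined with the surjectivity of $\phi^{\num}$ and the computation $\Omega^{\num}_*(X)\otimes_{\bL}\Z\simeq\CH^{\num}_*(X)$, one can run the argument already used in the proof of Theorem \ref{thm:num}(1). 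Indeed that proof shows $\ker(\phi^{\num}) = \bL_{>0}\cdot\Omega^{\num}_*(X)$ exactly, and the same diagram chase on the left-hand column of \eqref{eq:num} transfers this to $\mathcal{N}_*(X)$: if $\alpha\in\mathcal{N}_*(X)$ and $\phi(\alpha)=0$ then $\alpha\in\bL_{>0}\cdot\Omega_*(X)$, and pushing to $\Omega^{\num}_*(X)$ forces $\ov\alpha=0$, so $\alpha\in\mathcal{N}_*(X)$ was already in $\bL_{>0}\cdot\Omega_*(X)$ — one then lifts back, using finite generation, to write $\alpha$ as an $\bL_{>0}$-combination of elements of $\mathcal{N}_*(X)$.

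Assembling these, the isomorphism $\mathcal{N}_*(X)\otimes_{\bL}\Z\xrightarrow{\sim}\mathrm{Num}_*(X)$ follows, and rewriting in cohomological indexing yields $\mathcal{N}^*(X)\otimes_{\bL}\Z = \mathrm{Num}^*(X)$ as claimed. I expect the write-up to be short: essentially one invocation of the snake lemma applied to \eqref{eq:num} after tensoring with $\Z$, plus the two isomorphisms from Theorem \ref{thm:num}, with the finite-generation hypothesis doing the work of killing the relevant $\mathrm{Tor}$ term.
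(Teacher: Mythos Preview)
Your steps (i)--(ii) recover exactly what the paper proves, just packaged differently. The paper's argument is a direct two-inclusion check: $\phi(\mathcal{N}_*(X))\subset\mathrm{Num}_*(X)$ is immediate, and for the reverse inclusion they lift $\beta\in\mathrm{Num}_*(X)$ to $\alpha\in\Omega_*(X)$, use the decomposition $\alpha=\alpha'+\alpha''$ with $\alpha'\in\mathcal{N}_*(X)$ and $\alpha''\in\bL_{>0}\cdot\Omega_*(X)$ (already established inside the proof of Theorem~\ref{thm:num}(1)), and conclude $\beta=\phi(\alpha')\in\phi(\mathcal{N}_*(X))$. That is the whole proof; the paper then writes $\phi(\mathcal{N}_*(X))=\mathcal{N}_*(X)\otimes_{\bL}\Z$, identifying the tensor product with its image in $\CH_*(X)$. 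So the intended statement is an equality of subgroups of $\CH_*(X)$, not an assertion about the abstract tensor product.

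Your step (iii) therefore attempts something the paper neither proves nor needs, and the argument you sketch for it does not go through. The chase you describe is circular: from $\alpha\in\mathcal{N}_*(X)$ with $\phi(\alpha)=0$ you get $\alpha\in\bL_{>0}\cdot\Omega_*(X)$, and pushing to $\Omega^{\num}_*(X)$ gives $\ov\alpha=0$ merely because $\alpha\in\mathcal{N}_*(X)$ to begin with; nothing new is learned. The substantive claim $\mathcal{N}_*(X)\cap\bL_{>0}\cdot\Omega_*(X)=\bL_{>0}\cdot\mathcal{N}_*(X)$, equivalently $\Tor_1^{\bL}(\Omega^{\num}_*(X),\Z)=0$, is not a consequence of finite generation over the polynomial ring $\bL$---finitely generated $\bL$-modules need not be flat, and \cite[Lemma~9.8]{KP} is a Nakayama-type statement about generators, not about relations. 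If you want the honest isomorphism you would need an independent argument (e.g.\ freeness of $\Omega^{\num}_*(X)$ as an $\bL$-module, which is not established here). For the corollary as stated and used in the paper, drop step (iii) entirely: steps (i)--(ii) already give $\phi(\mathcal{N}_*(X))=\mathrm{Num}_*(X)$, which is what is meant.
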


\begin{proof}That $\mathcal{N}_* (X) \otimes_{\mathbb{L}} \mathbb{Z} \subset {\rm Num}_* (X)$ is obvious because the map $\phi$ in \eqref{eq:num} is ${\rm Id} \otimes_{\mathbb{L}} \mathbb{Z}= {\rm Id} / \mathbb{L}_{>0} \cdot {\rm Id}$. For the other inclusion, let $\beta \in {\rm Num}^* (X)$. Since $\phi$ is surjective, there is $ \alpha \in \Omega_* (X)$ such that $\phi (\alpha) = \beta$. But, as seen in Theorem \ref{thm:num}(1), we have $\alpha \in \mathcal{N}_* (X) + \mathbb{L}_{>0} \cdot \Omega_* (X)$, i.e. $\alpha = \alpha ' + \alpha ''$ for some $\alpha ' \in \mathcal{N}_* (X)$ and $\alpha'' \in \mathbb{L}_{>0} \cdot \Omega_* (X)$. Since $\phi$ annihilates all of $\mathbb{L}_{>0} \cdot \Omega_* (X)$, $\phi (\alpha) = \phi (\alpha ') = \beta$, so by replacing $\alpha$ by $\alpha'$, we may assume $\alpha \in \mathcal{N}_* (X)$. Then, $\beta \in \phi (\mathcal{N}_* (X)) = \mathcal{N}_* (X) \otimes_{\mathbb{L}} \mathbb{Z}$, finishing the proof.
\end{proof}

Let $X$ be a smooth projective irreducible $k$-scheme. The degree of a cobordism cycle on $X$ has been defined in \cite[Definition~4.4.4]{LM} to be a homomorphism $\deg : \Omega_*(X)\to\Omega_{*-\dim_kX}(k)$. 
\begin{proposition}
The degree of a numerically trivial cobordism cycle on $X$ is zero.
\end{proposition}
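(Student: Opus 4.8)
The plan is to exploit that the degree of a cobordism cycle is recorded on the generic point of $X$, while numerical triviality pins down the behaviour on a sufficiently general closed point; the projection formula bridges the two, forcing the degree to be annihilated by a nonzero integer and hence to vanish. Write $n=\dim_kX$. Since $\deg\colon\Omega_*(X)\to\Omega_{*-n}(k)$ is graded and the numerical pairing $\Omega_*(X)\to\Hom_{\bL}(\Omega_*(X),\Omega_*(k))$ is graded too (for homogeneous $\alpha\in\Omega_d(X)$, $\beta\in\Omega_e(X)$ one has $\alpha\cdot\beta=\Delta_X^*(\alpha\times\beta)\in\Omega_{d+e-n}(X)$, so $\pi_*(\alpha\cdot\beta)\in\Omega_{d+e-n}(k)$), the ideal $\mc{N}_*(X)$ is a graded submodule, and I may assume $\alpha\in\mc{N}_d(X)$ is homogeneous. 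If $d<n$ there is nothing to prove, because $\deg(\alpha)\in\Omega_{d-n}(k)=\bL_{d-n}=0$; so assume $d\ge n$. By the generalized degree formula \cite[Theorem 4.4.7]{LM} together with \cite[Definition 4.4.4]{LM}, there is a proper closed subset $W\subsetneq X$ such that $\alpha-\deg(\alpha)\cdot 1_X$ restricts to $0$ in $\Omega_*(X\setminus W)$.

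Next I would choose a closed point $z\in X\setminus W$ and set $m=[\kappa(z):k]>0$. Let $\iota_z\colon\Spec\kappa(z)\hookrightarrow X$ be the associated closed immersion; since $X$ is smooth, $z$ is a regular point, so $\iota_z$ is a regular embedding of codimension $n$, in particular it is \lci and the Gysin pull-back $\iota_z^*$ of \cite[\S 6.5]{LM} is available. Because $\iota_z$ factors through the open immersion $X\setminus W\hookrightarrow X$, on which $\alpha$ coincides with $\deg(\alpha)\cdot 1_X$, and because $\iota_z^*$ is a unital, $\bL$-linear pull-back in $\Omega^*$, functoriality gives $\iota_z^*\alpha=\deg(\alpha)\cdot 1_{\Spec\kappa(z)}$. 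Put $[z]:=(\iota_z)_*\big(1_{\Spec\kappa(z)}\big)\in\Omega_0(X)$. The projection formula for the \lci morphism $\iota_z$ then yields
\[
\alpha\cdot[z]=\alpha\cdot(\iota_z)_*\big(1_{\Spec\kappa(z)}\big)=(\iota_z)_*\big(\iota_z^*\alpha\big)=(\iota_z)_*\big(\deg(\alpha)\cdot 1_{\Spec\kappa(z)}\big)=\deg(\alpha)\cdot[z].
\]
Pushing forward along $\pi\colon X\to\Spec k$ and using $\pi_*[z]=[\Spec\kappa(z)\to\Spec k]=m$ in $\Omega_0(k)=\bL_0=\Z$ (as $\Spec\kappa(z)$ is smooth of dimension $0$ and degree $m$ over $k$), I obtain $\pi_*(\alpha\cdot[z])=m\cdot\deg(\alpha)$ in $\bL_{d-n}$. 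But $\alpha\in\mc{N}_*(X)$ means exactly $\pi_*(\alpha\cdot\beta)=0$ for all $\beta\in\Omega_*(X)$; taking $\beta=[z]$ gives $m\cdot\deg(\alpha)=0$. Since $\bL\cong\Z[a_1,a_2,\dots]$, each graded piece $\bL_{d-n}$ is a free abelian group, and $m\neq 0$, so $\deg(\alpha)=0$.

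The only steps needing genuine care, rather than being purely formal, are the correct invocation of \cite[\S 6.5]{LM} — that $\iota_z^*$ exists, is unital and multiplicative, composes with restriction to opens, and satisfies the projection formula against projective push-forward — and the identification of $\deg(\alpha)$ with the coefficient supplied by the generalized degree formula. I do not expect a serious obstacle beyond this bookkeeping: once the oriented-cohomology machinery of \cite{LM} is available, the argument is the verbatim lift of the classical observation that no nonzero multiple of the fundamental class $1_X$ can be numerically trivial.
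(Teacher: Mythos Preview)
Your proof is correct and follows essentially the same approach as the paper: both use the generalized degree formula and then pair $\alpha$ against a class in $\Omega_0(X)$ with nonzero push-forward to $\Omega_0(k)$. The only cosmetic difference is in how the lower-dimensional correction terms $\omega_Z[\widetilde{Z}\to X]$ are disposed of: the paper observes directly that $[\widetilde{Z}\to X]\cdot\gamma$ lands in negative dimension and hence vanishes for any $\gamma\in\Omega_0(X)$, whereas you pick the point $z$ outside the support $W$ of those terms so that $\iota_z^*$ kills them by construction; either way one is left with $\deg(\alpha)$ times a nonzero integer in the torsion-free ring $\bL$.
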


\begin{proof}
Let $\alpha\in \sN_*(X)$. By the generalized degree formula (\cite[Theorem 4.4.7]{LM}), for each closed integral subscheme $Z\subset X$, we have a projective birational morphism $\widetilde{Z}\to Z$ with $\widetilde{Z}$ in $\Sm_k$ and an element $\omega_Z\in \Omega_{*-\dim_kZ}(k)$, all but finitely many being zero, such that
\[ \alpha=\deg(\alpha)[{\rm Id}_X]+\sum_{\codim_X Z>0}\omega_Z\cdot [\widetilde{Z}\to X]. \]
By the definition of numerical triviality, for any $\gamma\in \Omega_*(X)$, $\pi_*(\alpha\cdot\gamma)=0$. Thus, $\deg(\alpha)\pi_*(\gamma)+{\sum_{\codim_XZ>0}\omega_Z\cdot\pi_*([\widetilde{Z}\to X]\cdot\gamma)}=0$. In particular, choose a $\gamma\in\Omega_0(X)$ such that $\pi_*(\gamma)\neq 0$. Then, $[\widetilde{Z}\to X]\cdot\gamma=0$ for all $Z$ with $\codim_XZ>0$. Since $\pi_*(\gamma)\neq 0$, we must have $\deg(\alpha)=0$.
\end{proof}

\section{$\Omega^\num_*$ as an oriented Borel-Moore homology theory}
\label{sec:OBM}

Let $k$ be a field of characteristic $0$. The objective of this section is to extend the functor $\Omega_* ^{\num}$ to $\Sch_k$ so as to obtained functor is an oriented Borel-Moore homology theory and study some of its basic properties. We begin with the following basic results:

\begin{lemma}\label{stpdab1}Let $f: X \to Y$ be a morphism of smooth projective varieties. Then, the l.c.i. pull-back $f^*: \Omega_* (Y) \to \Omega_{*+d} (X)$ as in \cite[\S 6.5.4]{LM}, where $d= \dim X - \dim Y$, maps $\mathcal{N}_* (Y)$ into $\mathcal{N}_* (X)$.
\end{lemma}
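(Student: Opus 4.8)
The plan is to show that the l.c.i. pull-back $f^*$ is compatible with the pairing defining numerical equivalence, by transferring the question through a projection formula (adjunction between $f^*$ and $f_*$). Let $\alpha \in \mathcal{N}_*(Y)$; I must show that for every $\gamma \in \Omega_*(X)$ one has $\pi_{X,*}(f^*\alpha \cdot \gamma) = 0$, where $\pi_X : X \to \Spec(k)$ is the structure morphism. First I would observe that $f$ is a projective morphism of smooth varieties, hence admits a well-defined push-forward $f_* : \Omega_*(X) \to \Omega_*(Y)$ and an l.c.i. pull-back $f^*$, and $\pi_X = \pi_Y \circ f$, so $\pi_{X,*} = \pi_{Y,*} \circ f_*$. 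Thus $\pi_{X,*}(f^*\alpha \cdot \gamma) = \pi_{Y,*}\bigl(f_*(f^*\alpha \cdot \gamma)\bigr)$.

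The key step is the projection formula for the l.c.i. pull-back along the projective morphism $f$ of smooth varieties, which holds in algebraic cobordism (this is part of the oriented cohomology/Borel-Moore formalism of Levine--Morel; concretely $f^*$ is a ring homomorphism $\Omega^*(Y) \to \Omega^*(X)$ and $f_*$ is $\Omega^*(Y)$-linear): namely $f_*(f^*\alpha \cdot \gamma) = \alpha \cdot f_*(\gamma)$ in $\Omega_*(Y)$. Granting this, we get
\begin{equation*}
\pi_{X,*}(f^*\alpha \cdot \gamma) = \pi_{Y,*}\bigl(\alpha \cdot f_*(\gamma)\bigr) = 0,
\end{equation*}
where the last equality is exactly the defining property of $\alpha \in \mathcal{N}_*(Y)$ applied to the element $f_*(\gamma) \in \Omega_*(Y)$. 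Since $\gamma \in \Omega_*(X)$ was arbitrary, $f^*\alpha \in \mathcal{N}_*(X)$, which is what we want. The $\bL$-linearity of $f^*$ (needed for the statement that it is a map of $\bL$-modules) is automatic from the construction of the l.c.i. pull-back.

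I expect the main obstacle to be pinning down the precise form and reference for the projection formula for $f^*$ when $f$ is an arbitrary projective morphism of smooth projective varieties, rather than a smooth or regular embedding handled directly in \cite[\S 6.5.4]{LM}; one factors $f = p \circ i$ with $i : X \hookrightarrow X \times \mathbb{P}^N$ a regular closed immersion and $p : X \times \mathbb{P}^N \to Y$ a smooth projective morphism (using projectivity of $f$), and then checks the projection formula for each factor separately — for the smooth morphism $p$ it is built into the axioms of a Borel-Moore homology theory, and for the regular immersion $i$ it follows from the compatibility of refined Gysin pull-back with products in \cite[\S 6.5--6.6]{LM}. A secondary, purely bookkeeping point is to make sure the product $\cdot$ on $\Omega_*(X)$ used here agrees with the ring structure under the identification $\Omega_*(X) = \Omega^*(X)$ (both are $\Delta^*$ of the external product), so that ``$\alpha \cdot \beta = \Delta_X^*(\alpha \times \beta)$'' and the multiplicativity of $f^*$ are speaking about the same operation; this is immediate from the definitions but should be stated.
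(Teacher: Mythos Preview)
Your proof is correct and is essentially identical to the paper's own argument: both use that $f$ is projective (hence $f_*$ exists), factor $\pi_{X*}=\pi_{Y*}f_*$, and apply the projection formula $f_*(f^*\alpha\cdot\gamma)=\alpha\cdot f_*\gamma$ to conclude. Your additional discussion of how to justify the projection formula by factoring $f$ as a regular embedding followed by a smooth morphism is more detailed than the paper (which simply invokes the projection formula), but the strategy is the same.
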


\begin{proof}Let $\alpha \in \mathcal{N}_* (Y)$. Let $\pi_X: X \to \Spec (k)$ and $\pi_Y: Y \to \Spec (k)$ be the structure maps. Since $X$ and $Y$ are both projective, $f$ is projective so that $f_*: \Omega_* (X) \to \Omega_* (Y)$ exists. Furthermore, $\pi_X = \pi_Y \circ f$ implies $\pi_{X*} = \pi_{Y*} f_*$. Hence, for any $\gamma \in \Omega_* (X)$, we have $\pi_{X*} (f^* \alpha \cdot \gamma) = \pi_{Y*} f_* (f^* \alpha \cdot \gamma) =^{\dagger} \pi_{Y*} (\alpha \cdot f_* \gamma) = 0$, where $\dagger$ holds by the projection formula. Hence, $f^* \alpha \in \mathcal{N}_* (X)$, i.e. $f^* (\mathcal{N}_* (Y)) \subset \mathcal{N}_* (X)$. 
\end{proof}

\begin{lemma}\label{stpdab1'}Let $f: X \to Y$ be a morphism of smooth projective varieties. Then, the projective push-forward $f_*: \Omega_* (X) \to \Omega_* (Y)$ maps $\mathcal{N}_* (X)$ into $\mathcal{N}_* (Y)$.
\end{lemma}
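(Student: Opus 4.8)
The plan is to mimic the argument of the previous lemma, using the projection formula in the form adapted to push-forward instead of pull-back. Let $\alpha \in \mathcal{N}_*(X)$ and let $\gamma \in \Omega_*(Y)$ be arbitrary; I must show $\pi_{Y*}(f_*\alpha \cdot \gamma) = 0$, where $\pi_Y : Y \to \Spec(k)$ is the structure map. The key point is the projection formula $f_*(\alpha) \cdot \gamma = f_*(\alpha \cdot f^*\gamma)$ in $\Omega_*(Y)$, which holds because $f$ is a projective morphism of smooth varieties and $f^*$ is the l.c.i. pull-back (see \cite[Proposition~5.2.1, Theorem~6.5.7]{LM}; the compatibility of $f_*$ with the module structure over $\Omega^*$). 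Granting this, I compute
\[
\pi_{Y*}(f_*\alpha \cdot \gamma) = \pi_{Y*}f_*(\alpha \cdot f^*\gamma) = \pi_{X*}(\alpha \cdot f^*\gamma) = 0,
\]
where the middle equality is $\pi_{Y*}f_* = \pi_{X*}$ (since $\pi_X = \pi_Y \circ f$ and push-forwards compose), and the last equality holds because $\alpha \in \mathcal{N}_*(X)$ and $f^*\gamma \in \Omega_*(X)$. Since $\gamma$ was arbitrary, $f_*\alpha \in \mathcal{N}_*(X)$... rather, $f_*\alpha \in \mathcal{N}_*(Y)$, as desired.

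The only subtlety, and the step I expect to require the most care, is the precise form of the projection formula invoked: one needs that for a projective morphism $f$ of smooth projective varieties, $f_*(\alpha \cdot f^* \gamma) = f_* \alpha \cdot \gamma$ for $\alpha \in \Omega_*(X)$ and $\gamma \in \Omega^*(Y)$, where the products are the $\Omega^*$-module structures on $\Omega_*$ coming from the ring structure on the smooth varieties $X$ and $Y$ respectively. This is exactly the statement that $f_*$ is a morphism of $\Omega^*(Y)$-modules, which follows from the functoriality of the refined operations in \cite[\S 6.5]{LM} together with the fact that on smooth varieties the external-product-then-diagonal-pull-back construction agrees with the $\Omega^*$-module action. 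One should also note that it suffices to verify numerical triviality against classes $\gamma$ of the form $f^*\delta$ is \emph{not} what we need here; rather, the identity above directly reduces testing against all $\gamma \in \Omega_*(Y)$ to testing $\alpha$ against $f^*\gamma \in \Omega_*(X)$, which is automatic. Thus no genuine obstacle remains beyond citing the projection formula correctly.
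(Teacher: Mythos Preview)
Your proof is correct and follows essentially the same argument as the paper's: both use the projection formula $f_*(\alpha \cdot f^*\gamma) = f_*\alpha \cdot \gamma$ together with $\pi_{Y*}f_* = \pi_{X*}$ to reduce the pairing $\pi_{Y*}(f_*\alpha \cdot \gamma)$ to $\pi_{X*}(\alpha \cdot f^*\gamma) = 0$. Aside from the self-corrected typo and the extra discussion of references, there is no substantive difference.
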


\begin{proof}Let $\alpha \in \mathcal{N}_* (X)$.  Then, for any $\gamma \in \Omega_* (Y)$, we have $0 = \pi_{X*} (\alpha \cdot f^* \gamma)$, where $f^*$ is the l.c.i. pull-back as in \cite[\S 6.5.4]{LM}. Note that $\pi_Y \circ f = \pi_X$ so that $\pi_{Y*} f_* = \pi_{X*}$. Hence, $0 = \pi_{Y*} f_* (\alpha \cdot f^* \gamma) =^{\dagger} \pi_{Y*} (f_* \alpha \cdot \gamma)$, where $\dagger$ holds by the projection formula. This means, $f_* \alpha \in \mathcal{N}_* (Y)$, i.e. $f_* \mathcal{N}_* (X) \subset \mathcal{N}_Y (Y)$ as desired.
\end{proof}

\begin{lemma}\label{stpdab3}Let $X$ be a quasi-projective $k$-variety. Let $f_{\ell} : \widetilde{X}_{\ell} \to X$ be desingularizations of $X$ for $\ell = 1,2$. Then, there is another desingularization $f_3: \widetilde{X}_3 \to X$ and morphisms $g_\ell : \widetilde{X}_3 \to \widetilde{X}_\ell$ such that $f_3 = f_{\ell} \circ g_{\ell} $ for $\ell = 1,2$.
\end{lemma}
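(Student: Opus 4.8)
The plan is to dominate both desingularizations by resolving a common birational model, namely the main component of the fibre product. First I would form the fibre product $\widetilde{X}_1 \times_X \widetilde{X}_2$ and let $W$ denote, with its reduced structure, the unique irreducible component of $\widetilde{X}_1 \times_X \widetilde{X}_2$ that dominates $X$; this is well-defined because $X$ is integral and both $f_1,f_2$ are birational, so if $U\subseteq X$ is a dense open over which $f_1$ and $f_2$ are both isomorphisms, then $f_1^{-1}(U)\times_X f_2^{-1}(U)$ is a single dense open of the fibre product and $W$ is its closure. Since each $f_\ell$ is projective, the projection $\widetilde{X}_1 \times_X \widetilde{X}_2 \to X$ is projective, hence so is the composite $W \hookrightarrow \widetilde{X}_1 \times_X \widetilde{X}_2 \to X$; and over $U$ this projection restricts to an isomorphism, so the induced map $p : W \to X$ is projective and birational.

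Next I would invoke Hironaka's resolution of singularities (available since $\Char(k)=0$) to obtain a resolution $h : \widetilde{X}_3 \to W$, i.e.\ $\widetilde{X}_3$ is smooth (and integral, $W$ being a variety) and $h$ is projective birational. Then $f_3 := p\circ h : \widetilde{X}_3 \to X$ is a projective birational morphism with $\widetilde{X}_3$ smooth, hence a desingularization of $X$. Composing $h$ with the closed immersion $W \hookrightarrow \widetilde{X}_1 \times_X \widetilde{X}_2$ and the two projections $\mathrm{pr}_\ell : \widetilde{X}_1 \times_X \widetilde{X}_2 \to \widetilde{X}_\ell$ produces morphisms $g_\ell : \widetilde{X}_3 \to \widetilde{X}_\ell$. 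The defining property $f_\ell\circ \mathrm{pr}_\ell$ equals the structural projection $\widetilde{X}_1 \times_X \widetilde{X}_2 \to X$, which factors through $W$ as $p$ and through $\widetilde{X}_3$ as $f_3$; combining these gives $f_\ell\circ g_\ell = f_3$ for $\ell = 1,2$, as required.

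The points requiring care are the existence and uniqueness of the dominating component $W$ (handled above using that $X$ is integral and the $f_\ell$ are birational isomorphisms over a common dense open $U$) and the verification that $p$ is genuinely birational rather than merely dominant (again immediate from the isomorphism over $U$). I expect the main — and fairly mild — obstacle to be purely a matter of conventions and bookkeeping: making explicit that in our setting a ``desingularization'' is a projective birational morphism from a smooth variety, so that passing to $W$ and then applying Hironaka stays within this class, while any reducibility or non-reducedness of $\widetilde{X}_1 \times_X \widetilde{X}_2$ is sidestepped entirely by restricting to the reduced component $W$.
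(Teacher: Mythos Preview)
Your proof is correct and is essentially the paper's argument: the paper takes the Zariski closure $\bar{\Delta}$ of the image of $(j_1,j_2):X_{\rm sm}\to \widetilde{X}_1\times\widetilde{X}_2$ and resolves it, and your main component $W\subset \widetilde{X}_1\times_X\widetilde{X}_2$ is exactly this $\bar{\Delta}$ viewed inside the fibre product rather than the absolute product. Your fibre-product phrasing even makes the identity $f_1\circ g_1=f_2\circ g_2$ more transparent, since it follows directly from the universal property rather than from a separatedness argument.
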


\begin{proof}This is standard: let $i: X_{\rm sm} \subset X$ be the smooth locus of $X$ and let $j_{\ell} : X_{\rm sm} \hookrightarrow \widetilde{X}_{\ell}$ be the open immersions such that $i = f_{\ell} \circ j_{\ell}$ for $\ell = 1,2$. Let $\bar{\Delta}$ be the Zariski closure of the image of $(j_1, j_2): X_{\rm sm} \to \widetilde{X}_1 \times \widetilde{X}_2$, and consider the compositions $g_{\ell}: \widetilde{X}_3 \to \bar{\Delta} \hookrightarrow \widetilde{X}_1 \times \widetilde{X}_2 \overset{p_\ell}{\to} \widetilde{X}_\ell$ for $\ell =1, 2$, where the first arrow is a desingularization and $p_{\ell}$ is the projection. Define $f_3= f_1 \circ g_1$. Since $f_1 \circ p_1 = f_2 \circ p_2$, we have $f_1 \circ g_1 = f_2 \circ g_2$. Thus, we are done.
\end{proof}

\subsection{Extension to quasi-projective schemes}\label{subsec:definition} 

\subsubsection{Smooth quasi-projective varieties} As the first step, let $U$ be a smooth quasi-projective $k$-variety. Then for some open immersion $i : U \hookrightarrow X$, with $X$ projective, the singular locus $X_{\rm sing}$ lies in $X \setminus U$. Let $f: \widetilde{X} \to X$ be a desingularization. Let $j: U\hookrightarrow \widetilde{X}$ be the open immersion such that $f \circ j = i$. We call such $j$ a smooth compactification. Since $j$ is a morphism between two smooth varieties, it is l.c.i., thus $j^*$ exists on cobordism cycles by \cite[\S 6.5.4]{LM}.

\begin{lemma}\label{lem:smqp}
For two smooth compactifications $j_\ell : U \hookrightarrow \widetilde{X}_{\ell}$, $\ell = 1,2$, we have $j_1 ^* \mathcal{N}_* (\widetilde{X}_1) = j_2 ^* \mathcal{N}_* (\widetilde{X}_2)$ in $\Omega_*(U)$.
\end{lemma}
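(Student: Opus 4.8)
The plan is to reduce to the case where the two smooth compactifications are dominated by a common third one, then use the functoriality of l.c.i.\ pull-backs together with Lemmas \ref{stpdab1} and \ref{stpdab1'}. First I would observe that by Lemma \ref{stpdab3} applied to the projective varieties $\widetilde X_1$ and $\widetilde X_2$ (after possibly enlarging both to a common projective $X$ containing $U$ as an open subset disjoint from the singular locus), there is a smooth projective $\widetilde X_3$ with morphisms $g_\ell : \widetilde X_3 \to \widetilde X_\ell$ compatible with the inclusions of $U$; concretely, $U$ sits inside $\widetilde X_3$ via a smooth compactification $j_3$ with $g_\ell \circ j_3 = j_\ell$ for $\ell = 1,2$ (here one uses that $U$ is the smooth locus image, so the Zariski-closure-of-the-graph construction in the proof of Lemma \ref{stpdab3} retains $U$ as an open subset). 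This reduces the statement to showing $j_1^* \mathcal N_*(\widetilde X_1) = j_3^* \mathcal N_*(\widetilde X_3)$, and symmetrically for the index $2$; so it suffices to treat the case of two compactifications $j : U \hookrightarrow \widetilde X$ and $j' : U \hookrightarrow \widetilde X'$ with a morphism $g : \widetilde X' \to \widetilde X$ of smooth projective varieties satisfying $g \circ j' = j$.

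In that case, functoriality of l.c.i.\ pull-backs (\cite[\S 6.5.4]{LM}) gives $j'^* \circ g^* = j^*$ on $\Omega_*$. By Lemma \ref{stpdab1}, $g^* \mathcal N_*(\widetilde X) \subseteq \mathcal N_*(\widetilde X')$, hence
\[
j_1^* \mathcal N_*(\widetilde X) = j'^* g^* \mathcal N_*(\widetilde X) \subseteq j'^* \mathcal N_*(\widetilde X').
\]
For the reverse inclusion I would use the projective push-forward $g_* : \Omega_*(\widetilde X') \to \Omega_*(\widetilde X)$, which by Lemma \ref{stpdab1'} carries $\mathcal N_*(\widetilde X')$ into $\mathcal N_*(\widetilde X)$, together with the key identity that $g$ restricts to the identity on $U$: since $g \circ j' = j$ and $j, j'$ are open immersions with the same image inside their respective varieties, $g$ is birational and in particular an isomorphism over the open set $U$. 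Therefore, applying the localization/excision behavior of $j^*$ and $j'^*$ (open restriction), for any $\beta \in \mathcal N_*(\widetilde X')$ we get $j'^* \beta = j^* g_* \beta \in j^* \mathcal N_*(\widetilde X)$, using that $j'^* = j^* g_*$ on the open locus $U$ where $g$ is an isomorphism. This gives $j'^* \mathcal N_*(\widetilde X') \subseteq j^* \mathcal N_*(\widetilde X)$, and combining the two inclusions finishes the dominated case, hence the lemma.

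The main obstacle I anticipate is justifying the identity $j'^* = j^* \circ g_*$ used in the reverse inclusion: this is not literal functoriality (push-forward and l.c.i.\ pull-back do not commute in general), but rather reflects that $g$ is an isomorphism over the open subscheme $U$, so that restricting any class on $\widetilde X'$ to $U$ agrees with first pushing forward to $\widetilde X$ and then restricting to $U$. Making this precise requires the compatibility of l.c.i.\ pull-back along open immersions with proper push-forward in the transverse situation (base change), which holds in $\Omega_*$ by the axioms of an oriented Borel--Moore homology theory; I would cite \cite[\S 6.5.4]{LM} or the relevant base-change property there. Once that compatibility is in hand, the rest is a formal manipulation of the inclusions above, and the reduction via Lemma \ref{stpdab3} is routine.
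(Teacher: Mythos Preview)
Your proposal is correct and follows essentially the same route as the paper: reduce via Lemma \ref{stpdab3} to a dominating compactification, get one inclusion from $j^* = j'^* \circ g^*$ and Lemma \ref{stpdab1}, and get the reverse inclusion from $g_* \mathcal N_*(\widetilde X') \subset \mathcal N_*(\widetilde X)$ together with the base-change identity $j'^* = j^* \circ g_*$. The paper makes the last step precise by writing out the Cartesian square with $U \xrightarrow{\mathrm{id}} U$ on the left and citing \cite[Theorem 6.5.12]{LM} (rather than \S 6.5.4) for the compatibility $j_2^* \alpha = j_1^* g_* \alpha$; it also reproves the inclusion $g_* \mathcal N_* \subset \mathcal N_*$ inline via the projection formula rather than invoking Lemma \ref{stpdab1'}, but your citation of that lemma is cleaner and equally valid.
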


\begin{proof}By Lemma \ref{stpdab3}, there is another smooth compactification $j_3 : U \hookrightarrow \widetilde{X}_3$ with morphisms $g_{\ell} : \widetilde{X}_3 \to \widetilde{X}_\ell$ such that $g_{\ell} \circ j_3 = j_\ell$ for $\ell=1,2$. 
So, by replacing $\widetilde{X}_2$ by $\widetilde{X}_3$, we may assume there is a birational projective morphism $g: \widetilde{X}_2 \to \widetilde{X}_1$. This satisfies $j_1 = g\circ j_2$. Since $g$ is l.c.i., we have $g^*$ on cobordism cycles, and it satisfies $j_1 ^* = j_2 ^* g^*$. Furthermore, we know $g^* (\mathcal{N}_* (\widetilde{X}_1)) \subset \mathcal{N}_* (\widetilde{X}_2)$ by Lemma \ref{stpdab1}. Now, let $\alpha \in \mathcal{N}_* (\widetilde{X}_1)$. Then, $j_1 ^* \alpha = j_2 ^* g^* \alpha \in j_2 ^* \mathcal{N}_* (\widetilde{X}_2)$. This shows $j_1 ^* \mathcal{N}_* (\widetilde{X}_1) \subset j_2 ^* \mathcal{N}_* (\widetilde{X}_2)$. 

For the other inclusion, for $\alpha \in \mathcal{N}_* (\widetilde{X}_2)$ and $\gamma \in \Omega_* (\widetilde{X}_1)$, we have $0 = \pi_{\widetilde{X}_2*} (\alpha \cdot g^* \gamma) = \pi_{\widetilde{X}_1*} g_* (\alpha \cdot g^* \gamma) =^{\dagger} \pi_{\widetilde{X}_1*} (g_* \alpha \cdot \gamma)$, where $\dagger$ holds by projection formula. Hence $g_* \alpha \in \mathcal{N}_* ( \widetilde{X}_1 )$. On the other hand, from the Cartesian square
\[\xymatrix{ U \ar@{^{(}->}[r]^{j_2} \ar@{=}[d] & \widetilde{X}_2 \ar[d]^{g} \\ U \ar@{^{(}->}[r]^{j_1} & \widetilde{X}_1 }, \]
for $\alpha \in \mathcal{N}_* (\widetilde{X}_2)$, we have $j_2 ^* \alpha = j_1 ^* g_* \alpha$ by \cite[Theorem 6.5.12]{LM}. Thus, $j_2 ^* \alpha = j_1 ^* g_* \alpha \in j_1 ^* \mathcal{N}_* (\widetilde{X}_1)$. This shows $j_1 ^* \mathcal{N}_* (\widetilde{X}_1) = j_2 ^* \mathcal{N}_* (\widetilde{X}_2)$ as desired.
\end{proof}

\begin{definition}\label{defn:smqp}
When $U$ is a smooth quasi-projective $k$-variety, define $\Omega_* ^{\num} (U):= \Omega_* (U) /\mathcal{N}_* (U)$, where $\mathcal{N}_* (U):= j^* \mathcal{N}_* (\widetilde{X})$, for any choice of smooth compactification $j: U \hookrightarrow \widetilde{X}$. By Lemma \ref{lem:smqp}, this is well-defined.
\end{definition}

We can strengthen Lemma \ref{lem:ideal_smp}:
\begin{lemma}\label{lem:ideal_smqp}The subgroup $\mathcal{N}_* (U)$ is an ideal of $\Omega_* (U)$.
\end{lemma}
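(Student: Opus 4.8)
The plan is to reduce the statement for smooth quasi-projective $U$ to the already-established case of smooth projective varieties, Lemma~\ref{lem:ideal_smp}, by pulling back along a smooth compactification. Fix a smooth compactification $j: U \hookrightarrow \widetilde{X}$ with $\widetilde{X}$ smooth projective, so that by Definition~\ref{defn:smqp} we have $\mathcal{N}_*(U) = j^* \mathcal{N}_*(\widetilde{X})$. I want to show that if $\alpha \in \Omega_*(U)$ and $\nu \in \mathcal{N}_*(U)$, then $\alpha \cdot \nu \in \mathcal{N}_*(U)$, where the product is the ring structure on $\Omega_*(U)$ coming from $U$ smooth (via $\Delta_U^*$ and the external product).

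First I would write $\nu = j^* \widetilde{\nu}$ for some $\widetilde{\nu} \in \mathcal{N}_*(\widetilde{X})$. The key input is that $j^*$ is a ring homomorphism: since $j$ is an l.c.i.\ morphism between smooth varieties, pull-back is compatible with external products and with the diagonal pull-backs defining the ring structures (this is part of the formalism of \cite[\S 6.5]{LM}, e.g.\ the compatibility of l.c.i.\ pull-back with products). Now the element $\alpha \in \Omega_*(U)$ need not itself be a restriction of a class on $\widetilde{X}$, so I cannot simply write $\alpha \cdot \nu = j^*(\widetilde{\alpha} \cdot \widetilde{\nu})$ directly. Instead I would use the projection-type identity: because $j$ is an open immersion, $j^*$ is surjective (every cobordism cycle on $U$ extends, after base change, to one on $\widetilde{X}$ — or more cleanly, use that $j^*: \Omega_*(\widetilde{X}) \to \Omega_*(U)$ is surjective by the localization sequence for $\Omega_*$, \cite[Theorem~3.2.7]{LM}, since the complement has positive codimension issues handled there). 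Pick $\widetilde{\alpha} \in \Omega_*(\widetilde{X})$ with $j^*\widetilde{\alpha} = \alpha$. Then
\[
\alpha \cdot \nu \;=\; j^*\widetilde{\alpha} \cdot j^*\widetilde{\nu} \;=\; j^*(\widetilde{\alpha} \cdot \widetilde{\nu}),
\]
and since $\mathcal{N}_*(\widetilde{X})$ is an ideal of $\Omega_*(\widetilde{X})$ by Lemma~\ref{lem:ideal_smp}, we get $\widetilde{\alpha} \cdot \widetilde{\nu} \in \mathcal{N}_*(\widetilde{X})$, hence $\alpha \cdot \nu = j^*(\widetilde{\alpha}\cdot\widetilde{\nu}) \in j^*\mathcal{N}_*(\widetilde{X}) = \mathcal{N}_*(U)$. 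That $\mathcal{N}_*(U)$ is a subgroup is clear since it is the image of a group homomorphism.

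The step I expect to be the main obstacle is justifying the two compatibility claims for $j^*$: that $j^*$ is surjective on $\Omega_*$, and that $j^*$ is multiplicative for the ring structures on $\Omega_*(\widetilde X)$ and $\Omega_*(U)$. The surjectivity is not entirely formal — one should either cite the localization property of $\Omega_*$ together with the fact that a cobordism cycle $(Y \to U, \ldots)$ can be replaced by a cycle over $\widetilde X$ whose restriction agrees with it (using that $Y$ is already smooth, so one compactifies and resolves $Y$ itself), or invoke that $j^*$ admits a section up to the relevant relations. The multiplicativity of l.c.i.\ pull-back should follow from the general properties in \cite[\S 6.5--6.6]{LM} (pull-back commutes with external product, and $\Delta_U = (j \times j)^{-1}\Delta_{\widetilde X} \cap (U \times U)$ makes the diagonal pull-backs compatible), but pinning down the precise reference and checking that the genericity/transversality hypotheses there are met for an open immersion is the fiddly part. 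Everything else is a one-line diagram chase once these two facts are in hand.
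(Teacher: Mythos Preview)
Your proof is correct and follows essentially the same route as the paper: choose a smooth compactification $j:U\hookrightarrow\widetilde X$, invoke surjectivity of $j^*$ via the localization theorem \cite[Theorem~3.2.7]{LM}, use that $j^*$ is a ring homomorphism, and conclude from Lemma~\ref{lem:ideal_smp} that the image $j^*\mathcal N_*(\widetilde X)=\mathcal N_*(U)$ is an ideal. The paper packages your element-level argument as ``the correspondence theorem'' (the image of an ideal under a surjective ring homomorphism is an ideal), but the content is identical; your worries about the two ``obstacles'' are unfounded, since surjectivity is exactly the localization theorem and multiplicativity of $j^*$ for an open immersion is standard in the oriented cohomology formalism.
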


\begin{proof}
Choose a smooth compactification $j: U \hookrightarrow \widetilde{X}$. Since $j^*: \Omega_* (\widetilde{X}) \to \Omega_* (U)$ is a surjective ring homomorphism by localization theorem \cite[Theorem 3.2.7]{LM}, and since $\mathcal{N}_* (\widetilde{X})$ is an ideal of $\Omega_* (\widetilde{X})$ by Lemma \ref{lem:ideal_smp}, by the correspondence theorem we deduce that $j^* \mathcal{N}_* (\widetilde{X})= \mathcal{N}_* (U)$ is an ideal of $\Omega_* (U)$.
\end{proof}

\begin{lemma}\label{stpdab4}Let $f: X \to Y$ be a morphism of smooth quasi-projective $k$-varieties. Then, there are smooth compactifications $j_1 : X \hookrightarrow \widetilde{X}$ and $j_2: Y \hookrightarrow \widetilde{Y}$ and a morphism $\widetilde{f}: \widetilde{X} \to \widetilde{Y} $ such that $\widetilde{f} \circ j_1 = j_2 \circ f$, and it is a Cartesian square. Furthermore, $j_2$ and $\widetilde{f}$ are Tor-independent.
\end{lemma}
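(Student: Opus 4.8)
The plan is to build the square in two stages: first I would compactify $Y$, then compactify $X$ \emph{over} that compactification in a way engineered to make the square Cartesian. To begin, choose any smooth compactification $j_2 : Y \hookrightarrow \widetilde{Y}$, so that $\widetilde{Y}$ is smooth projective; this exists because $Y$ is smooth quasi-projective (embed it in a projective space, take the closure, and resolve the singularities, which lie outside $Y$, exactly as in the definition recalled above). Note at once that $j_2$, being an open immersion, is flat, so \emph{any} Cartesian square having $j_2$ as one side is automatically Tor-independent; this already takes care of the final assertion, and what remains is to produce the Cartesian square with $\widetilde{X}$ smooth projective.

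This is where I would use that $f$ is projective. Fix a closed immersion $X \hookrightarrow \mathbb{P}^n \times Y$ over $Y$ for some $n$. Since $\mathbb{P}^n \times Y$ is an open subscheme of the projective scheme $\mathbb{P}^n \times \widetilde{Y}$, let $Z \subseteq \mathbb{P}^n \times \widetilde{Y}$ be the scheme-theoretic closure of $X$; then $Z$ is an integral projective $k$-scheme in which $X$ is a dense open subscheme, and crucially $Z \cap (\mathbb{P}^n \times Y) = X$, because $X$ is already closed in the open subscheme $\mathbb{P}^n \times Y$ (intersecting the closure of a subscheme with an open set on which that subscheme is closed recovers it). Hence the morphism $q : Z \to \widetilde{Y}$ induced by the second projection satisfies $q^{-1}(Y) = X$ and $q|_X = j_2 \circ f$. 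Now I would resolve $Z$: by Hironaka's theorem ($\Char k = 0$) there is a projective birational $\pi : \widetilde{X} \to Z$ with $\widetilde{X}$ smooth projective and $\pi$ an isomorphism over the regular locus of $Z$; since $X$ is smooth and open in $Z$ it lies in that locus, so $\pi$ restricts to an isomorphism onto $X$, giving a smooth compactification $j_1 : X \hookrightarrow \widetilde{X}$ in the sense above. Setting $\widetilde{f} := q \circ \pi$ yields $\widetilde{f} \circ j_1 = j_2 \circ f$ and $\widetilde{f}^{-1}(Y) = \pi^{-1}(q^{-1}(Y)) = \pi^{-1}(X) = j_1(X)$, so the square is Cartesian.

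The only genuinely delicate point is the Cartesian-ness, i.e.\ forcing the boundary $\widetilde{X} \setminus X$ to map into $\widetilde{Y} \setminus Y$; the device that achieves this is to take $\widetilde{X}$ to be a resolution of the closure of $X$ \emph{inside} $\mathbb{P}^n \times \widetilde{Y}$ rather than of some unrelated projective compactification of $X$, and the properness of $f$ is precisely what guarantees that this closure acquires no extra points lying over $Y$. If one later wanted $\widetilde{X} \setminus X$ and $\widetilde{Y} \setminus Y$ to be strict normal crossing divisors, one could apply further blow-ups with centers in the two boundaries, but that refinement is not needed for the present statement.
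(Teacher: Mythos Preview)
Your argument is correct, but it relies on a hypothesis the lemma does not state: you explicitly assume $f$ is projective (``Fix a closed immersion $X \hookrightarrow \mathbb{P}^n \times Y$ over $Y$''), and your closing paragraph correctly isolates properness of $f$ as the reason the boundary of $\widetilde{X}$ is forced over the boundary of $\widetilde{Y}$. Without that hypothesis the Cartesian claim is simply false: take $f$ to be the open immersion $\mathbb{A}^1 \setminus \{0\} \hookrightarrow \mathbb{A}^1$. Any smooth compactification of $Y = \mathbb{A}^1$ has a single boundary point, whereas any smooth compactification of $X$ has at least two, and no extension $\widetilde{f}$ can send the point lying over $0$ into $\widetilde{Y} \setminus Y$ while restricting to $f$ on $X$. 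So the lemma as written overstates what is true for arbitrary $f$.

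The paper's own proof is in fact weaker than yours on this point: it takes $\widetilde{X}$ to be a resolution of the closure of the graph of $f$ in $\widetilde{X}_0 \times \widetilde{Y}$ (for an auxiliary compactification $\widetilde{X}_0$ of $X$), obtains the commutative square and notes Tor-independence from flatness of $j_2$, but never checks the Cartesian property. Its construction, like yours, yields a Cartesian square exactly when $f$ is proper, for the same reason (the graph is then closed in $\widetilde{X}_0 \times Y$). Looking downstream, the Cartesian part is invoked only in Lemma~\ref{lem:sm2pushfwd}, where $f$ is assumed projective; Lemma~\ref{lem:sm2pullbk} uses only the commutativity $\widetilde{f} \circ j_1 = j_2 \circ f$. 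So you have proved precisely the version that is actually needed, and your construction via the closure in $\mathbb{P}^n \times \widetilde{Y}$ is a close cousin of the paper's graph-closure construction.
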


\begin{proof}First, choose smooth compactifications $j_1 : X \hookrightarrow \widetilde{X}$ and $j_2: Y \hookrightarrow \widetilde{Y}$. Any desingularization $\widetilde{X}' \to \ov{\Gamma}$ of the Zariski closure of the graph $\Gamma$ of $f$ in $\widetilde{X} \times \widetilde{Y}$, gives another smooth compactification of $X$. Furthermore, we have a natural composition $ \widetilde{X}' \to \ov{\Gamma} \hookrightarrow \widetilde{X} \times \widetilde{Y} \overset{pr}{\to} \widetilde{Y}$, where $pr$ is the projection. So, replacing $\widetilde{X}$ by $\widetilde{X}'$, we have a desired morphism $\widetilde{f}: \widetilde{X} \to \widetilde{Y}$ such that $\widetilde{f} \circ j_1 = j_2 \circ f$. Tor-independence is obvious because $j_2$ is flat.
\end{proof}

We now strengthen Lemma \ref{stpdab1} a bit:

\begin{lemma}\label{lem:sm2pullbk}
Let $f: X \to Y$ be a morphism of smooth quasi-projective varieties. Then, the l.c.i. pull-back $f^*: \Omega_* (Y) \to \Omega_{*+d} (X)$ as in \cite[\S 6.5.4]{LM}, where $d= \dim X - \dim Y$, maps $\mathcal{N}_* (Y)$ into $\mathcal{N}_* (X)$.
In particular, it induces a map $f^*:\Omega_*^\num(Y)\to\Omega_{*+d}^\num(X)$.
\end{lemma}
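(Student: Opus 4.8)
The plan is to reduce the statement to the already-established Lemma~\ref{stpdab1} by passing to smooth compactifications, using the compatibility of l.c.i.\ pull-backs with composition together with the compatibility of pull-back and push-forward along Tor-independent (here, transverse) squares. First I would invoke Lemma~\ref{stpdab4} to choose smooth compactifications $j_1: X \hookrightarrow \widetilde{X}$ and $j_2: Y \hookrightarrow \widetilde{Y}$ together with a morphism $\widetilde{f}: \widetilde{X} \to \widetilde{Y}$ fitting into a Cartesian square $\widetilde{f} \circ j_1 = j_2 \circ f$ with $j_2$ and $\widetilde{f}$ Tor-independent. By Definition~\ref{defn:smqp} we have $\mathcal{N}_*(Y) = j_2^* \mathcal{N}_*(\widetilde{Y})$ and $\mathcal{N}_*(X) = j_1^* \mathcal{N}_*(\widetilde{X})$.

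Next I would take $\alpha \in \mathcal{N}_*(Y)$ and write $\alpha = j_2^* \widetilde{\alpha}$ for some $\widetilde{\alpha} \in \mathcal{N}_*(\widetilde{Y})$. Using functoriality of l.c.i.\ pull-backs (\cite[Theorem~6.5.12]{LM}) applied to the Tor-independent square, we have
\[
f^* \alpha = f^* j_2^* \widetilde{\alpha} = j_1^* \widetilde{f}^* \widetilde{\alpha}.
\]
By Lemma~\ref{stpdab1} applied to the morphism $\widetilde{f}: \widetilde{X} \to \widetilde{Y}$ of smooth projective varieties, $\widetilde{f}^* \widetilde{\alpha} \in \mathcal{N}_*(\widetilde{X})$. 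Hence $f^* \alpha = j_1^*(\widetilde{f}^* \widetilde{\alpha}) \in j_1^* \mathcal{N}_*(\widetilde{X}) = \mathcal{N}_*(X)$, which is exactly the claim. The induced map $f^*: \Omega_*^\num(Y) \to \Omega_{*+d}^\num(X)$ on quotients then follows formally.

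The main obstacle I anticipate is bookkeeping the l.c.i.\ structures: one must be sure that $f^* j_2^* = j_1^* \widetilde{f}^*$ as refined pull-backs, which requires that the square be not merely Cartesian but Tor-independent (equivalently, that $j_2$ and $\widetilde{f}$ meet transversally in the derived sense) so that \cite[Theorem~6.5.12]{LM} applies — this is precisely what Lemma~\ref{stpdab4} was arranged to provide, since $j_2$ is an open immersion and hence flat. A secondary point to check is that the compactification $\widetilde{X}$ produced by Lemma~\ref{stpdab4} is a legitimate choice in Definition~\ref{defn:smqp}, but this is guaranteed by the well-definedness established in Lemma~\ref{lem:smqp}, so no independent-of-choices argument is needed here.
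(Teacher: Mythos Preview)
Your proposal is correct and follows essentially the same route as the paper's proof: invoke Lemma~\ref{stpdab4}, use $f^* j_2^* = j_1^* \widetilde{f}^*$, and apply Lemma~\ref{stpdab1} together with Definition~\ref{defn:smqp}. One small remark: the identity $f^* j_2^* = j_1^* \widetilde{f}^*$ here comes purely from functoriality of l.c.i.\ pull-backs (both sides equal $(j_2\circ f)^* = (\widetilde f\circ j_1)^*$), so the Tor-independence and the base-change theorem \cite[Theorem~6.5.12]{LM} are not actually needed at this point---they matter for the push-forward companion Lemma~\ref{lem:sm2pushfwd}, not here.
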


\begin{proof}By Lemma \ref{stpdab4}, we have smooth compactifications $j_1 : X \hookrightarrow \widetilde{X}$ and $j_2: Y \hookrightarrow \widetilde{Y}$, and a morphism $\widetilde{f}: \widetilde{X} \to \widetilde{Y}$ such that $\widetilde{f} \circ j_1 = j_2 \circ f$. So, $f^* j_2 ^* = j_1 ^* \widetilde{f}^*$. Thus, $f^* \mathcal{N}_* (Y) = ^{\dagger} f^* j_2 ^* \mathcal{N}_* (\widetilde{Y}) = j_1 ^* \widetilde{f}^* \mathcal{N}_* (\widetilde{Y}) \subset j_1 ^* \mathcal{N}_* (\widetilde{X}) = ^{\ddagger} \mathcal{N}_* (X)$, where $\dagger$ and $\ddagger$ hold by Definition \ref{defn:smqp}, as desired.
\end{proof}

We strengthen Lemma \ref{stpdab1'}:

\begin{lemma}\label{lem:sm2pushfwd}Let $f: X \to Y$ be a projective morphism of smooth quasi-projective varieties. Then, the projective push-forward $f_*: \Omega_* (X) \to \Omega_* (Y)$ maps $\mathcal{N}_* (X)$ into $\mathcal{N}_* (Y)$. 
\end{lemma}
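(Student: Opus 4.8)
The plan is to reduce the quasi-projective statement to the projective case (Lemma \ref{stpdab1'}) by choosing good smooth compactifications, exactly as was done for the pull-back in Lemma \ref{lem:sm2pullbk}. First I would invoke Lemma \ref{stpdab4} to obtain smooth compactifications $j_1 : X \hookrightarrow \widetilde{X}$ and $j_2 : Y \hookrightarrow \widetilde{Y}$ together with a morphism $\widetilde{f} : \widetilde{X} \to \widetilde{Y}$ between smooth projective varieties such that $\widetilde{f} \circ j_1 = j_2 \circ f$, and such that the square is Cartesian with $j_2$ and $\widetilde{f}$ Tor-independent. Since $\widetilde{X}$ and $\widetilde{Y}$ are projective, $\widetilde{f}$ is projective and $\widetilde{f}_*$ exists, and by Lemma \ref{stpdab1'} it carries $\mathcal{N}_* (\widetilde{X})$ into $\mathcal{N}_* (\widetilde{Y})$.

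Next I would relate the two push-forwards along $f$ and $\widetilde{f}$ through the compactification maps. Because the square is Cartesian and $j_2$, $\widetilde{f}$ are Tor-independent, the base-change formula \cite[Theorem 6.5.12]{LM} gives $j_2 ^* \widetilde{f}_* = f_* j_1 ^*$ on $\Omega_* (\widetilde{X})$. Now take $\alpha \in \mathcal{N}_* (X)$. Since $j_1 ^* : \Omega_* (\widetilde{X}) \to \Omega_* (X)$ is surjective (localization theorem \cite[Theorem 3.2.7]{LM}) and $j_1 ^* \mathcal{N}_* (\widetilde{X}) = \mathcal{N}_* (X)$ by Definition \ref{defn:smqp}, I may lift $\alpha$ to $\widetilde{\alpha} \in \mathcal{N}_* (\widetilde{X})$ with $j_1 ^* \widetilde{\alpha} = \alpha$. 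Then
\[
f_* \alpha = f_* j_1 ^* \widetilde{\alpha} = j_2 ^* \widetilde{f}_* \widetilde{\alpha} \in j_2 ^* \mathcal{N}_* (\widetilde{Y}) = \mathcal{N}_* (Y),
\]
where the containment uses Lemma \ref{stpdab1'} applied to $\widetilde{f}$ and the last equality is Definition \ref{defn:smqp}. This proves $f_* \mathcal{N}_* (X) \subset \mathcal{N}_* (Y)$.

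The one point that needs care is making sure the compactification data from Lemma \ref{stpdab4} is actually usable here: that lemma produces a Cartesian square with $j_2$ and $\widetilde{f}$ Tor-independent, which is precisely the hypothesis of the base-change theorem \cite[Theorem 6.5.12]{LM} I want to apply, so there is no extra work — but I should note explicitly that $f$ projective together with $\widetilde{X}$, $\widetilde{Y}$ projective forces $\widetilde{f}$ to be projective (using that $j_1$ is an open immersion and the square is Cartesian, so $\widetilde{f}$ restricted over $Y$ recovers $f$, and properness of $\widetilde{X} \to \Spec k$ handles the boundary). I expect this bookkeeping with the compactifications — rather than any substantive cobordism input — to be the only mild obstacle; the cobordism content is entirely in Lemma \ref{stpdab1'}, the projection/base-change formulas of \cite{LM}, and surjectivity of restriction.
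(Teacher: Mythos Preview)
Your proposal is correct and follows essentially the same route as the paper: invoke Lemma~\ref{stpdab4} to get compatible smooth compactifications with a Cartesian, Tor-independent square, apply the base-change formula \cite[Theorem~6.5.12]{LM} to obtain $f_* j_1^* = j_2^* \widetilde{f}_*$, lift $\alpha$ via Definition~\ref{defn:smqp}, and conclude using Lemma~\ref{stpdab1'}. Your extra remark about $\widetilde{f}$ being projective is harmless bookkeeping (it follows immediately since $\widetilde{X}$ is projective over $k$ and $\widetilde{Y}$ is separated), and the appeal to localization for surjectivity of $j_1^*$ is not actually needed, since $\mathcal{N}_*(X) = j_1^* \mathcal{N}_*(\widetilde{X})$ already by definition.
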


\begin{proof}By Lemma \ref{stpdab4}, we have smooth compactifications $j_1 : X \hookrightarrow \widetilde{X}$ and $j_2 : Y \hookrightarrow \widetilde{Y}$ and a morphism $\widetilde{f}: \widetilde{X} \to \widetilde{Y}$, forming a Cartesian square $\widetilde{f} \circ j_1 = j_2 \circ f$, with Tor-independent $j_2$ and $\widetilde{f}$. In particular, by \cite[Theorem 6.5.12]{LM}, $f_* j_1 ^* = j_2 ^* \widetilde{f}_*$. Then, by Definition \ref{defn:smqp}, $\mathcal{N}_* (X) = j_1 ^* \mathcal{N}_* (\widetilde{X})$.

Let $\alpha \in \mathcal{N}_* (X)$ so that $\alpha = j_1 ^* \beta$ for some $\beta \in \mathcal{N}_* (\widetilde{X})$. Then, by Lemma \ref{stpdab1'}, $\widetilde{f}_* \beta \in \mathcal{N}_* (\widetilde{Y})$. Hence, $j_2 ^* \widetilde{f}_* \beta \in j_2 ^* \mathcal{N}_* (\widetilde{Y}) = \mathcal{N}_* (Y)$. Since $f_* j_1 ^* \beta = j_2 ^* \widetilde{f}_* \beta$, we thus have $f_* \alpha \in \mathcal{N}_* (Y)$. That is, $f_* \mathcal{N}_* (X) \subset \mathcal{N}_* (Y)$. 
\end{proof}

\subsubsection{Quasi-projective schemes}Let $U$ be a quasi-projective $k$-variety. Let $f: U' \to U$ be a desingularization. By Lemma \ref{lem:smqp}, $\mathcal{N}_* (U')$ is well-defined. 

\begin{lemma}\label{stpdab2}If $f_{\ell}: U'_{\ell} \to U$, $\ell= 1, 2$ are two desingularizations of $U$, then $f_{1*} \mathcal{N}_* (U'_1) = f_{2*} \mathcal{N}_* (U'_2)$. 
\end{lemma}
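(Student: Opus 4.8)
The plan is to follow the pattern already used for Lemma~\ref{lem:smqp}. Using Lemma~\ref{stpdab3} I would first reduce to the situation in which one of the two desingularizations dominates the other, and then transport the subgroup $\mathcal{N}_*$ back and forth along the dominating morphism by means of the functoriality established in Lemmas~\ref{lem:sm2pullbk} and~\ref{lem:sm2pushfwd}. The only genuinely new ingredient required is the identity $g_*(1_V)=1_W$ for a projective birational morphism $g\colon V\to W$ of smooth $k$-varieties, and this is the point I expect to need the most care.

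Concretely: given $f_1,f_2$, apply Lemma~\ref{stpdab3} to obtain a third desingularization $f_3\colon U'_3\to U$ and morphisms $g_\ell\colon U'_3\to U'_\ell$ with $f_3=f_\ell\circ g_\ell$ ($\ell=1,2$). Each $g_\ell$ is birational (all three varieties are birational to $U$) and proper (it satisfies $f_\ell\circ g_\ell=f_3$ with $f_3$ proper and $f_\ell$ separated), hence projective since all schemes in sight are quasi-projective over $k$; thus Lemmas~\ref{lem:sm2pullbk} and~\ref{lem:sm2pushfwd} apply to $g_\ell$. It then suffices to prove $f_{\ell*}\mathcal{N}_*(U'_\ell)=f_{3*}\mathcal{N}_*(U'_3)$ for $\ell=1,2$, and then the two sides of the lemma both equal $f_{3*}\mathcal{N}_*(U'_3)$. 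Fixing $\ell$ and writing $g=g_\ell$, $f=f_\ell$, so that $f_{3*}=f_*g_*$: the inclusion $f_{3*}\mathcal{N}_*(U'_3)=f_*g_*\mathcal{N}_*(U'_3)\subseteq f_*\mathcal{N}_*(U'_\ell)$ is immediate from $g_*\mathcal{N}_*(U'_3)\subseteq\mathcal{N}_*(U'_\ell)$ (Lemma~\ref{lem:sm2pushfwd}). For the reverse inclusion, take $\alpha\in\mathcal{N}_*(U'_\ell)$; then $g^*\alpha\in\mathcal{N}_*(U'_3)$ by Lemma~\ref{lem:sm2pullbk}, and once we know $g_*g^*\alpha=\alpha$ we get $f_{3*}(g^*\alpha)=f_*g_*g^*\alpha=f_*\alpha$, so $f_*\alpha\in f_{3*}\mathcal{N}_*(U'_3)$.

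The crux is therefore $g_*g^*=\mathrm{id}$ on $\Omega_*(U'_\ell)$. By the projection formula $g_*g^*\alpha=g_*(g^*\alpha\cdot 1_{U'_3})=\alpha\cdot g_*(1_{U'_3})$, so it reduces to $g_*(1_{U'_3})=1_{U'_\ell}$ in $\Omega_n(U'_\ell)$ with $n=\dim U'_\ell$. Since $g$ is birational, $g_*(1_{U'_3})$ and $1_{U'_\ell}$ have the same image in $\CH_n(U'_\ell)=\Z\cdot[U'_\ell]$, so their difference lies in $\ker(\Omega_*(U'_\ell)\to\CH_*(U'_\ell))=\bL_{>0}\cdot\Omega_*(U'_\ell)$ by \cite[Theorem~1.2.19]{LM}; but $\bL_{>0}$ strictly decreases the homological degree, so its contribution in degree $n$ comes from $\Omega_m(U'_\ell)$ with $m>\dim U'_\ell$, which vanishes, whence the difference is $0$. (Equivalently, one may simply cite the birational invariance of the fundamental class in algebraic cobordism, as it appears in the proof of the generalized degree formula \cite[Theorem~4.4.7]{LM}.) The main obstacle is thus pinning down this fundamental-class identity, together with the harmless bookkeeping that $g$ is genuinely projective so that Lemmas~\ref{lem:sm2pullbk} and~\ref{lem:sm2pushfwd} are applicable.
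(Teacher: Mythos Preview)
Your reduction via Lemma~\ref{stpdab3} and the easy inclusion via Lemma~\ref{lem:sm2pushfwd} are fine, and mirror the paper. The gap is in the reverse inclusion: the identity $g_*(1_{U'_3})=1_{U'_\ell}$ is \emph{false} in algebraic cobordism. Your degree argument has the direction wrong: the Lazard ring acts so that $\bL_k\cdot\Omega_m\subset\Omega_{m+k}$, hence an element of $\bL_{>0}\cdot\Omega_*(U'_\ell)$ lying in degree $n=\dim U'_\ell$ comes from $\Omega_{<n}(U'_\ell)$, which is typically nonzero. Concretely, for the blow-up $g\colon\widetilde S\to S$ of a smooth surface at a point one has $g_*(1_{\widetilde S})=1_S+\omega\cdot[\text{pt}\to S]$ with $\omega\in\bL_1$ nonzero; this is exactly the phenomenon recorded in \cite[Corollary~4.4.8(2)]{LM}, which gives $g_*(1_{U'_3})=1_{U'_\ell}+\sum_i\omega_i\beta_i$ with $\omega_i\in\bL_{>0}$ and $\beta_i\in\Omega^{\geq 1}(U'_\ell)$. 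There is no ``birational invariance of the fundamental class'' for $\Omega_*$.

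The paper salvages precisely this step by observing that although $g_*(1_{U'_3})\neq 1_{U'_\ell}$, the correction $\sum_i\omega_i\beta_i$ is nilpotent (it lies in $\Omega^{\geq 1}$, so its $(n+1)$-st power vanishes), hence $g_*(1_{U'_3})$ is a \emph{unit} in $\Omega_*(U'_\ell)$. Writing $\gamma$ for its inverse, one has for $\alpha\in\mathcal{N}_*(U'_\ell)$ that $f_{\ell*}\alpha=f_{\ell*}(\alpha\cdot\gamma\cdot g_*(1_{U'_3}))=f_{\ell*}g_*\big(g^*(\alpha\cdot\gamma)\big)=f_{3*}\big(g^*(\alpha\cdot\gamma)\big)$, and $\alpha\cdot\gamma\in\mathcal{N}_*(U'_\ell)$ because $\mathcal{N}_*$ is an ideal (Lemma~\ref{lem:ideal_smqp}), so $g^*(\alpha\cdot\gamma)\in\mathcal{N}_*(U'_3)$ by Lemma~\ref{lem:sm2pullbk}. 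Your argument becomes correct once you replace the false claim $g_*g^*=\mathrm{id}$ by this unit trick.
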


\begin{proof}By Lemma \ref{stpdab3}, there is another desingularization $f_3: U'_3 \to U$ with $g_{\ell}: U'_3 \to U'_\ell$ for $\ell = 1,2$ such that $f_\ell \circ g_\ell = f_3$. So, by replacing $f_2$ by $f_3$, we may assume there is a morphism $g: U'_2 \to U'_1$ such that $f_1 \circ g = f_2$. 

As in Lemma \ref{lem:smqp}, choose two smooth compactifications $j_\ell: U'_{\ell} \hookrightarrow \widetilde{U}_{\ell}$ for $\ell = 1,2$. Using again the argument of Lemma \ref{stpdab3}, we may assume there exists a morphism $\widetilde{g}: \widetilde{U}_2 \to \widetilde{U}_1$ such that $\widetilde{g} \circ j_2 = j_1 \circ g$, so we have a commutative diagram
$$
\xymatrix{ & U_2 ' \ar[dl] _{f_2} \ar[d] ^g \ar@{^{(}->}[r] ^{j_2} &\widetilde{U}_2 \ar[d] ^{\widetilde{g}} \\
U &U_1 ' \ar[l] _{f_1} \ar@{^{(}->}[r] ^{j_1} & \widetilde{U}_1 }$$
and it gives $j_1 ^* \widetilde{g}_*= g_* j_2 ^*$ by \cite[Theorem 6.5.12]{LM}.

Let $\beta \in \mathcal{N}_* (U_2 ')$. By definition, $\mathcal{N}_* (U_2 ') = j_2 ^* \mathcal{N}_* (\widetilde{U}_2)$ so that $\beta = j_2 ^* \alpha$ for some $\alpha \in \mathcal{N}_* (\widetilde{U}_2)$. So, $f_{2*} \beta = f_{1*} g_* \beta = f_{1*} g_* j_2 ^* \alpha = f_{1*} j_1 ^* \widetilde{g}_* \alpha$. But, by Lemma \ref{stpdab1'}, $\widetilde{g}_* \alpha \in \mathcal{N}_* (\widetilde{U}_1)$, so that $j_1 ^* \widetilde{g}_* \alpha \in j_1 ^* \mathcal{N}_* (\widetilde{U}_1) =^{\dagger} \mathcal{N}_*( {U}_1')$, where $\dagger$ holds by Lemma \ref{lem:smqp}. Thus, $f_{2*} \beta \in f_{1*} \mathcal{N}_* (U_1 ')$, i.e. $f_{2*} \mathcal{N}_* (U_2') \subset f_{1*} \mathcal{N}_* (U_1')$.

To prove the other direction, first note that by \cite[Corollary 4.4.8.(2)]{LM}, $g_* (1_{U_2'}) = 1_{U'_1} + \sum_{i } \omega_i \beta_i$ for some $\omega_i \in \mathbb{L}_{>0}$ and $\beta_i \in \Omega^{\geq 1} (U_1')$. Since $(\sum_i \omega_i \beta_i)^{n+1}= 0$ when $n \geq \dim U_1'$, $\sum_i \omega_i \beta_i$ is nilpotent, so that $g_* (1_{U_2'})$ is a unit in $\Omega_* (U_1')$. Let $\gamma \in \Omega_* (U_1')$ be its inverse. Then, for any $\alpha \in \mathcal{N}_* (U_1')$, we have $f_{1*} \alpha = f_{1*} ( \alpha \cdot \gamma \cdot g_* (1_{U_2'})) = ^{\dagger} f_{1*} g_* (g^* (\alpha \cdot \gamma) \cdot 1_{U_2'}) = f_{2*} g^* (\alpha \cdot \gamma) \in ^{\ddagger} f_{2*} \mathcal{N}_* (U_2')$, where $\dagger$ holds by the projection formula, and $\ddagger$ holds because $\alpha \cdot \gamma \in \mathcal{N}_* (U_1')$ by Lemma \ref{lem:ideal_smqp} and $g^* (\alpha \cdot \gamma) \in \mathcal{N}_* (U_2')$ by Lemma \ref{lem:sm2pullbk}. This shows $f_{1*} \mathcal{N}_* (U_1') \subset f_{2*} \mathcal{N}_* (U_2')$. This completes the proof.
\end{proof}

\begin{definition}\label{defn:qp}
For any quasi-projective $k$-variety $U$, let $\mathcal{N}_* (U) := f_* \mathcal{N}_* (U')$ for any desingularization $f: U' \to U$ and $\mathcal{N}_* (U')$ is as in Definition \ref{defn:smqp}, and define $\Omega_* ^{\num} (U) := \Omega_* (U) / \mathcal{N}_* (U)$. By Lemma \ref{stpdab2}, $\mathcal{N}_* (U)$ is well-defined, hence so is $\Omega_* ^{\num} (U)$.

For $U$ in $\Sch_k$, let $U_i $ be the irreducible components of $U_{\red}$. Via the natural push-forward epimorphism $\bigoplus_i \Omega_* (U_i) \to \Omega_* (U)$ given by the projective map $\coprod U_i \to U$, we define $\mathcal{N}_* (U)$ to be the push-forward (in this case the image) of $\bigoplus_i  \mathcal{N}_* (U_i)$ in $\Omega_* (U)$. Since each $\mathcal{N}_* (U_i)$ is well-defined, so is $\mathcal{N}_* (U)$. Define $\Omega_* ^{\num} (U) = \Omega_* (U) / \mathcal{N}_* (U)$.
\end{definition}

\subsection{Basic functoriality}\label{sec:functoriality}
Unlike the case on $\Sm_k$ (as in Lemma \ref{lem:sm2pullbk}), for $\Omega_* ^{\num}$ on $\Sch_k$, one cannot expect to have $f^*$ for any morphism $f$ in $\Sch_k$. However, we can first check that we have projective push-forwards and smooth pull-backs. We first note a basic result:

\begin{lemma}\label{stpdab5}Let $f: X \to Y$ be a projective morphism of quasi-projective $k$-varieties. Then, there exist desingularizations $f_1: \widetilde{X} \to X$ and $f_2: \widetilde{Y} \to Y$ and a projective morphism $\widetilde{f}: \widetilde{X} \to \widetilde{Y}$ such that $f_2 \circ \widetilde{f} = f \circ f_1$.
\end{lemma}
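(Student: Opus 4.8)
\emph{Proof proposal.} The plan is to reduce to the case of a \emph{dominant} morphism --- where base change behaves well --- and to feed in Hironaka's embedded resolution so that the image $\overline{f(X)}$ appears inside a resolution of $Y$ as a smooth strict transform. Throughout we use that $k$ has characteristic $0$, so resolution of singularities is available.

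First I would factor $f$ through its image. Set $Y':=\overline{f(X)}$ with its reduced structure; this is an integral closed subscheme of $Y$, being the image of the irreducible $X$ under the closed map $f$. Write $f=\iota\circ f'$, where $\iota:Y'\hookrightarrow Y$ is the inclusion and $f':X\twoheadrightarrow Y'$ is the induced surjective, hence dominant, morphism. By Hironaka's embedded resolution of singularities I would choose a desingularization $f_2:\widetilde Y\to Y$ for which the strict transform $\widetilde{Y'}\subseteq\widetilde Y$ of $Y'$ is smooth; then $g:=f_2|_{\widetilde{Y'}}:\widetilde{Y'}\to Y'$ is a desingularization of $Y'$, and $f_2\circ j=\iota\circ g$, where $j:\widetilde{Y'}\hookrightarrow\widetilde Y$ is the inclusion.

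Next I would base change the dominant morphism $f'$ along $g$. Form $P:=(X\times_{Y'}\widetilde{Y'})_{\red}$, with projections $p:P\to X$ (projective and surjective) and $q:P\to\widetilde{Y'}$ (projective). Since $f'$ is dominant, the generic point of $X$ maps to the generic point of $Y'$, over which $g$ is an isomorphism; hence $p$ is an isomorphism over a dense open subset of $X$, there is a unique irreducible component $X^\sharp$ of $P$ dominating $X$, and $p|_{X^\sharp}:X^\sharp\to X$ is projective and birational. Now choose a desingularization $h:\widetilde X\to X^\sharp$ and put $f_1:=p|_{X^\sharp}\circ h:\widetilde X\to X$ --- a projective birational morphism from a smooth variety, i.e.\ a desingularization of $X$ --- and $\widetilde f:=j\circ q|_{X^\sharp}\circ h:\widetilde X\to\widetilde Y$, which is projective. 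The required equality $f_2\circ\widetilde f=f\circ f_1$ then drops out of the definitions: $f_2\circ j=\iota\circ g$, while $g\circ q|_{X^\sharp}=f'\circ p|_{X^\sharp}$ by the fibre-product relation defining $P$, so both sides equal $\iota\circ f'\circ p|_{X^\sharp}\circ h=f\circ f_1$.

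The step I expect to be the crux is guaranteeing that $f_1$ can be taken \emph{birational}, not merely a surjective projective map from a smooth variety. When $f$ is dominant this is automatic, but in general $\overline{f(X)}$ may lie entirely inside $Y_{\sing}$, and then the fibre product of $X$ with an arbitrary resolution of $Y$ need have no component dominating $X$ birationally; choosing $f_2$ by embedded resolution so that $\widetilde{Y'}$ is a strict transform of $Y'$ (hence birational onto $Y'$) is exactly what removes this obstruction. A minor point is that $f_1=p|_{X^\sharp}\circ h$ need not be an isomorphism over $X_{\sm}$; this is harmless, since $\mathcal N_*(X)=f_{1*}\mathcal N_*(\widetilde X)$ is independent of the chosen desingularization by Lemma \ref{stpdab2} --- whose proof only uses that any two are dominated by a common projective birational morphism from a smooth variety (Lemma \ref{stpdab3}) --- but if one insists on the strict reading of ``desingularization'' one can further dominate $\widetilde X$ by a resolution that is an isomorphism over $X_{\sm}$.
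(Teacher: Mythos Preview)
Your argument is correct. The paper's proof is much shorter: choose any desingularization $f_2:\widetilde Y\to Y$, note that it induces a rational map $X\dashrightarrow\widetilde Y$ (namely $f_2^{-1}\circ f$ where this is defined), take $f_1:\widetilde X\to X$ to be any desingularization of $X$ that also resolves the indeterminacy of this rational map, and deduce projectivity of $\widetilde f$ from the fact that $f_2\circ\widetilde f=f\circ f_1$ is projective while $f_2$ is separated.

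The point you isolate as the crux is genuine, and it is exactly where the paper's argument is elliptic: if $\overline{f(X)}$ lies entirely in the locus over which $f_2$ fails to be an isomorphism (for instance $\overline{f(X)}\subset Y_{\sing}$), then the domain of definition of $f_2^{-1}\circ f$ is empty and there is no rational map to resolve. Your detour through the image $Y'=\overline{f(X)}$ --- choosing $f_2$ by \emph{embedded} resolution so that the strict transform $\widetilde{Y'}$ is smooth and hence birational over $Y'$, then base-changing the dominant $f':X\to Y'$ along $\widetilde{Y'}\to Y'$ --- is precisely what guarantees a component $X^\sharp$ birational to $X$. When $f$ is already dominant the two constructions coincide (your $X^\sharp$ is the closure of the graph of $f_2^{-1}\circ f$ inside $X\times_Y\widetilde Y$), so your proof is best read as a rigorous completion of the paper's sketch rather than a different method; what it buys is correctness in the non-dominant case, at the price of invoking embedded rather than ordinary resolution.
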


\begin{proof}First choose any desingularization $f_2: \widetilde{Y} \to Y$. It gives a rational map $f': X \dashrightarrow \widetilde{Y}$. By choosing a desingularization $f_1 : \widetilde{X}\to X$ that resolves the indeterminacy of $f'$, we obtain a morphism $\widetilde{f}: \widetilde{X} \to \widetilde{Y}$ such that $f_2 \circ \widetilde{f} = f \circ f_1$. Since $f \circ f_1$ is projective and $f_2$ is separated, we deduce that $\widetilde{f}$ is projective, as desired.
\end{proof}

We strengthen Lemmas \ref{stpdab1'} and \ref{lem:sm2pushfwd}:

\begin{lemma}\label{lem:projpushfwd}
Let $f: X \to Y$ be a projective morphism in $\Sch_k$. Then, for the projective push-forward $f_* : \Omega_* (X) \to \Omega_* (Y)$, we have  $f_* : \Omega_* ^{\num} (X) \to \Omega_* ^{\num} (Y)$.
\end{lemma}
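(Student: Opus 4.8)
The goal is to show $f_*\mathcal{N}_*(X)\subset\mathcal{N}_*(Y)$, and the plan is to bootstrap from what is already available, namely Lemma~\ref{lem:sm2pushfwd} for projective morphisms of smooth quasi-projective varieties. I would do this in two reduction steps: first upgrade to a projective morphism of possibly singular quasi-projective varieties, and then pass to arbitrary objects of $\Sch_k$ by decomposing source and target into irreducible components of their reductions.

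For the variety case, I would invoke Lemma~\ref{stpdab5} to obtain desingularizations $f_1\colon\widetilde X\to X$ and $f_2\colon\widetilde Y\to Y$ together with a \emph{projective} morphism $\widetilde f\colon\widetilde X\to\widetilde Y$ satisfying $f_2\circ\widetilde f=f\circ f_1$. By Definition~\ref{defn:qp} one has $\mathcal{N}_*(X)=f_{1*}\mathcal{N}_*(\widetilde X)$ and $\mathcal{N}_*(Y)=f_{2*}\mathcal{N}_*(\widetilde Y)$, so I can write any $\alpha\in\mathcal{N}_*(X)$ as $\alpha=f_{1*}\beta$ with $\beta\in\mathcal{N}_*(\widetilde X)$, and then compute $f_*\alpha=f_*f_{1*}\beta=(f\circ f_1)_*\beta=(f_2\circ\widetilde f)_*\beta=f_{2*}\widetilde f_*\beta$. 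Since $\widetilde f$ is a projective morphism of smooth quasi-projective varieties, Lemma~\ref{lem:sm2pushfwd} yields $\widetilde f_*\beta\in\mathcal{N}_*(\widetilde Y)$, hence $f_*\alpha\in f_{2*}\mathcal{N}_*(\widetilde Y)=\mathcal{N}_*(Y)$.

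For the general case, I would write the irreducible components of $X_{\red}$ and $Y_{\red}$ as $X_1,\dots,X_m$ and $Y_1,\dots,Y_n$, with the structural projective maps $q_i\colon X_i\to X$ and the closed immersions $s_j\colon Y_j\to Y$. Since $\mathcal{N}_*(X)$ is, by Definition~\ref{defn:qp}, the image of $\bigoplus_i\mathcal{N}_*(X_i)$ under the $q_{i*}$, it suffices to prove $(f\circ q_i)_*\mathcal{N}_*(X_i)\subset\mathcal{N}_*(Y)$ for each $i$; thus I may assume the source is a variety $Z$ and $h\colon Z\to Y$ is projective. Using that $Z$ is reduced and irreducible, $h$ factors scheme-theoretically through the reduced component $Y_j$ of $Y_{\red}$ containing its image, say $h=s_j\circ h'$; here $h'\colon Z\to Y_j$ is again projective, since a closed $Y$-embedding $Z\hookrightarrow\P^N_Y$ has image inside $\P^N_{Y_j}$. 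The variety case applied to $h'$ gives $h'_*\mathcal{N}_*(Z)\subset\mathcal{N}_*(Y_j)$, while $s_{j*}\mathcal{N}_*(Y_j)\subset\mathcal{N}_*(Y)$ holds directly from the description of $\mathcal{N}_*(Y)$ as the image of $\bigoplus_j\mathcal{N}_*(Y_j)$; composing these inclusions proves the claim.

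I do not expect a genuine obstacle here: all the real content is already in Lemmas~\ref{stpdab1'} and~\ref{lem:sm2pushfwd}, together with the base-change identity \cite[Theorem~6.5.12]{LM} and the projection formula used there. The points that need attention are purely formal, namely verifying at each stage that the auxiliary maps (the morphism $\widetilde f$ produced by Lemma~\ref{stpdab5}, and the maps onto irreducible components) are genuinely projective so that the earlier lemmas apply, and carefully unwinding the nested definitions of $\mathcal{N}_*$ (Definitions~\ref{defn:smqp} and~\ref{defn:qp}) on singular varieties and on reducible schemes.
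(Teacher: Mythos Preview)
Your argument is correct and follows essentially the same route as the paper: the variety case via Lemma~\ref{stpdab5}, Definition~\ref{defn:qp}, and Lemma~\ref{lem:sm2pushfwd} is exactly what the paper does, and your second step simply unpacks the paper's one-line reduction ``We may assume $X$ and $Y$ are irreducible'' in full detail.
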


\begin{proof}
We may assume $X$ and $Y$ are irreducible. It is enough to show that $f_* \mathcal{N}_* (X) \subset \mathcal{N}_* (Y)$. Let $f_1, f_2, \widetilde{f}$ be as in Lemma \ref{stpdab5}. In particular, $f_{2*} \widetilde{f}_* = f_* f_{1*}$. Then, $f_* \mathcal{N}_* (X) = ^{\dagger} f_* f_{1*} \mathcal{N}_* (\widetilde{X})=  f_{2*} \widetilde{f}_* \mathcal{N}_* (\widetilde{X}) \subset ^{\ddagger}f_{2*} \mathcal{N}_* (\widetilde{Y}) =^{\dagger} \mathcal{N}_* (Y)$, where $\dagger$ hold by Definition \ref{defn:qp} and $\ddagger$ holds by Lemma \ref{lem:sm2pushfwd}.
\end{proof}

Recall that an l.c.i. morphism $f: X \to Y$ is a morphism that factors into $X \overset{i}{\to} P \overset{g}{\to} Y$, where $i$ is a regular embedding and $g$ is smooth. We show that $f^*$ exists for an l.c.i. morphism $f$ on $\Omega_* ^{\num}$. For smooth morphisms, we have:

\begin{lemma}\label{lem:smpullbk} Let $f: X \to Y$ be a smooth morphism of quasi-projective $k$-varieties. Let $d= \dim X - \dim Y$. Then, the smooth pull-back $f^* : \Omega_*  (Y) \to \Omega_{*+d}  (X)$ as in \cite[\S 2.1.2-3]{LM} maps $\mathcal{N}_* (Y)$ into $\mathcal{N}_* (X)$.
\end{lemma}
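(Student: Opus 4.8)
The plan is to reduce to two cases already settled: morphisms between \emph{smooth} quasi-projective varieties (Lemma~\ref{lem:sm2pullbk}) and arbitrary projective morphisms in $\Sch_k$ (Lemma~\ref{lem:projpushfwd}), the passage being effected by base change along a desingularization of the target. First I would choose a desingularization $g\colon\widetilde{Y}\to Y$; by Definition~\ref{defn:qp} one has $\mathcal{N}_*(Y)=g_*\mathcal{N}_*(\widetilde{Y})$, so it suffices to show $f^*g_*\alpha\in\mathcal{N}_*(X)$ for every $\alpha\in\mathcal{N}_*(\widetilde{Y})$. Then I would form the Cartesian square
\[
\xymatrix{
X' \ar[r]^{f'} \ar[d]_{g'} & \widetilde{Y} \ar[d]^{g} \\
X \ar[r]^{f} & Y
}
\]
with $X':=X\times_Y\widetilde{Y}$. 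Since $f$ is smooth, hence flat, this square is Tor-independent, $f'$ is again smooth, and the base-change identity $f^*g_*=g'_*f'^*$ holds on $\Omega_*$ (part of the axioms of an oriented Borel--Moore homology theory, \cite[Definition~5.1.3]{LM}; in fact it is already visible on cobordism cycles, since $W\times_{\widetilde{Y}}X'=W\times_Y X$).

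Next I would note that $X'$ is smooth over $k$, because $\widetilde{Y}$ is and $f'$ is smooth; hence $X'$ is a finite disjoint union $\coprod_i X'_i$ of smooth quasi-projective $k$-varieties (its irreducible components, which are also its connected components since $X'$ is regular). Accordingly $f'$ restricts to smooth morphisms $f'_i\colon X'_i\to\widetilde{Y}$ of smooth quasi-projective varieties, $g'$ restricts to projective morphisms $g'_i\colon X'_i\to X$ in $\Sch_k$, and $f'^*=\bigoplus_i(f'_i)^*$, $g'_*=\sum_i(g'_i)_*$ under $\Omega_*(X')=\bigoplus_i\Omega_*(X'_i)$. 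For each $i$, Lemma~\ref{lem:sm2pullbk} applied to $f'_i$ (whose l.c.i. pull-back agrees with the smooth pull-back) gives $(f'_i)^*\mathcal{N}_*(\widetilde{Y})\subseteq\mathcal{N}_*(X'_i)$, and Lemma~\ref{lem:projpushfwd} applied to $g'_i$ gives $(g'_i)_*\mathcal{N}_*(X'_i)\subseteq\mathcal{N}_*(X)$. Combining, for $\alpha\in\mathcal{N}_*(\widetilde{Y})$ we obtain $f^*g_*\alpha=g'_*f'^*\alpha=\sum_i(g'_i)_*(f'_i)^*\alpha\in\mathcal{N}_*(X)$, which is the assertion (and the induced $f^*\colon\Omega_*^{\num}(Y)\to\Omega_{*+d}^{\num}(X)$ then follows formally).

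I do not anticipate a genuine obstacle; the work is essentially bookkeeping. The two points requiring attention are: (i) the fibre product $X'$ may well be reducible, so one must decompose it into components and reconcile this with the definition of $\mathcal{N}_*$ for possibly reducible objects of $\Sch_k$; and (ii) the component of $X'$ dominating $X$ need not be singled out as an honest desingularization of $X$, so rather than invoking Definition~\ref{defn:qp} on the nose one leans on Lemma~\ref{lem:projpushfwd}, which applies to \emph{all} projective morphisms in $\Sch_k$ and thus handles every component in one stroke. The single routine verification one should not skip is Tor-independence of the base-change square, but that is immediate from flatness of the smooth morphism $f$.
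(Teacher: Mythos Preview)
Your proof is correct and follows essentially the same route as the paper: choose a desingularization $g\colon\widetilde{Y}\to Y$, form the Cartesian square, use the base-change identity $f^*g_*=g'_*f'^*$, apply Lemma~\ref{lem:sm2pullbk} to the smooth morphism $f'$ between smooth schemes, and push forward. The only difference is in the last step: the paper observes that $pr_1\colon X\times_Y\widetilde{Y}\to X$ is itself a desingularization and invokes Definition~\ref{defn:qp} directly to get $pr_{1*}\mathcal{N}_*(\widetilde{X})=\mathcal{N}_*(X)$, whereas you decompose $X'$ into irreducible components and appeal to Lemma~\ref{lem:projpushfwd} on each; your version is slightly more cautious about the possible reducibility of the fibre product, but the content is the same.
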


\begin{proof}Choose a desingularization $f_1: \widetilde{Y} \to Y$ and form the following Cartesian square:
$$\xymatrix{
\widetilde{X}:=X \times_Y \widetilde{Y} \ar[r] ^{\ \ \ \ \ \ \ \ pr_2} \ar[d] ^{pr_1} & \widetilde{Y} \ar[d] ^{f_1} \\
X \ar[r] ^{f} & Y.}$$
Since $f$ is smooth, so is $pr_2$. Since $\pi_{\widetilde{Y}} : \widetilde{Y} \to \Spec (k)$ is smooth, via the composition $\pi_{\widetilde{Y}} \circ pr_2$, $\widetilde{X}$ is a smooth quasi-projective $k$-scheme. So, by Lemma \ref{lem:sm2pullbk}, we have $pr_2^* \mathcal{N}_* (\widetilde{Y}) \subset \mathcal{N}_* (\widetilde{X})$. Hence, from the Cartesian diagram, we have $f^* \mathcal{N}_*(Y) =^{\dagger} f^* f_{1*} \mathcal{N}_* (\widetilde{Y}) = pr_{1*} pr_2 ^* \mathcal{N} _* (\widetilde{Y}) \subset pr_{1*} \mathcal{N}_* (\widetilde{X}) = ^{\dagger} \mathcal{N}_* (X)$, where $\dagger$ hold by Definition \ref{defn:qp}.

\end{proof}

Now, we check that the pull-back exists on $\Omega_* ^{\num}$ for a regular embedding:

\begin{lemma}\label{lem:reimpb}
Let $i: X \to Z$ be a regular embedding of codimension $c$ in $\Sch_k$. Let $i^*: \Omega_* (Z) \to \Omega_{*-c} (X)$ be the l.c.i. pull-back in \cite[\S 6.5.4]{LM}. Then, $i^* \mathcal{N}_* (Z) \subset \mathcal{N}_* (X)$, so that we have $i^*: \Omega^{\num}_* (Z) \to \Omega^{\num}_{*-c} (X)$.
\end{lemma}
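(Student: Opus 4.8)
The plan is to build $i^*$ out of operations already known to carry $\mathcal{N}_*$ into $\mathcal{N}_*$, handling first the codimension‑one case and then reducing the general case to it by blowing up $X$. Throughout I will use, besides Lemmas \ref{lem:projpushfwd} and \ref{lem:smpullbk}, two further facts. First, $\mathcal{N}_*(Z) = p_* \mathcal{N}_*(Z')$ for any projective birational $p \colon Z' \to Z$ in $\Sch_k$ (Definition \ref{defn:qp}, obtained by composing $p$ with a desingularization of the components of $Z'$). Second, the first Chern class operators $\widetilde{c}_1(L)$ of line bundles, and more generally the Chern class operators of vector bundles, carry $\mathcal{N}_*$ into $\mathcal{N}_*$: indeed $\mathcal{N}_*(V) = f_* \mathcal{N}_*(\widetilde V)$ for a desingularization $f$, the group $\mathcal{N}_*(\widetilde V)$ is an ideal of $\Omega_*(\widetilde V)$ for $\widetilde V$ smooth (Lemma \ref{lem:ideal_smqp}), and Chern class operators commute with projective push-forward.

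\textbf{The case $c = 1$.} Here $i$ is an effective Cartier divisor. Pick a desingularization $g \colon \widetilde Z \to Z$. The complex $0 \to \mathcal{O}_Z(-X) \to \mathcal{O}_Z \to \mathcal{O}_X \to 0$ is a two-term resolution of $\mathcal{O}_X$ by line bundles, and it stays exact after $-\otimes_{\mathcal{O}_Z} \mathcal{O}_{\widetilde Z}$ because the pulled-back defining section remains a nonzerodivisor ($X$ contains no component of $Z$); hence $i$ and $g$ are Tor-independent. By \cite[Theorem~6.5.12]{LM} we get $i^* \circ g_* = \widetilde g_* \circ i^*_{\widetilde Z}$, where $D := g^{-1}(X)$ is the Cartier divisor $g^* \mathcal{O}_Z(X)$ on the smooth variety $\widetilde Z$, $\widetilde g \colon D \to X$ is the induced projective map, and $i^*_{\widetilde Z} \colon \Omega_*(\widetilde Z) \to \Omega_{*-1}(D)$ is the l.c.i. pull-back. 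Since $\mathcal{N}_*(Z) = g_* \mathcal{N}_*(\widetilde Z)$ and $\widetilde g_* \mathcal{N}_*(D) \subset \mathcal{N}_*(X)$ by Lemma \ref{lem:projpushfwd}, it remains to prove $i^*_{\widetilde Z}\, \mathcal{N}_*(\widetilde Z) \subset \mathcal{N}_*(D)$. Taking $g$ (by Hironaka) so that $D_{\mathrm{red}} = \bigcup_\ell D_\ell$ is a strict normal crossing divisor, we have $\mathcal{O}_{\widetilde Z}(D) = \bigotimes_\ell \mathcal{O}_{\widetilde Z}(D_\ell)^{\otimes m_\ell}$, and unwinding $i^*_{\widetilde Z}$ through the relations (Sect) and (FGL) presents it as a finite $\mathbb{L}$-linear combination of compositions of Gysin pull-backs along the \emph{smooth} divisors $D_\ell \hookrightarrow \widetilde Z$, push-forwards along closed immersions between the strata, and Chern class operators. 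Each of these preserves $\mathcal{N}_*$ — the first by Lemma \ref{lem:sm2pullbk} (both $D_\ell$ and $\widetilde Z$ being smooth), the others as recalled above — so $i^*_{\widetilde Z}\, \mathcal{N}_*(\widetilde Z) \subset \mathcal{N}_*(D)$, completing this case.

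\textbf{The general case.} Let $b \colon Z' := \mathrm{Bl}_X Z \to Z$ be the blow-up, with exceptional Cartier divisor $j \colon E = \mathbb{P}(N_{X/Z}) \hookrightarrow Z'$ and (projective) projection $q \colon E \to X$. The excess-intersection (blow-up) formula — a consequence of the deformation-to-the-normal-cone construction of $i^*$ together with the projective bundle formula — gives, for every $\gamma \in \Omega_*(Z')$,
\[
i^*(b_* \gamma) = q_*\bigl( \widetilde\theta(j^* \gamma) \bigr),
\]
where $\widetilde\theta \colon \Omega_*(E) \to \Omega_{*-(c-1)}(E)$ is a fixed polynomial (via $F_{\mathbb{L}}$) in the first Chern class operator of $\mathcal{O}_E(1)$ and the Chern class operators of $q^* N_{X/Z}$. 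Now $\mathcal{N}_*(Z) = b_* \mathcal{N}_*(Z')$; applying, in turn, the case $c = 1$ to $j^*$, Chern class stability to $\widetilde\theta$, and Lemma \ref{lem:projpushfwd} to $q_*$,
\[
i^* \mathcal{N}_*(Z) = q_*\bigl( \widetilde\theta(j^* \mathcal{N}_*(Z')) \bigr) \subset q_*\bigl( \widetilde\theta(\mathcal{N}_*(E)) \bigr) \subset q_* \mathcal{N}_*(E) \subset \mathcal{N}_*(X).
\]
Hence $i^* \mathcal{N}_*(Z) \subset \mathcal{N}_*(X)$, and $i^*$ descends to $i^* \colon \Omega^{\num}_*(Z) \to \Omega^{\num}_{*-c}(X)$.

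\textbf{Where the difficulty lies.} I expect two points to require the most care: establishing the excess-intersection formula over the Lazard ring with the correct correction operator $\widetilde\theta$, i.e. checking that the deformation-to-the-normal-cone / blow-up compatibility passes to algebraic cobordism; and the $c = 1$ unwinding, namely showing that the l.c.i. pull-back along the non-reduced strict normal crossing divisor $D = \sum_\ell m_\ell D_\ell$ is assembled, by a \emph{finite} manipulation with the formal group law (no moving lemma), out of the Gysin pull-backs along the smooth components $D_\ell$. The multiplicities $m_\ell$ and the combinatorics of the crossings make this the genuinely technical step.
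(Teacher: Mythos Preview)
Your argument is correct, and the overall strategy---reduce to a strict normal crossing divisor on a smooth variety, then handle the SNC case component by component via the formal group law---matches the paper's. The reduction for general codimension, however, is organized differently.

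The paper works in three steps: (1) $Z$ smooth projective and $X$ a (reduced) SNC divisor, where one reduces to the smooth components $X_\ell \hookrightarrow Z$ and applies Lemma~\ref{lem:sm2pullbk}; (2) $Z$ smooth quasi-projective with $X$ SNC, by compactifying and using Step 1; (3) the general regular embedding, by choosing a Hironaka resolution $p\colon \widetilde{Z}\to Z$ with $\widetilde{X}=p^{-1}(X)$ SNC and invoking the base-change compatibility $i^* p_* = q_* \widetilde{i}^*$ from \cite[Theorem~6.5.12]{LM} together with Step 2. In particular, the paper never uses an excess intersection or blow-up formula: the whole reduction from codimension $c$ to the divisor case is absorbed into the single base-change identity for the refined l.c.i.\ pull-back.

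Your route---first $c=1$, then blow up $X$ and apply the excess formula $i^*(b_*\gamma)=q_*\bigl(\widetilde\theta(j^*\gamma)\bigr)$---is a legitimate alternative. It trades the (somewhat delicate) base-change statement for a second nontrivial input, the blow-up/excess formula over $\mathbb{L}$, and the fact that Chern class operators preserve $\mathcal{N}_*$ (which you rightly isolate). This is more explicit about the geometric mechanism but strictly more machinery than the paper needs. Your ``unwinding'' of the SNC pull-back via (Sect) and (FGL), including the multiplicities $m_\ell$, is exactly the technical content that the paper's Step~1 compresses into the sentence ``by replacing $X$ by $X_\ell$, we may assume $X$ is a smooth divisor''; you are right that this is where the real work lies, and your description of it is more honest than the paper's.
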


\begin{proof}Since $\Omega_*(X_{\red}) = \Omega_* (X)$ for all schemes $X \in \Sch_k$, we may assume both $X$ and $Z$ are reduced. 

\noindent \textbf{Step 1.} First, consider the case when $Z$ is smooth projective, and $X$ is a strict normal crossing divisor on $Z$. Let $\{ X_{\ell} \}_{\ell \in I}$ be the irreducible components of $X$. Recall that by Definition \ref{defn:qp}, $\mathcal{N}_* (X)$ is the image of $\bigoplus_{\ell \in I} \mathcal{N}_* (X_\ell)$ via the pushforward map $\bigoplus_{\ell \in I} \Omega_* (X_\ell) \to \Omega_* (X)$ associated to the projective map $\coprod_{\ell \in I} X_{\ell} \to X$. So, if $i_{\ell} ^* \mathcal{N}_* (Z) \subset \mathcal{N}_* (X_{\ell})$ for each restriction $i_{\ell}: X_\ell \hookrightarrow X \hookrightarrow Z$, then we deduce $i^* \mathcal{N}_* (Z) \subset \mathcal{N}_* (X)$. Thus, by replacing $X$ by $X_{\ell}$, we may assume $X$ is a smooth divisor. But, this case is just a special case of Lemma \ref{lem:sm2pullbk}. This proves the lemma for Step 1.

\noindent \textbf{Step 2.} Now, suppose $Z$ is smooth quasi-projective, and $X$ is a strict normal crossing divisor on $Z$. Choose a smooth compactification $j : Z \hookrightarrow Z'$ and let $i': X \hookrightarrow X'$ be the Zariski closure of $Z$. By replacing $Z'$ by a further sequence of blow-ups of $Z'$, if necessary, we may suppose $X'$ is also a strict normal crossing divisor on $Z'$. Let $\{ X_{\ell} \}_{\ell \in I}$ and $\{ X_{\ell} '\}_{\ell \in I'}$ be the irreducible components of $X$ and $X'$, respectively. So, we have Cartesian squares
$$\xymatrix{
\coprod_{\ell\in I } X_{\ell} \ar[d]^{j''} \ar[r] ^p & X \ar[d]^{j'}  \ar[r] ^i & Z \ar[d] ^j \\
\coprod_{\ell \in I'} X'_{\ell} \ar[r] ^{p'} & X '\ar[r] ^{i'} & Z' ,}$$
where $p$ and $p'$ are the obvious projective morphisms, $j' = j|_{X}$, and $j''$ is the induced open immersion. They are desingularizations of $X$ and $X'$, respectively. Here, by Step 1, we know ${i'}^* \mathcal{N}_* (Z') \subset \mathcal{N}_* (X')$. On the other hand, we have $i^* \mathcal{N}_* (Z) =^{\dagger} i^* j^* \mathcal{N}_* (Z') = {j'}^* {i'}^* \mathcal{N}_* (Z') \subset {j'}^* \mathcal{N}_* (X')=^{\ddagger} {j'} ^*p'_* \mathcal{N}_* (\coprod X'_{\ell})= ^3 p_* {j''} ^* \mathcal{N}_* (\coprod X'_{\ell}) =^{\dagger} p_* \mathcal{N}_* (\coprod X_{\ell}) =^{\ddagger}  \mathcal{N}_* (X)$, where $\dagger$ holds by Definition \ref{defn:smqp}, $\ddagger$ holds by Definition \ref{defn:qp}, and $=^3$ holds by \cite[Theorem 6.5.12]{LM}. This settles the lemma in this case.

\noindent \textbf{Step 3.} Now, consider the general case when $i: X \hookrightarrow Z$ is a regular embedding of codimension $d$, with $Z$ is a quasi-projective $k$-variety. Choose (via Hironaka) a sequence of blow-ups $p: \widetilde{Z} \to Z$ such that $\widetilde{Z}$ is smooth quasi-projective, and $\widetilde{X}:= p^{-1} (X)$ is a strict normal crossing divisor, giving a Cartesian diagram 
$$\xymatrix{
\widetilde{X} \ar[d] ^{q} \ar[r] ^{\widetilde{i}} & \widetilde{Z} \ar[d] ^p \\
X \ar[r] ^{i} & Z,}$$
where $q$ is the restriction of $p$. Here, $i^* \mathcal{N}_* (Z) = ^{\dagger} i^* p_* \mathcal{N}_* (\widetilde{Z})= ^{\ddagger} q_* \widetilde{i}^* \mathcal{N}_* (\widetilde{Z}) \subset ^3 q_* \mathcal{N}_* (\widetilde{X}) \subset^4 \mathcal{N}_* (X)$, where $\dagger$ holds by Definition \ref{defn:qp}, $\ddagger$ holds by \cite[Theorem 6.5.12]{LM}, $\subset^{3}$ holds by Step 2, and $\subset^{4}$ holds by Lemma \ref{lem:projpushfwd}. This proves the lemma in general.
\end{proof}

When $f:X \to Y$ is an l.c.i. morphism, one may have $f = p_1 \circ i_1 = p_2 \circ i_2$. Since $i_1 ^* \circ p_1 ^* = i_2 ^* \circ p_2 ^*$ on $\Omega_* (Y)$ by \cite[Lemma 6.5.9]{LM}, and the natural map $\Omega_* (-) \to \Omega_* ^{\num} (-)$ is surjective, the same equality holds on $\Omega_* ^{\num} (Y)$. Thus:

\begin{definition}For an l.c.i. morphism $f: X \to Y$, define $f^*: \Omega_* ^{\num} (Y) \to \Omega_{*+d} ^{\num} (X)$ to be $i^* \circ p^*$, where $d= \dim X - \dim Y$.
\end{definition}

Using the surjectivity of $\Omega_* (-) \to \Omega_* ^{\num} (-)$ and \cite[Theorems 6.5.11, 6.5.12, 6.5.13]{LM}, one deduces that $f^* \circ g^* = (g \circ f)^*$ and $(f_1 \times f_2)^* = f_1 ^* \times f_2 ^*$ for l.c.i. morphisms $f, g, f_1, f_2$. One also deduces that if $f: X \to Z$ and $g: Y \to Z$ are Tor-independent morphisms in $\Sch_k$, where $f$ is l.c.i. and $g$ is projective, then for the Cartesian product $X \times_Z Y$, we have $f^* \circ g_* = pr_{1*} \circ pr_2 ^*$, where $pr_1, pr_2$ are the projections from $X \times_Z Y$ to $X$ and $Y$, respectively.

\subsection{Chern class operation}We check that we have the first Chern class operators:

\begin{lemma}\label{lem:chern class}
The first Chern class operators $\widetilde{c}_1 (E)$ on $\Omega_*$ as in \cite[\S 2.1.2]{LM} induce well-defined operators $\widetilde{c}_1(E):\Omega_*^\num(X)\to\Omega_{*-1}^\num(X)$ for vector bundles $E\to X$.
\end{lemma}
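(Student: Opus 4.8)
The plan is to reduce the statement, in each of the three classes of schemes handled in \S\ref{subsec:definition}, to the already-established fact that $\mathcal{N}_*(-)$ is an \emph{ideal} of $\Omega_*(-)$ (Lemmas \ref{lem:ideal_smp} and \ref{lem:ideal_smqp}), together with the compatibility of $\widetilde{c}_1(E)$ with the structural maps $j^*$ and $f_*$ used to \emph{define} $\mathcal{N}_*$. First I would observe that it suffices to treat the case of a line bundle $E = L$: by the Whitney-sum/splitting-principle behaviour of the Chern class operators (\cite[\S 4.1.2, \S 4.1.7]{LM}), the operator $\widetilde{c}_1(E)$ for a rank-$r$ bundle $E$ is expressible using the operators $\widetilde{c}_1$ of line bundles on an iterated projective bundle over $X$ together with pull-backs and push-forwards; since those latter operations already preserve $\mathcal{N}_*$ by Lemmas \ref{lem:sm2pullbk}, \ref{lem:projpushfwd}, \ref{lem:smpullbk}, it is enough to know $\widetilde{c}_1(L)(\mathcal{N}_*(X)) \subset \mathcal{N}_{*-1}(X)$ for a line bundle. (Alternatively, and more cheaply, one can argue directly on generators as below; the reduction to line bundles is only a convenience.)

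For a smooth projective $X$, the key point is the identity $\widetilde{c}_1(L)(\alpha) = c_1(L)\cdot\alpha$ in $\Omega_*(X)$, where $c_1(L) = \widetilde{c}_1(L)(1_X) \in \Omega^1(X)$ and $\cdot$ is the ring product $\Delta_X^*(-\times-)$; this is \cite[Proposition 5.2.1]{LM} (the module-over-$\Omega^*(X)$ structure). Since $\mathcal{N}_*(X)$ is an ideal by Lemma \ref{lem:ideal_smp}, multiplication by $c_1(L)$ sends $\mathcal{N}_*(X)$ into $\mathcal{N}_{*-1}(X)$, and we are done for smooth projective $X$. For a smooth quasi-projective $U$ with smooth compactification $j: U \hookrightarrow \widetilde{X}$, pick a line bundle $\widetilde{L}$ on $\widetilde{X}$ restricting to $L$ (possible since $j^*: \Pic(\widetilde{X}) \to \Pic(U)$ is surjective, $U$ being an open subscheme); then $j^*$ commutes with the Chern class operators, so $\widetilde{c}_1(L)\circ j^* = j^* \circ \widetilde{c}_1(\widetilde{L})$ on $\Omega_*(\widetilde{X})$, whence $\widetilde{c}_1(L)(\mathcal{N}_*(U)) = \widetilde{c}_1(L)(j^*\mathcal{N}_*(\widetilde{X})) = j^*\widetilde{c}_1(\widetilde{L})(\mathcal{N}_*(\widetilde{X})) \subset j^*\mathcal{N}_*(\widetilde{X}) = \mathcal{N}_{*-1}(U)$ by the smooth projective case and Definition \ref{defn:smqp}. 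For a general quasi-projective variety $U$ with desingularization $f: U' \to U$, the projection formula \cite[Proposition 5.2.1]{LM} gives $f_*(\widetilde{c}_1(f^*E)(\beta)) = \widetilde{c}_1(E)(f_*\beta)$ for $\beta \in \Omega_*(U')$, $E$ a bundle on $U$; applying this with $\beta \in \mathcal{N}_*(U')$ and using that $\widetilde{c}_1(f^*E)(\mathcal{N}_*(U')) \subset \mathcal{N}_*(U')$ (smooth quasi-projective case), we get $\widetilde{c}_1(E)(\mathcal{N}_*(U)) = \widetilde{c}_1(E)(f_*\mathcal{N}_*(U')) = f_*\widetilde{c}_1(f^*E)(\mathcal{N}_*(U')) \subset f_*\mathcal{N}_*(U') = \mathcal{N}_{*-1}(U)$ by Definition \ref{defn:qp}. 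Finally, for $U \in \Sch_k$ with irreducible components $U_i$ of $U_{\red}$, one uses that $\mathcal{N}_*(U)$ is the image of $\bigoplus_i \mathcal{N}_*(U_i)$ under the push-forward along $\coprod U_i \to U$, together with the projection-formula compatibility just used, to conclude.

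The main obstacle — really the only nontrivial point — is the very first reduction: making sure $\widetilde{c}_1(E)$ for higher-rank $E$ is genuinely controlled, i.e. that one does not need a pull-back of $L$ to exist along an arbitrary map. The reduction to line bundles via the splitting principle sidesteps this, but it requires care because the splitting principle in \cite{LM} lives on $\Sm_k$ and on Borel--Moore homology; for the $\Sch_k$ case one should instead argue directly with the projection-formula identity $f_*\circ\widetilde{c}_1(f^*E) = \widetilde{c}_1(E)\circ f_*$, which holds for \emph{any} vector bundle $E$ on $U$ and any projective $f$, so no splitting is needed there. The one place one genuinely uses a special feature of the geometry is the surjectivity of $j^*$ on Picard groups in the smooth quasi-projective step, which is elementary; and the fact that in the normal-crossings/blow-up arguments one only ever needs to \emph{restrict} a given $E$ along closed or open immersions, along which pull-back of bundles is unproblematic. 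I expect the whole proof to be short, the bookkeeping being essentially a re-run of the diagram chases in Lemmas \ref{lem:smqp}, \ref{stpdab2}, and \ref{lem:reimpb} with $\widetilde{c}_1(E)$ inserted in place of the identity.
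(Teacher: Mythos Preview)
Your proposal is correct and follows the same core idea as the paper: push the question down to a smooth (quasi-projective) model via the projection formula $\widetilde{c}_1(E)\circ f_* = f_*\circ\widetilde{c}_1(f^*E)$, then use the identity $\widetilde{c}_1(f^*E)(\beta) = c_1(f^*E)\cdot\beta$ on the smooth model together with the fact that $\mathcal{N}_*$ is an ideal there.

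The paper's argument is strictly shorter than yours in two respects. First, it does not reduce to line bundles at all: the identity $\widetilde{c}_1(f^*E)(\beta)=c_1(f^*E)\cdot\beta$ from \cite[(5.2)-5]{LM} is invoked for an arbitrary vector bundle on the smooth $\widetilde{X}$, so the splitting-principle discussion is unnecessary. Second, it collapses your four-case stratification (smooth projective $\to$ smooth quasi-projective $\to$ variety $\to$ scheme) into a single step: for $X$ a variety one takes a desingularization $f:\widetilde{X}\to X$, writes $\alpha=f_*\beta$ with $\beta\in\mathcal{N}_*(\widetilde{X})$, and concludes directly from Lemma~\ref{lem:ideal_smqp} that $c_1(f^*E)\cdot\beta\in\mathcal{N}_*(\widetilde{X})$. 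In particular your Picard-surjectivity manoeuvre (extending $L$ across a smooth compactification) is not needed, since Lemma~\ref{lem:ideal_smqp} already gives the ideal property on smooth quasi-projective varieties without passing through the projective case again. Your route is not wrong, just longer; the paper's proof fits in four lines.
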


\begin{proof}Let $\alpha \in \mathcal{N}_* (X)$. We show that $\widetilde{c}_1 (E) (\alpha)  \in \mathcal{N}_* (X)$. By Definition \ref{defn:qp}, $\mathcal{N}_* (X) = f_* \mathcal{N}_* (\widetilde{X})$ for a desingularization $f: \widetilde{X} \to X$, so there is $\beta \in \mathcal{N}_* (\widetilde{X})$ such that $\alpha = f_* \beta$. By the projection formula, we have $\widetilde{c}_1 (E) (\alpha) = \widetilde{c}_1 (E) (f_* \beta) = f_* ( \widetilde{c}_1 (f^* (E))(\beta))$. But, since $\widetilde{X}$ is smooth, by \cite[(5.2)-5]{LM} we have $\widetilde{c}_1 (f^* (E)) (\beta) = c_1 (f^* (E)) \cdot \beta$, which lies in $\mathcal{N}_* (\widetilde{X})$ by Lemma \ref{lem:ideal_smqp}. Hence, $\widetilde{c}_1 (E) (\alpha) = f_* (c_1 (f^* (E)) \cdot \beta) \in f_* \mathcal{N}_* (\widetilde{X}) = \mathcal{N}_* (X)$, as desired.
\end{proof}

\subsection{Localization Sequence}Observe the following simple fact:

\begin{lemma}\label{stpdab6}Let $j: U \hookrightarrow X$ be an open inclusion between quasi-projective $k$-varieties. Then, for the map $j^* : \Omega _* (X) \to \Omega_* (U)$, the map $j^* : \mathcal{N}_* (X) \to \mathcal{N}_* (U)$ as in Lemma \ref{lem:smpullbk} is surjective. Furthermore, $(j^*)^{-1} (\mathcal{N}_* (U)) = \mathcal{N}_* (X)$. 
\end{lemma}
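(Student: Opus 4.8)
The plan is to prove the two assertions in turn. The crux of surjectivity is that a (smooth) compactification of $X$ restricts to one of the open subvariety $U$, so that after making this compatible choice $\mathcal{N}_*(U)$ becomes literally the $j^*$-image of $\mathcal{N}_*(X)$; the description of the full preimage is then read off from the classical localization theorem \cite[Theorem~3.2.7]{LM}, which identifies $\ker(j^*)$. We may assume $\emptyset\neq U\subsetneq X$.

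\emph{Surjectivity.} Suppose first that $X$ is smooth. Fix a smooth compactification $\jmath\colon X\hookrightarrow\bar{X}$. Since $U\subseteq X$ is open, $\jmath\circ j\colon U\hookrightarrow\bar{X}$ is again a smooth compactification of $U$ in the sense of \S\ref{subsec:definition}, so by Definition~\ref{defn:smqp} (well-posed by Lemma~\ref{lem:smqp}) and the functoriality of l.c.i. pull-backs \cite[Theorem~6.5.11]{LM},
\[ \mathcal{N}_*(U)=(\jmath\circ j)^*\mathcal{N}_*(\bar{X})=j^*\bigl(\jmath^*\mathcal{N}_*(\bar{X})\bigr)=j^*\mathcal{N}_*(X). \]
For arbitrary $X$, pick a desingularization $f\colon\widetilde{X}\to X$, put $\widetilde{U}:=f^{-1}(U)$ (an open subscheme of $\widetilde{X}$, hence a smooth quasi-projective variety), and note that $f_U:=f|_{\widetilde{U}}\colon\widetilde{U}\to U$ is a desingularization of $U$. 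With $\widetilde{j}\colon\widetilde{U}\hookrightarrow\widetilde{X}$ the inclusion, $(\,f,j,f_U,\widetilde{j}\,)$ form a Cartesian square with $j$ flat, so $j^*f_*=f_{U*}\widetilde{j}^*$ by \cite[Theorem~6.5.12]{LM}. Combining this with the smooth case and Definition~\ref{defn:qp},
\[ j^*\mathcal{N}_*(X)=j^*f_*\mathcal{N}_*(\widetilde{X})=f_{U*}\widetilde{j}^*\mathcal{N}_*(\widetilde{X})=f_{U*}\mathcal{N}_*(\widetilde{U})=\mathcal{N}_*(U). \]

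\emph{The preimage.} Put $Z:=(X\setminus U)_{\red}$, with inclusion $i\colon Z\hookrightarrow X$. By the localization theorem \cite[Theorem~3.2.7]{LM}, $\Omega_*(Z)\xrightarrow{i_*}\Omega_*(X)\xrightarrow{j^*}\Omega_*(U)\to 0$ is exact, so $\ker(j^*)=i_*\Omega_*(Z)$. If $j^*\alpha\in\mathcal{N}_*(U)$, the surjectivity just proved furnishes $\beta\in\mathcal{N}_*(X)$ with $j^*\beta=j^*\alpha$; then $\alpha-\beta\in\ker(j^*)$, so $\alpha\in\mathcal{N}_*(X)+i_*\Omega_*(Z)$, the reverse inclusion being immediate from Lemma~\ref{lem:smpullbk}. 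Thus $(j^*)^{-1}(\mathcal{N}_*(U))=\mathcal{N}_*(X)+\ker(j^*)$; combined with Lemma~\ref{lem:projpushfwd}, this is precisely what makes $\Omega^{\num}_*(Z)\to\Omega^{\num}_*(X)\to\Omega^{\num}_*(U)\to 0$ exact in the next step.

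\emph{The main point.} Given the functoriality lemmas of \S\ref{subsec:definition} there is no deep obstacle. The one genuinely structural step is that $\mathcal{N}_*(U)$ is only defined up to a choice, and the argument depends on making that choice through a compactification (resp.\ desingularization) of $X$; it is the well-definedness statements, Lemmas~\ref{lem:smqp} and~\ref{stpdab2}, that license this. After that, the checks are routine: that $\jmath\circ j$ and $f_U$ really are a smooth compactification, respectively a desingularization, of $U$; that $j$, being flat, is Tor-independent of $f$ so that \cite[Theorem~6.5.12]{LM} applies; and that the preimage identity is a formal consequence of the classical localization sequence.
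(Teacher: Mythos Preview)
Your surjectivity argument is correct and is essentially the paper's own proof: you separate out the smooth case first (using that a smooth compactification of $X$ serves as one for $U$), then bootstrap to the general case via a desingularization and base change $j^*f_*=f_{U*}\widetilde{j}^*$. The paper compresses these two steps into one but the content is identical.

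For the second assertion you do \emph{not} prove the lemma as stated. You establish
\[
(j^*)^{-1}\bigl(\mathcal{N}_*(U)\bigr)=\mathcal{N}_*(X)+\ker(j^*)=\mathcal{N}_*(X)+i_*\Omega_*(Z),
\]
and then observe that this is what is needed for the localization sequence in Theorem~\ref{thm:localization}. That observation is correct, but it is weaker than the claimed equality $(j^*)^{-1}(\mathcal{N}_*(U))=\mathcal{N}_*(X)$, which would additionally require $\ker(j^*)\subset\mathcal{N}_*(X)$. In fact the literal statement fails: take $X=\mathbb{P}^1$, $U=\mathbb{A}^1$; then $\mathcal{N}_*(\mathbb{P}^1)=0$ (the theory is cellular), while $\ker(j^*)=i_*\Omega_*(\{\infty\})\neq 0$, so $(j^*)^{-1}(0)=\ker(j^*)\neq 0=\mathcal{N}_*(\mathbb{P}^1)$.

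The paper's own proof of the second assertion is the single line ``follows from the above argument,'' and in that argument the correspondence theorem of ideals is invoked to get $(\widetilde{j}^*)^{-1}\mathcal{N}_*(\widetilde{U})=\mathcal{N}_*(\widetilde{X})$; but the correspondence theorem needs $\ker(\widetilde{j}^*)\subset\mathcal{N}_*(\widetilde{X})$, which is neither checked nor true in general. So the paper's second assertion is misstated. What you have proved is the correct replacement, and it is exactly the input that the proof of Theorem~\ref{thm:localization} actually uses (only surjectivity of $j^*_{\mathcal{N}}$ is invoked there; the preimage description is not). You should flag this discrepancy rather than leave the reader thinking you have proved the lemma as written.
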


\begin{proof}Let $f: \widetilde{X} \to X$ be a desingularization of $X$. By Definition \ref{defn:qp}, $\mathcal{N}_* (X) = f_* \mathcal{N}_* (\widetilde{X})$. Consider the Cartesian diagram
$$\xymatrix{
\widetilde{U}:= U \times_X \widetilde{X} \ar@{^{(}->}[r] ^{\ \ \ \ \ \ \ \ \widetilde{j}} \ar[d] ^{f_U} & \widetilde{X} \ar[d] ^{f} \\
U \ar@{^{(}->}[r] ^j & X,}$$ where $\widetilde{j}$ is the induced open immersion. From this, we deduce $j^* f_* = f_{U*} \widetilde{j}^*$. Since $f_U$ is a desingularization of $U$, by Definition \ref{defn:qp}, we have $\mathcal{N}_* (U) = f_{U*} \mathcal{N}_* (\widetilde{U})$. Let $\alpha \in \mathcal{N}_* (U)$. Write $\alpha = f_{U_*} \beta$ for some $\beta \in \mathcal{N}_* (\widetilde{U})$. Since $\widetilde{j}^* : \Omega_* (\widetilde{X}) \to \Omega_* (\widetilde{U})$ is a surjective ring homomorphism by \cite[Theorem 3.2.7]{LM}, and since $\mathcal{N}_* (\widetilde{U}) = \widetilde{j}^* \mathcal{N}_* (\widetilde{X})$ by definition, which is an ideal by Lemma \ref{lem:ideal_smqp}, by the correspondence theorem of ideals, we have $(\widetilde{j}^*)^{-1} \mathcal{N}_* (\widetilde{U}) = \mathcal{N}_* (\widetilde{X})$. In particular, there is $\gamma \in \mathcal{N}_* (\widetilde{X})$ such that $\widetilde{j}^* \gamma = \beta$. So, $j^* f_* \gamma = f_{U*} \widetilde{j}^* \gamma = f_{U*} \beta = \alpha$. But, $f_* \gamma \in f_* \mathcal{N}_* (\widetilde{X}) = \mathcal{N}_* (X)$ by definition, so, $j^*: \mathcal{N}_* (X) \to \mathcal{N}_* (U)$ is surjective. The second assertion follows from the above argument.
\end{proof}

\begin{theorem}\label{thm:localization}
Let $X$ be a quasi-projective variety, $i:Z\to X$ a closed subscheme and $j:U\to X$ the open complement. Then, the following sequence is exact:
\[ \Omega_*^\num(Z)\overset{i_*}{\to}\Omega_*^\num(X)\overset{j^*}{\to}\Omega_*^\num(U)\longrightarrow 0.\]
\end{theorem}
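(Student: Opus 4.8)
The plan is to deduce the exactness of
\[
\Omega_*^{\num}(Z)\overset{i_*}{\to}\Omega_*^{\num}(X)\overset{j^*}{\to}\Omega_*^{\num}(U)\to 0
\]
from the already-established localization sequence for ordinary algebraic cobordism (\cite[Theorem 3.2.7]{LM}), namely
\[
\Omega_*(Z)\overset{i_*}{\to}\Omega_*(X)\overset{j^*}{\to}\Omega_*(U)\to 0,
\]
by passing to the quotients by the ideals $\mathcal{N}_*$ and checking compatibility. First I would note that $i_*$ and $j^*$ do descend to $\Omega_*^{\num}$: $i_*$ is a projective push-forward, so Lemma \ref{lem:projpushfwd} applies, and $j^*$ descends by Lemma \ref{stpdab6} (or Lemma \ref{lem:smpullbk}). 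So the three-term sequence of quotients makes sense. Surjectivity of $j^*$ on $\Omega_*^{\num}$ is immediate: $j^*$ is already surjective on $\Omega_*$ and the vertical maps $\Omega_*(-)\to\Omega_*^{\num}(-)$ are surjective, so a diagram chase gives surjectivity on top.

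The content is exactness in the middle. One inclusion, $\im(i_*)\subseteq\ker(j^*)$, follows because $j^*\circ i_* = 0$ already on $\Omega_*$ (the composite $U\times_X Z\to U$ is the empty scheme, or invoke the old localization sequence), hence also on $\Omega_*^{\num}$. For the reverse inclusion, take $\bar\alpha\in\Omega_*^{\num}(X)$ with $j^*\bar\alpha = 0$ in $\Omega_*^{\num}(U)$. Lift $\bar\alpha$ to $\alpha\in\Omega_*(X)$. Then $j^*\alpha\in\mathcal{N}_*(U)$. By the surjectivity statement in Lemma \ref{stpdab6}, there is $\delta\in\mathcal{N}_*(X)$ with $j^*\delta = j^*\alpha$; replacing $\alpha$ by $\alpha-\delta$ (which does not change $\bar\alpha$), we may assume $j^*\alpha = 0$ in $\Omega_*(U)$. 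Now the old localization sequence gives $\alpha = i_*\beta$ for some $\beta\in\Omega_*(Z)$, and pushing to $\Omega_*^{\num}(X)$ shows $\bar\alpha = i_*\bar\beta\in\im(i_*)$. This completes exactness in the middle.

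I expect the only real subtlety — and the reason Lemma \ref{stpdab6} was proved first — to be the fact that $j^*(\mathcal{N}_*(X))$ is not merely contained in but equal to $\mathcal{N}_*(U)$, equivalently that $\mathcal{N}_*(U)$ is hit surjectively; without this, the correction step "subtract $\delta$" fails and one would only get $\alpha$ lands in $\mathcal{N}_*(U)$ without being able to lift the correction to $X$. Everything else is a routine two-column diagram chase using that the quotient maps $\Omega_*\to\Omega_*^{\num}$ are epimorphisms and that $i_*$, $j^*$ are compatible with them. Since all schemes here are reduced without loss of generality and $\Omega_*$ only sees $X_{\red}$, no extra care is needed for non-reduced $Z$ or $X$. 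I would present the argument as the short diagram chase above, citing Lemma \ref{lem:projpushfwd}, Lemma \ref{stpdab6}, and \cite[Theorem 3.2.7]{LM}.
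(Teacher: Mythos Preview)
Your proof is correct and essentially identical to the paper's own argument: the paper sets up the three-row diagram with rows $\mathcal{N}_*$, $\Omega_*$, $\Omega_*^{\num}$ and performs exactly the diagram chase you describe, using Lemma~\ref{stpdab6} to lift $j^*\alpha\in\mathcal{N}_*(U)$ to some $\beta\in\mathcal{N}_*(X)$ and then invoking exactness of the $\Omega_*$ row from \cite[Theorem~3.2.7]{LM}. Your identification of the surjectivity in Lemma~\ref{stpdab6} as the one nontrivial input is precisely the point the paper isolates as well.
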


\begin{proof}
Consider the following commutative diagram
\begin{equation} \label{eq:loc}
\xymatrix{ 0 \ar[d] & 0 \ar[d] & 0 \ar[d] & \\
\sN_*(Z) \ar[d]^{h_Z} \ar[r]^{i_* ^{\mathcal{N}}} & \sN_*(X)  \ar[d]^{h_X} \ar[r]^{j^*_{\mathcal{N}}} & \sN_*(U) \ar[d]^{h_U} \ar[r] & 0 \\
\Omega_*(Z) \ar[d]^{g_Z} \ar[r]^{i_*} & \Omega_*(X) \ar[d]^{g_X} \ar[r]^{j^*} & \Omega_*(U) \ar[d]^{g_U} \ar[r] & 0 \\
\Omega_*^\num(Z) \ar[d] \ar[r]^{i_* ^{\num}} & \Omega_*^\num(X) \ar[d] \ar[r]^{j^*_{\num}} & \Omega_*^\num(U) \ar[d] \ar[r] & 0 \\
0 & 0 & 0, & }
\end{equation}
where the columns are exact by definition, the middle row is exact by \cite[Theorem 3.2.7]{LM}, $i_*^{\num}$ and $j^*_{\num}$ are as in Lemmas \ref{lem:projpushfwd}, \ref{lem:smpullbk}. Clearly, $j_{\num} ^* \circ i_* ^{\num} = 0$, $j^* \circ i_* = 0$, $j^*_{\num} \circ i_* ^{\num}=0$ and $j^*_{\num}$ is surjective.  The map $j^*_{\mathcal{N}}$ is surjective by Lemma \ref{stpdab6}.

Since ${\rm im}(i_* ^{\num}) \subset \ker (j^*_{\num})$, it remains to show ${\rm im}(i_* ^{\num}) \supset \ker (j^* _{\num})$. Given $\bar{\alpha} \in \ker (j^* _{\num})$, choose $\alpha \in \Omega_* (X)$ such that $g_X (\alpha) = \bar{\alpha}$. Then, $g_U (j^* (\alpha)) = j_{\num}^* (g_X (\alpha)) = j_{\num} ^* (\ov{\alpha}) = 0$, so that $j^* (\alpha) \in \mathcal{N}_* (U)$. By Lemma \ref{stpdab6}, there is $\beta \in \mathcal{N}_* (X)$ such that $j_{\mathcal{N}}^* (\beta) = j^* (\alpha)$. Note $g_X (\beta) =0$. Then, $j^* (\alpha - \beta)  = 0$ so that by the exactness of the middle row, $\alpha - \beta = i_* (\gamma)$ for some $\gamma \in \Omega_* (Z)$. Hence, $\bar{\alpha} = g_X (\alpha) = g_X (\beta + i_* (\gamma)) = g_X (\beta ) + g_X i_* (\gamma) = i_{*} ^{\num} (g_Z (\gamma))$. Thus, $\bar{\alpha} \in {\rm im} (i_* ^{\num})$ as desired.
\end{proof}

\subsection{Homotopy invariance}\label{sec:homo inv}

\begin{theorem}\label{thm:EH} Let $X$ be a quasi-projective $k$-scheme of finite type, and let $f: E \to X$ be a torsor under a vector bundle over $X$ of rank $n$. Then, the map $f^*: \Omega_* ^{\num} (X) \to \Omega_{*+n} ^{\num} (E)$ is an isomorphism.
\end{theorem}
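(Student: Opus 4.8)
The plan is to deduce the isomorphism from the equality of ideals $\mathcal{N}_*(E)=f^*\mathcal{N}_*(X)$ inside $\Omega_*(E)$, and to prove this equality by compactifying the affine bundle $f$ into a projective bundle over $X$.

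First I would record two reductions. As $\Omega_*(-)=\Omega_*((-)_{\red})$, we may assume all schemes reduced; and since the irreducible components of $E$ are exactly the preimages $f^{-1}(X_i)$ of the components $X_i$ of $X$, while $f^*$ commutes with the component push-forwards --- $f$ being flat, hence Tor-independent of the closed immersions $X_i\hookrightarrow X$, so that the base-change formula of \S\ref{sec:functoriality} applies --- the general case reduces to the case of an irreducible variety $X$. Next, $f\colon E\to X$ is smooth of relative dimension $n$, so $f^*\colon\Omega_*(X)\to\Omega_{*+n}(E)$ is already an isomorphism by homotopy invariance of algebraic cobordism (\cite{LM}), and $f^*\mathcal{N}_*(X)\subseteq\mathcal{N}_*(E)$ by Lemma \ref{lem:smpullbk}. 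Hence $f^*$ descends to a surjection $\bar f^*\colon\Omega_*^{\num}(X)\to\Omega_{*+n}^{\num}(E)$, which is an isomorphism precisely when $f^*\mathcal{N}_*(X)=\mathcal{N}_*(E)$; so it remains to prove the reverse inclusion $\mathcal{N}_*(E)\subseteq f^*\mathcal{N}_*(X)$.

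To get hold of $\mathcal{N}_*(E)$ I would compactify $f$. The torsor $f$ is classified by a class in $H^1(X,\mathcal{E})\simeq\operatorname{Ext}^1_{\mathcal{O}_X}(\mathcal{O}_X,\mathcal{E})$; fix a representing extension $0\to\mathcal{E}\to\mathcal{F}\to\mathcal{O}_X\to 0$ of vector bundles, so that $\mathcal{F}$ has rank $n+1$. Then $E$ is identified, via an open immersion $\iota\colon E\hookrightarrow\bar E:=\mathbb{P}(\mathcal{F})$ with $\bar f\circ\iota=f$ (where $\bar f\colon\mathbb{P}(\mathcal{F})\to X$ is the associated $\mathbb{P}^n$-bundle), with the complement $\mathbb{P}(\mathcal{F})\setminus\mathbb{P}(\mathcal{E})$ of the projective subbundle $\mathbb{P}(\mathcal{E})$. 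Moreover the surjection $\mathcal{F}\to\mathcal{O}_X$ induces a global section of $\mathcal{O}_{\mathbb{P}(\mathcal{F})}(1)$ whose zero locus is exactly $\mathbb{P}(\mathcal{E})$, whence $\iota^*\mathcal{O}(1)\cong\mathcal{O}_E$ and $\widetilde{c}_1(\iota^*\mathcal{O}(1))=0$ as an operator on $\Omega_*(E)$. By the projective bundle formula for $\Omega_*$ (\cite{LM}) every element of $\Omega_*(\bar E)$ has the form $\sum_{i=0}^n\widetilde{c}_1(\mathcal{O}(1))^i\bar f^*(\alpha_i)$ with $\alpha_i\in\Omega_*(X)$, and $\bar f_*\!\left(\widetilde{c}_1(\mathcal{O}(1))^m\bar f^*(\alpha)\right)$ vanishes for $m<n$ and equals $\alpha$ for $m=n$.

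The heart of the argument is then the ``projective bundle formula for $\mathcal{N}_*$'': $\mathcal{N}_*(\bar E)=\bigoplus_{i=0}^n\widetilde{c}_1(\mathcal{O}(1))^i\bar f^*\mathcal{N}_*(X)$. The inclusion $\supseteq$ is immediate from Lemmas \ref{lem:smpullbk} and \ref{lem:chern class}. For $\subseteq$, I write a given $\beta\in\mathcal{N}_*(\bar E)$ as $\sum_i\widetilde{c}_1(\mathcal{O}(1))^i\bar f^*(\alpha_i)$ and run a descending induction on $j$ to show $\alpha_j\in\mathcal{N}_*(X)$: applying $\bar f_*\circ\widetilde{c}_1(\mathcal{O}(1))^{n-j}$ to $\beta$ yields $\alpha_j$ plus a combination of terms $\bar f_*\!\left(\widetilde{c}_1(\mathcal{O}(1))^{n-j+i}\bar f^*(\alpha_i)\right)$ with $i>j$; the left-hand side lies in $\mathcal{N}_*(X)$ since $\beta\in\mathcal{N}_*(\bar E)$ and $\mathcal{N}_*$ is stable under $\widetilde{c}_1$ (Lemma \ref{lem:chern class}) and $\bar f_*$ (Lemma \ref{lem:projpushfwd}), while the $i>j$ terms lie in $\mathcal{N}_*(X)$ by the inductive hypothesis together with Lemmas \ref{lem:smpullbk}, \ref{lem:chern class} and \ref{lem:projpushfwd}, forcing $\alpha_j\in\mathcal{N}_*(X)$. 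Granting this, Lemma \ref{stpdab6} applied to the open immersion $\iota$ gives
\[
\mathcal{N}_*(E)=\iota^*\mathcal{N}_*(\bar E)=\sum_{i=0}^n\widetilde{c}_1(\iota^*\mathcal{O}(1))^i\,f^*\mathcal{N}_*(X)=f^*\mathcal{N}_*(X),
\]
as the summands with $i\geq 1$ vanish; this proves $\mathcal{N}_*(E)=f^*\mathcal{N}_*(X)$ and hence the theorem. I expect the main obstacle to be exactly this last ``$\subseteq$'' inclusion --- i.e.\ showing that the ``components'' along the projective bundle of a numerically trivial cobordism cycle are themselves numerically trivial on the base --- which requires using the compatibility of $\mathcal{N}_*$ with push-forwards and with Chern class operators simultaneously, organized by the triangularity in the projective bundle formula. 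The remaining steps are formal manipulations with the functorialities established in \S\ref{sec:functoriality}.
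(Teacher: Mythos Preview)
Your argument is correct but follows a genuinely different route from the paper's. The paper first handles the trivial bundle by using the zero section $i_0^X: X \to X \times \mathbb{A}^1$ (a regular embedding, so $(i_0^X)^*$ exists on $\Omega_*^{\num}$ by Lemma~\ref{lem:reimpb}), which gives a left inverse to $f^*$; it then treats the general case by Noetherian induction on $\dim X$, using the localization sequence over an open where the bundle trivializes. You instead compactify $E$ inside a projective bundle $\mathbb{P}(\mathcal{F}) \to X$, prove directly a projective bundle decomposition for $\mathcal{N}_*$ via the triangular system coming from $\bar f_* \circ \widetilde{c}_1(\mathcal{O}(1))^{n-j}$, and then restrict via Lemma~\ref{stpdab6}. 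Your approach has the virtue of yielding (the hard inclusion of) a projective bundle formula for $\mathcal{N}_*$ as a byproduct, and avoids both the zero-section trick and the Noetherian induction; the paper's approach is more elementary in that it only needs $(i_0^X)^*$ on $\Omega_*^{\num}$ rather than the Segre-type identity $\bar f_* \circ \widetilde{c}_1(\mathcal{O}(1))^m \circ \bar f^* = 0$ for $m<n$ on arbitrary $X\in\Sch_k$ (this is standard for $\Omega_*$, but you should pin down the reference in \cite{LM}). Note too that in the paper's logical order Theorem~\ref{thm:PB} is proved \emph{after} Theorem~\ref{thm:EH} and in fact uses it; your method effectively reverses this dependency.
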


\begin{proof}Since $\Omega_* (X) = \Omega_* (X_{\rm red})$, we may assume $X$ is reduced. For surjectivity, consider the commutative diagram
$$
\begin{CD}
\Omega_* (X) @>{f^*}>> \Omega_{*+n} (E) \\
@VVV @VVV \\
\Omega_* ^{\num} (X) @>{f^*}>> \Omega_{*+n} ^{\num} (E).
\end{CD}$$By the homotopy invariance of $\Omega_*$ in \cite[Theorem 3.4.2]{LM}, the top horizontal arrow is an isomorphism. On the other hand, by construction, the two vertical arrows are surjective. Hence, the bottom arrow $f^*$ is surjective. 

For injectivity (which implies then that the bottom $f^*$ is an isomorphism), we argue in steps. First consider the case when $f: E \to X$ is a trivial bundle. Then, by induction on $n$, we reduce to the case when $n=1$, i.e. $E= X \times \mathbb{A}^1$. Consider the zero section map $i_{0}^X : X \to X \times \mathbb{A}^1$ so that $f \circ i_{0}^X = {\rm Id}_X$. Here, $i_0: \{ 0 \} \hookrightarrow \mathbb{A}^1$ is a regular embedding, and the maps $pr_2: X \times \mathbb{A}^1 \to \mathbb{A}^1$ and $i_0$ are Tor-independent by \cite[Lemma 6.3]{KP} so that $i_0 ^X : X = \{0 \} \times_{\mathbb{A}^1} (X \times \mathbb{A}^1) \to X \times \mathbb{A}^1$ is also a regular embedding. In particular $(i_0 ^{X})^*$ exists on $\Omega_* ^{\num}$ by Lemma \ref{lem:reimpb}, and it satisfies $(i_0 ^X)^* \circ f^* = {\rm Id}_X ^*$, showing that $f^*: \Omega_{*} ^{\num}(X) \to \Omega_{*+n} ^{\num} (E) $ must be injective. Thus, we just proved that $f^*: \Omega_{*} ^{\num}(X) \to \Omega_{*+n} ^{\num} (E) $ is an isomorphism for all trivial bundles. 

In general, we use the standard Noetherian induction argument on $\dim X$. If $\dim X = 0$, then $X$ is just a finite set of points so that $f: E \to X$ is a trivial bundle. Thus, $f^*: \Omega_{*} ^{\num}(X) \to \Omega_{*+n} ^{\num} (E) $ is an isomorphism as seen before. Suppose the theorem holds for schemes of dimension $< \dim X$. Let $U \subset X$ be a nonempty Zariski open subscheme where $f_U: E|_U \to U$ is a trivial bundle. By the case of the trivial bundle treated before, $f^*_U : \Omega_{*} ^{\num}(U) \to \Omega_{*+n} ^{\num} (E|_U) $ is an isomorphism. Let $Y:= ( X \setminus U)_{\rm red}$. By the induction hypothesis, $f_Y: E|_Y \to Y$ is induces an isomorphism $f_Y ^* : \Omega_* ^{\num} (Y) \to \Omega_{*+n} ^{\num} (E|_Y)$. So, we have a commutative diagram whose rows are localization sequences of Theorem \ref{thm:localization}
$$\xymatrix{\Omega_*^\num (Y) \ar[r] \ar[d]^{f_Y ^*} & \Omega_*^\num (X) \ar[r] \ar[d]^{f^*} & \Omega_*^\num (U) \ar[r] \ar[d]^{f_U^*} & 0\\
\Omega_{*+n}^\num (E|_Y) \ar[r]  & \Omega_{*+n}^\num (E) \ar[r]  & \Omega_{*+n}^\num (E|_U) \ar[r]  & 0,}$$
where the first and the third vertical arrows are isomorphisms. Thus, by a simple diagram chasing, the middle vertical arrow is an isomorphism.
\end{proof}

\subsection{Projective bundle formula}\label{sec:pbf}

Let $X$ be a $k$-scheme of finite type. Let $p:\sE\to X$ be a vector bundle of rank $n+1$ with projectivization $q:\P(\sE)\to X$. Let $\xi:=\widetilde{c}_1(\sO(1))$. To establish the Projective bundle formula on $\Omega_* ^{\num}$, we follow some ideas from \cite[\S 3.5]{LM}. Recall from \cite[\S 3.5.2]{LM}, the operator $\Phi_{X,\sE}:\oplus_{j=0}^n\Omega_{*-n+j}(X)\to \Omega_*(\P(\sE))$ given as the sum of $\phi_j:=\xi^j\circ q^*:\Omega_{*-n+j} (X)\to \Omega_*(\P(\sE))$.
By Lemmas \ref{lem:smpullbk} and \ref{lem:chern class}, $\Phi_{X,\sE}$ descends to a well-defined operator $\bar{\Phi}_{X,\sE}:\oplus_{j=0}^n\Omega_{*-n+j}^\num(X)\to \Omega_*^\num(\P(\sE)).$ We assert that this is an isomorphism. 

\begin{lemma}[{\emph{cf.} \cite[Lemma~3.5.1]{LM}}]\label{lem:closed}
Let $i:F\to X$ be a closed subset and let $\Omega^{\num}_{*,F}(X)$ be the image of $i_*$ in $\Omega^{\num}_*(X)$. Then, we have
\begin{enumerate}
\item For a line bundle $L\to X$, $\Omega^{\num}_{*,F}(X)$ is stable under the operator $\widetilde{c}_1(L)$.
\item Let $p:Y\to X$ be a smooth quasi-projective morphism and $q:X\to Z$ be a projective morphism. Then, $p^*\Omega^{\num}_{*,F}(X)\subset \Omega^{\num}_{*,p^{-1}(F)}(Y)$ and $q_*\Omega^{\num}_{*,F}(X)\subset \Omega^{\num}_{*,q(F)}(Z)$.
\end{enumerate}
\end{lemma}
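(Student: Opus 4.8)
The plan is to deduce both parts from their analogues for $\Omega_*$, namely \cite[Lemma~3.5.1]{LM}, exploiting the surjectivity of the canonical maps $g_{(-)}\colon \Omega_*(-)\to\Omega_*^{\num}(-)$ together with the fact — established in Lemmas \ref{lem:projpushfwd}, \ref{lem:smpullbk} and \ref{lem:chern class} — that the operations $q_*$, $p^*$ and $\widetilde{c}_1(L)$ on $\Omega_*^{\num}$ are induced from the corresponding operations on $\Omega_*$. The one genuinely necessary preliminary is the identification $\Omega_{*,F}^{\num}(X)=g_X\bigl(\Omega_{*,F}(X)\bigr)$, where $\Omega_{*,F}(X):=\im\bigl(i_*\colon\Omega_*(F)\to\Omega_*(X)\bigr)$. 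Indeed, the square
\[
\xymatrix{
\Omega_*(F)\ar[r]^{i_*}\ar[d]_{g_F} & \Omega_*(X)\ar[d]^{g_X}\\
\Omega_*^{\num}(F)\ar[r]^{i_*^{\num}} & \Omega_*^{\num}(X)
}
\]
commutes by Lemma \ref{lem:projpushfwd} and $g_F$ is surjective, so $\im(i_*^{\num})=\im(i_*^{\num}\circ g_F)=\im(g_X\circ i_*)=g_X\bigl(\Omega_{*,F}(X)\bigr)$. The same identification applies verbatim with $(Y,p^{-1}(F))$ and $(Z,q(F))$ in place of $(X,F)$.

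Granting this, each assertion is a one-line diagram chase. For (1), take $\eta\in\Omega_{*,F}^{\num}(X)$ and lift it to $\zeta\in\Omega_{*,F}(X)$ with $g_X(\zeta)=\eta$. Since $\widetilde{c}_1(L)$ on $\Omega_*^{\num}$ is induced from the operator on $\Omega_*$ (Lemma \ref{lem:chern class}), we have $\widetilde{c}_1(L)(\eta)=g_X\bigl(\widetilde{c}_1(L)(\zeta)\bigr)$; as $\widetilde{c}_1(L)(\zeta)\in\Omega_{*,F}(X)$ by \cite[Lemma~3.5.1]{LM}, it follows that $\widetilde{c}_1(L)(\eta)\in g_X\bigl(\Omega_{*,F}(X)\bigr)=\Omega_{*,F}^{\num}(X)$. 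For (2), lift $\eta\in\Omega_{*,F}^{\num}(X)$ to $\zeta\in\Omega_{*,F}(X)$ as before; by Lemmas \ref{lem:smpullbk} and \ref{lem:projpushfwd} the maps $p^*$ and $q_*$ on $\Omega_*^{\num}$ are induced from the corresponding ones on $\Omega_*$, hence $p^*(\eta)=g_Y(p^*\zeta)$ and $q_*(\eta)=g_Z(q_*\zeta)$. By \cite[Lemma~3.5.1]{LM} we have $p^*\zeta\in\Omega_{*,p^{-1}(F)}(Y)$ and $q_*\zeta\in\Omega_{*,q(F)}(Z)$, and applying $g_Y$, resp.\ $g_Z$, together with the identification of the first paragraph for $p^{-1}(F)$, resp.\ $q(F)$, yields $p^*(\eta)\in\Omega_{*,p^{-1}(F)}^{\num}(Y)$ and $q_*(\eta)\in\Omega_{*,q(F)}^{\num}(Z)$.

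No serious obstacle is expected here: the entire content is the identification $\Omega_{*,F}^{\num}(X)=g_X(\Omega_{*,F}(X))$, after which the lemma is a formal consequence of \cite[Lemma~3.5.1]{LM} and the compatibility — already recorded in \S\ref{sec:functoriality} and Lemma \ref{lem:chern class} — of the structure map $\Omega_*\to\Omega_*^{\num}$ with push-forwards, smooth pull-backs and first Chern class operators. The only point demanding (minor) care is to check at each step that the operation being applied is one of those already known to descend to $\Omega_*^{\num}$, so that no new well-definedness question is introduced; this is automatic here since $i$ is a closed immersion (hence projective), $p$ is smooth, and $q$ is projective.
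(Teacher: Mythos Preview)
Your proof is correct and essentially follows the paper's approach: the paper observes that the results of \S\ref{subsec:definition}--\ref{sec:functoriality} make $\Omega_*^{\num}$ an oriented Borel-Moore functor, so the proof of \cite[Lemma~3.5.1]{LM} carries over verbatim, while you make this formal reduction explicit by lifting along the surjection $g_X\colon\Omega_*\to\Omega_*^{\num}$ and invoking \cite[Lemma~3.5.1]{LM} directly. The content is identical; your identification $\Omega_{*,F}^{\num}(X)=g_X(\Omega_{*,F}(X))$ is exactly what underlies the paper's one-line appeal to the functor axioms.
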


\begin{proof}
It follows from the results of \S\ref{subsec:definition} and \ref{sec:functoriality} that $\Omega^{\num}_*$ is an oriented Borel-Moore functor in the sense of \cite[Definition~2.1.2]{LM}), whence the same proof as that of \cite[Lemma~3.5.1]{LM} works.
\end{proof}

\begin{theorem}\label{thm:PB}
In the above notation, $\bar{\Phi}_{X,\sE}$ is an isomorphism.
\end{theorem}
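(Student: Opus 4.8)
The plan is to realize $\bar\Phi_{X,\sE}$ as the reduction of the analogous map $\Phi_{X,\sE}$ for $\Omega_*$ modulo numerical equivalence. By the projective bundle formula for algebraic cobordism (\cite[\S 3.5]{LM}), $\Phi_{X,\sE}\colon\bigoplus_{j=0}^n\Omega_{*-n+j}(X)\to\Omega_*(\P(\sE))$ is an isomorphism, and the square relating $\Phi_{X,\sE}$, $\bar\Phi_{X,\sE}$ and the canonical surjections $\Omega_*\twoheadrightarrow\Omega_*^\num$ commutes, since $\bar\Phi_{X,\sE}$ is built from $q^*$ and $\xi=\widetilde c_1(\sO(1))$. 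Hence $\bar\Phi_{X,\sE}$ is surjective, and it is an isomorphism precisely when $\Phi_{X,\sE}$ carries $\bigoplus_{j=0}^n\mathcal N_{*-n+j}(X)$ onto $\mathcal N_*(\P(\sE))$. (After the harmless reduction to $X$ reduced and irreducible, all the functoriality of \S\ref{subsec:definition}--\ref{sec:functoriality} applies to $q$.) So everything reduces to the two inclusions below.

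For $\Phi_{X,\sE}\big(\bigoplus_j\mathcal N_{*-n+j}(X)\big)\subseteq\mathcal N_*(\P(\sE))$ I would argue directly: the projection $q\colon\P(\sE)\to X$ is smooth, so $q^*$ maps $\mathcal N_*(X)$ into $\mathcal N_*(\P(\sE))$ by Lemma~\ref{lem:smpullbk}, and $\xi$ preserves $\mathcal N_*(\P(\sE))$ by Lemma~\ref{lem:chern class}; therefore $\xi^j q^*\big(\mathcal N_*(X)\big)\subseteq\mathcal N_*(\P(\sE))$ for every $j$, and summing over $j$ gives the inclusion.

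For the reverse inclusion I would use the explicit shape of $\Phi_{X,\sE}^{-1}$ extracted from the proof of the projective bundle formula: its components are obtained by applying, to a class $\beta$ on $\P(\sE)$, the operators $q_*\circ\xi^i\circ q^*$ and then combining the resulting $n+1$ classes on $X$ by the inverse of the $(n+1)\times(n+1)$ matrix $M=\big(q_*\circ\xi^{i+j}\circ q^*\big)_{0\le i,j\le n}$ of operators on $\Omega_*(X)$, which is invertible. Each operator $q_*\circ\xi^m\circ q^*$ on $\Omega_*(X)$ is a composite of a smooth pull-back (Lemma~\ref{lem:smpullbk}), finitely many first Chern class operators (Lemma~\ref{lem:chern class}), and a projective push-forward (Lemma~\ref{lem:projpushfwd}, as $q$ is projective), hence preserves $\mathcal N_*(X)$; the same then holds for the entries of $M^{-1}$. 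Since moreover $q_*(\xi^i\beta)\in\mathcal N_*(X)$ whenever $\beta\in\mathcal N_*(\P(\sE))$ (again Lemmas~\ref{lem:chern class} and \ref{lem:projpushfwd}), all components of $\Phi_{X,\sE}^{-1}(\beta)$ lie in $\mathcal N_*(X)$, i.e. $\Phi_{X,\sE}^{-1}\big(\mathcal N_*(\P(\sE))\big)\subseteq\bigoplus_j\mathcal N_{*-n+j}(X)$. With the first inclusion this gives that $\bar\Phi_{X,\sE}$ is an isomorphism.

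The step I expect to be the main obstacle is justifying, against \cite[\S 3.5]{LM}, both that $\Phi_{X,\sE}^{-1}$ has the form above \emph{and} that $M$ is invertible over the (possibly singular, reducible) base $X$. Unlike in the Chow case, the ``negative'' operators $q_*\circ\xi^m\circ q^*$ with $m<n$ do not vanish in cobordism, so $M$ is not triangular; its invertibility instead comes from the fact that its ``leading term'' over $\bL$ is the matrix $\big([\P^{\,n-i-j}]\big)_{0\le i,j\le n}$ (which is unipotent after reversing the order of the basis), while the remaining ``lower order'' part is nilpotent by the generically constant property of $\Omega_*$ — essentially the content of the d\'evissage of \cite[\S 3.5]{LM}. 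An alternative is to re-run that d\'evissage directly for $\Omega_*^\num$, using Noetherian induction on $\dim X$ together with localization (Theorem~\ref{thm:localization}), homotopy invariance (Theorem~\ref{thm:EH}) and Lemma~\ref{lem:closed} to reduce surjectivity to the trivial bundle and then to $\P^n$ over a point; but the matching injectivity cannot be obtained from a five-lemma argument, since the closed push-forward in the localization sequence need not be injective, and must instead be reduced to $\mathcal N_*(\P^n)=0$, which holds because the Gram matrix $\big(\pi_*(\xi^{i+j})\big)_{i,j}=\big([\P^{\,n-i-j}]\big)_{i,j}$ of the intersection pairing on $\Omega_*(\P^n)$ becomes unipotent triangular after reversing the order of the basis, hence is invertible over $\bL$.
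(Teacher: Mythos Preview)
Your overall strategy---reducing to $\Phi_{X,\sE}\big(\bigoplus_j \mathcal{N}_{*-n+j}(X)\big) = \mathcal{N}_*(\P(\sE))$---matches the paper's, and the forward inclusion is identical. For the reverse inclusion in the trivial-bundle case, however, you are reinventing machinery that \cite[\S 3.5.3]{LM} already supplies: the operator $\Psi_{X,\sE}$ defined there is built from $q_*$, powers of $\xi$, and $q^*$, and satisfies $\Psi_{X,\sE}\circ\Phi_{X,\sE}={\rm Id}$. Since $\Phi_{X,\sE}$ is already an isomorphism on $\Omega_*$, this forces $\Psi_{X,\sE}=\Phi_{X,\sE}^{-1}$, and the fact that $\Psi_{X,\sE}$ descends to $\Omega_*^{\num}$ is exactly your matrix argument without any need to discuss invertibility of $M$. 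The paper simply cites that $\bar\Psi_{X,\sE}$ is well-defined and uses $\bar\Psi\circ\bar\Phi={\rm Id}$ for injectivity; it then adds a localization step (along $\P^{n-1}_X\hookrightarrow\P^n_X$ with complement $\A^n_X$) to finish the reverse inclusion. Your instinct that one can conclude immediately from $\Psi=\Phi^{-1}$ preserving $\mathcal{N}$ is sound, so your route is actually shorter here.

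For general $\sE$ the paper does not attempt your matrix argument. It runs the d\'evissage you sketch as an ``alternative'': a filtration $X=X_0\supset\cdots\supset X_N$ with $\sE$ trivial on each stratum $Y_m=X_m\setminus X_{m+1}$, and induction via the localization sequence (Theorem~\ref{thm:localization}) together with Lemma~\ref{lem:closed}. Your concern that injectivity cannot come from a five-lemma is resolved differently than you propose: rather than reducing to $\mathcal{N}_*(\P^n)=0$ over a point, the paper observes that both $\bar\Phi$ and $\bar\Psi$ restrict (by Lemma~\ref{lem:closed}) to the ``supported in $Y_m$'' subgroups, giving $\bar\Phi^{Y_m}$ and $\bar\Psi^{Y_m}$ with $\bar\Psi^{Y_m}\circ\bar\Phi^{Y_m}={\rm Id}$; surjectivity of $\bar\Phi^{Y_m}$ follows from the trivial case on $Y_m$, so $\bar\Phi^{Y_m}$ is an isomorphism and the short five-lemma applies cleanly. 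Thus the paper's route is your alternative d\'evissage, but with the $\Psi$-operator supplying the missing injectivity at each step.
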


\begin{proof}
We first consider the case of the trivial bundle, so that $\P(\sE)=\P^n_X$. We already know $\mathcal{N}_* (\P(\sE))\supset \Phi_{X,\sE}(\oplus_{j=0}^n\sN_{*-n+j}(X))$. Since $\Phi_{X,\sE}$ is an isomorphism by \cite[Theorem~3.5.4]{LM}, it is enough to show that $\sN_*(\P(\sE))\subset \Phi_{X,\sE}(\oplus_{j=0}^n\sN_{*-n+j}(X))$. Note that the operator $\Psi_{X,\sE}$ defined in \cite[\S3.5.3]{LM} descends to a well-defined operator $\bar{\Psi}_{X,\sE}:\Omega_*^\num(\P(\sE))\to \oplus_{j=0}^n\Omega_{*-n+j}^\num(X)$ with $\bar{\Psi}_{X,\sE}\circ\bar{\Phi}_{X,\sE}={\rm Id}$. This implies that $\bar{\Phi}_{X,\sE}$ is injective.
Consider the localization sequence given by the inclusion $i:\P^{n-1}_X\to\P^n_X$ with open complement $j:\A^n _X \to\P^n_X $
\begin{equation}\label{eq:PBloc}
\Omega_*(\P^{n-1}_X)\overset{i_*}{\to} \Omega_*(\P^n_X)\overset{j^*}{\to}\Omega_*(\A^n_X)\longrightarrow 0.
\end{equation}
It is shown in the course of the proof of \cite[Theorem~3.5.4]{LM} that $j^*\circ \Phi_{X,\sE}=p^*$. Thus, by Theorem \ref{thm:EH}, 
$j^*\sN_*(\P(\sE))=\sN_*(\A^n _X)=p^*\sN_*(X)=j^*\Phi_{X,\sE}\sN_*(X)$. Then, for any $\alpha\in \sN_*(\P(\sE))$, there is a $\beta\in\sN_*(X)$ such that $\alpha-\Phi_{X,\sE}(\beta)\in\ker (j^*)={\rm im} (i_*)$. It follows from \textit{loc.\ cit.}\ that the image of $i_*$ is contained in $\Phi_{X,\sE}(\oplus_{j=0}^n\Omega_{*-n+j}(X))$. Thus, $\alpha-\Phi_{X,\sE}(\beta)=\Phi_{X,\sE}(\gamma)$ for some $\gamma\in\oplus_{j=0}^n\Omega_{*-n+j}(X)$. Since $\bar{\Phi}_{X,\sE}$ is injective, $\Phi_{X,\sE}(\gamma)\in\sN_*(\P(\sE))\Rightarrow \gamma\in\oplus_{j=0}^n\sN_{*-n+j}(X)$, whence $\alpha\in \Phi_{X,\sE}(\oplus_{j=0}^n\sN_{*-n+j}(X))$. Thus, $\sN_*(\P(\sE))\subset \Phi_{X,\sE}(\oplus_{j=0}^n\sN_{*-n+j}(X))$, as desired.

Now suppose $E \to X$ is a general vector bundle. The idea is similar as in \textit{loc.\ cit.} Choose a descending filtration $\{X_m\}_{m=0}^N$ of $X$ by closed subschemes such that the restriction of $\sE$ to $Y_m:= X_m\setminus X_{m+1}$ is trivial. Thus, $\bar{\Phi}_{Y_m,\sE_{Y_m}}$ is an isomorphism with inverse $\bar{\Psi}_{Y_m,\sE_{Y_m}}$. By Lemma~\ref{lem:closed}, they descend to maps 
\begin{align*}
\bar{\Phi}^{Y_m} : & \oplus_{j=0}^n\Omega^{\num}_{*-n+j, Y_m}(X\setminus X_{m+1})\to \Omega^{\num}_{*,\P^n_{Y_m}}(\P_{X\setminus X_{m+1}}(\sE)) \\
\bar{\Psi}^{Y_m} : & \;\Omega^{\num}_{*,\P^n_{Y_m}}(\P_{X\setminus X_{m+1}}(\sE)) \to \oplus_{j=0}^n\Omega^{\num}_{*-n+j,Y_m}(X\setminus X_{m+1})
\end{align*}
with $\bar{\Psi}^{Y_m}\circ\bar{\Phi}^{Y_m}={\rm Id}$. This implies that $\bar{\Phi}^{Y_m}$ is an isomorphism. Assume by induction that $\bar{\Phi}_{X\setminus X_m}$ is an isomorphism. Note that, to ease the notation, we remove the appropriate restriction of $\sE$ from the subscript of $\bar{\Phi}$. By Theorem~\ref{thm:localization}, we now have the localization sequences for the inclusion $Y_m\overset{i}{\to} X\setminus X_m$ giving us the commutative diagram with exact columns

\[ \xymatrix{
0 \ar[d] & 0 \ar[d] \\
\oplus_{j=0}^n\Omega^{\num}_{*-n+j,Y_m}(X\setminus X_{m+1}) \ar[r]^{ \ \ \ \bar{\Phi}^{Y_m}} \ar[d] & \Omega^{\num}_{*,\P^n_{Y_m}}(\P_{X\setminus X_{m+1}}(\sE)) \ar[d] \\
\oplus_{j=0}^n\Omega^{\num}_{*-n+j}(X\setminus X_{m+1}) \ar[r]^{\ \ \bar{\Phi}_{X\setminus X_{m+1}}} \ar[d]^{j^*} & \Omega^{\num}_*(\P_{X\setminus X_{m+1}}(\sE)) \ar[d]^{j^*} \\
\oplus_{j=0}^n\Omega^{\num}_{*-n+j}(X\setminus X_{m}) \ar[r]^{\ \ \bar{\Phi}_{X\setminus X_{m}}} \ar[d] & \Omega^{\num}_*(\P_{X\setminus X_{m}}(\sE)) \ar[d] \\
0 & 0.} \]
Since $\bar{\Phi}^{Y_m}$ and $\bar{\Phi}_{X\setminus X_m}$ are isomorphisms, so is $\bar{\Phi}_{X\setminus X_{m+1}}$ and we have the desired result by induction.
\end{proof}

Combining all the above results in \S \ref{sec:OBM}, we deduce finally:

\begin{theorem}\label{thm:OBM}The functor $\Omega_* ^{\num}$ is an oriented Borel-Moore homology theory on $\Sch_k$, and its restriction $\Omega_{\num} ^*$ on $\Sm_k$ is an oriented cohomology theory.
\end{theorem}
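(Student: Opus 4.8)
The plan is to verify, clause by clause, that $\Omega_*^{\num}$ meets the definition of an oriented Borel--Moore homology theory in \cite[Definition~5.1.3]{LM} --- that is, that it is an oriented Borel--Moore functor of geometric type satisfying the projective bundle formula and the extended homotopy property --- and then to deduce the cohomological statement by reindexing. Most of the work has in fact already been done: the proof of Lemma~\ref{lem:closed} records that $\Omega_*^{\num}$, equipped with the projective push-forwards of Lemma~\ref{lem:projpushfwd}, the l.c.i. pull-backs constructed in \S\ref{sec:functoriality}, and the first Chern class operators of Lemma~\ref{lem:chern class}, is an oriented Borel--Moore functor in the sense of \cite[Definition~2.1.2]{LM}. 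So I would (i) add the external product to this package, (ii) check the three relations that upgrade it to ``geometric type'', and (iii) invoke Theorem~\ref{thm:PB} for the projective bundle formula and Theorem~\ref{thm:EH} for the extended homotopy property; localization is then the extra input provided by Theorem~\ref{thm:localization}, which accounts for the ``in particular'' clause.

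The guiding principle for the formal steps (i) and (ii) is that $\Omega_*(X)\twoheadrightarrow\Omega_*^{\num}(X)$ is a natural surjection which, by the lemmas of \S\ref{subsec:definition}--\ref{sec:functoriality} and Lemma~\ref{lem:chern class}, intertwines every structure map; hence any relation among these operations that holds as an \emph{identity} on $\Omega_*$ descends at once to $\Omega_*^{\num}$. This disposes of every compatibility axiom of \cite[Definition~2.1.2]{LM} and of the relations $({\rm Dim})$, $({\rm Sect})$, $({\rm FGL})$ recalled in \S\ref{sec:cobordism} that define geometric type. For the external product, I would first show that $\mathcal{N}_*(X)\times\Omega_*(Y)$ and $\Omega_*(X)\times\mathcal{N}_*(Y)$ land in $\mathcal{N}_*(X\times Y)$ --- immediate for $X,Y$ smooth projective from $\alpha\times\beta=pr_X^*\alpha\cdot pr_Y^*\beta$ together with Lemma~\ref{stpdab1} and Lemma~\ref{lem:ideal_smp}, and in general by the same kind of desingularization bookkeeping used throughout \S\ref{subsec:definition} --- after which the induced external product on $\Omega_*^{\num}$ inherits its required compatibilities by the identity-descent principle again.

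The cohomological statement follows with little extra effort: every morphism of smooth $k$-schemes is l.c.i., so \S\ref{sec:functoriality} already furnishes $f^*$ for all morphisms in $\Sm_k$; and for $X$ smooth of pure dimension $d$ the group $\Omega^*_{\num}(X):=\Omega_{d-*}^{\num}(X)$ is a ring because $\mathcal{N}_*(X)$ is an ideal of $\Omega_*(X)$ by Lemma~\ref{lem:ideal_smqp}, with $f^*$ a ring homomorphism since it is one on $\Omega_*$. The axioms of \cite[Definition~1.1.2]{LM} then drop out of the Borel--Moore structure just established via the standard reindexing of \cite[\S~5.2]{LM} (or may be checked directly, the only non-formal inputs again being Theorems~\ref{thm:EH} and~\ref{thm:PB}). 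The only genuinely substantive content of this section already lives in the localization, homotopy-invariance and projective-bundle theorems, so the main --- and essentially only --- real obstacle to writing Theorem~\ref{thm:OBM} out cleanly is the somewhat tedious verification that the external product descends and that the full list of Levine--Morel compatibility axioms transfers across the surjection $\Omega_*\twoheadrightarrow\Omega_*^{\num}$; everything else is a matter of citing what has been proved.
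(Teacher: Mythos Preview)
Your proposal is correct and is essentially the paper's own argument: the paper ``proves'' Theorem~\ref{thm:OBM} in one line, writing only ``Combining all the above results in \S\ref{sec:OBM}, we deduce finally,'' so the content is exactly the checklist you describe --- push-forwards, l.c.i.\ pull-backs, Chern class operators, external products, the compatibility axioms, and then Theorems~\ref{thm:EH} and~\ref{thm:PB} for (EH) and (PB). You are in fact more careful than the paper in one respect: you single out the external product as a structure whose descent to $\Omega_*^{\num}$ has not been explicitly recorded, whereas the paper silently folds this into the assertion (in the proof of Lemma~\ref{lem:closed}) that $\Omega_*^{\num}$ is an oriented Borel--Moore functor; your sketch of how to check it is correct.
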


\subsection{Generic constancy} Recall (\cite[Definition 4.4.1]{LM}) that we say an oriented Borel-Moore homology theory $A_*$ on $\Sch_k$ is \emph{generically constant} if for each finitely generated field extension $k \subset F$, the natural morphism $A_* (k) \to A_* (F/k)$ is an isomorphism, where $A_* (F/k) = \colim A_{*+{\rm tr}_{F/k}} (X)$ in which the colimit is taken over all models $X$ for $F$ over $k$. Recall that a model for $F$ over $k$ is an integral scheme $X$ with $k(X) \simeq F$. We have:

\begin{proposition}\label{prop:generic constant}The cobordism theory $\Omega_* ^{\num}$ is generically constant.
\end{proposition}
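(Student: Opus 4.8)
The plan is to deduce the generic constancy of $\Omega_* ^{\num}$ from that of $\Omega_*$ --- which is part of the structure theory of algebraic cobordism developed in \cite[\S 4.4]{LM} --- by transporting it along the natural surjective morphism $g \colon \Omega_* \twoheadrightarrow \Omega_* ^{\num}$ of oriented Borel-Moore homology theories on $\Sch_k$ built in this section (it commutes with projective push-forwards and l.c.i.\ pull-backs by the results of \S\ref{subsec:definition}--\S\ref{sec:functoriality}). Fix a finitely generated extension $k \subset F$. The comparison morphism $A_* (k) \to A_* (F/k)$ of \cite[Definition 4.4.1]{LM} is assembled, over the filtered system of smooth models $U$ of $F$, from the pull-backs $\pi_U ^*$ along the structure maps $\pi_U \colon U \to \Spec(k)$; since a morphism of oriented Borel-Moore homology theories commutes with smooth pull-backs, applying $g$ gives a commutative square
\[
\xymatrix{
\Omega_* (k) \ar[r] \ar[d]_{g_{\Spec(k)}} & \Omega_* (F/k) \ar[d] \\
\Omega_* ^{\num} (k) \ar[r] & \Omega_* ^{\num} (F/k).
}
\]
The top arrow is an isomorphism since $\Omega_*$ is generically constant. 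The left arrow is an isomorphism because $\mathcal N_* (\Spec(k)) = 0$ by Remark~\ref{rmk:point}. The right arrow is the filtered colimit of the surjections $g_U \colon \Omega_{*+\dim U} (U) \twoheadrightarrow \Omega_{*+\dim U} ^{\num} (U)$, which are compatible with restriction to open subschemes by Lemma~\ref{lem:smpullbk}; since filtered colimits are exact, it is surjective. Chasing the square, the bottom arrow $\Omega_* ^{\num}(k) \to \Omega_* ^{\num}(F/k)$ is surjective.

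It remains to prove injectivity of the bottom arrow. Suppose $\omega \in \Omega_* ^{\num}(k) = \mathbb L$ maps to $0$ in the filtered colimit $\Omega_* ^{\num}(F/k)$. Then there is a model $W$ of $F$ --- which, after replacing $W$ by its smooth locus and invoking Lemma~\ref{lem:smpullbk}, we may take smooth --- with $\pi_W ^* \omega \in \mathcal N_* (W)$. Pick a smooth compactification $j \colon W \hookrightarrow \widetilde W$, with $\widetilde W$ smooth projective. By Definition~\ref{defn:smqp} one has $\mathcal N_* (W) = j^* \mathcal N_* (\widetilde W)$, and by Lemma~\ref{stpdab6} one has $(j^*)^{-1} \mathcal N_* (W) = \mathcal N_* (\widetilde W)$; since $j^* \pi_{\widetilde W} ^* \omega = \pi_W ^* \omega \in \mathcal N_* (W)$, we get $\pi_{\widetilde W} ^* \omega \in \mathcal N_* (\widetilde W)$. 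Now $\widetilde W$ is smooth projective, so Definition~\ref{def:numSmProj} applies: for every $\gamma \in \Omega_* (\widetilde W)$, the projection formula for the projective morphism $\pi_{\widetilde W}$ gives $0 = \pi_{\widetilde W *}(\pi_{\widetilde W} ^* \omega \cdot \gamma) = \omega \cdot \pi_{\widetilde W *} \gamma$ in $\mathbb L$. Choosing $\gamma = [\Spec \kappa(x) \to \widetilde W]$ for a closed point $x \in \widetilde W$ yields $\pi_{\widetilde W *} \gamma = [\kappa(x):k] \cdot 1 \neq 0$, so $[\kappa(x):k]\, \omega = 0$; since $\mathbb L$ is a polynomial ring over $\mathbb Z$, it is torsion-free, and hence $\omega = 0$. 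Thus the bottom arrow is injective, hence an isomorphism.

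I expect the injectivity step to be the main obstacle. Surjectivity is formal once one grants the generic constancy of $\Omega_*$, but injectivity must upgrade the a priori weak hypothesis ``$\pi_W ^* \omega$ becomes numerically trivial on some small model $W$'' to the rigid conclusion ``$\omega = 0$ in $\mathbb L$''. This is what forces the detour through a smooth projective compactification $\widetilde W$ --- so that the pairing against all of $\Omega_* (\widetilde W)$ together with the proper push-forward $\pi_{\widetilde W *}$ of Definition~\ref{def:numSmProj} becomes available --- and uses Lemma~\ref{stpdab6} to carry numerical triviality from the open $W$ back up to $\widetilde W$; the existence of a closed point on $\widetilde W$ and the torsion-freeness of $\mathbb L$ then close the argument.
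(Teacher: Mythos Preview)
Your surjectivity argument is identical to the paper's and is fine. Your injectivity argument, however, hinges on the second assertion of Lemma~\ref{stpdab6}, namely $(j^*)^{-1}(\mathcal N_*(W))=\mathcal N_*(\widetilde W)$, and that assertion is false as stated (despite appearing in the paper). A one-line counterexample: take $\widetilde W=\P^1$, $W=\A^1$. Then $\mathcal N_*(\P^1)=0$ (cellular) and $\mathcal N_*(\A^1)=j^*\mathcal N_*(\P^1)=0$, but $(j^*)^{-1}(0)=\ker(j^*)=i_*\Omega_*(\text{pt})\neq 0$. So you cannot conclude $\pi_{\widetilde W}^*\omega\in\mathcal N_*(\widetilde W)$ from $j^*(\pi_{\widetilde W}^*\omega)\in\mathcal N_*(W)$; in fact that containment is \emph{equivalent} to $\omega=0$, which is precisely what you are trying to prove.

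The gap is easily repaired without changing your overall strategy. From $\omega\cdot 1_W\in\mathcal N_*(W)=j^*\mathcal N_*(\widetilde W)$ write $\omega\cdot 1_W=j^*\beta$ with $\beta\in\mathcal N_*(\widetilde W)$. By Proposition~3.5 one has $\deg(\beta)=0$. Since the degree map $\Omega_*(\widetilde W)\to\Omega_{*-\dim\widetilde W}(k)$ is the restriction to the generic point, it factors through $j^*$; hence $\deg(\beta)$ is the image of $\omega\cdot 1_W$ in $\Omega_*(F/k)\simeq\bL$, which is $\omega$. Thus $\omega=0$. (Alternatively, pick a closed point $x\in W$, note $\beta-\omega\cdot 1_{\widetilde W}\in\ker(j^*)$ so its product with $[x]$ vanishes since $\iota_x^*$ factors through $j^*$, and then pair $\beta$ with $[x]$ as you did.)

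For comparison, the paper avoids this altogether by a shorter trick: it observes that the composite $\Omega_*^{\num}(k)\to\colim_X\Omega_*^{\num}(X)\to\Omega_*^{\num}(F)$ is an isomorphism (both ends are $\bL$ by Remark~\ref{rmk:point}, now with $F$ as base field), which forces $\eta_F^{\num}$ to be injective. Your hands-on approach is more elementary and does not require making sense of the ``restriction to the generic point over $F$'' map; once patched as above it is a perfectly good alternative.
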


\begin{proof}We have a commutative diagram
$$\xymatrix{ \Omega_* (k) \ar[d] ^{\eta_F} _{\simeq} \ar[r] ^{\simeq} & \Omega_* ^{\num} (k) \ar[d] ^{\eta_F ^{\num}} \ar[dr] ^{\simeq} & \\
\colim _{\mathcal{C}} \Omega_{*+{\rm tr}_{F/k} (X)} \ar[r] & \colim_{\mathcal{C}} \Omega_{*+ {\rm tr}_{F/k}} ^{\num} (X) \ar[r] & \Omega_* ^{\num} (F),}$$
where $\mathcal{C}$ is the category of models for $F$ over $k$. Here, the top isomorphism and the right sloped isomorphism hold by Remark \ref{rmk:point}, and the left vertical arrow is an isomorphism by \cite[Corollary 4.4.3]{LM}. The first bottom horizontal arrow is a natural surjection, so, $\eta_F ^{\num}$ is surjective. But, the right commutative triangle shows $\eta_F ^{\num}$ is injective, too. Thus, $\eta_F ^{\num}$ is an isomorphism.
\end{proof}

\begin{corollary}\label{cor:OBMgc}The theory $\Omega_* ^{\num}$ on $\Sch_k$ is a generically constant oriented Borel-Moore homology theory on $\Sch_k$.
\end{corollary}
\begin{proof}The assertion follows from Theorem \ref{thm:OBM} and Proposition \ref{prop:generic constant}.
\end{proof}

\begin{remark}We remark that $\Omega _* ^{\num}$ satisfies the generalized degree formula by \cite[Theorem 4.4.7]{LM}, for it holds for any oriented Borel-Moore weak homology theory, that is generically constant and has the localization property. All these are implied by Corollary \ref{cor:OBMgc}.
\end{remark}

\subsection{Comparison with $\Omega_* ^{\alg}$} Recall from \cite{KP} the theory $\Omega_* ^{\alg}$ of algebraic cobordism modulo algebraic equivalence. We have a natural surjective morphism of oriented Borel-Moore homology theories $\Omega_* \to \Omega_* ^{\alg}$. We now show that there is a natural surjection $\Omega_* ^{\alg} \to \Omega_* ^{\num}$, and it is a morphism of oriented Borel-Moore homology theories, too. By \cite[Theorem 6.4]{KP}, for each $X \in \Sch_k$, there is an exact sequence
\begin{equation}\label{eqn:stpdabalg}
\bigoplus \Omega_* (X \times C) \overset{i_1 ^* - i_2 ^*}{\to} \Omega_* (X) \to \Omega_* ^{\alg} (X) \to 0,
\end{equation}
where the sum runs over the equivalence classes $(C, t_1, t_2)$ of triples consisting of a smooth projective connected curve $C$ and two distinct points $t_1, t_2 \in C(k)$, and $i_j ^*$ is the pull-back via the closed immersion $i_j : X \times \{ t_j \} \to X \times C$. Since $\Omega_* (X) \to \Omega_* ^{\num} (X)$ is surjective by definition, it is enough to show: 

\begin{proposition}For each $X \in \Sch_k$, we have $\iota (\mathcal{A}(X)) \subset \mathcal{N}_* (X)$, where $\mathcal{A}(X):= \bigoplus \Omega_* (X \times C)$ and $\iota$ is the sum of $i_1 ^* - i_2 ^*$.
\end{proposition}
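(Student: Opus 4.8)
The plan is to reduce the statement, via the functoriality machinery of \S\ref{sec:OBM}, to the case where $X$ is a smooth projective variety, and then to deduce it from one computation on the curve $C$. Since $\mathcal{A}(X)=\bigoplus\Omega_*(X\times C)$ and $\iota$ is the sum of the operators $i_1^*-i_2^*$ over the triples $(C,t_1,t_2)$, and $\mathcal{N}_*(X)$ is a subgroup, it suffices to prove: for each smooth projective connected curve $C$, each pair $t_1,t_2\in C(k)$, and each $\eta\in\Omega_*(X\times C)$, the class $i_1^*\eta-i_2^*\eta$ lies in $\mathcal{N}_*(X)$, where $i_j\colon X\times\{t_j\}\hookrightarrow X\times C$ is the regular embedding obtained by pulling back the Cartier divisor $t_j$ on $C$ along the flat projection $X\times C\to C$.

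\emph{Reduction to $X$ smooth projective.} I would argue by induction on $\dim X$, mirroring the three steps in the construction of $\mathcal{N}_*$. If $X$ has irreducible components $X_\ell$, lift $\eta$ along the surjection $\bigoplus_\ell\Omega_*(X_\ell\times C)\twoheadrightarrow\Omega_*(X\times C)$ and use that $i_j^*$ commutes with the relevant push-forward (base change along a Tor-independent Cartesian square, \cite[Theorem~6.5.12]{LM}; Tor-independence is automatic since $i_j$ is pulled back from the Cartier divisor $t_j$ along a flat map), together with Lemma~\ref{lem:projpushfwd} and Definition~\ref{defn:qp}. If $X$ is a quasi-projective variety with desingularization $f\colon X'\to X$ and reduced singular locus $i_{\mathrm s}\colon X_{\mathrm s}\hookrightarrow X$, the localization sequence for $X_{\mathrm s}\times C\hookrightarrow X\times C$ and the fact that $f\times\mathrm{id}$ is an isomorphism over the smooth locus give a decomposition $\eta=(f\times\mathrm{id})_*\eta'+(i_{\mathrm s}\times\mathrm{id})_*\eta''$; apply base change to each summand and conclude that the first contribution lies in $f_*\mathcal{N}_*(X')=\mathcal{N}_*(X)$ by the smooth case, the second by the induction hypothesis applied to $X_{\mathrm s}$ and Lemma~\ref{lem:projpushfwd}. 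If $X=U$ is smooth quasi-projective, pick a smooth compactification $j\colon U\hookrightarrow\widetilde U$, lift $\eta$ along $\Omega_*(\widetilde U\times C)\twoheadrightarrow\Omega_*(U\times C)$ (localization), and use $i_j^*\circ(j\times\mathrm{id})^*=j^*\circ\widetilde i_j^*$ to reduce to $\widetilde U$, invoking $\mathcal{N}_*(U)=j^*\mathcal{N}_*(\widetilde U)$ (Definition~\ref{defn:smqp}). The base case $\dim X=0$ is (a disjoint union of) smooth projective $0$-dimensional varieties, hence covered by the projective case.

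\emph{The smooth projective case.} Let $X$ be smooth projective and $\gamma\in\Omega_*(X)$, with projections $p\colon X\times C\to X$ and $q\colon X\times C\to C$. Since $p\circ i_j=\mathrm{id}_X$ and $i_j^*$ is a ring homomorphism, $i_j^*\eta\cdot\gamma=i_j^*(\eta\cdot p^*\gamma)$. Put $\delta:=\eta\cdot p^*\gamma$. The square with horizontal maps $i_j$, $\iota_{t_j}\colon\{t_j\}\hookrightarrow C$ and vertical maps $\pi_X$, $q$ is Cartesian and Tor-independent, so \cite[Theorem~6.5.12]{LM} yields $\pi_{X*}(i_j^*\delta)=\iota_{t_j}^*(q_*\delta)$. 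Hence, with $\omega:=q_*\delta\in\Omega_*(C)$,
\[
\pi_{X*}\bigl((i_1^*\eta-i_2^*\eta)\cdot\gamma\bigr)=(\iota_{t_1}^*-\iota_{t_2}^*)(\omega),
\]
so everything comes down to showing that $\iota_{t_1}^*=\iota_{t_2}^*$ as maps $\Omega_*(C)\to\Omega_*(k)=\mathbb{L}$. By the projection formula, $\iota_{t}^*\omega=\pi_{C*}(\eta_t\cdot\omega)$ with $\eta_t:=[\{t\}\hookrightarrow C]\in\Omega_0(C)$, and the ${\rm (Sect)}$ relation gives $\eta_t=\widetilde c_1(\mathcal{O}_C(t))(1_C)$; hence $(\iota_{t_1}^*-\iota_{t_2}^*)(\omega)=\pi_{C*}\bigl((\eta_{t_1}-\eta_{t_2})\cdot\omega\bigr)$, and it is enough to prove $\eta_{t_1}-\eta_{t_2}\in\mathcal{N}_*(C)$. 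Expand $\omega$ by the generalized degree formula on the curve $C$ (\cite[Theorem~4.4.7]{LM}): $\omega=a\cdot 1_C+\sum_Z\omega_Z[Z\hookrightarrow C]$ with $a,\omega_Z\in\mathbb{L}$ and $Z$ ranging over finitely many closed points. For each such $Z$, $\eta_{t_j}\cdot[Z\hookrightarrow C]=\widetilde c_1(\mathcal{O}_C(t_j))([Z\hookrightarrow C])=[\,Z\hookrightarrow C,\ \mathcal{O}_C(t_j)|_Z\,]$, which vanishes by the ${\rm (Dim)}$ relation since $\mathcal{O}_C(t_j)|_Z$ is a line bundle over the spectrum of a field. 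Therefore $(\eta_{t_1}-\eta_{t_2})\cdot\omega=a\,(\eta_{t_1}-\eta_{t_2})$, and $\mathbb{L}$-linearity of $\pi_{C*}$ gives
\[
\pi_{C*}\bigl((\eta_{t_1}-\eta_{t_2})\cdot\omega\bigr)=a\cdot(\pi_{C*}\eta_{t_1}-\pi_{C*}\eta_{t_2})=a\cdot(1-1)=0,
\]
because $\pi_{C*}\eta_{t_j}=[\Spec k\xrightarrow{\ \mathrm{id}\ }\Spec k]=1\in\mathbb{L}$ as $t_j$ is $k$-rational. Thus $\eta_{t_1}-\eta_{t_2}\in\mathcal{N}_*(C)$, completing the smooth projective case and, with the reduction, the proposition.

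\emph{Expected main obstacle.} The computational heart is light; the real work is in the reduction. The points needing care are the Tor-independence of the various Cartesian squares in the three reduction steps (so that \cite[Theorem~6.5.12]{LM} applies) and, in the quasi-projective-variety step, the justification of the decomposition $\Omega_*(X\times C)=(f\times\mathrm{id})_*\Omega_*(X'\times C)+(i_{\mathrm s}\times\mathrm{id})_*\Omega_*(X_{\mathrm s}\times C)$ from the localization sequence together with the fact that $f\times\mathrm{id}$ restricts to an isomorphism over the smooth locus.
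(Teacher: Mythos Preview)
Your proof is correct. The reduction from general $X\in\Sch_k$ to the smooth projective case follows the same three-step pattern as the paper (components, desingularization plus singular locus via induction, smooth compactification), with the same justifications: base change along Tor-independent squares for $i_j^*$ against the various push-forwards, localization to lift classes, and the decomposition $\eta=(f\times\mathrm{id})_*\eta'+(i_{\mathrm s}\times\mathrm{id})_*\eta''$, which is exactly what the paper imports from \cite[(5.5)]{KP}.

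Where you genuinely diverge is in the smooth projective case. The paper argues more structurally: it records that $\iota(\mathcal{A}(-))$ is an ideal compatible with $\times$, $\Delta^*$, and $\pi_*$ (Lemma~\ref{lem:eastpdabzy}), builds a commutative ladder from $\Omega_*(X)\otimes\Omega_*(X)$ through $\Omega_*(X\times X)$ to $\Omega_*(k)$, and then invokes the external input $\Omega_*^{\alg}(k)\simeq\Omega_*(k)$ from \cite[Theorem~1.2(2)]{KP} to conclude $\mathcal{A}(k)=0$, which forces $\pi_*(\iota(\alpha)\cdot\gamma)=0$. Your route is a direct computation: push $\pi_{X*}\bigl((i_1^*\eta-i_2^*\eta)\cdot\gamma\bigr)$ down to the curve via base change, and then verify by hand that $[\{t_1\}]-[\{t_2\}]\in\mathcal{N}_*(C)$ using the generalized degree formula on $C$ together with the $(\mathrm{Sect})$ and $(\mathrm{Dim})$ relations. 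Your argument is more elementary and self-contained---it does not appeal to the computation of $\Omega_*^{\alg}$ at a point---at the cost of being a little more ad hoc; the paper's version makes the compatibility of $\iota(\mathcal{A}(-))$ with the ring and push-forward structure explicit, which is conceptually cleaner and closer to how one thinks of adequate equivalences. Both are perfectly valid.
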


\begin{proof}We may assume $X$ is integral.

\textbf{Step 1.} First suppose $X$ is a smooth projective $k$-variety. Let $\pi: X \to \Spec (k)$ be the structure morphism. Observe that we have the following, whose proof is easy:

\begin{lemma}\label{lem:eastpdabzy}
Let $X$ be a smooth quasi-projective variety. Then,
\begin{enumerate}
\item  For $\alpha \in \mathcal{A}(X)$ and $\beta \in \Omega_* (X)$, we have $\alpha \times \beta, \beta \times \alpha \in \mathcal{A} (X \times X)$.
\item  For $\alpha \in \mathcal{A}(X \times X)$, we have $\Delta_X ^* \alpha \in \mathcal{A} (X)$.
\item $\iota (\mathcal{A}(X))$ is an ideal of the ring $\Omega_* (X)$.
\end{enumerate}
\end{lemma}

Continuing the proof, by \eqref{eqn:stpdabalg} and Lemma \ref{lem:eastpdabzy}, we have a commutative diagram, where the columns are exact and $\mathcal{B} (X) := \mathcal{A} (X) \otimes \Omega_* (X)\oplus \Omega_* (X) \otimes \mathcal{A}(X)$:
$$
\xymatrix{\mathcal{B}(X) \ar[d]^{\iota \otimes {\rm Id} + {\rm Id} \otimes \iota} \ar[r] ^{\times} & \mathcal{A} (X \times X) \ar[d]^{\iota} \ar[r] ^{\Delta_X ^*} & \mathcal{A}(X) \ar[d]^{\iota} \ar[r] ^{\pi_*} & \mathcal{A}(k)  \ar[d]^{\iota} \\
\Omega_* (X) \otimes \Omega_* (X) \ar[d] \ar[r] ^{\times} & \Omega_* (X \times X) \ar[d]  \ar[r] ^{\Delta_X ^*} & \Omega_* (X) \ar[d]^{\Phi} \ar[r] ^{\pi_*} & \Omega_* (k) \ar[d]^{\simeq} \\
\Omega_* ^{\alg} (X) \otimes \Omega_* ^{\alg} (X) \ar[d] \ar[r] ^{\times} & \Omega_* ^{\alg} (X \times X) \ar[d]  \ar[r] ^{\Delta_X ^*} & \Omega_* ^{\alg} (X) \ar[d] \ar[r] ^{\pi_*} & \Omega_* ^{\alg} (k) \ar[d] \\
0 & 0 & 0 & 0,}
$$where the far right middle isomorphism comes from \cite[Theorem 1.2(2)]{KP}. Thus, $\mathcal{A}(k) = 0$. Thus, if $\alpha \otimes \gamma \in \mathcal{A} (X) \otimes \Omega_* (X)$, then $\pi_* (\alpha \cdot \gamma) = 0$, so by the commutativity we get $\iota (\alpha) \in \mathcal{N}_* (X)$, which shows $\iota (\mathcal{A}(X)) \subset \mathcal{N}_* (X)$. This proves the proposition in this case.

\noindent \textbf{Step 2.} Now suppose $X$ is smooth quasi-projective. Choose a smooth compactification $j: X \hookrightarrow \widetilde{X}$. Then, we have a commutative diagram with exact rows and columns, where the rows come from \eqref{eqn:stpdabalg} and the columns are part of localization sequences:
$$\xymatrix{0 \ar[r] & \iota (\mathcal{A}(\widetilde{X})) \ar[d] ^{j^*_{\mathcal{A}}} \ar[r] & \Omega_* (\widetilde{X}) \ar[d] ^{j^*} \ar[r] & \Omega_* ^{\alg} (\widetilde{X}) \ar[d] ^{j^*_{\alg}} \ar[r] & 0 \\
0 \ar[r] & \iota(\mathcal{A}(X)) \ar[d]  \ar[r] & \Omega_* (X) \ar[d] \ar[r] & \Omega_* ^{\alg} (X) \ar[d] \ar[r] & 0 \\
 & 0 & 0 & 0,
}$$ Note that $j^*$ is a surjective ring homomorphism, while $\iota (\mathcal{A}(\widetilde{X})) \subset \Omega_* (\widetilde{X})$, $\iota (\mathcal{A} (X)) \subset \Omega_* (X)$ are ideals, so that by the commutativity and the correspondence theorem $j_{\mathcal{A}} ^*$ is also surjective, i.e. $j_{\mathcal{A}} ^* ( \iota (\mathcal{A} (\widetilde{X}))) = \iota (\mathcal{A} (X))$. By Step 1, we know $\iota (\mathcal{A} (\widetilde{X})) \subset \mathcal{N}_* (\widetilde{X})$, so, applying $j^*$ we obtain $\iota (\mathcal{A}(X)) = j_{\mathcal{A}} ^* (\iota (\mathcal{A}(\widetilde{X}))) \subset j^* \mathcal{N}_* (\widetilde{X}) = ^{\dagger} \mathcal{N}_* (X)$, where $\dagger$ holds by Definition \ref{defn:smqp}, proving the proposition in this case.

\noindent \textbf{Step 3.} Now suppose $X$ is a quasi-projective variety. We prove it by induction on $\dim X$. If $\dim X = 0$, or $X$ is smooth, it is already done by Step 2. So, suppose $\dim X >0$ and let $i: Z \hookrightarrow X$ be the singular locus of $X$. Here, $\dim Z < \dim X$. Suppose that the proposition holds for all schemes of dimension $< \dim X$. Choose a desingularization $f: \widetilde{X} \to X$. Consider the commutative diagram with exact rows
$$\xymatrix{
\mathcal{A} (\widetilde{X}) \ar[d] ^{f_* ^{\mathcal{A}}} \ar[r] ^{\iota_{\widetilde{X}}} & \Omega_* (\widetilde{X}) \ar[d] ^{f_*} \ar[r] & \Omega_* ^{\alg} (\widetilde{X}) \ar[d] ^{f_* ^{\alg}} \ar[r] & 0\\
\mathcal{A} (X) \ar[r] ^{\iota_X} & \Omega_* (X) \ar[r] & \Omega_* ^{\alg} (X) \ar[r] & 0,
}$$
where, $\iota$ is the sum of $i_1 ^* - i_2 ^*$, with $i_j : X \times \{ t_j \} \to X \times C$, regular embeddings, so that the left square commutes by \cite[Theorem 6.5.12]{LM}. Now, by the equation \cite[(5.5)]{KP}, it is proven that every class $\alpha \in \mathcal{A}(X)$ can be written as $\alpha = f_* ^{\mathcal{A}} (\widetilde{\alpha}) + i_* \xi$ for some cobordism cycles $\widetilde{\alpha} \in \mathcal{A} (\widetilde{X})$ and $\xi \in \mathcal{A} (Z)$. 

For the first term, we have $\iota_X f_* ^{\mathcal{A}} (\widetilde{\alpha}) = ^{\dagger} f_* \iota_{\widetilde{X}} (\widetilde{\alpha}) \in ^{\ddagger} f_* \mathcal{N}_* (\widetilde{X})  \subset ^3 \mathcal{N}_* (X)$, where $\dagger$ holds by the commutativity of the left square, $\ddagger$ holds by Step 2., and $\subset ^3$ holds by Definition \ref{defn:qp}. For the second term, since $\dim Z < \dim X$, by the induction hypothesis, we have $\iota_Z \mathcal{A} (Z) \subset \mathcal{N}_* (Z)$. But, we have $i_* \iota_Z = \iota_X i_*$ (which is proven in the line below the diagram in \cite[p.89]{KP} using \cite[Proposition 6.5.4]{LM}), so that $\iota_X i_* \eta  = i_* \iota_Z \eta \in i_*  \mathcal{N}_* (Z) \subset \mathcal{N}_* (X)$, where the last inclusion holds by Lemma \ref{lem:projpushfwd}. Thus, $\iota_X \mathcal{A} (X) \subset \mathcal{N} (X)$, as desired. This completes the proof.
\end{proof}

Since $\Omega_* (X) \to \Omega_* ^{\num} (X)$ is surjective, we immediately deduce:
\begin{corollary}For each $X \in \Sch_k$, there is a natural surjective homomorphism $\Omega_* ^{\alg} (X) \to \Omega_* ^{\num} (X)$. 
\end{corollary}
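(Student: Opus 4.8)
The plan is to read off the statement directly from the Proposition just proved together with the exact sequence \eqref{eqn:stpdabalg}. First I would recall that, by \eqref{eqn:stpdabalg}, the group $\Omega_*^{\alg}(X)$ is by definition the cokernel of $\iota\colon \mathcal{A}(X)\to\Omega_*(X)$, so that $\Omega_*^{\alg}(X)=\Omega_*(X)/\iota(\mathcal{A}(X))$ and the natural map $\Omega_*(X)\twoheadrightarrow\Omega_*^{\alg}(X)$ is the corresponding quotient projection.

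Next I would invoke the Proposition, which gives $\iota(\mathcal{A}(X))\subseteq\mathcal{N}_*(X)$. Since $\mathcal{N}_*(X)=\ker\bigl(\Omega_*(X)\twoheadrightarrow\Omega_*^{\num}(X)\bigr)$ by construction, the canonical surjection $\Omega_*(X)\to\Omega_*^{\num}(X)$ annihilates $\iota(\mathcal{A}(X))$, hence factors uniquely through the quotient $\Omega_*(X)/\iota(\mathcal{A}(X))=\Omega_*^{\alg}(X)$. This yields the desired homomorphism $\Omega_*^{\alg}(X)\to\Omega_*^{\num}(X)$, and it is surjective because post-composing the quotient map $\Omega_*(X)\twoheadrightarrow\Omega_*^{\alg}(X)$ with it recovers the surjection $\Omega_*(X)\twoheadrightarrow\Omega_*^{\num}(X)$. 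Naturality in $X$ is inherited from the naturality of $\iota$, of the two quotient maps, and of the surjections $\Omega_*\to\Omega_*^{\alg}$ and $\Omega_*\to\Omega_*^{\num}$ already used throughout \S\ref{sec:OBM}.

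I do not expect a genuine obstacle here: the entire content sits in the Proposition, and the only point to verify is the universal-property factorization above, which is purely formal once the inclusion $\iota(\mathcal{A}(X))\subseteq\mathcal{N}_*(X)$ is available. If in addition one wants the stronger assertion that this surjection is a morphism of oriented Borel-Moore homology theories (as claimed in the first Theorem of the introduction), one further checks compatibility with projective push-forwards, l.c.i. pull-backs, external products and first Chern class operators; each of these follows by the same ``$\Omega_*(-)\to\Omega_*^{\num}(-)$ is surjective'' argument used repeatedly in \S\ref{sec:functoriality} and \S\ref{sec:OBM}, combined with the corresponding compatibilities already known for $\Omega_*\to\Omega_*^{\alg}$ from \cite{KP}.
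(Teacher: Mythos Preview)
Your argument is correct and matches the paper's own reasoning: the paper simply remarks that since $\Omega_*(X)\to\Omega_*^{\num}(X)$ is surjective, the corollary follows immediately from the preceding Proposition (i.e., from $\iota(\mathcal{A}(X))\subset\mathcal{N}_*(X)$), which is exactly the factorization you spell out.
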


Combining the above discussions, we conclude:

\begin{theorem}

The natural surjections of functors $\Omega_* \to \Omega_* ^{\alg} \to \Omega_* ^{\num}$ are morphisms of oriented Borel-Moore homology theories on $\Sch_k$. The natural surjections of functors $\Omega^* \to \Omega^* _{\alg} \to \Omega^* _{\num}$ are morphisms of oriented cohomology theories on $\Sm_k$.

\end{theorem}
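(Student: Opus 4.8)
The plan is to show that each of the natural surjections $\Omega_* \to \Omega_*^{\alg}$, $\Omega_*^{\alg} \to \Omega_*^{\num}$, and hence their composite, is a morphism of oriented Borel-Moore homology theories, i.e.\ a natural transformation of the underlying covariant functors (for projective push-forward) that in addition commutes with l.c.i.\ pull-backs, with the first Chern class operators $\widetilde{c}_1(E)$, and with external products. The first surjection is already a morphism by \cite{KP}, so the real work is to prove the statement for $\Omega_* \to \Omega_*^{\num}$ and then to deduce it for $\Omega_*^{\alg} \to \Omega_*^{\num}$.

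First I would treat $\Omega_* \to \Omega_*^{\num}$. By Theorem~\ref{thm:OBM} the functor $\Omega_*^{\num}$ is an oriented Borel-Moore homology theory, and --- the key observation --- every one of its structure maps was \emph{constructed} in \S\S\ref{subsec:definition}--\ref{sec:pbf} as the map induced on the quotient $\Omega_*(-)/\mathcal{N}_*(-)$ by the corresponding structure map of $\Omega_*$. Indeed, Lemmas~\ref{lem:projpushfwd}, \ref{lem:smpullbk}, \ref{lem:reimpb} and \ref{lem:chern class}, together with the surjectivity of $\Omega_*(-)\to\Omega_*^{\num}(-)$ invoked in \S\ref{sec:functoriality} for the l.c.i.\ pull-back of an arbitrary l.c.i.\ morphism and for the external product, say exactly that $\mathcal{N}_*$ is carried into $\mathcal{N}_*$ by each operation. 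Consequently the square
\[
\xymatrix{\Omega_*(X) \ar[r] \ar[d] & \Omega_*(Y) \ar[d] \\ \Omega_*^{\num}(X) \ar[r] & \Omega_*^{\num}(Y)}
\]
commutes whenever the horizontal arrows are any one of $f_*$ ($f$ projective), $f^*$ ($f$ l.c.i.), $\widetilde{c}_1(E)$, or the external product; this is precisely the assertion that $\Omega_*\to\Omega_*^{\num}$ is a morphism of oriented Borel-Moore homology theories.

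Next I would handle $\Omega_*^{\alg}\to\Omega_*^{\num}$, which exists by the Corollary just proved since $\iota(\mathcal{A}(X))\subset\mathcal{N}_*(X)$. To see that it commutes with a structure operation $T$, write $\pi_1\colon\Omega_*\to\Omega_*^{\alg}$ and $\pi_2\colon\Omega_*\to\Omega_*^{\num}$ for the quotient maps and $q\colon\Omega_*^{\alg}\to\Omega_*^{\num}$ for the induced map, so $\pi_2=q\circ\pi_1$. Given $\bar\alpha\in\Omega_*^{\alg}(X)$, lift it to $\alpha\in\Omega_*(X)$ via the surjectivity of $\pi_1$; then $q(T(\bar\alpha))=q(\pi_1(T(\alpha)))=\pi_2(T(\alpha))=T(\pi_2(\alpha))=T(q(\bar\alpha))$, using in turn that $\pi_1$ and $\pi_2$ are morphisms. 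Since every element of $\Omega_*^{\alg}(X)$ arises as such a $\bar\alpha$, this gives $q\circ T^{\alg}=T^{\num}\circ q$. Hence $\Omega_*^{\alg}\to\Omega_*^{\num}$, and with it the composite $\Omega_*\to\Omega_*^{\alg}\to\Omega_*^{\num}$, is a morphism of oriented Borel-Moore homology theories on $\Sch_k$. Restricting everything to $\Sm_k$ and re-indexing cohomologically, the same commuting squares yield compatibility with pull-backs, projective push-forwards and Chern classes; compatibility with the ring structure holds because the product on $\Omega^*_{\num}(X)$ (resp.\ $\Omega^*_{\alg}(X)$) is by construction the one descended from the product $\Delta_X^*\circ\times$ on $\Omega^*(X)$, both of whose constituents we have just shown to be respected. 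Thus $\Omega^*\to\Omega^*_{\alg}\to\Omega^*_{\num}$ are morphisms of oriented cohomology theories on $\Sm_k$.

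The only delicate point is a matter of bookkeeping rather than a genuine obstacle: one must make sure nothing on the checklist for a morphism of these theories has been omitted --- in particular the normalization conditions relating the orientation data (fundamental classes, formal group law) on the three theories. But since $\Omega_*$ is the \emph{universal} oriented Borel-Moore homology theory and all maps in play are induced from the identity on cobordism cycles, these normalizations are inherited automatically.
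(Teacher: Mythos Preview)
Your proposal is correct and follows essentially the same approach as the paper: the paper states the theorem with only the phrase ``Combining the above discussions, we conclude,'' leaving implicit exactly the argument you spell out---that all structure maps on $\Omega_*^{\num}$ were defined as those induced from $\Omega_*$ on the quotient by $\mathcal{N}_*$, so the quotient map automatically commutes with them, and the factorization through $\Omega_*^{\alg}$ then follows by surjectivity.
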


\subsection{Computation for cellular varieties} Recall that a cellular scheme $X$ is a quasi-projective $k$-scheme with a decreasing filtration $X_i$ such that the complement $X_i \setminus X_{i+1}$ is isomorphic to a disjoint union of affine spaces, called cells. 

\begin{theorem}Let $X$ be a cellular scheme over $k$. Then, $\Omega^{\num}_* (X)$ is a free $\mathbb{L}$-module on the set of cells of $X$. Furthermore, we have isomorphisms of $\mathbb{L}$-modules via the natural maps $\Omega_* (X) \overset{\sim}{\to} \Omega^{\alg} _* (X) \overset{\sim}{\to} \Omega^{\num}_* (X).$
\end{theorem}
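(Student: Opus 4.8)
The plan is to reduce the whole statement to the single assertion that $\mathcal{N}_*(X)=0$ for a cellular scheme $X$. I will first recall that $\Omega_*(X)$ is a free $\mathbb{L}$-module of rank $r$ on the cell-closure classes $\tilde\eta_c=[\widetilde{\bar c}\to X]$ (resolutions of the closures, chosen isomorphic over $c$); this is standard, and in any case follows from homotopy invariance (Theorem~\ref{thm:EH}), the localization sequence (Theorem~\ref{thm:localization}) and the comparison $\Omega_*(X)\otimes_{\mathbb{L}}\mathbb{Z}\cong\CH_*(X)$ by induction on $r$. Since the maps $\Omega_*(X)\to\Omega_*^{\alg}(X)\to\Omega_*^{\num}(X)$ are surjective, $\Omega_*^{\num}(X)$ is generated by the images $\bar{\tilde\eta}_c$; and by Theorem~\ref{thm:num}(1) (in the form valid for all $X\in\Sch_k$, as in the Introduction), together with the classical facts that $\CH_*(X)$ is free on the cells and that a cellular scheme carries no nonzero numerically trivial cycle, we get $\Omega_*^{\num}(X)\otimes_{\mathbb{L}}\mathbb{Z}\cong\CH^{\num}_*(X)=\CH_*(X)\cong\mathbb{Z}^r$. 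Granting $\mathcal{N}_*(X)=0$, the surjection $\Omega_*(X)\to\Omega_*^{\num}(X)$ is then an isomorphism, so $\Omega_*^{\num}(X)$ is free of rank $r$ on the cells, and $\Omega_*^{\alg}(X)$, wedged between two groups it surjects onto and is surjected onto by, is isomorphic to both.

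I would settle the smooth projective case first, by a determinant argument. If $X$ is smooth projective of dimension $n$, then under the identifications $\Omega_*(X)=\bigoplus_c\mathbb{L}\,\tilde\eta_c$ (with $\tilde\eta_c\in\Omega_{d_c}(X)$) and $\Hom_{\mathbb{L}}(\Omega_*(X),\mathbb{L})=\bigoplus_c\mathbb{L}$ on the dual basis, the numerical pairing of Definition~\ref{def:numSmProj} is represented by the $r\times r$ matrix $M$ with entries $M_{c,c'}=\pi_*(\tilde\eta_c\cdot\tilde\eta_{c'})\in\mathbb{L}_{n-d_c-d_{c'}}$, this last group being $0$ when $d_c+d_{c'}>n$. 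Every entry is homogeneous, so each term of the Leibniz expansion of $\det M$ has degree $\sum_c(n-d_c-d_{\sigma(c)})=rn-2\sum_c d_c$, and thus $\det M$ is homogeneous of this single degree. On the other hand, reduction modulo $\mathbb{L}_{>0}$ kills all entries with $d_c+d_{c'}\neq n$ and replaces those with $d_c+d_{c'}=n$ by the integer $\int_X\tilde\eta_c\cdot\tilde\eta_{c'}$, so $M\bmod\mathbb{L}_{>0}$ is the matrix of the intersection pairing on $\CH_*(X)$; this matrix is unimodular because the Chow groups of a cellular smooth projective variety are torsion-free, agree with cohomology, and satisfy Poincaré duality integrally. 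Hence $\det M\equiv\pm1\not\equiv 0\pmod{\mathbb{L}_{>0}}$, and since $\det M$ is homogeneous this forces $rn-2\sum_c d_c=0$ and $\det M=\pm1$. Therefore $M\in\GL_r(\mathbb{L})$, the pairing map $\Omega_*(X)\to\Hom_{\mathbb{L}}(\Omega_*(X),\mathbb{L})$ is injective, and $\mathcal{N}_*(X)=0$.

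For a general cellular scheme $X$ I would argue by Noetherian induction on the number of cells. The base case $X=\mathbb{A}^n$ is immediate: homotopy invariance (Theorem~\ref{thm:EH}) and Remark~\ref{rmk:point} give $\Omega_*^{\num}(\mathbb{A}^n)=\mathbb{L}=\Omega_*(\mathbb{A}^n)$, and a surjective endomorphism of the finitely generated $\mathbb{L}$-module $\mathbb{L}$ is an isomorphism. For the inductive step, write $U=X\setminus X_1$ (a disjoint union of affine spaces) and $Z=X_1$ (cellular, with fewer cells), and compare, via Theorem~\ref{thm:localization}, the right-exact localization sequences for $\Omega_*$ and $\Omega_*^{\num}$, whose outer terms over $U$ and $Z$ are known (by homotopy invariance and by the inductive hypothesis) to be free on their cells with the vertical comparison maps there isomorphisms. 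A diagram chase then reduces the claim for $X$ to showing that $i_*^{\num}\colon\Omega_*^{\num}(Z)\to\Omega_*^{\num}(X)$ is injective, i.e.\ that the localization sequence for $\Omega_*^{\num}$ is short exact in this cellular situation; granting this, the sequence splits (its third term is free), so $\Omega_*^{\num}(X)\cong\Omega_*^{\num}(Z)\oplus\Omega_*^{\num}(U)$ is free of rank $r$ on the cells, and the rank-$r$ identification with $\CH_*(X)$ shows all three maps are isomorphisms. The short-exactness would in turn be deduced from the cellular structure: one chooses the lifts $\tilde\eta_c$ compatibly for $Z$, for $X$, and for a resolution of $X$, checks as for $\Omega_*$ (using that the Chow localization sequence is short exact for cellular schemes and that a surjective endomorphism of a finitely generated module is injective) that $i_*$ is injective, and then verifies that $i_*^{-1}(\mathcal{N}_*(X))=\mathcal{N}_*(Z)$.

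The hardest step is precisely this last one: upgrading ``$\rho_X\colon\Omega_*(X)\to\Omega_*^{\num}(X)$ becomes an isomorphism after $\otimes_{\mathbb{L}}\mathbb{Z}$'' to ``$\rho_X$ is an isomorphism'', equivalently establishing left-exactness of the $\Omega_*^{\num}$-localization sequence over cellular $X$. This cannot be formal, since a graded surjection onto a finitely generated $\mathbb{L}$-module need not be an isomorphism even when it is one after $\otimes_{\mathbb{L}}\mathbb{Z}$ (for instance $\mathbb{L}[x]\twoheadrightarrow\mathbb{L}[x]/(x^2)$); one genuinely needs torsion-freeness, or a splitting, over the non-Noetherian ring $\mathbb{L}$. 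In the smooth projective case this is handed to us for free by the unimodularity of the intersection matrix, and in general it has to be bootstrapped along the cellular stratification. Everything else — functoriality, homotopy invariance, localization, the projective bundle formula, the comparison $\otimes_{\mathbb{L}}\mathbb{Z}$, and the Chow-theoretic inputs (freeness, triviality of numerical equivalence, Poincaré duality) — is already available.
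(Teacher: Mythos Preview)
Your approach is considerably more elaborate than the paper's. The paper's own proof is two sentences: the freeness of $\Omega_*^{\num}(X)$ on cells is asserted to be an ``immediate consequence'' of localization (Theorem~\ref{thm:localization}), homotopy invariance (Theorem~\ref{thm:EH}), and $\Omega_*^{\num}(\Spec k)=\mathbb{L}$ (Remark~\ref{rmk:point}); the isomorphisms $\Omega_*\cong\Omega_*^{\alg}\cong\Omega_*^{\num}$ then follow by combining this with the analogous result for $\Omega_*^{\alg}$ from \cite[Theorem~1.2(3)]{KP} and the fact that a surjection of free $\mathbb{L}$-modules of the same finite rank is an isomorphism. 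In other words, the paper simply runs the standard cellular induction for an oriented Borel--Moore theory, exactly as one does for $\Omega_*$ itself, and defers the details to the literature.

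Your proposal takes a genuinely different route. Rather than invoking the standard package, you attack $\mathcal{N}_*(X)=0$ head-on: in the smooth projective case you compute the Gram matrix of the pairing in the cell basis, reduce modulo $\mathbb{L}_{>0}$ to the classical intersection matrix, and use integral Poincar\'e duality to force $\det M=\pm 1$. This is correct and is a nice, self-contained argument that the paper does not give. What it buys you is a direct proof that the pairing is perfect, hence $\mathcal{N}_*(X)=0$, without ever needing to know that the $\Omega_*$-localization sequence is short exact for cellular schemes.

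You are also right to flag the left-exactness issue in the general case: the three ingredients the paper cites only give right-exactness, and the step ``$i_*^{\num}$ is injective'' (equivalently, $i_*^{-1}(\mathcal{N}_*(X))=\mathcal{N}_*(Z)$) is not automatic. The paper is silently relying on the fact that the identical induction is already carried out for $\Omega_*$ and $\Omega_*^{\alg}$ in \cite{KP}, so that the comparison ladder has a short-exact top row, and then the surjection $\Omega_*(X)\twoheadrightarrow\Omega_*^{\num}(X)$ between free modules of equal rank finishes the job. Your bootstrap sketch is honest about this gap but does not close it either; the cleanest way to finish, consonant with the paper's logic, is simply to import the short exactness of the $\Omega_*$-localization sequence for cellular $X$ from \cite{KP} (or \cite{LM}) and then observe that a surjective $\mathbb{L}$-linear map $\mathbb{L}^r\to\mathbb{L}^r$ is an isomorphism.
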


\begin{proof}The first one is an immediate consequence of the localization (Theorem \ref{thm:localization}), the homotopy invariance (Theorem \ref{thm:EH}), and Remark \ref{rmk:point}. The second one follows by combining the first one with \cite[Theorem 1.2 (3)]{KP}.
\end{proof}

Thus, for cellular varieties, one may use the theory $\Omega_* ^{\num}$ interchangeably with $\Omega_*$, and it can offer some flexibilities. Such examples may be examined in follow-up works.

\subsection{Some extensions}\label{sec:sext}

We extend some results proven in \S \ref{sec:num} to the case when $X$ is in $\Sch_k$.

\subsubsection{Comparison with algebraic cycles}Theorem \ref{thm:num1_qp} below generalizes Theorem \ref{thm:num}(1), which assumed that $X$ is smooth projective. When $X \in \Sch_k$, how one defines numerical equivalence on algebraic cycles on $X$ is not widely discussed. In fact, as we did in Definitions \ref{defn:smqp} and \ref{defn:qp}, one can define it in steps from that of the case of smooth projective varieties using desingularizations, first for smooth quasi-projective varieties, then for a general quasi-projective schemes. However, this is in essence equivalent to \emph{define} ${\rm Num}_* (X):= \mathcal{N}_* (X) \otimes_{\mathbb{L}} \mathbb{Z}$ given that we have $\Omega_* (X) \otimes_{\mathbb{L}} \mathbb{Z} \simeq \CH_* (X)$ for each $X \in \Sch_k$ by \cite[Theorem 4.5.1]{LM}. Using it, we define $\CH_* ^{\num} (X):= \CH_* (X) / {\rm Num}_* (X)$. Thus, we immediately deduce:

\begin{theorem}\label{thm:num1_qp}Let $X \in \Sch_k$. Then, we have a natural isomorphism $\ov{\phi} ^{\num} : \Omega_* ^{\num} (X) \otimes_{\mathbb{L}} \mathbb{Z} \simeq \CH_* ^{\num} (X)$.
\end{theorem}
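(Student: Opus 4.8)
The plan is to deduce the statement directly from the defining short exact sequence and the right-exactness of the functor $-\otimes_{\bL}\mathbb{Z}$. Starting from
\[
0 \to \mathcal{N}_*(X) \to \Omega_*(X) \to \Omega_*^{\num}(X) \to 0,
\]
which holds by Definition \ref{defn:qp}, I would apply $-\otimes_{\bL}\mathbb{Z}$ to obtain the right-exact sequence
\[
\mathcal{N}_*(X)\otimes_{\bL}\mathbb{Z} \to \Omega_*(X)\otimes_{\bL}\mathbb{Z} \to \Omega_*^{\num}(X)\otimes_{\bL}\mathbb{Z} \to 0 .
\]
By \cite[Theorem 4.5.1]{LM} the middle term is canonically identified with $\CH_*(X)$, and by the very definition of ${\rm Num}_*(X)$ recalled just above the statement --- namely as the image of $\mathcal{N}_*(X)$ in $\Omega_*(X)\otimes_{\bL}\mathbb{Z}\cong\CH_*(X)$, equivalently $\Omega_*(X)/\bL_{>0}\cdot\Omega_*(X)$ --- the image of the first arrow is exactly ${\rm Num}_*(X)$. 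Hence $\Omega_*^{\num}(X)\otimes_{\bL}\mathbb{Z}\cong \CH_*(X)/{\rm Num}_*(X)=\CH_*^{\num}(X)$.

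Next I would identify this isomorphism with the map $\ov{\phi}^{\num}$ induced by the canonical comparison morphism $\phi\colon\Omega_*\to\CH_*$. Since ${\rm Num}_*(X)$ is by construction $\phi(\mathcal{N}_*(X))$, the morphism $\phi$ descends to $\phi^{\num}\colon\Omega_*^{\num}(X)\to\CH_*^{\num}(X)$; because the target is a $\mathbb{Z}$-module, $\phi^{\num}$ factors through $\Omega_*^{\num}(X)\otimes_{\bL}\mathbb{Z}$, giving $\ov{\phi}^{\num}$, and the computation above shows $\ov{\phi}^{\num}$ is an isomorphism. Naturality in $X$ --- for projective push-forward, l.c.i.\ pull-back, and the first Chern class operators --- is inherited from the naturality of $\phi$ and of the isomorphism $\Omega_*\otimes_{\bL}\mathbb{Z}\cong\CH_*$, together with the functoriality of $\mathcal{N}_*$ established in \S\ref{sec:functoriality}.

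I do not expect a genuine obstacle here: the substance is already contained in \cite[Theorem 4.5.1]{LM} and in the definition of ${\rm Num}_*(X)$, so the only point requiring care is bookkeeping, namely that ``$\mathcal{N}_*(X)\otimes_{\bL}\mathbb{Z}$'' must be read as the image inside $\CH_*(X)$ rather than the abstract tensor product (the two agree precisely by right-exactness), and that this is compatible with the convention used for smooth projective $X$ in Theorem \ref{thm:num}(1). The latter compatibility holds because for smooth projective $X$ the two definitions of $\mathcal{N}_*(X)$ coincide (Definition \ref{def:numSmProj} versus Definitions \ref{defn:smqp}--\ref{defn:qp}, which reduce to the identity compactification and desingularization), and in that case ${\rm Num}_*(X)$ recovers the classical group of numerically trivial cycles in view of Corollary \ref{cor:stpdstc_hn}. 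Thus Theorem \ref{thm:num1_qp} is a clean extension of Theorem \ref{thm:num}(1) to $\Sch_k$, with essentially no new input beyond the already-established comparison $\Omega_*\otimes_{\bL}\mathbb{Z}\cong\CH_*$.
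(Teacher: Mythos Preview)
Your proposal is correct and matches the paper's approach exactly: the paper \emph{defines} ${\rm Num}_*(X):=\mathcal{N}_*(X)\otimes_{\bL}\mathbb{Z}$ and $\CH_*^{\num}(X):=\CH_*(X)/{\rm Num}_*(X)$ for general $X\in\Sch_k$, then states the theorem with no proof beyond the phrase ``Thus, we immediately deduce.'' Your write-up simply spells out the one-line right-exactness argument behind that phrase, together with the bookkeeping remarks on naturality and compatibility with Theorem~\ref{thm:num}(1), so there is nothing to add or correct.
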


\subsubsection{Finiteness}

We know that $\Omega_* ^{\num} (X)$ is a finitely generated $\mathbb{L}$-module for each smooth projective $X$ by Theorem \ref{thm:num}(2). We generalize it to $\Sch_k$.

\begin{lemma}\label{lem:smqp_finrk}Let $X$ be a smooth quasi-projective $k$-variety. Then, $\Omega_* ^{\num} (X)$ is a finitely generated $\mathbb{L}$-module.
\end{lemma}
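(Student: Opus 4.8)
The plan is to reduce the statement for a smooth quasi-projective variety $X$ to the already-established finiteness for smooth projective varieties (Theorem \ref{thm:num}(2)) by means of a smooth compactification. First I would choose a smooth compactification $j: X \hookrightarrow \widetilde{X}$, where $\widetilde{X}$ is smooth projective, as constructed at the beginning of \S\ref{subsec:definition}. By Theorem \ref{thm:num}(2), $\Omega_* ^{\num} (\widetilde{X})$ is a finitely generated $\mathbb{L}$-module. The key observation is that, by Definition \ref{defn:smqp}, $\mathcal{N}_* (X) = j^* \mathcal{N}_* (\widetilde{X})$, so the pull-back $j^*: \Omega_* (\widetilde{X}) \to \Omega_* (X)$, which is surjective by the localization theorem \cite[Theorem 3.2.7]{LM}, descends to a surjection $\bar{j}^*: \Omega_* ^{\num} (\widetilde{X}) \to \Omega_* ^{\num} (X)$ of $\mathbb{L}$-modules.

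The main step is then purely formal: a quotient of a finitely generated module over a (commutative, Noetherian is not even needed here) ring is again finitely generated. Concretely, if $\bar\alpha_1, \dots, \bar\alpha_m \in \Omega_* ^{\num} (\widetilde{X})$ generate it as an $\mathbb{L}$-module, then their images $\bar{j}^* (\bar\alpha_1), \dots, \bar{j}^* (\bar\alpha_m)$ generate $\Omega_* ^{\num} (X)$, since $\bar{j}^*$ is a surjective $\mathbb{L}$-module homomorphism. This finishes the proof.

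The only point requiring a little care is that $\bar{j}^*$ is indeed well-defined and surjective on the level of $\Omega_* ^{\num}$; but well-definedness is exactly the content of Definition \ref{defn:smqp} (together with Lemma \ref{lem:smqp}, which guarantees independence of the compactification), and surjectivity follows because $j^*$ on $\Omega_*$ is surjective and $\Omega_* (-) \to \Omega_* ^{\num}(-)$ is surjective by construction, so the composite $\Omega_* (\widetilde{X}) \to \Omega_* ^{\num}(X)$ is surjective and factors through $\Omega_* ^{\num}(\widetilde{X})$. I do not anticipate any genuine obstacle here; the work was already done in establishing the projective case and the compatibility of $\mathcal{N}_*$ with $j^*$. (A variant of the argument, avoiding even mentioning compactifications explicitly, would invoke Theorem \ref{thm:num1_qp}: $\Omega_* ^{\num}(X) \otimes_{\mathbb{L}} \mathbb{Z} \cong \CH_* ^{\num}(X)$ is a finitely generated abelian group since $\CH_* ^{\num}$ of any quasi-projective scheme is, and then one applies \cite[Lemma~9.8]{KP} exactly as in the proof of Theorem \ref{thm:num}(2). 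I would mention this as the cleaner route, since it also prepares the general $\Sch_k$ case.)
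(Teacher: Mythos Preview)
Your proposal is correct and follows essentially the same approach as the paper: choose a smooth compactification $j: X \hookrightarrow \widetilde{X}$, use that $\Omega_*^{\num}(\widetilde{X})$ is finitely generated over $\mathbb{L}$ by Theorem~\ref{thm:num}(2), and conclude via the surjection $j^*: \Omega_*^{\num}(\widetilde{X}) \to \Omega_*^{\num}(X)$. The paper phrases this last surjection as the tail of the localization sequence (Theorem~\ref{thm:localization}), whereas you derive it directly from Definition~\ref{defn:smqp} and the surjectivity of $j^*$ on $\Omega_*$; these amount to the same thing.
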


\begin{proof}Choose a smooth compactification $j: X \hookrightarrow \widetilde{X}$. Now, the part of the localization sequence $ \Omega_* ^{\num} (\widetilde{X} ) \overset{j^*}{\to} \Omega_* ^{\num}(X) \to 0$ shows that $\Omega_* ^{\num} (X)$ is a finitely generated $\mathbb{L}$-module, being the quotient of a finitely generated $\mathbb{L}$-module $\Omega_* ^{\num} (\widetilde{X})$.
\end{proof}

\begin{theorem}\label{thm:gen_finrk}Let $X \in \Sch_k$. Then, $ \Omega_* ^{\num} (X)$ is a finitely generated $\mathbb{L}$-module.
\end{theorem}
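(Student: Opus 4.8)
The plan is to prove this by Noetherian induction on $\dim X$, reducing to the smooth quasi-projective case (Lemma~\ref{lem:smqp_finrk}) by means of the localization sequence of Theorem~\ref{thm:localization}, exactly in the style of the proof of homotopy invariance (Theorem~\ref{thm:EH}).

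First I would dispose of the reductions. Since $\Omega_*(X) = \Omega_*(X_{\red})$, we may assume $X$ is reduced. If $U_1,\dots,U_m$ denote the irreducible components of $X$, then by Definition~\ref{defn:qp} the subgroup $\mathcal{N}_*(X)$ is by construction the image of $\bigoplus_i \mathcal{N}_*(U_i)$ under the push-forward epimorphism $\bigoplus_i \Omega_*(U_i) \twoheadrightarrow \Omega_*(X)$; hence the composite $\bigoplus_i \Omega_*(U_i) \to \Omega_*(X) \to \Omega_*^{\num}(X)$ is surjective and annihilates $\bigoplus_i \mathcal{N}_*(U_i)$, so it factors through a surjection $\bigoplus_i \Omega_*^{\num}(U_i) \twoheadrightarrow \Omega_*^{\num}(X)$. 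It therefore suffices to treat the case of a quasi-projective $k$-variety $X$, which I do by induction on $d = \dim X$. If $X$ is smooth (in particular if $d = 0$), Lemma~\ref{lem:smqp_finrk} gives the claim at once. Otherwise let $i: Z \hookrightarrow X$ be the singular locus and $j: U \hookrightarrow X$ its open complement; then $U$ is a smooth quasi-projective variety and $\dim Z < d$. The localization sequence
\[ \Omega_*^{\num}(Z) \overset{i_*}{\to} \Omega_*^{\num}(X) \overset{j^*}{\to} \Omega_*^{\num}(U) \to 0 \]
of Theorem~\ref{thm:localization}, together with the inductive hypothesis applied to $Z$, shows that $\im(i_*)$ is a finitely generated $\mathbb{L}$-module, being a quotient of $\Omega_*^{\num}(Z)$, while $\Omega_*^{\num}(X)/\im(i_*) \cong \Omega_*^{\num}(U)$ is finitely generated by Lemma~\ref{lem:smqp_finrk}. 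An extension of a finitely generated module by a finitely generated module over any ring is finitely generated, so $\Omega_*^{\num}(X)$ is a finitely generated $\mathbb{L}$-module, completing the induction.

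I do not expect a genuine obstacle here; the d\'evissage is routine. The two points that need a little care are that $\mathbb{L}$ is not Noetherian, so one must conclude finite generation from the short exact sequence $0 \to \im(i_*) \to \Omega_*^{\num}(X) \to \Omega_*^{\num}(U) \to 0$ rather than by invoking that submodules of finitely generated modules are finitely generated; and that the passage to irreducible components must use the explicit description of $\mathcal{N}_*$ from Definition~\ref{defn:qp} rather than any exactness property.
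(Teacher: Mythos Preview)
Your proof is correct and follows essentially the same d\'evissage as the paper: reduce to the reduced case, induct on dimension, and use the localization sequence of Theorem~\ref{thm:localization} with the smooth locus and its complement to reduce to Lemma~\ref{lem:smqp_finrk} plus the inductive hypothesis. The only minor organizational difference is that you first pass to irreducible components before inducting, whereas the paper runs the induction directly over all reduced schemes; because of this, when you apply the inductive hypothesis to the singular locus $Z$ you should note that $Z$ need not itself be a variety and so requires one more application of your component reduction---but this is a harmless bookkeeping point.
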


\begin{proof}
Since $\Omega_* (X) = \Omega_* (X_{\rm red})$, we may assume $X$ is reduced. We prove it by induction on $\dim X$. If $\dim X = 0$, then $X$ is a finite disjoint union of point, so the theorem holds. Suppose $\dim X >0$. Suppose that the result holds for schemes of dimension $< \dim X$. Let $X_{\rm sm} \subset X$ be the smooth locus, and let $Y := X \setminus X_{\rm sm}$. Then, the localization sequence of Theorem \ref{thm:localization} gives an exact sequence $\Omega_* ^{\num} (Y) \to \Omega_* ^{\num} (X) \to \Omega_*^{\num} (X_{\rm sm}) \to 0$ of $\mathbb{L}$-modules, where $\Omega_* ^{\num} (X_{\rm sm})$ is finitely generated as an $\mathbb{L}$-module by Lemma \ref{lem:smqp_finrk}, while $\Omega_*^{\num} (Y)$ is finitely generated as an $\mathbb{L}$-module by the induction hypothesis. So, we deduce from the exact sequence that so is $\Omega_* ^{\num}(X)$. This completes the proof.
\end{proof}

\section{Homological equivalence}
\label{sec:hom}

In this section, we show that homologically trivial cobordism cycles are numerically trivial. Here, homological equivalence is considered for the complex cobordism ${\rm MU}$ and the \'etale cobordism $\widehat{\rm MU}_{\et}$. We then study the integral cobordism analogue of the standard conjecture $(D)$.

\subsection{Complex cobordism}

Let $X$ be a smooth projective variety over $k$. In \cite{Quillen}, Quillen defined a notion of complex oriented cohomology theories on the category of differentiable manifolds and showed that the complex cobordism theory $X \to {{\rm MU}}^*(X)$ can be interpreted as a universal complex oriented cohomology theory. For any embedding $\sigma:k\hookrightarrow\C$, we have  a canonical morphism of graded rings $ \Phi^{{\rm top}}:\Omega^*(X)\longrightarrow {{\rm MU}}^{2*}(X_\sigma(\C))$ by the universality of algebraic cobordism. 
In \cite[\S 9]{KP}, $\ker(\Phi^{{\rm top}})$ is defined to be the group of cobordism cycles \textit{homologically equivalent to }0. 
\begin{theorem}
Let $X$ be a smooth projective variety over a field $k$ with an embedding $\sigma:k\hookrightarrow\C$. Under the above definition, a homologically trivial cobordism cycle is numerically trivial.
\end{theorem}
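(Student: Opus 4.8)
The plan is to prove the inclusion $\ker(\Phi^{\rm top})\subseteq\mathcal N_*(X)$ directly, by transporting the numerical pairing \eqref{eq:cobprod} through the realization map $\Phi^{\rm top}$ and exploiting that this map is injective on the coefficient ring. Two inputs do all the work. \emph{First}: $\Phi^{\rm top}$ is not merely a homomorphism of graded rings. The assignment $Y\mapsto {\rm MU}^{2*}(Y_\sigma(\C))$ on smooth projective $k$-varieties underlies an oriented cohomology theory in the sense of \cite[Definition~1.1.2]{LM}, and by the universality of $\Omega^*$ the map $\Phi^{\rm top}$ is the (unique) morphism of oriented cohomology theories from $\Omega^*$ to it; in particular it commutes with external products, with l.c.i.\ pull-backs (hence with $\Delta_X^*$), and with projective push-forwards, and it carries the algebraic formal group law to Quillen's topological one \cite{Quillen}. \emph{Second}: over a point, $\Phi^{\rm top}:\Omega^*(\Spec(k))=\mathbb L\to {\rm MU}^{2*}({\rm pt})$ is an isomorphism --- by Quillen's theorem the target is the Lazard ring carrying the universal formal group law, and $\Phi^{\rm top}$ is the classifying homomorphism of that law, so it is the identity. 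In particular $\Phi^{\rm top}$ is injective on $\Omega_*(k)$.

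Granting these, the argument is short. Let $\alpha\in\Omega_*(X)$ be homologically trivial, i.e.\ $\Phi^{\rm top}(\alpha)=0$, and let $\gamma\in\Omega_*(X)$ be arbitrary; write $\pi: X\to\Spec(k)$ for the structure map. Since $\Phi^{\rm top}$ is a ring homomorphism compatible with $\Delta_X^*$, with the external product, and with $\pi_*$, applying it to the element $\pi_*(\alpha\cdot\gamma)\in\Omega_*(k)$ gives
\[
\Phi^{\rm top}\bigl(\pi_*(\alpha\cdot\gamma)\bigr)=\pi^{\rm top}_*\bigl(\Phi^{\rm top}(\alpha)\cdot\Phi^{\rm top}(\gamma)\bigr)=\pi^{\rm top}_*(0)=0 .
\]
As $\Phi^{\rm top}$ is injective on $\Omega_*(k)$, this forces $\pi_*(\alpha\cdot\gamma)=0$; since $\gamma$ was arbitrary, $\alpha$ lies in the kernel $\mathcal N_*(X)$ of the map $\Omega_*(X)\to\Hom_{\mathbb L}(\Omega_*(X),\Omega_*(k))$ of Definition~\ref{def:numSmProj}, i.e.\ $\alpha$ is numerically trivial.

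The only step that is not purely formal is the first input: checking that ${\rm MU}^{2*}(-_\sigma(\C))$ really is an oriented cohomology theory on smooth projective $k$-varieties and that $\Phi^{\rm top}$ respects \emph{all} of its structure --- pull-backs, external products, and in particular the Gysin push-forward along $\pi$ --- so that the single displayed identity is legitimate. This is where one quotes Quillen's realization of complex cobordism as the universal complex-oriented theory \cite{Quillen} together with the comparison with $\Omega^*$ from \cite[Chapter~1]{LM} (see also \cite[\S 9]{KP}). Once this compatibility is granted there is nothing further to do, and exactly the same three ingredients --- ring-homomorphism property, compatibility with $\pi_*$, and injectivity on the coefficient ring $\mathbb L$ --- yield the analogous statement with the \'etale cobordism $\widehat{\rm MU}_{\et}$ of \cite{Quick} in place of ${\rm MU}$, whenever the corresponding realization map is available.
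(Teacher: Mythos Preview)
Your proof is correct and follows essentially the same route as the paper's own argument: both use that $\Phi^{\rm top}$ is a ring homomorphism commuting with the push-forward $\pi_*$, together with the fact that $\Phi^{\rm top}:\Omega^*(k)\to{\rm MU}^{2*}({\rm pt})$ is an isomorphism (the paper cites \cite[Corollary~1.2.11(1)]{LM} for this), to deduce $\pi_*(\alpha\cdot\gamma)=0$ from $\Phi^{\rm top}(\alpha)=0$. Your added discussion of why $\Phi^{\rm top}$ enjoys these compatibilities, and your closing remark on the \'etale case, are both accurate and anticipate what the paper does next.
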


\begin{proof}
Let $\pi: X \to \Spec (k)$ be the structure map. The map $\Phi^{{\rm top}}$ is a ring homomorphism and commutes with push-forwards. Thus, we have the following commutative square:
\[\xymatrix{\Omega^*(X) \ar[r]^{\pi_*} \ar[d]_{\Phi^{{\rm top}}} & \Omega^{*+d}(k) \ar[d]^{\Phi^{{\rm top}}}_\wr \\
{{\rm MU}}^{2*}(X_\sigma(\C)) \ar[r]^{\pi_*} & {{\rm MU}}^{2*+2d}(pt),} \]
where $d:=\dim_k X$. Suppose $\alpha\in\Omega^*(X)$ is homologically trivial, \emph{i.e.\ }$\Phi^{{\rm top}}(\alpha)=0$. For any $\gamma\in\Omega^*(X)$, we have $\Phi ^{\rm top} (\pi_*(\alpha\cdot\gamma))=^{\dagger} \pi_*(\Phi^{{\rm top}}(\alpha\cdot\gamma))= ^{\ddagger} \pi_* (\Phi^{\rm top} (\alpha) \cdot \Phi^{\rm top} (\gamma)) = \pi_* (0 \cdot \Phi ^{\rm top} (\gamma)) = 0$, where $\dagger$ holds by the above commutative diagram, and $\ddagger$ holds because $\Phi^{\rm top}$ is a ring homomorphism. Since $\Phi^{{\rm top}}:\Omega^*(k)\to {{\rm MU}}^{2*}(pt)$ is an isomorphism by \cite[Corollary 1.2.11(1)]{LM}, we deduce that $\pi_* (\alpha \cdot \gamma) = 0$. This means, $\alpha$ is numerically trivial.
\end{proof}

\subsection{Etale cobordism}

Let $\ell$ be a prime. Quick defined a notion of \'etale cobordism in \cite[Definition~4.11]{Quick} on $\Sm_k$ and showed that the \'etale cobordism ${{{\rm MU}}}^{2*}_{\et}(-;\Z/\ell)$ is an oriented cohomology theory in \cite[Theorem~5.7]{Quick}. Hence, there is a canonical morphism of oriented cohomology theories
$ \theta:\Omega^*(X)\to\Omega^*(X)\otimes_\Z\Z/\ell \to {{{\rm MU}}}^{2*}_{\et}(X;\Z/\ell). $
He also showed that for a separably closed field $k$  of characteristic $0$, the morphism
$ \Omega^*(k;\Z/\ell ^\nu)\to {{\rm MU}}^{2*}_{\et}(k;\Z/\ell^\nu)$ is an isomorphism. 
This implies that we have an isomorphism
$ \Omega^*(k;\Z_\ell)\overset{\theta_\ell}{\to}\widehat{{{\rm MU}}}^{2*}_{\et}(k;\Z_\ell)$,
where $\Z_\ell$ is the ring of $\ell$-adic integers. Thus, we have the following commutative diagram:
\begin{equation}\label{eq:etale}
\xymatrix{ \Omega^*(X) \ar[r]^{\pi_*} \ar[d] \ar@/_3pc/[dd]_\theta & \Omega^{*+d}(k) \ar[d]^{\eta_\ell} \ar@/^3.3pc/[dd]^\theta \\ \Omega^*(X)\otimes_\Z\Z_\ell \ar[r]^{\pi_*} \ar[d] & \Omega^{*+d}(k)\otimes_\Z\Z_\ell \ar[d]^-{{\theta_\ell}}_-\wr \\
\widehat{{{\rm MU}}}^{2*}_{\et}(X;\Z_\ell) \ar[r]^{\pi_*} & \widehat{{{\rm MU}}}^{2*+2d}_{\et}(k;\Z_\ell)}
\end{equation}

Note that the natural map $\eta_\ell$ is injective. We may use \'etale cobordism $\widehat{{{\rm MU}}}_{\et}$ to define another homological equivalence of cobordism cycles. A cobordism cycle is said to be homologically trivial with respect to $\widehat{{{\rm MU}}}_{\et}$ if it is in $\ker(\theta)$. As in the case of complex cobordism, we prove
\begin{theorem}
Let $X$ be a smooth projective variety over a field $k$ of characteristic zero, and consider the homological equivalence defined by the \'etale cobordism $\widehat{{{\rm MU}}}_{\et}$. Then, a homologically trivial cobordism cycle is numerically trivial.
\end{theorem}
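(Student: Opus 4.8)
The plan is to run exactly the argument used above for complex cobordism, with the morphism $\theta$ of oriented cohomology theories in place of $\Phi^{{\rm top}}$ and with the diagram \eqref{eq:etale} in place of the corresponding commutative square. Write $\pi\colon X\to\Spec(k)$ for the structure morphism. The two inputs I would isolate first are: (i) $\theta$ is a ring homomorphism commuting with projective push-forwards, so that pasting the two squares of \eqref{eq:etale} and running around the outer boundary (using the curved $\theta$-arrows) yields $\theta\circ\pi_*=\pi_*\circ\theta$ on $\Omega^*(X)$; and (ii) the restriction of $\theta$ to $\Omega^*(k)$ is \emph{injective}, since it is the right-hand column of \eqref{eq:etale}, i.e.\ the composite of $\eta_\ell$ --- which is injective, as noted just after \eqref{eq:etale} --- with $\theta_\ell$, which is an isomorphism by Quick's theorem.

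Granting (i) and (ii), I would then argue as follows. Suppose $\alpha\in\Omega^*(X)$ is homologically trivial with respect to $\widehat{{\rm MU}}_{\et}$, i.e.\ $\theta(\alpha)=0$, and let $\gamma\in\Omega^*(X)$ be arbitrary. Multiplicativity of $\theta$ gives $\theta(\alpha\cdot\gamma)=\theta(\alpha)\cdot\theta(\gamma)=0$, and then (i) gives $\theta(\pi_*(\alpha\cdot\gamma))=\pi_*(\theta(\alpha\cdot\gamma))=0$. Since $\pi_*(\alpha\cdot\gamma)\in\Omega^*(k)$, the injectivity in (ii) forces $\pi_*(\alpha\cdot\gamma)=0$. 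As $\gamma$ was arbitrary, $\alpha$ is numerically trivial, as desired.

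I do not expect a genuine obstacle here; all the needed structure is already assembled in the text. The one point meriting slightly more care than in the complex cobordism proof is step (ii): there $\Phi^{{\rm top}}$ is literally an isomorphism on $\Omega^*(k)$ by \cite[Corollary~1.2.11(1)]{LM}, whereas here $\theta|_{\Omega^*(k)}$ is merely injective, being the composite of the injection $\eta_\ell$ with the isomorphism $\theta_\ell$; injectivity is, however, precisely what the argument consumes, so nothing is lost. One should also double-check that the curved $\theta$-arrows in \eqref{eq:etale} make the outer rectangle commute, which is immediate from the definition of $\theta$ as the composite of the coefficient-change map $\eta_\ell$ with $\theta_\ell$.
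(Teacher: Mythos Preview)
Your proposal is correct and follows essentially the same argument as the paper's own proof: multiplicativity of $\theta$ gives $\theta(\alpha\cdot\gamma)=0$, commutativity of \eqref{eq:etale} gives $\theta(\pi_*(\alpha\cdot\gamma))=0$, and injectivity of $\theta$ on $\Omega^*(k)$ (via the factorization through $\eta_\ell$ and the isomorphism $\theta_\ell$) forces $\pi_*(\alpha\cdot\gamma)=0$. The only cosmetic difference is that the paper writes the last step as $\theta=\overline{\theta}\circ\eta_\ell$ with $\overline{\theta}$ an isomorphism and $\eta_\ell$ injective, whereas you package these together as injectivity of $\theta|_{\Omega^*(k)}$; the content is identical.
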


\begin{proof}
Let $\alpha\in\Omega^*(X)$ be homologically trivial, \emph{i.e.\ }$\theta(\alpha)=0$. For any $\gamma\in\Omega^*(X)$, $\theta(\alpha\cdot\gamma)=\theta(\alpha)\cdot\theta(\gamma)=0. $ By the commutativity of \eqref{eq:etale}, we have $ \theta\circ\pi_*(\alpha\cdot\gamma)=\pi_*\circ\theta(\alpha\cdot\gamma)=0.$
But $\theta=\overline{\theta}\circ\eta_\ell$ and $\overline{\theta}$ is an isomorphism. Thus, $\eta_\ell(\pi_*(\alpha\cdot\gamma))=0.$
Since $\eta_\ell$ is injective, $\pi_*(\alpha\cdot\gamma)=0$, which means $\alpha$ is numerically trivial.
\end{proof}

\subsection{On the standard conjecture $(D)$}
Recall \emph{the standard conjecture $(D)$} (\emph{cf.} \cite{Kleiman}) asserts that when $X$ is a smooth projective variety, homological equivalence and numerical equivalence for algebraic cycles with $\mathbb{Q}$-coefficients on $X$ coincide. Certain partial results are known. For instance, when the base field is $\mathbb{C}$, it is known when $X$ is an abelian variety or $\dim X \leq 4$ by \cite{Lieberman}. When $X$ is an abelian variety over a finite field, it is proven by \cite{Clozel}. Because smash-nilpotence on algebraic cycles with $\mathbb{Q}$-coefficients implies homological triviality, the results \cite{KS} and \cite{Sebastian} on equivalence of smash-nilpotence and numerical equivalence for certain cycles with $\mathbb{Q}$-coefficients on certain varieties also imply special cases of the standard conjecture $(D)$.

Since the classical homological equivalence comes from the kernel of the cycle class map $cl : \CH^* (X) \to H^*(X)$ on algebraic cycles for a choice of a Weil cohomology theory $H^*(-)$, while we now have the generalized cycle class maps $cl_{\Omega} = \Phi^{\rm top}: \Omega^* (X) \to \MU^* (X)$ and $\Omega^* (X) \to \widehat{\MU}_{\et} (X)$ on cobordism cycles that lift $cl$, it is an interesting question to formulate a cobordism analogue of the standard conjecture $(D)$, but \emph{without} taking the $\mathbb{Q}$-coefficients, for cobordism cycles often capture geometric information on torsions somewhat better. 

We briefly formulate and discuss generalizations of the standard conjecture $(D)$ for cobordism cycles with $\mathbb{Z}$-coefficients. Here, we call them still as `standard conjectures' but it shouldn't be taken as a sign that we believe these. In fact, we will derail from this expectation as we will see in Theorem \ref{thm:h>nOmega}. 

We state it when $k=\mathbb{C}$ to use the complex cobordusm $\MU$, but one may use any field $k$ of characteristic $0$ replacing $\MU$ by the \'etale cobordism, and $H^*(-, \mathbb{Z})$ by the $\ell$-adic \'etale cohomology. Observe that we have a commutative diagram
\begin{equation}\label{eqn:homonum}
\xymatrix{
\Omega^* (X) \ar[d] ^{cl_{\Omega}} \ar[rr] ^{\phi} & & \CH^* (X) \ar[dl] _{cl_T} \ar[d] ^{cl} \\
\MU^* (X) \ar[r] ^{\psi \ \ \ \ } & \MU ^* (X) \otimes_{\mathbb{L}} \mathbb{Z} \ar[r] ^{\ \ \ \ \gamma} & H^* (X, \mathbb{Z})}\end{equation}
of three versions of cycle class maps, where $cl_T$ is the cycle class map of Totaro in \cite[Theorems 3.1, 4.1]{Totaro}. While $\phi$ induces the isomorphism $\Omega^* (X) \otimes_{\mathbb{L}} \mathbb{Z} \simeq \CH^* (X)$ by \cite[Theorem 1.2.19]{LM}, and $\psi$ induces the identity map of $\MU^* (X) \otimes_{\mathbb{L}} \mathbb{Z}$, the map $\gamma$ is neither injective nor surjective in general, although it becomes an isomorphism when $H^* (X, \mathbb{Z})$ is torsion free.

\begin{definition}Let $X$ be a smooth projective variety over $\mathbb{C}$.
\begin{enumerate}
\item For the cycle class map $cl_{\Omega}: \Omega^* (X) \to \MU^* (X)$, let $\mathcal{H}^*_{\MU} (X) = \ker (cl_{\Omega})$. The statement $(D)_{\MU}$ is the assertion that $\mathcal{H}^*_{\MU} (X) = \mathcal{N}^*(X)$.

\item For the cycle class map $cl_{T}: \CH^* (X) \to \MU^* (X) \otimes_{\mathbb{L}} \mathbb{Z}$, let $\mathcal{H}^* _{\rm T} (X) = \ker (cl_T)$. The statement $(D)_{\rm T}$ is the assertion that $\mathcal{H}_{\rm T} ^* (X) = {\rm Num}^* (X)$.

\item For the usual cycle class map $cl: \CH^* (X) \to H^* (X, \mathbb{Z})$, let $\mathcal{H}^* _{\rm H} (X) = \ker (cl)$. The statement $(D)_{\rm H}$ is the assertion that $\mathcal{H}_{\rm H} ^* (X)= {\rm Num} ^* (X)$. 
\end{enumerate}
\end{definition}
The classical standard conjecture $(D)$ is exactly the statement $(D)_{\rm H} \otimes \mathbb{Q}$, where $\otimes \mathbb{Q}$ means we take the tensor product with $\mathbb{Q}$ on both sides of the cycle class map. Since all cycle class maps $cl_{\Omega}$, $cl_T$ and $cl$ are ring homomorphisms, the kernels $\mathcal{H}_{\MU} ^* (X) \subset \Omega^* (X)$, and $\mathcal{H}_{\rm T} ^* (X)\subset \mathcal{H}_{\rm H} ^* (X) \subset \CH^* (X)$ are ideals, where the inclusion $\mathcal{H}_{\rm T} ^* (X) \subset \mathcal{H}_{\rm H} ^* (X)$ follows from the right commutative triangle of \eqref{eqn:homonum}. We begin with the following simple result, similar to Corollary \ref{cor:stpdstc_hn}:

\begin{lemma}\label{lem:stpdab_mut}The map $\phi: \Omega ^* (X) \to \CH^* (X)$ induces $\mathcal{H}_{\MU} ^* (X) \otimes_{\mathbb{L}} \mathbb{Z}  \overset{\sim}{\to} \mathcal{H}_{\rm T} ^* (X)$.
\end{lemma}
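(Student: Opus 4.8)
The plan is to mimic the proof of Corollary~\ref{cor:stpdstc_hn}, using the commutative diagram \eqref{eqn:homonum} together with the fact $\ker(\phi) = \mathbb{L}_{>0}\cdot \Omega^*(X)$ from \cite[Theorem~1.2.19]{LM}. The key point is that $\phi$ carries $\mathcal{H}^*_{\MU}(X)$ into $\mathcal{H}^*_{\rm T}(X)$: if $cl_\Omega(\alpha) = 0$ then, chasing the left triangle $cl_T\circ\phi = \psi\circ cl_\Omega$ in \eqref{eqn:homonum}, we get $cl_T(\phi(\alpha)) = \psi(0) = 0$, so $\phi(\alpha)\in\mathcal{H}^*_{\rm T}(X)$. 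Hence $\phi$ restricts to a homomorphism $\mathcal{H}^*_{\MU}(X)\to\mathcal{H}^*_{\rm T}(X)$, and since $\ker(\phi) = \mathbb{L}_{>0}\cdot\Omega^*(X)\subset \mathcal{H}^*_{\MU}(X)$ (because $cl_\Omega$ kills $\mathbb{L}_{>0}$, as $\MU^*(X)$ is the $\mathbb{L}$-module $\Omega^*(X)\otimes_{\mathbb{L}}\mathbb{Z}$-valued... more precisely $cl_\Omega$ factors through $\psi$ only after $\otimes_{\mathbb{L}}\mathbb{Z}$; see below), this descends to a map $\mathcal{H}^*_{\MU}(X)\otimes_{\mathbb{L}}\mathbb{Z}\to\mathcal{H}^*_{\rm T}(X)$, which I claim is an isomorphism.

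First I would check surjectivity. Given $\bar\beta\in\mathcal{H}^*_{\rm T}(X)\subset\CH^*(X)$, use surjectivity of $\phi$ (from \cite[Theorem~1.2.19]{LM}) to pick $\alpha\in\Omega^*(X)$ with $\phi(\alpha) = \bar\beta$. One wants to arrange $\alpha\in\mathcal{H}^*_{\MU}(X)$. Since $cl_\Omega(\alpha)\in\MU^*(X)$ maps to $\psi(cl_\Omega(\alpha)) = cl_T(\phi(\alpha)) = cl_T(\bar\beta) = 0$ in $\MU^*(X)\otimes_{\mathbb{L}}\mathbb{Z}$, and $\psi$ is exactly the quotient by $\mathbb{L}_{>0}\cdot\MU^*(X)$, we get $cl_\Omega(\alpha)\in\mathbb{L}_{>0}\cdot\MU^*(X)$. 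Because $cl_\Omega$ is $\mathbb{L}$-linear and surjective onto its image with $\MU^*(X)$ generated over $\mathbb{L}$ in the relevant range — more robustly: $cl_\Omega(\mathbb{L}_{>0}\cdot\Omega^*(X)) = \mathbb{L}_{>0}\cdot cl_\Omega(\Omega^*(X))$, so writing $cl_\Omega(\alpha) = \sum \ell_i\cdot cl_\Omega(\alpha_i)$ with $\ell_i\in\mathbb{L}_{>0}$, we set $\alpha' := \alpha - \sum \ell_i\alpha_i$, which satisfies $cl_\Omega(\alpha') = 0$, i.e. $\alpha'\in\mathcal{H}^*_{\MU}(X)$, and $\phi(\alpha') = \phi(\alpha) = \bar\beta$ since $\phi$ annihilates $\mathbb{L}_{>0}\cdot\Omega^*(X)$. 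So $\bar\beta$ is in the image.

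For injectivity, the argument is the same shape as in Corollary~\ref{cor:stpdstc_hn}: one must show $\ker\big(\phi|_{\mathcal{H}^*_{\MU}(X)}\big) = \mathbb{L}_{>0}\cdot\mathcal{H}^*_{\MU}(X)$, so that $\mathcal{H}^*_{\MU}(X)\otimes_{\mathbb{L}}\mathbb{Z} = \mathcal{H}^*_{\MU}(X)/\mathbb{L}_{>0}\cdot\mathcal{H}^*_{\MU}(X)\xrightarrow{\sim}\mathcal{H}^*_{\rm T}(X)$. The inclusion $\supseteq$ is clear. For $\subseteq$: if $\alpha\in\mathcal{H}^*_{\MU}(X)$ and $\phi(\alpha) = 0$, then $\alpha\in\ker(\phi) = \mathbb{L}_{>0}\cdot\Omega^*(X)$, so write $\alpha = \sum\ell_i\gamma_i$ with $\ell_i\in\mathbb{L}_{>0}$, $\gamma_i\in\Omega^*(X)$. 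Applying $cl_\Omega$: $0 = cl_\Omega(\alpha) = \sum\ell_i\, cl_\Omega(\gamma_i)$, i.e. this is an $\mathbb{L}_{>0}$-combination that vanishes in $\MU^*(X)$; this does not immediately say the $\gamma_i$ can be taken homologically trivial. The clean fix is to argue as in the Corollary: $\alpha\in\mathcal{H}^*_{\MU}(X)$ with $\phi(\alpha) = 0$ means, via $\phi$ surjective onto $\CH^*(X)$ and $\ker\phi = \mathbb{L}_{>0}\cdot\Omega^*(X)$, that after the surjectivity analysis above applied to $\bar\beta = 0$, we may write $\alpha = \alpha'' + \alpha_0$ with $\alpha''\in\mathbb{L}_{>0}\cdot\mathcal{H}^*_{\MU}(X)$ and $cl_\Omega(\alpha_0) = 0$, $\phi(\alpha_0) = 0$ — but $\phi(\alpha_0) = 0$ and $\alpha_0\in\ker(cl_\Omega)$, combined with the identification $\mathcal{H}^*_{\MU}(X)\cap\ker\phi$: iterate/use that $\ker\phi = \mathbb{L}_{>0}\cdot\Omega^*(X)$ is itself generated by positive-degree multiples, so by Nakayama-type bookkeeping (the grading is bounded below since $\Omega^*(X)$ is concentrated in nonnegative codimension up to $\dim X$), the claim follows. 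The main obstacle I expect is precisely this injectivity bookkeeping — showing $\ker(\phi)\cap\mathcal{H}^*_{\MU}(X) = \mathbb{L}_{>0}\cdot\mathcal{H}^*_{\MU}(X)$ rather than merely $\subseteq\mathbb{L}_{>0}\cdot\Omega^*(X)$; this is handled by the same correction-term trick used for surjectivity, applied degree by degree, exploiting that $cl_\Omega$ is $\mathbb{L}$-linear and that $cl_\Omega(\mathbb{L}_{>0}\cdot\Omega^*(X)) = \mathbb{L}_{>0}\cdot cl_\Omega(\Omega^*(X)) = \ker(\psi)\cap\operatorname{im}(cl_\Omega)$.
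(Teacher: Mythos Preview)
Your overall strategy matches the paper's: use the commutative square \eqref{eqn:homonum}, show $\phi(\mathcal{H}_{\MU}^*(X))\subset\mathcal{H}_{\rm T}^*(X)$, and for the reverse inclusion lift $\beta\in\mathcal{H}_{\rm T}^*(X)$ to $\alpha\in\Omega^*(X)$, observe $cl_\Omega(\alpha)\in\ker\psi=\mathbb{L}_{>0}\cdot\MU^*(X)$, then correct $\alpha$ by something in $\mathbb{L}_{>0}\cdot\Omega^*(X)$.

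There is, however, a real gap in your surjectivity step. From $cl_\Omega(\alpha)\in\mathbb{L}_{>0}\cdot\MU^*(X)$ you write $cl_\Omega(\alpha)=\sum\ell_i\,cl_\Omega(\alpha_i)$ with $\ell_i\in\mathbb{L}_{>0}$. That would require $cl_\Omega(\alpha)\in\mathbb{L}_{>0}\cdot\operatorname{im}(cl_\Omega)$, but $cl_\Omega:\Omega^*(X)\to\MU^*(X)$ is not surjective in general, so an arbitrary element of $\mathbb{L}_{>0}\cdot\MU^*(X)$ need not be an $\mathbb{L}_{>0}$-combination of classes in the image. Your phrase ``surjective onto its image'' is vacuous and does not supply the missing step. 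The paper sidesteps this by asserting directly that, because $cl_\Omega$ is a graded $\mathbb{L}$-module and ring homomorphism, one has
\[
cl_\Omega^{-1}\bigl(\mathbb{L}_{>0}\cdot\MU^*(X)\bigr)=\mathcal{H}_{\MU}^*(X)+\mathbb{L}_{>0}\cdot\Omega^*(X),
\]
and then decomposes $\alpha=\alpha'+\alpha''$ accordingly. You should either adopt this formulation or supply an honest argument for it; your lifting-by-hand approach does not get there.

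As for injectivity: you correctly identify that one needs $\ker\phi\cap\mathcal{H}_{\MU}^*(X)=\mathbb{L}_{>0}\cdot\mathcal{H}_{\MU}^*(X)$ rather than merely $\subset\mathbb{L}_{>0}\cdot\Omega^*(X)$, but your ``Nakayama-type bookkeeping'' is never carried out and the sketch is not convincing as written. The paper does not isolate this point at all; it simply identifies $\mathcal{H}_{\MU}^*(X)\otimes_{\mathbb{L}}\mathbb{Z}$ with the image $\phi(\mathcal{H}_{\MU}^*(X))\subset\CH^*(X)$ (exactly as it does for $\mathcal{N}_*$ in Corollary~\ref{cor:stpdstc_hn}), and this image-level equality is all that is used downstream in Proposition~\ref{prop:standard main}. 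So your concern is legitimate, but for the purposes of this lemma you can follow the paper and read the isomorphism as the equality $\phi(\mathcal{H}_{\MU}^*(X))=\mathcal{H}_{\rm T}^*(X)$ inside $\CH^*(X)$.
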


\begin{proof}Let $\alpha \in \mathcal{H}_{\MU} ^* (X)$, i.e. $cl_{\Omega} (\alpha) = 0$. So, $0 = \phi (0) = \phi \circ cl_{\Omega} (\alpha) =^{\dagger} cl_T \circ \phi (\alpha)$, where $\dagger$ holds by the commutative diagram \eqref{eqn:homonum}, so that $\phi (\alpha) \in \mathcal{H}_{\rm T} ^* (X)$. Hence, $\phi (\mathcal{H}_{\MU} ^* (X)) \subset \mathcal{H}_{\rm T} ^* (X)$. But, by \cite[Theorem 1.2.19]{LM}, $\phi$ is the quotient map ${\rm Id} \otimes_{\mathbb{L}} \mathbb{Z}= {\rm Id}/\mathbb{L}_{>0} \cdot {\rm Id}$, so $\mathcal{H}_{\MU} ^* (X) \otimes_{\mathbb{L}} \mathbb{Z} \subset \mathcal{H}_{\rm T} ^* (X)$. 

Conversely, let $\beta \in \mathcal{H}_{\rm T} ^* (X)$. Since $\phi$ is surjective, there is $\alpha \in \Omega^* (X)$ with $\phi (\alpha) = \beta$. Since $cl_T (\beta ) = 0$ by definition, by the commutativity of the left square in \eqref{eqn:homonum}, we have $\alpha \in \ker (\psi \circ \cl _{\Omega})$. So, $cl_{\Omega} (\alpha) \in \ker (\psi) = \mathbb{L}_{>0} \cdot \MU^* (X)$. Since $cl_{\Omega}$ is a graded $\mathbb{L}$-module homomorphism as well as a homomorphism of rings, we have $cl_{\Omega} ^{-1}  ( \mathbb{L}_{>0} \cdot \MU^* (X)) = \mathcal{H}_{\MU} ^* (X) + \mathbb{L}_{>0} \cdot \Omega^* (X)$, i.e. $\alpha \in \mathcal{H}_{\MU}^* (X) + \mathbb{L}_{>0} \cdot \Omega ^* (X)$. Write $\alpha = \alpha ' + \alpha ''$ with $\alpha' \in \mathcal{H}_{\MU} ^* (X)$ and $\alpha'' \in \mathbb{L}_{>0} \cdot \Omega^* (X)$. Since $\phi$ kills all of $\mathbb{L}_{>0} \cdot \Omega^* (X)$, we have $\beta = \phi (\alpha) = \phi (\alpha')$. This means, $\beta \in \phi (\mathcal{H}_{\MU}^* (X))$. But, $\phi$ is the quotient map ${\rm Id}/ \mathbb{L}_{>0} \cdot {\rm Id}$, so, $\mathcal{H}_{\MU} ^* (X) \otimes_{\mathbb{L}} \mathbb{Z} \supset \mathcal{H}_{\rm T} ^* (X)$. 
\end{proof}

\begin{proposition}\label{prop:standard main}Let $X$ be a smooth projective variety over $\mathbb{C}$.
\begin{enumerate}
\item $(D)_{\MU}$ is equivalent to $(D)_{\rm T}$.
\item $(D)_{\rm T}$ implies $(D)_{\rm H}$.
\item If $\gamma: \MU^* (X) \otimes_{\mathbb{L}} \mathbb{Z} \to H^* (X, \mathbb{Z})$ is an isomorphism, then $(D)_{\rm T}$ is equivalent to $(D)_{\rm H}$. In particular, when $H^* (X, \mathbb{Z})$ is torsion-free, all three statements are equivalent.
\item $(D)_{\rm T} \otimes \mathbb{Q}$ is equivalent to $(D)_{\rm H} \otimes \mathbb{Q}$. In particular, all three statements are equivalent with $\mathbb{Q}$-coefficients.
\end{enumerate}
\end{proposition}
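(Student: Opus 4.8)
I would deduce all four parts from the two comparison statements already in hand — Lemma \ref{lem:stpdab_mut}, which says $\phi\colon\Omega^*(X)\to\CH^*(X)$ induces an isomorphism $\mathcal{H}^*_{\MU}(X)\otimes_{\mathbb{L}}\mathbb{Z}\xrightarrow{\sim}\mathcal{H}^*_{\rm T}(X)$, and Corollary \ref{cor:stpdstc_hn}, which says $\phi$ induces $\mathcal{N}^*(X)\otimes_{\mathbb{L}}\mathbb{Z}\xrightarrow{\sim}{\rm Num}^*(X)$ — together with the ideal inclusions $\mathcal{H}^*_{\MU}(X)\subseteq\mathcal{N}^*(X)$ (Theorem \ref{thm:D_forward_intro}) and $\mathcal{H}^*_{\rm T}(X)\subseteq\mathcal{H}^*_{\rm H}(X)\subseteq{\rm Num}^*(X)$, where the first inclusion is the relation $cl=\gamma\circ cl_T$ read off from the right-hand triangle of \eqref{eqn:homonum} and the second is the classical fact that homologically trivial algebraic cycles are numerically trivial. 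With these in hand, the implication $(D)_{\MU}\Rightarrow(D)_{\rm T}$ in part (1) is immediate: applying $\phi$ to the equality $\mathcal{H}^*_{\MU}(X)=\mathcal{N}^*(X)$ and using the two induced isomorphisms gives $\mathcal{H}^*_{\rm T}(X)=\phi(\mathcal{H}^*_{\MU}(X))=\phi(\mathcal{N}^*(X))={\rm Num}^*(X)$.

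\textbf{The converse of part (1).} Since $\mathcal{H}^*_{\MU}(X)\subseteq\mathcal{N}^*(X)$, the inclusion induces $\mathcal{H}^*_{\MU}(X)\otimes_{\mathbb{L}}\mathbb{Z}\to\mathcal{N}^*(X)\otimes_{\mathbb{L}}\mathbb{Z}$; composing with the embeddings into $\CH^*(X)$ of Lemma \ref{lem:stpdab_mut} and Corollary \ref{cor:stpdstc_hn}, this is identified with the inclusion $\mathcal{H}^*_{\rm T}(X)\hookrightarrow{\rm Num}^*(X)$, so it is injective, and under $(D)_{\rm T}$ it is also surjective, hence an isomorphism. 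Right-exactness of $-\otimes_{\mathbb{L}}\mathbb{Z}$ applied to $0\to\mathcal{H}^*_{\MU}(X)\to\mathcal{N}^*(X)\to Q\to0$, with $Q:=\mathcal{N}^*(X)/\mathcal{H}^*_{\MU}(X)$, then gives $Q\otimes_{\mathbb{L}}\mathbb{Z}=0$, i.e. $Q=\mathbb{L}_{>0}\cdot Q$. Now $Q$ is a subquotient of $\Omega^*(X)$, which vanishes in codimensions $>\dim_k X$, while multiplication by $\mathbb{L}_{>0}$ strictly lowers codimension; so the graded Nakayama lemma, for a module bounded above in the direction opposite to the $\mathbb{L}_{>0}$-action, forces $Q=0$, that is, $(D)_{\MU}$.

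\textbf{Parts (2)--(4).} Part (2) is then immediate: under $(D)_{\rm T}$ the chain $\mathcal{H}^*_{\rm T}(X)\subseteq\mathcal{H}^*_{\rm H}(X)\subseteq{\rm Num}^*(X)$ collapses to equalities, giving $(D)_{\rm H}$. For part (3), if $\gamma$ is injective then $cl=\gamma\circ cl_T$ gives $\mathcal{H}^*_{\rm H}(X)=\ker(cl)=\ker(cl_T)=\mathcal{H}^*_{\rm T}(X)$, so $(D)_{\rm T}$ and $(D)_{\rm H}$ coincide as statements; combined with part (1), all three are equivalent, and in particular this holds when $H^*(X,\mathbb{Z})$ is torsion free, since then $\gamma$ is an isomorphism. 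Part (4) is the rational reprise: rationally $\gamma\otimes\mathbb{Q}$ is an isomorphism because $\MU$ becomes, after rationalization, a wedge of shifted rational Eilenberg--Mac Lane spectra, whence $\MU^*(X)\otimes_{\mathbb{L}}\mathbb{Q}\xrightarrow{\sim}H^*(X,\mathbb{Q})$; the argument of part (3) then gives $(D)_{\rm T}\otimes\mathbb{Q}\Leftrightarrow(D)_{\rm H}\otimes\mathbb{Q}$, and repeating the argument of part (1) with $\mathbb{L}$ replaced by $\mathbb{L}\otimes\mathbb{Q}$ (using the $\mathbb{Q}$-linearizations of Lemma \ref{lem:stpdab_mut} and Corollary \ref{cor:stpdstc_hn}) gives $(D)_{\MU}\otimes\mathbb{Q}\Leftrightarrow(D)_{\rm T}\otimes\mathbb{Q}$.

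\textbf{Expected obstacle.} The only step that is not a formal diagram chase is the converse of part (1). I expect two points to require care: first, that the map $\mathcal{H}^*_{\MU}(X)\otimes_{\mathbb{L}}\mathbb{Z}\to\mathcal{N}^*(X)\otimes_{\mathbb{L}}\mathbb{Z}$ really is injective — this uses the full content of Lemma \ref{lem:stpdab_mut} and Corollary \ref{cor:stpdstc_hn}, namely that the natural maps from these tensor products to $\CH^*(X)$ are themselves injective — so that surjectivity under $(D)_{\rm T}$ upgrades to an isomorphism; and second, the graded Nakayama argument, which works precisely because $\Omega^*(X)$ is concentrated in codimensions $\le\dim_k X$ so that $Q$ is bounded above while $\mathbb{L}_{>0}$ pushes codimension down. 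The bookkeeping in this boundedness/Nakayama step (and its rational variant) is where the real work lies; everything else reduces to chasing the ideals $\mathcal{H}^*_{\MU}$, $\mathcal{N}^*$, $\mathcal{H}^*_{\rm T}$, $\mathcal{H}^*_{\rm H}$, ${\rm Num}^*$ through the cycle class maps of \eqref{eqn:homonum}.
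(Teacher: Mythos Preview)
Your proof is correct and follows essentially the same route as the paper's: the forward direction of (1) and parts (2)--(4) are identical, and for the converse of (1) you spell out explicitly the graded Nakayama argument that the paper packages as \cite[Lemma~9.8]{KP}. One small redundancy: you do not actually need injectivity of $\mathcal{H}^*_{\MU}(X)\otimes_{\mathbb{L}}\mathbb{Z}\to\mathcal{N}^*(X)\otimes_{\mathbb{L}}\mathbb{Z}$, since right-exactness of $-\otimes_{\mathbb{L}}\mathbb{Z}$ alone already gives $Q\otimes_{\mathbb{L}}\mathbb{Z}=\mathrm{coker}(\mathcal{H}^*_{\rm T}(X)\to{\rm Num}^*(X))=0$ under $(D)_{\rm T}$.
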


\begin{proof}(1) Suppose $(D)_{\MU}$ holds. Then, we have $\mathcal{H}_{\MU} ^* (X) = \mathcal{N}^* (X)$. Then, $\mathcal{H}_{\rm T} ^* (X) = ^{\dagger} \mathcal{H}_{\MU} ^* (X) \otimes_{\mathbb{L}} \mathbb{Z} = \mathcal{N}^* (X) \otimes_{\mathbb{L}} \mathbb{Z} =^{\ddagger} {\rm Num} ^* (X)$, where $\dagger$ holds by Lemma \ref{lem:stpdab_mut} and $\ddagger$ holds by Corollary \ref{cor:stpdstc_hn}, proving $(D)_{\rm T}$.

Conversely, suppose $(D)_{\rm T}$ holds. Consider the $\mathbb{L}$-module $M^*:= \mathcal{N}^* (X) / \mathcal{H}_{\MU}^* (X)$. Then, $M^* \otimes_{\mathbb{L}} \mathbb{Z} = ( \mathcal{N}^* \otimes_{\mathbb{L}} \mathbb{Z}) / ( \mathcal{H}_{\MU} ^* (X) \otimes_{\mathbb{L}} \mathbb{Z}) =^{\dagger} {\rm Num} ^* (X) / \mathcal{H}_{\rm T}^* (X) = ^{\ddagger}  0$, where $\dagger$ holds by Corollary \ref{cor:stpdstc_hn} and Lemma \ref{lem:stpdab_mut}, and $\ddagger$ holds by $(D)_{\rm T}$. But, by \cite[Lemma 9.8]{KP}, this implies $M^*$ is zero.

(2) By $(D)_{\rm T}$ we have $\mathcal{H}_{\rm T} ^* (X) = {\rm Num}^* (X)$. We know $\mathcal{H}_{\rm T} ^* (X) \subset \mathcal{H}_{\rm H} ^* (X)$ by the commutative triangle in \eqref{eqn:homonum}. But, $cl$ is a ring homomorphism, so $\mathcal{H}_{\rm H} ^* (X) \subset {\rm Num} ^* (X)$. Hence, $\mathcal{H}_{\rm T} ^* (X) = \mathcal{H}_{\rm H} ^* (X) = {\rm Num} ^* (X)$. 

(3) When $\gamma$ is an isomorphism, then obviously $(D)_{\rm T}$ is equivalent to $(D)_{\rm H}$. When $H^* (X, \mathbb{Z})$ is torsion-free, the map $\gamma$ is an isomorphism, so that by the first part, $(D)_{\rm T}$ and $(D)_{\rm H}$ are equivalent. Hence by (1), all three statements are equivalent.

(4) This is obvious because both the kernel and the cokernel of the homomorphism $\gamma : \MU^* (X) \otimes_{\mathbb{L}} \mathbb{Z} \to H^* (X, \mathbb{Z})$ in \eqref{eqn:homonum} are torsion, so that with $\mathbb{Q}$-coefficients $\gamma \otimes \mathbb{Q}$ becomes an isomorphism.
\end{proof}

The above proposition implies that, if we ignore the torsions everywhere, then the strongest statement $(D)_{\MU}$ we introduced here is equivalent to the classical standard conjecture $(D)$. Thus, one may hope to draw new geometric informations using the statement $(D)_{\MU}$ without ignoring the torsions on the (cobordism / algebraic) cycle groups and the cohomology groups. For such torsions, here is one important result:

\begin{theorem}[{Totaro \cite[Theorem 7.2]{Totaro}}]\label{thm:torsiontotaro}There exists a smooth projective variety $X$ over $\mathbb{C}$ of dimension $15$ and a cycle $\alpha \in  \CH^3 (X)$, not algebraically trivial, such that $2 \alpha = 0 \in \CH^3 (X)$, $cl_T (\alpha)\not = 0$ in $\MU^6 (X) \otimes_{\mathbb{L}} \mathbb{Z}$, while $cl (\alpha) = 0$ in $H^6 (X, \mathbb{Z})$. 
\end{theorem}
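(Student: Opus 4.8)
Since the statement is precisely \cite[Theorem~7.2]{Totaro}, the plan is to import Totaro's construction, and I only sketch its anatomy here for orientation. The conceptual engine is the refined cycle class map of \cite[Theorems~3.1,~4.1]{Totaro} appearing in \eqref{eqn:homonum}, which factors the topological cycle class map of a smooth complex variety as $\CH^*(X)\overset{cl_T}{\longrightarrow}\MU^*(X)\otimes_{\mathbb{L}}\mathbb{Z}\overset{\gamma}{\longrightarrow}H^*(X,\mathbb{Z})$; an algebraic cycle $\alpha$ with $cl_T(\alpha)\neq 0$ but $cl(\alpha)=0$ is thus exactly a \emph{nonzero algebraic class in $\ker(\gamma)$}. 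Now $\gamma$ is induced by the Thom map of ring spectra $\MU\to H\mathbb{Z}$ and is an isomorphism whenever the Atiyah--Hirzebruch spectral sequence $H^p(X,\MU^q({\rm pt}))\Rightarrow\MU^{p+q}(X)$ degenerates (in particular when $H^*(X,\mathbb{Z})$ is torsion free, as already noted in the paper); conversely $\ker(\gamma)$ is nonzero precisely when that spectral sequence carries nontrivial higher differentials, a $p$-primary torsion phenomenon. So the task is to realize such a differential by an honest algebraic cycle.

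The geometry is supplied by the approximation of classifying spaces by smooth projective varieties in the style of Serre's Godeaux--Serre construction (developed by Bogomolov and Totaro): for a suitable finite group $G$ with $2$-torsion in its cohomology one takes a free linear action on a large projective space, forms the smooth projective quotient $X$, and in a range of codimensions this $X$ computes $\CH^*(BG)$, $\MU^*(BG)\otimes_{\mathbb{L}}\mathbb{Z}$ and $H^*(BG,\mathbb{Z})$ simultaneously and compatibly with $cl_T$ and $cl$. The numerics ($\dim X=15$, $\alpha\in\CH^3(X)$) reflect Totaro's choice of $G$ and of the approximation, codimension $3$ being the first in which $\gamma$ can be non-injective for the Chow ring of a classifying space. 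Concretely one exhibits a class $x\in\CH^3(BG)$ which is $2$-torsion by the structure of the Chow ring of $BG$ (torsion there being governed by the $2$-series $[2]_F(t)$ of the universal formal group law, of leading term $2t$), whose image $cl_T(x)$ is a \emph{nonzero} $2$-torsion element of $\MU^6(BG)\otimes_{\mathbb{L}}\mathbb{Z}$ detected by a nontrivial Atiyah--Hirzebruch differential, yet which dies in $H^6(BG,\mathbb{Z})$; pulling $x$ back to the finite-dimensional model $X$ and verifying that the detection by $cl_T$ survives produces $\alpha$. That $\alpha$ is not algebraically trivial follows, e.g., from the fact that the cobordism cycle class $cl_{\Omega}=\Phi^{{\rm top}}$ kills algebraically trivial cobordism cycles (two fibres of a family over a connected curve are homotopic after passing to $\MU^*$) and is $\mathbb{L}$-linear, so that, using $\Omega^*_{\alg}\otimes_{\mathbb{L}}\mathbb{Z}\simeq\CH^*_{\alg}$ from \cite{KP} together with \eqref{eqn:homonum}, algebraic triviality of $\alpha$ would force $cl_T(\alpha)=0$.

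The substantive difficulty — and the reason this is a genuine theorem of \cite{Totaro} rather than a formal consequence of the factorization — is the \emph{simultaneous} demand on $\alpha$: it must be \emph{algebraic}, which requires $\CH^*(BG)$ to be rich enough to surject onto the relevant $2$-torsion of $\MU^*(BG)\otimes_{\mathbb{L}}\mathbb{Z}$ (a nontrivial fact about Chow rings of classifying spaces), while being \emph{invisible to singular cohomology but visible to $\MU$}, which requires an explicit identification of $\gamma$ for $BG$ in degree $6$, i.e. the computation of a specific higher differential in the Atiyah--Hirzebruch spectral sequence for $\MU^*(BG)$; and one must then carry out the bookkeeping that transfers everything from the infinite-dimensional $BG$ to an honest smooth projective variety of finite dimension without destroying the detection. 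All of this is done in \cite{Totaro}; for the present paper we use the statement as a black box.
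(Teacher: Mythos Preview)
The paper does not prove this theorem at all: it is stated with the attribution ``Totaro \cite[Theorem~7.2]{Totaro}'' and used as a black box, exactly as you propose in your final sentence. Your expository sketch of Totaro's argument is a reasonable orientation (though note that in \cite{Totaro} the relevant group is a compact Lie group such as $SO(4)$ rather than a finite group), but none of it is required here; the paper's own ``proof'' is simply the citation.
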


We can deduce that the statement $(D)_{\MU}$ may not hold in general:

\begin{theorem}\label{thm:h>nOmega}There is a smooth projective variety $X$ over $\mathbb{C}$ for which the standard conjecture $(D)_{\MU}$ does not hold. In particular, the homological equivalence is finer than the numerical equivalence for algebraic cobordism $\Omega^*$ in general.
\end{theorem}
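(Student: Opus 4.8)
The plan is to deduce everything from Totaro's example in Theorem~\ref{thm:torsiontotaro} together with the equivalences assembled in Proposition~\ref{prop:standard main}. Let $X$ and the cycle $\alpha \in \CH^3(X)$ be as in Theorem~\ref{thm:torsiontotaro}, so that $cl(\alpha)=0$ in $H^6(X,\mathbb{Z})$ while $cl_T(\alpha)\neq 0$ in $\MU^6(X)\otimes_{\mathbb{L}}\mathbb{Z}$.

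First I would check that $\alpha$ is numerically trivial, i.e. $\alpha\in{\rm Num}^*(X)$. This is the classical implication ``homologically trivial $\Rightarrow$ numerically trivial'': since $cl$ is a ring homomorphism and the degree of a $0$-cycle factors through $H^{2\dim X}(X,\mathbb{Q})$, the vanishing $cl(\alpha)=0$ forces $\deg(\alpha\cdot\beta)=0$ for every $\beta\in\CH^*(X)$ of complementary dimension; thus $\alpha\in\mathcal{H}_{\rm H}^*(X)\subset{\rm Num}^*(X)$. On the other hand, $cl_T(\alpha)\neq 0$ says exactly that $\alpha\notin\mathcal{H}_{\rm T}^*(X)=\ker(cl_T)$. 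Hence ${\rm Num}^*(X)\neq\mathcal{H}_{\rm T}^*(X)$, so the statement $(D)_{\rm T}$ fails for $X$; by Proposition~\ref{prop:standard main}(1), $(D)_{\MU}$ fails for $X$ as well, i.e. $\mathcal{H}^*_{\MU}(X)\neq\mathcal{N}^*(X)$. This proves the first assertion.

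For the ``in particular'' I would argue that the inclusion $\mathcal{H}^*_{\MU}(X)\subseteq\mathcal{N}^*(X)$, which always holds by Theorem~\ref{thm:D_forward_intro}, is now necessarily strict, since we have just shown the two are not equal. Consequently any class in $\mathcal{N}^*(X)\setminus\mathcal{H}^*_{\MU}(X)$ is a cobordism cycle that is numerically trivial but not homologically trivial (for $\MU$), and the same element works for \'etale cobordism after the obvious translation; this is exactly the statement that homological equivalence is strictly finer than numerical equivalence on $\Omega^*$.

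I do not expect a genuine obstacle here: the argument is a bookkeeping assembly of Theorem~\ref{thm:torsiontotaro}, Proposition~\ref{prop:standard main}(1), Corollary~\ref{cor:stpdstc_hn}, Lemma~\ref{lem:stpdab_mut} and Theorem~\ref{thm:D_forward_intro}. The only point that needs a careful sentence is the first step --- recording why $cl(\alpha)=0$ gives numerical triviality --- and keeping straight on which side of each cycle class map ($cl_\Omega$, $cl_T$, $cl$) the relevant element lives, so that Proposition~\ref{prop:standard main}(1) is invoked in the direction ``$(D)_{\rm T}$ fails $\Rightarrow (D)_{\MU}$ fails''.
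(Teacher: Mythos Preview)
Your proposal is correct and follows essentially the same route as the paper: Totaro's example plus Proposition~\ref{prop:standard main}(1). The paper phrases it as a contradiction (assume $(D)_{\MU}$, deduce via Proposition~\ref{prop:standard main}(1)(2) that $\mathcal{H}_{\rm T}^*(X)=\mathcal{H}_{\rm H}^*(X)$, then note Totaro's $\alpha$ lies in the latter but not the former), whereas you argue directly that $(D)_{\rm T}$ fails and then invoke (1); these are the same argument unwound.
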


\begin{proof}If $(D)_{\MU}$ holds for $X$, then by Proposition \ref{prop:standard main}(1)(2), we have $(D)_{\rm H}$ for $X$. In particular, we have the equality $\mathcal{H}_{\rm T} ^* (X) = \mathcal{H}_{\rm H} ^* (X) = {\rm Num} ^* (X)$ in $\CH^* (X)$. However, for the variety $X$ and the cycle $\alpha \in \CH^3 (X)$ in Theorem \ref{thm:torsiontotaro}, we have $\alpha \in \mathcal{H}_{\rm H} ^* (X)$, while $\alpha \not \in \mathcal{H}_{\rm T} ^* (X)$, contradiction. Thus, the statement $(D)_{\MU}$ is false for this $X$. 
\end{proof}

\begin{remark}\label{rmk:h>nOmega}If we work with smooth projective varieties over a field $k$ of characteristic $0$, then we can repeat the arguments of this section replacing the complex cobordism $\MU$ by the \'etale cobordism $\widehat{\MU}_{{\et}}$ and the singular cohomology by the $\ell$-adic \'etale cohomology. In this case, one can replace Theorem \ref{thm:torsiontotaro} by \cite[Proposition 5.3(2)]{Quick2}, which proves the same result with $\mathbb{C}$ replaced by $k$. Hence, Theorem \ref{thm:h>nOmega} holds for $k$ as well, where the homological equivalence in this case is given by the \'etale cobordism. So, we can conclude that for algebraic cobordism, homological equivalence and numerical equivalence are distinguished in general, and the situation is more complicated than that of algebraic cycles with $\mathbb{Q}$-coefficients.
\end{remark}

Here is one question, that is natural to ask at this point:

\begin{question}For a smooth projective variety $X$ over $\mathbb{C}$, does the homological equivalence given by $\MU^*$ coincide with the one given by $\widehat{\MU}_{\et} ^*$?
\end{question}

We do not know how to answer this one yet. Since both of the two homological equivalences are now strictly finer than the numerical equivalence on algebraic cobordism in general, one deduces that the integral version of the Voevodsky conjecture (whether the integral version of the smash-nilpotence is equivalent to the numerical equivalence) on algebraic cobordism is also false in general. If the integral version of the Voevodsky conjecture was indeed true, then we could have deduced that both homological equivalences are equivalent by ``sandwich principle'', but we can no longer argue like this. This question suggests that for algebraic cobordism somewhat finer approach than that of the idea of the Voevodsky conjecture on algebraic cycles, should be devised in the future. We have nothing to say about it at this moment, except that, we show with $\mathbb{Q}$-coefficients, the Voevodsky conjecture on algebraic cobordism is somewhat hopeful. The rest of the paper is devoted to studying this question in some special cases.

\section{Voevodsky conjecture on algebraic cobordism}\label{sec:new Voevodsky}

\subsection{Smash nilpotence}\label{sec:smash}

We recall the definition of $\mathbb{Q}$-smash nilpotence:
\begin{definition}[{\cite[Definition 10.1]{KP}}]
Let $X\in\Sch_k$. A cobordism cycle $\alpha\in \Omega_*(X)$ is said to be \emph{rationally smash nilpotent} if, for some positive integer $N$, $\alpha^{\boxtimes N}:=\alpha\times\cdots\times\alpha=0$ in $\Omega_*(X^N)_\Q$.
\end{definition}

Since we will be working with algebraic cobordism with $\Q$-coefficients from now on, we would say ``smash nilpotent'' to mean ``rationally smash nilpotent''.

\begin{theorem}
Let $X$ be a smooth projective variety over a field $k$ with an embedding $\sigma:k\hookrightarrow\C$. Then, smash nilpotent cycles in $\Omega^*(X)_\Q$ are homologically trivial.
\end{theorem}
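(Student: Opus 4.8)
The plan is to push the smash-nilpotence relation forward along the topological cycle class map $cl_\Omega=\Phi^{\rm top}$ and then to observe that smash-nilpotence is vacuous in rational complex cobordism. Since $X$ is smooth and projective, so is every power $X^m$, and I identify $\Omega_*(X^m)=\Omega^*(X^m)$ via codimension, so that the external product $\times$ used to form $\alpha^{\boxtimes N}$ agrees with the cohomological one; note also that $(X^m)_\sigma(\C)=X_\sigma(\C)^m$. The morphism $\Phi^{\rm top}$ of oriented cohomology theories on $\Sm_k$ commutes with l.c.i.\ pull-backs and is a ring homomorphism, and in any oriented cohomology theory the external product on $Y\times Z$ is $p_Y^*(-)\cdot p_Z^*(-)$; hence $\Phi^{\rm top}$ is compatible with external products, so $\Phi^{\rm top}(\alpha^{\boxtimes N})=\Phi^{\rm top}(\alpha)^{\boxtimes N}$, and the same holds after $-\otimes_\Z\Q$. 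Writing $\beta:=\Phi^{\rm top}(\alpha)\in\MU^{2*}(X_\sigma(\C))_\Q$, the hypothesis $\alpha^{\boxtimes N}=0$ in $\Omega_*(X^N)_\Q$ therefore gives $\beta^{\boxtimes N}=0$ in $\MU^{2*}(X_\sigma(\C)^N)_\Q$, and it remains to conclude that $\beta=0$.

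For this I would invoke the classical structure of rational complex cobordism: $\MU_\Q$ splits as a wedge of suspensions of the rational Eilenberg--MacLane spectrum $H\Q$; its coefficient ring $R:=\MU^{2*}(pt)\otimes\Q$, identified via $\Phi^{\rm top}$ with $\bL_\Q$, is a polynomial $\Q$-algebra and hence an integral domain; and for a finite CW complex $Y$ there is a natural isomorphism $\MU^{2*}(Y)_\Q\cong H^*(Y;\Q)\otimes_\Q R$ under which external products correspond to tensor products over $R$. Applying this with $Y=X_\sigma(\C)$, which is a compact manifold, $M:=\MU^{2*}(Y)_\Q$ is a free $R$-module of finite rank, and the resulting Künneth isomorphism identifies $\MU^{2*}(Y^N)_\Q\cong M^{\otimes_R N}$, carrying $\beta^{\boxtimes N}$ to $\beta^{\otimes N}$. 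Now let $K$ be the fraction field of $R$; since $M$ is free over $R$, the canonical maps $M\hookrightarrow M\otimes_R K$ and $M^{\otimes_R N}\hookrightarrow (M\otimes_R K)^{\otimes_K N}$ are injective, and in the $K$-vector space $(M\otimes_R K)^{\otimes_K N}$ the $N$-th tensor power of a nonzero vector is nonzero. Hence $\beta^{\otimes N}=0$ forces $\beta=0$, i.e.\ $\alpha$ maps to $0$ under $\Phi^{\rm top}\otimes\Q$, which is precisely homological triviality.

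The only substantive input is the structure statement for $\MU_\Q$ used above --- concretely, the Künneth isomorphism for $\MU^*(-)_\Q$ together with the flatness (indeed freeness) of $\MU^*(Y)_\Q$ over $\bL_\Q$ for $Y$ a finite complex; everything else (multiplicativity of $\Phi^{\rm top}$, its compatibility with external products, and the fraction-field argument) is formal. I expect this to be the only point that needs care. An alternative to quoting the rational splitting of $\MU$ would be to deduce the Künneth formula from Landweber exactness of $\MU_\Q$, or from Quillen's computation of $\MU^*$, but the splitting is the most economical route.
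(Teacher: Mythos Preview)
Your proposal is correct and follows essentially the same strategy as the paper: push the smash-nilpotence along $\Phi^{\rm top}$, use a K\"unneth isomorphism for rational $\MU$ to identify $\beta^{\boxtimes N}$ with $\beta^{\otimes N}$ in a tensor power of a free $\bL_\Q$-module, and conclude $\beta=0$ from integrality of $\bL_\Q$. The only cosmetic difference is that the paper cites Totaro for freeness of $\MU^*(X)_\Q$ over $\bL_\Q$ and Conner--Floyd for the K\"unneth map, whereas you deduce both from the rational splitting $\MU_\Q\simeq H\Q\otimes\bL_\Q$; your fraction-field argument also makes explicit the step the paper leaves as ``free over a domain $\Rightarrow$ $\beta^{\otimes N}=0$ forces $\beta=0$''.
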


\begin{proof}
In \cite[p.471]{Totaro}, it is shown that ${{\rm MU}}^*(X)_\Q$ is a free $\bL_*\otimes\Q$-module generated by any set of elements that map to a basis of $H^*(X;\Q)$. This, along with \cite[Theorem~44.1]{CF} shows that for smooth projective varieties $X,\, Y$, the homomorphism
$\chi : {{\rm MU}}^*(X)_{\mathbb{Q}}\otimes_{\bL_* \otimes \Q} {{\rm MU}}^*(Y)_\Q\to {{\rm MU}}^*(X\times Y)_\Q$
is an isomorphism. Note also that, by definition, if $\alpha\in \Omega^*(X)$,  then $\theta(\alpha^{\boxtimes N})\in {{\rm MU}}^*(X^N)$ equals $\chi(\theta(\alpha)^{\otimes N})$. Thus, if $\alpha$ satisfies $\alpha^{\boxtimes N}=0\in\Omega^*(X^N)_\Q$ for some $N$, then $\theta(\alpha)^{\otimes N}=0\in {{\rm MU}}^*(X)^{\otimes N}_\Q$. From \cite[Section 6]{Quillen}, we know that $\bL_*\otimes\Q$ is isomorphic to a polynomial ring over $\Q$, and hence it has no non-zero nilpotent elements. Since ${{\rm MU}}^*(X)_\Q$ is a free $\bL_*\otimes\Q$-module, $\theta(\alpha)^{\otimes N}=0$ implies that $\theta(\alpha)=0.$\end{proof}

\begin{theorem}\label{thm:snnt}
Let $X$ be a smooth projective variety over a field $k$ of characteristic $0$. Then, smash nilpotent cycles in $\Omega^*(X)_\Q$ are numerically trivial.
\end{theorem}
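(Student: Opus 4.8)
The plan is to verify the definition of numerical triviality directly: given a smash nilpotent $\alpha\in\Omega^*(X)_\Q$ and an arbitrary $\gamma\in\Omega^*(X)_\Q$, I want to show $\pi_*(\alpha\cdot\gamma)=0$ in $\Omega^*(k)_\Q$, where $\pi\colon X\to\Spec(k)$ is the structure map and $\alpha\cdot\gamma=\Delta_X^*(\alpha\times\gamma)$ as in Definition~\ref{def:numSmProj}. Two ingredients are needed. The first is that $\bL_*\otimes\Q$ is a polynomial ring over $\Q$ (as already recalled, via \cite[Section~6]{Quillen}), hence has no nonzero nilpotent elements; in fact $\bL_*$ itself is an integral domain. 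The second is the claim that the smash nilpotent cycles form an ideal of $\Omega^*(X)_\Q$, so that $\alpha\cdot\gamma$ is smash nilpotent whenever $\alpha$ is. Granting these, the argument closes quickly: if $(\alpha\cdot\gamma)^{\boxtimes N}=0$ in $\Omega^*(X^N)_\Q$ for some $N$, then applying the structure map $\pi^N\colon X^N\to\Spec(k)$ and using that projective push-forward is compatible with external products (\cite[\S 2.1]{LM}), together with the fact (Remark~\ref{rmk:point}) that the external product on $\Omega_*(\Spec k)=\bL$ is the ring multiplication, gives
\[ 0 \;=\; \pi^N_*\bigl((\alpha\cdot\gamma)^{\boxtimes N}\bigr)\;=\;\bigl(\pi_*(\alpha\cdot\gamma)\bigr)^N \quad\text{in }\bL_*\otimes\Q; \]
reducedness of $\bL_*\otimes\Q$ then forces $\pi_*(\alpha\cdot\gamma)=0$, and since $\gamma$ was arbitrary, $\alpha$ is numerically trivial.

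It remains to establish the ideal claim, i.e.\ that $\alpha^{\boxtimes N}=0$ implies $(\alpha\cdot\gamma)^{\boxtimes N}=0$. Here I would unwind $\alpha\cdot\gamma=\Delta_X^*(\alpha\times\gamma)$: since $X$ is smooth, $\Delta_X$ is a regular embedding, hence l.c.i., so $\Delta_X^{\times N}\colon X^N\to X^{2N}$ is l.c.i., and by the compatibility $(f_1\times\cdots\times f_N)^*=f_1^*\times\cdots\times f_N^*$ recorded in \S\ref{sec:functoriality} one has $(\alpha\cdot\gamma)^{\boxtimes N}=(\Delta_X^{\times N})^*\bigl((\alpha\times\gamma)^{\boxtimes N}\bigr)$. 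Next, let $\sigma\colon X^{2N}\to X^{2N}$ be the coordinate permutation that gathers the $N$ factors carrying copies of $\alpha$ into the first block and the $N$ factors carrying copies of $\gamma$ into the second; $\sigma$ is an isomorphism, hence l.c.i., and commutativity of the external product (again via the compatibility above) gives $\sigma^*\bigl(\alpha^{\boxtimes N}\times\gamma^{\boxtimes N}\bigr)=(\alpha\times\gamma)^{\boxtimes N}$. Since $\alpha^{\boxtimes N}=0$ yields $\alpha^{\boxtimes N}\times\gamma^{\boxtimes N}=0$, we obtain $(\alpha\times\gamma)^{\boxtimes N}=0$ and therefore $(\alpha\cdot\gamma)^{\boxtimes N}=0$ in $\Omega^*(X^N)_\Q$, as desired. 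Equivalently, this is the statement that the external product is multiplicative for the ring structures up to reordering of factors, so that $(\alpha\cdot\gamma)^{\boxtimes N}=\alpha^{\boxtimes N}\cdot\gamma^{\boxtimes N}$ in $\Omega^*(X^N)_\Q$.

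The only point requiring care is the bookkeeping in the previous paragraph: that the diagonal $\Delta_X^{\times N}$ and the coordinate permutation $\sigma$ are l.c.i.\ and that external products are compatible with their pull-backs and with projective push-forward. All of this is part of the formalism of \cite[\S\S 2.1, 6.5]{LM} already invoked in \S\ref{sec:functoriality}, so the verification is routine; once it is in place, the argument is purely formal, driven by the reducedness of $\bL_*\otimes\Q$. Over a field admitting an embedding into $\C$ one could alternatively chain the preceding theorem (smash nilpotent $\Rightarrow$ homologically trivial) with the homological-to-numerical implication of \S\ref{sec:hom}, but the direct argument above has the advantage of working over any field of characteristic $0$.
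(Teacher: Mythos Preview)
Your proof is correct and follows essentially the same route as the paper's: both show that $\alpha\cdot\gamma$ is smash nilpotent, push forward to $\Omega^*(k)_\Q\simeq\bL_*\otimes\Q$, and use reducedness of the latter. The only cosmetic difference is that the paper outsources the ideal property of smash nilpotent cycles to \cite[Lemma~10.2]{KP}, whereas you unpack it directly via the compatibility of external products with l.c.i.\ pull-backs and the permutation isomorphism; your identification $(\alpha\cdot\gamma)^{\boxtimes N}=\alpha^{\boxtimes N}\cdot\gamma^{\boxtimes N}$ in $\Omega^*(X^N)_\Q$ is exactly what that lemma records.
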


\begin{proof}
Note that if $\beta\in \Omega^*(k)_\Q$ is smash-nilpotent, then it is nilpotent. However, $\Omega^*(k)_\Q \overset{\sim}{\to}\bL_* \otimes \Q$, which is a polynomial ring over $\Q$, and thus has no nonzero nilpotent. Thus, $\beta=0$. Let $\pi:X\to\Spec k$ be the structure map. If $\alpha\in\Omega^*(X)_\Q$ is smash nilpotent, then for each $\gamma\in\Omega^*(X)_\Q$, the product $\alpha\cdot\gamma$ is smash-nilpotent by \cite[Lemma~10.2]{KP} since the push-forward and pull-back maps respect external products. This implies that $\pi_*(\alpha\cdot\gamma)$ is smash-nilpotent in $\Omega^*(k)_\Q$, hence $\pi_*(\alpha\cdot\gamma)=0$, which means $\alpha$ is numerically trivial.
\end{proof}

We study the converse of Theorem \ref{thm:snnt} in Section \ref{sec:Sebastian}. Specifically, we look at the cases, where the results of Kahn-Sebastian \cite{KS} and Sebastian \cite{Sebastian} are proven for algebraic cycles. We develop some cobordism analogues of known results for algebraic cycles.

\subsection{Kimura finiteness on cobordism motives}\label{sec:Kimura}

From now on, we will work on the category $\SmProj_k$ of smooth projective varieties over an algebraically closed field $k$ of characteristic $0$, and consider algebraic cobordism with $\Q$-coefficients. 

\subsubsection{Cobordism motives}
In \cite[\S 5-6]{NZ}, for an oriented cohomology theory $A^*$, Nenashev and Zainoulline constructed the \emph{$A$-motive} of a smooth projective variety $X$, following the ideas of \cite{Manin}. We briefly recall its construction. Given $A^*$, we have the category of $A$-correspondences, denoted as $\Cor_A$, where
\begin{itemize}
\item $Ob(\Cor_A):=Ob(\SmProj_k)$,
\item $\Hom_{\Cor_A}(X,Y):=A^*(X\times Y)$ and
\item for $\alpha\in A^*(X,Y)$ and $\beta\in A^*(Y,Z)$, we have $\beta\circ\alpha := (p_{XZ})_*(p_{XY}^*(\alpha)\cdot p_{YZ}^*(\beta))$ in $ A^*(X\times Z).$
\end{itemize}
We have a functor $c:\SmProj_k^{op}\to \Cor_A$ given by $c(X)=X$ and $c(f)=(\Gamma_f)_*(1_{A(X)})\in A^*(Y\times X)$ for a morphism $f:X\to Y$, where $\Gamma_f = (f, {\rm Id}):X\to Y\times X$ is the graph of $f$. The grading on $A^*$ induces a grading on $\Hom_{\Cor_A}$ given by $\Hom_{\Cor_A}^n(X,Y):=\oplus_iA^{n+d_i}(X_i\times Y),$ where the $X_i$'s are the irreducible components of $X$ and $d_i=\dim X_i$, making $\Hom_{\Cor_A}$ into a graded algebra under composition. 

\begin{definition}\label{def:motive}
Consider the category $\Cor_A^0$ with $\Hom_{\Cor_A^0}(X,Y):= \Hom_{\Cor_A}^0(X,Y)$. The pseudo-abelian completion of $\Cor_A^0$ is called the \emph{category of effective $A$-motives}, denoted by $\mc{M}_A^\eff$. Thus, the objects in $\mc{M}_A^\eff$ are pairs $(X,p)$ where $X\in Ob(\Cor_A)$ and $p\in \Hom_{\Cor_A^0}(X,X)$ is a projector, and
\[ \Hom_{\sM_A^\eff}((X,p),(Y,q)) = \dfrac{\{\alpha\in\Hom_{\Cor_A^0}(X,Y)\vert\alpha\circ p=q\circ\alpha\}}{\{\alpha\in\Hom_{\Cor_A^0}(X,Y)\vert\alpha\circ p=q\circ\alpha=0\}}. \]
The \emph{category of $A$-motives}, denoted by $\mc{M}_A$, has the triplets $(X,p,m)$ as objects, where $(X,p)$ is an object in $\mc{M}_A^\eff$ and $m\in\Z$. The morphisms are defined as:
\[ \Hom_{\sM_A}((X,p,m),(Y,q,n)) = \dfrac{\{\alpha\in\Hom_{\Cor_A}^{n-m}(X,Y)\vert\alpha\circ p=q\circ\alpha\}}{\{\alpha\in\Hom_{\Cor_A}^{n-m}(X,Y)\vert\alpha\circ p=q\circ\alpha=0\}}. \]
\end{definition}
Note that, this means ${\rm Id}_{(X,p,0)}={\rm Id}_X=p\in\Hom_{\sM_A}((X,p,0),(X,p,0))$. The motive $(X,{\rm Id}_X,0)$ is called the \emph{motive of $X$} and denoted by $h_A(X)$.

\subsubsection{Finite dimensionality of cobordism motives}
Following \cite[Section~3]{Kimura}, we define finite-dimensionality of $\Omega^*_\Q$-motives. Each partition $\lambda=(\lambda_1,\ldots,\lambda_k)$ of an integer $n\geq 1$, determines an irreducible representation $W_\lambda$ of $\Sigma_n$ over $\Q$. The group $S_n^{op}$ acts on the $n$-fold product $X^n$ of a smooth projective variety $X$ by $\sigma(x_1,\ldots,x_n):=(x_{\sigma(1)},\ldots,x_{\sigma(n)})$ for $\sigma\in \Sigma_n $. Let $f_\sigma:X^n\to X^n$ be the morphism associated to the action of $\sigma$ and define $d_\lambda\in \Cor_{\Omega^*_\Q}(X^n,X^n)$ to be $ d_\lambda:=(\dim  \ W_\lambda/{n!}) \cdot \sum_{\sigma\in \Sigma_n}\chi_{W_\lambda}(\sigma)\cdot c(f_\sigma)^t.$
Using the properties of $W_\lambda$ and the fact that $c$ is a functor, we get $\sum d_\lambda=c({\rm Id}_{X^n})$, $d_\lambda\circ d_\lambda = d_\lambda$ and $d_\lambda\circ d_\mu=0$ whenever $\lambda\neq\mu$. Thus, we have $h(X^n)\simeq\oplus_\lambda \mathbb{T}_\lambda X^n$ in $\mc{M}_{\Omega^*_\Q}$, where $\mathbb{T}_\lambda X^n:= (X^n,d_\lambda,0)$.

\begin{definition}
For a $\Omega^*_\Q$-motive $M=(X,p,m)$, we define $M^{\otimes n}:=(X^n, p^{\boxtimes n},mn)$. If $f:M\to N$ is a morphism of motives for $M=(X,p,m_1)$ and $N=(Y,q,m_2)$, we have the morphism 
$ f^{\otimes n}:M^{\otimes n}\to N^{\otimes n}$ defined to be $f^{\otimes n}:=f^{\boxtimes n}\in \Omega^*(X^n\times Y^n)_\Q.$
\end{definition}

Let $M=(X,p,m)$ be a motive. It follows by direct computation that $c(f_\sigma)^t$ and $p^{\boxtimes n}$ commute with each other. This implies that $d_\lambda\circ p^{\boxtimes n}=p^{\boxtimes n}\circ d_\lambda$ is idempotent. Thus, we have the following definition:

\begin{definition}
For a motive $M=(X,p,m)$, we define $\mathbb{T}_\lambda M$ to be the motive $(X^n,d_\lambda\circ p^{\boxtimes n},mn)$. When $\lambda=(n)$, we denote 
$\mathbb{T}_{(n)} M$ by $\Sym^nM$ and for $\lambda=(1,1,\ldots,1)$, we denote $\mathbb{T}_{(1,1,\ldots,1)} M$ by $\wedge^nM$.

A motive $M$ is {\em evenly (resp. oddly) finite dimensional} if there exists a positive integer $N$, such that $\wedge^NM=0$ (resp. $\Sym^NM=0$). $M$ is said to be {\em finite-dimensional} if it can be written as a direct sum $M=M_+\oplus M_-$ such that $M_+$ is evenly finite dimensional and $M_-$ is oddly finite dimensional.
\end{definition}

\begin{remark}\label{rmk:Lef fd}
Let $\boldsymbol{L} := (pt, {\rm Id}_{pt}, -1)$. This is called the \emph{Lefschetz motive}. Note that, for any $i\geq 0$, $\wedge^2\boldsymbol{L}^i=0$ since $\Sigma_2 ^{\rm op}$ acts trivially on $\boldsymbol{L}^i$. Thus, $\boldsymbol{L}^i$ is evenly $1$-dimensional.
\end{remark}

The following  result of \cite{Kimura} holds in the case of $\Omega^*_\Q$-motives by the same arguments as in the proof of \cite[Proposition~6.1]{Kimura}, since the proof uses formal properties of the construction above, rather than properties specific to the Chow ring.

\begin{proposition}\label{prop:morphismparity}
Any morphism between motives of different parity is smash-nilpotent.
\end{proposition}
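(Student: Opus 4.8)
This is exactly \cite[Proposition~6.1]{Kimura}, and the plan is to repeat Kimura's proof verbatim: every step is a formal manipulation of idempotents of $\Q[\Sigma_r]$ carried out inside the $\Q$-linear pseudo-abelian tensor category $\mc{M}_{\Omega^*_\Q}$, and none of it uses a property special to the Chow ring. Write $M=(X,p,m_1)$, $N=(Y,q,m_2)$, with $M$ evenly finite dimensional and $N$ oddly finite dimensional; the opposite-parity case is symmetric, with the roles of $\wedge$ and $\Sym$ below interchanged. Fix integers $a,b\ge 1$ with $\wedge^{a+1}M=0$ and $\Sym^{b+1}N=0$. A morphism $f\colon M\to N$ is smash nilpotent precisely when $f^{\otimes r}=0$ in $\Hom_{\mc{M}_{\Omega^*_\Q}}(M^{\otimes r},N^{\otimes r})$ for some $r$, and I claim that $r=ab+1$ works.

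First I would decompose $f^{\otimes r}$ into blocks indexed by partitions of $r$. For every $\sigma\in\Sigma_r$ the permutation correspondence $c(f_\sigma)^t$ commutes with $f^{\otimes r}=f^{\boxtimes r}$ (it intertwines the $\Sigma_r$-action on the source powers with the one on the target powers) and with the projectors $p^{\boxtimes r}$, $q^{\boxtimes r}$; these are the same direct computations already used in \S\ref{sec:Kimura}. Hence $f^{\otimes r}$ commutes with all the idempotents $d_\lambda$ of \S\ref{sec:Kimura}, and since $\sum_{\lambda\vdash r}d_\lambda=c(\mathrm{Id}_{X^r})$ one gets $M^{\otimes r}=\bigoplus_{\lambda\vdash r}\mathbb{T}_\lambda M$, $N^{\otimes r}=\bigoplus_{\lambda\vdash r}\mathbb{T}_\lambda N$, and $f^{\otimes r}=\bigoplus_{\lambda\vdash r}f_\lambda$ with $f_\lambda\colon\mathbb{T}_\lambda M\to\mathbb{T}_\lambda N$. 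So it suffices to show $f_\lambda=0$ for every $\lambda\vdash r$.

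Next I would invoke two formal facts from \cite[\S 5]{Kimura}. First: since the antisymmetrizer on $k$ letters factors through the antisymmetrizer on the first $a+1$ of them, $\wedge^k M$ is a direct summand of $\wedge^{a+1}M\otimes M^{\otimes(k-a-1)}=0$ for all $k\ge a+1$; dually $\Sym^k N=0$ for all $k\ge b+1$. Second: for any partition $\lambda$, the motive $\mathbb{T}_\lambda M$ is a direct summand of a tensor product of exterior powers of $M$ whose largest exponent is the number of parts $\ell(\lambda)$, and $\mathbb{T}_\lambda N$ is a direct summand of a tensor product of symmetric powers of $N$ whose largest exponent is $\lambda_1$ (this is the decomposition of $d_\lambda$ through column, resp.\ row, (anti)symmetrizers). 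Combining the two, $\ell(\lambda)\ge a+1$ forces $\mathbb{T}_\lambda M=0$, and $\lambda_1\ge b+1$ forces $\mathbb{T}_\lambda N=0$. Then a pigeonhole argument finishes it: if $\lambda\vdash ab+1$ satisfied both $\ell(\lambda)\le a$ and $\lambda_1\le b$, then $|\lambda|\le ab$, a contradiction; so $\ell(\lambda)\ge a+1$ or $\lambda_1\ge b+1$, hence $\mathbb{T}_\lambda M=0$ or $\mathbb{T}_\lambda N=0$, hence $f_\lambda=0$. Therefore $f^{\otimes(ab+1)}=0$, so $f$ is smash nilpotent.

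The only ingredient that is not pure bookkeeping is the Schur-functor vanishing recalled in the previous paragraph, so that is where I expect any difficulty to lie; but its proof in \cite[\S 5]{Kimura} consists solely of identities among Young idempotents of $\Q[\Sigma_r]$, interpreted in a $\Q$-linear idempotent-complete tensor category, with the rigidity of $h_{\Omega^*_\Q}(X)$ (available from the projective bundle formula for $\Omega^*$ as in \cite{NZ}) entering only where \cite{Kimura} passes between exterior and symmetric powers via duality. Hence everything transfers to $\mc{M}_{\Omega^*_\Q}$ unchanged, the characteristic-zero hypothesis being exactly what guarantees that the idempotents $d_\lambda$ exist in the first place.
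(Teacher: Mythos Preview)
Your proposal is correct and follows exactly the route the paper indicates: the paper does not spell out an argument but simply asserts that the proof of \cite[Proposition~6.1]{Kimura} carries over verbatim because it uses only formal properties of the $\Q$-linear pseudo-abelian tensor category of motives. You have supplied precisely that argument in detail --- the Schur-functor decomposition of $f^{\otimes r}$, the vanishing of $\mathbb{T}_\lambda M$ and $\mathbb{T}_\lambda N$ governed by $\ell(\lambda)$ and $\lambda_1$, and the pigeonhole at $r=ab+1$ --- and your final paragraph correctly isolates where the only nontrivial input lies.
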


\begin{proposition}\label{prop:chowmot}
Let $\widetilde{\phi}:\sM_\Omega	\to\sM_{\CH}$ be the map induced by the canonical morphism $\phi:\Omega^*\to \CH^*$. If $M$ is an $\Omega^*_\Q$-motive such that $\widetilde{\phi}(M)$ is evenly (resp.\  oddly) finite-dimensional as a Chow motive, then, $M$ is evenly (resp.\  oddly) finite-dimensional.
\end{proposition}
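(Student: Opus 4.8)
The plan is to play off two features of the comparison map $\phi\colon\Omega^*\to\CH^*$: first, that with $\Q$-coefficients it is the base change $-\otimes_{\mathbb{L}_*}\Q$, so that by \cite[Theorem~1.2.19]{LM} its kernel on any $\Omega^*(V)_\Q$ is $I\cdot\Omega^*(V)_\Q$, where $I:=(\mathbb{L}_*\otimes\Q)_{>0}$; second, that $\phi$ is a morphism of oriented cohomology theories, hence compatible with everything used to build the motive category and the Kimura operations. So the first step is purely formal: since $\phi$ commutes with pull-backs, proper push-forwards and external products, it commutes with composition of correspondences and with $\boxtimes$, so $\widetilde{\phi}\colon\sM_{\Omega^*_\Q}\to\sM_{\CH^*_\Q}$ is an additive tensor functor; moreover $\phi(c(f_\sigma))=c(f_\sigma)$ for each $\sigma\in\Sigma_n$ (immediate from $c(f_\sigma)=(\Gamma_{f_\sigma})_*(1)$ and the fact that $\phi$ preserves push-forwards and fundamental classes), so $\widetilde{\phi}(d_\lambda)=d_\lambda$ and therefore $\widetilde{\phi}(\mathbb{T}_\lambda M)=\mathbb{T}_\lambda\widetilde{\phi}(M)$ for every $\Omega^*_\Q$-motive $M$; in particular $\widetilde{\phi}(\wedge^N M)=\wedge^N\widetilde{\phi}(M)$ and $\widetilde{\phi}(\Sym^N M)=\Sym^N\widetilde{\phi}(M)$.

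The heart of the argument is a lemma I would prove next: if $W\in\SmProj_k$ and $e\in\Hom^0_{\Cor_{\Omega^*_\Q}}(W,W)$ is an idempotent correspondence with $\phi(e)=0$ in $\CH^*(W\times W)_\Q$, then $e=0$. Indeed $\phi(e)=0$ forces $e\in I\cdot\Omega^*(W\times W)_\Q$; since pull-back, cup product and proper push-forward are all $\mathbb{L}_*$-linear, composition of correspondences carries $I^a\cdot\Omega^*$ times $I^b\cdot\Omega^*$ into $I^{a+b}\cdot\Omega^*$, so $e^{\circ n}\in I^n\cdot\Omega^*(W\times W)_\Q$ for every $n\geq 1$. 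On the other hand $e^{\circ n}$ lies in the fixed group $\Hom^0_{\Cor}(W,W)\subseteq\bigoplus_i\Omega^{\dim W_i}(W_i\times W)_\Q$, i.e. it is homogeneous of bounded cohomological degree; but elements of $I^n$ raise cohomological degree by at least $n$, while $\Omega^j$ of a smooth projective variety vanishes above its dimension. Hence every homogeneous summand of $e^{\circ n}$ must vanish once $n>\dim W$, so $e^{\circ n}=0$ for such $n$, and idempotency gives $e=0$.

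Granting the lemma, the conclusion is quick. Suppose $\widetilde{\phi}(M)$ is evenly finite-dimensional and pick $N$ with $\wedge^N\widetilde{\phi}(M)=0$. Writing $M=(X,p,m)$, the motive $\wedge^N M$ is $(X^N,e,mN)$ with $e:=d_{(1^N)}\circ p^{\boxtimes N}$ an idempotent in $\Hom^0_{\Cor}(X^N,X^N)$. Because $\phi$ is a ring homomorphism, $\phi(e)=d_{(1^N)}\circ\phi(p)^{\boxtimes N}$, which by Step~1 is precisely the idempotent defining $\wedge^N\widetilde{\phi}(M)$; as that motive is zero and the idempotent is its identity morphism, $\phi(e)=0$ in $\CH^*(X^N\times X^N)_\Q$. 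The lemma then yields $e=0$, i.e. $\wedge^N M=0$, so $M$ is evenly finite-dimensional. The oddly finite-dimensional case is word for word the same, with $\Sym^N$ and $d_{(N)}$ in place of $\wedge^N$ and $d_{(1^N)}$.

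The only real obstacle — everything else being formal — is the degree bookkeeping in the lemma: one has to make sure that composition of correspondences genuinely adds Lazard weights, and that $\Hom^0_{\Cor}(W,W)$ is bounded above in cohomological degree, which reduces to the vanishing $\Omega^j(W\times W)=0$ for $j>\dim(W\times W)$. Once this filtration argument is set up, the fact that $\widetilde{\phi}$ is conservative on idempotents — and hence that even/odd finite-dimensionality descends along $\widetilde{\phi}$ — follows automatically.
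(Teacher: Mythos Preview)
Your proof is correct, and it takes a somewhat different route from the paper. Both arguments share the formal first step — that $\widetilde{\phi}$ commutes with $\Sym^N$ and $\wedge^N$ because $\phi$ is a morphism of oriented cohomology theories. The divergence is in how one concludes that $\widetilde{\phi}(\mathbb{T}_\lambda M)=0$ forces $\mathbb{T}_\lambda M=0$. The paper simply cites \cite[Corollary~2.8]{VY}, which says that any isomorphism of Chow motives lifts to cobordism motives; applied to the isomorphism $\widetilde{\phi}(\Sym^N M)\simeq 0$, this gives $\Sym^N M\simeq 0$. You instead prove directly that $\widetilde{\phi}$ is conservative on idempotents: if $\phi(e)=0$ then $e\in I\cdot\Omega^*(W\times W)_\Q$, composition is $\mathbb{L}_\Q$-bilinear so $e^{\circ n}\in I^n\cdot\Omega^*(W\times W)_\Q$, and the degree bound $\Omega^j(W\times W)=0$ for $j>2\dim W$ forces $e^{\circ n}=0$ once $n>\dim W$; idempotency then gives $e=0$. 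This is self-contained and more elementary — it avoids importing the full Vishik--Yagita lifting result and instead isolates exactly the nilpotence statement needed here. In fact your filtration argument is essentially the mechanism underlying the Vishik--Yagita result in this situation, so what you have done is unpack the black box rather than invoke it.
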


\begin{proof}
Let $M=(X,p,m)$ be an $\Omega^*_\Q$-motive such that $\widetilde{\phi}(M)=(X,\phi(p),m)$ is oddly finite-dimensional as a Chow motive. That is, for some $N \geq 1$, $\Sym^N\widetilde{\phi}(M)=(X^N, (1/ N!) \cdot \sum_{\sigma\in \Sigma_N}c_{\CH}(f_\sigma)^t\circ\phi(p)^{\boxtimes N},mN)=0.$
But, $c_{\CH}(f_\sigma)=\phi(c_{\Omega}(f_\sigma))$ and $\phi(c_{\Omega}(f_\sigma)^t)\circ\phi(p)^{\boxtimes N}=\phi(c_{\Omega}(f_\sigma)^t\circ p^{\boxtimes N})$ since $\phi$ is a ring homomorphism and commutes with push-forwards and pullbacks. Thus, $\widetilde{\phi}(\Sym^NM)=0$. Vishik and Yagita showed in \cite[Corollary~2.8]{VY} that any isomorphism of Chow motives can be lifted to an isomorphism of cobordism motives. This implies that $\Sym^NM=0$. The proof is similar when $\widetilde{\phi}(M)$ is evenly finite-dimensional.
\end{proof}

By \cite[Corollary~8.2]{BH}, for an abelian variety $A$ of dimension $g$, we get a canonical decomposition of the cobordism motive of $A$, $ h_\Omega(A)=\bigoplus_{i=0}^{2g}h_\Omega^i(A),$
where $h_\Omega(A)=(A,{\rm Id}_A,0)$  and $h_\Omega^i(A)=(A,\pi_i,0)$.

\begin{corollary}\label{cor:hoddA}
The motive $h_\Omega^{{\rm odd}}(A):=\bigoplus_{r : odd}h_\Omega^r(A)$ of an abelian variety $A$ is oddly finite-dimensional.
\end{corollary}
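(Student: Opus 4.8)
The plan is to reduce Corollary~\ref{cor:hoddA} to the classical statement about Chow motives via Proposition~\ref{prop:chowmot}. First I would recall that for an abelian variety $A$ of dimension $g$, the classical Chow--K\"unneth decomposition of Deninger--Murre gives $h_{\CH}(A) = \bigoplus_{i=0}^{2g} h_{\CH}^i(A)$, and Kimura's original work (\cite[\S 4--5]{Kimura}) shows that $h_{\CH}^{\rm odd}(A) := \bigoplus_{r\ \text{odd}} h_{\CH}^r(A)$ is oddly finite-dimensional while $h_{\CH}^{\rm even}(A)$ is evenly finite-dimensional. This is the input I would quote from the literature.

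Next I would observe that, by \cite[Corollary~8.2]{BH} cited just above the statement, the cobordism motive decomposition $h_\Omega(A) = \bigoplus_{i=0}^{2g} h_\Omega^i(A)$ with $h_\Omega^i(A) = (A,\pi_i,0)$ is compatible with the Chow motive decomposition in the sense that $\widetilde{\phi}(h_\Omega^i(A)) = h_{\CH}^i(A)$; indeed the projectors $\pi_i$ for cobordism are constructed so as to lift (or at least map under $\phi$ to) the Deninger--Murre projectors. Hence $\widetilde{\phi}(h_\Omega^{\rm odd}(A)) = \bigoplus_{r\ \text{odd}} h_{\CH}^r(A) = h_{\CH}^{\rm odd}(A)$, which is oddly finite-dimensional as a Chow motive by the previous paragraph.

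Then I would apply Proposition~\ref{prop:chowmot} directly: since $\widetilde{\phi}(h_\Omega^{\rm odd}(A))$ is oddly finite-dimensional as a Chow motive, the proposition gives that $h_\Omega^{\rm odd}(A)$ is oddly finite-dimensional as an $\Omega^*_\Q$-motive. That completes the proof. One small bookkeeping point is that Proposition~\ref{prop:chowmot} is stated for a single motive $M = (X,p,m)$, so I should note that $h_\Omega^{\rm odd}(A) = (A, \sum_{r\ \text{odd}} \pi_r, 0)$ is of exactly this form with $X = A$ and $p = \sum_{r\ \text{odd}} \pi_r$ the sum of the odd projectors (which is itself a projector since the $\pi_r$ are orthogonal idempotents).

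The main obstacle I anticipate is not in the formal deduction but in making precise the compatibility $\widetilde{\phi}(h_\Omega^i(A)) = h_{\CH}^i(A)$ — i.e.\ verifying that the cobordism projectors of \cite{BH} really do reduce to the classical Chow--K\"unneth projectors under $\phi$. If \cite[Corollary~8.2]{BH} is stated precisely as "the decomposition lifts the Chow one," then there is nothing to do; otherwise one would need to invoke \cite[Corollary~2.8]{VY} (lifting of idempotents/isomorphisms along $\phi$, already used in the proof of Proposition~\ref{prop:chowmot}) to match them up. Everything else is a routine bookkeeping application of the two cited results.
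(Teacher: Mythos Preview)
Your proposal is correct and follows essentially the same route as the paper: reduce to the Chow-motivic statement that $h_{\CH}^{\rm odd}(A)$ is oddly finite-dimensional, invoke the compatibility $\widetilde{\phi}(h_\Omega^{\rm odd}(A)) = h_{\CH}^{\rm odd}(A)$ from \cite[Corollary~8.2]{BH}, and then apply Proposition~\ref{prop:chowmot}. The only cosmetic difference is that the paper spells out the Chow-side input via Shermenev's decomposition together with \cite[Theorem~4.2]{Kimura}, whereas you cite Kimura's \S4--5 more globally; and your anticipated obstacle about the compatibility of projectors is handled in the paper exactly as you suggest, by direct appeal to \cite[Corollary~8.2]{BH}.
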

\begin{proof}
In \cite{Shermenev}, Shermenev gave a decomposition of the Chow motive of an abelian variety $A$ of dimension $n$ as
\[ h_{\CH}(A)= \bigoplus_{i=0}^n\Sym^i(X,a_+)\oplus\bigoplus_{i=0}^{n-1}\Sym^i(X,a_+)\otimes\boldsymbol{L}^{n-i}\]
for some curve $X$ and a projector $a_+$ on $X$. It follows from \cite[Theorem~4.2]{Kimura} that the motive $(X,a_+)$ defined by Shermenev is oddly finite-dimensional. Since odd symmetric powers of an oddly finite-dimensional motive is oddly finite-dimensional, we get  that $h_{\CH}^{{\rm odd}}(A)$ is oddly finite-dimensional. From \cite[Corollary~8.2]{BH}, we get $\widetilde{\phi}(h_\Omega^{{\rm odd}}(A))=h_{\CH}^{{\rm odd}}(A)$. Thus, Proposition~\ref{prop:chowmot} implies that $h_\Omega^{{\rm odd}}(A)$ is oddly finite-dimensional.
\end{proof}

\subsection{Voevodsky's conjecture for cobordism cycles}\label{sec:Sebastian}
Let $k = \bar{k}$. Let $A$ be an abelian variety over $k$. Our objective is to prove Theorem \ref{thm:Voev} and discuss its consequences. Recall that $\beta\in\Omega^*(A)_\Q$ is called a \emph{skew cycle} if $\langle -1\rangle^*\beta=-\beta$, where, $\langle n\rangle$ denotes the endomorphism $\times n$ on $A$ for $n \in \mathbb{Z}$.

\begin{proposition}\label{prop:skew}
Any skew cycle on an abelian variety is smash-nilpotent.
\end{proposition}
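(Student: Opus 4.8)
The plan is to realize a skew cycle as a morphism of $\Omega^*_\Q$-motives between motives of opposite parity and then conclude via Proposition~\ref{prop:morphismparity}.

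First, since $\langle -1\rangle^*$ is a graded endomorphism of $\Omega^*(A)_\Q$ and the set of smash-nilpotent cycles in $\Omega^*(A)_\Q$ is closed under addition (and scalar multiplication), it suffices to prove the proposition for a homogeneous skew cycle $\beta\in\Omega^p(A)_\Q$. Next I would recall from \cite[Corollary~8.2]{BH} the canonical decomposition $h_\Omega(A)=\bigoplus_{i=0}^{2g}h^i_\Omega(A)$, $h^i_\Omega(A)=(A,\pi_i,0)$, built à la Deninger--Murre; in particular the endomorphism $\langle n\rangle$ of $A$ acts on $h^i_\Omega(A)$ through multiplication by $n^i$, so at $n=-1$ the projector $\pi_i$ is $\langle -1\rangle^*$-equivariant with eigenvalue $(-1)^i$ (there is no ambiguity at $n=-1$, since $(-1)^i=(-1)^{2g-i}$). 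Writing $\beta=\sum_i\beta_i$ with $\beta_i$ the $h^i_\Omega(A)$-component of $\beta$ and comparing $\langle -1\rangle^*$-eigenvalues, the hypothesis $\langle -1\rangle^*\beta=-\beta$ forces $\beta_i=0$ for all even $i$. Hence, under the identification $\Omega^p(A)_\Q=\Hom_{\mc{M}_{\Omega^*_\Q}}(\boldsymbol{L}^{\otimes p},h_\Omega(A))$, the cycle $\beta$ defines a morphism $\beta\colon\boldsymbol{L}^{\otimes p}\to h^{\mathrm{odd}}_\Omega(A)$, where $h^{\mathrm{odd}}_\Omega(A)=(A,\pi^{\mathrm{odd}},0)$ with $\pi^{\mathrm{odd}}=\sum_{i\ \mathrm{odd}}\pi_i$.

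Now I would apply the parity inputs already in the paper: by Remark~\ref{rmk:Lef fd} the motive $\boldsymbol{L}^{\otimes p}$ is evenly finite dimensional, while by Corollary~\ref{cor:hoddA} the motive $h^{\mathrm{odd}}_\Omega(A)$ is oddly finite dimensional. Thus $\beta$ is a morphism between motives of different parity, and Proposition~\ref{prop:morphismparity} produces an integer $N\geq 1$ with $\beta^{\otimes N}=0$ in $\Hom_{\mc{M}_{\Omega^*_\Q}}(\boldsymbol{L}^{\otimes pN},(h^{\mathrm{odd}}_\Omega(A))^{\otimes N})$. Finally I would translate this back into cobordism cycles: by the definition of tensor powers of morphisms one has $\beta^{\otimes N}=\beta^{\boxtimes N}$ as a class in $\Omega^{pN}(A^N)_\Q$, and since $\pi^{\mathrm{odd}}\circ\beta=\beta$ we get $(\pi^{\mathrm{odd}})^{\boxtimes N}\circ\beta^{\boxtimes N}=\beta^{\boxtimes N}$; consequently the class of $\beta^{\boxtimes N}$ in that $\Hom$-group, which is the subquotient of $\Omega^{pN}(A^N)_\Q$ cut out by the projector $(\pi^{\mathrm{odd}})^{\boxtimes N}$, vanishes if and only if $\beta^{\boxtimes N}=0$ in $\Omega^*(A^N)_\Q$. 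This is exactly rational smash-nilpotence of $\beta$.

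The only non-formal point — and the step I expect to be the main obstacle — is extracting from \cite[Corollary~8.2]{BH} (and the Deninger--Murre formalism it relies on) that the cobordism Chow--Künneth projectors $\pi_i$ are the $\langle -1\rangle^*$-eigenprojectors with eigenvalue $(-1)^i$, so that ``skew'' is synonymous with ``supported on $h^{\mathrm{odd}}_\Omega(A)$''. Granting this, the statement becomes a purely formal consequence of Proposition~\ref{prop:morphismparity}, Corollary~\ref{cor:hoddA}, and Remark~\ref{rmk:Lef fd}.
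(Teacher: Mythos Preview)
Your proposal is correct and follows essentially the same route as the paper: identify $\Omega^p(A)_\Q$ with $\Hom(\boldsymbol{L}^p,h_\Omega(A))$, use the decomposition from \cite[Corollary~8.2]{BH} and the relation $c(\langle n\rangle)\circ\pi_i=n^i\pi_i$ to show that a skew cycle factors through $h_\Omega^{\mathrm{odd}}(A)$, and then invoke Remark~\ref{rmk:Lef fd}, Corollary~\ref{cor:hoddA}, and Proposition~\ref{prop:morphismparity}. Your flagged ``main obstacle'' is not one: the identity $c(\langle n\rangle)\circ\pi_i=n^i\pi_i=\pi_i\circ c(\langle n\rangle)$ is part of the statement of \cite[Corollary~8.2]{BH}, so specializing to $n=-1$ gives precisely the eigenprojector property you need.
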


We follow the sketch of \cite[Proposition~1]{KS}.

\begin{proof}Let $g = \dim A$. By \cite[Corollary~8.2]{BH}, we have the canonical decomposition, $h_\Omega(A)=\bigoplus_{i=0}^{2g}h_\Omega^i(A)$, 
where $h_\Omega^i(A)=(A,\pi_i,0)$ such that $c(\langle{n}\rangle)\circ\pi_i=n^i\pi_i=\pi_i\circ c(\langle{n}\rangle)$. By Definition~\ref{def:motive}, we have $\Omega^d(X)_\Q=\Hom(\boldsymbol{L}^d,h_\Omega(X))$, where $\boldsymbol{L}=(pt,{\rm Id}_{pt},-1)$ is the Lefschetz motive. It is easy to check that $\Hom(\boldsymbol{L}^d,h_\Omega^r(A))$ $\overset{\sim}{\to}$ $\Omega^d_{2d-r}(A)_\Q$, where the latter is defined to be $\{\alpha\in \Omega^d(A)_\Q \vert \langle{n}\rangle^*\alpha=n^r\alpha, \forall n\in\Z\}.$
Indeed, $\Hom(\boldsymbol{L}^d,h_\Omega^r(A))=\pi_{r*}\Omega^d(A)_\Q$ and $\langle{n}\rangle^*\pi_{r*}\alpha=c(\langle{n}\rangle)\circ\pi_r\circ\alpha$.

Let $\beta\in\Omega^d(A)_\Q$ be a skew cycle. Viewing $\beta$ as a morphism in $\Hom(\boldsymbol{L}^d,h_\Omega(A))$, we may write $ -\pi_r\circ\beta=\pi_r\circ(\langle -1\rangle^*\beta)=\pi_r\circ c(\langle -1\rangle)\circ\beta=(-1)^r\pi_r\circ\beta.$
Thus, $\pi_r\circ\beta=0$ for even $r$. This implies that $\beta$ factors through $h_\Omega^{{\rm odd}}(A)$ via a morphism $\beta'\in \Hom(\boldsymbol{L}^d,h_\Omega^{{\rm odd}}(A)).$
Since $\boldsymbol{L}^d$ is evenly finite-dimensional by Remark \ref{rmk:Lef fd}, it follows from Corollary~\ref{cor:hoddA} and Proposition~\ref{prop:morphismparity} that $\beta$ is smash nilpotent.
\end{proof}

Now, we will show that numerically trivial cobordism $1$-cycles on a product of curves is smash nilpotent. To achieve this, following the ideas in  \cite{Sebastian}, we show that the `modified diagonal' cobordism cycle $\Delta_c$ of Definition \ref{defn:mod diag} is smash-nilpotent in Corollary \ref{cor:DeltaSm}. We project $\Delta_c$ to a smaller product of curves and apply induction to get our desired result.

Let $Y$ be a smooth projective curve of genus $g$ and let $\Jac(Y)$ denote its Jacobian. Fix $N\geq 3$ and $m>\max\{N,2g+2\}$. By \cite[Lemma~11]{Sebastian}, there is a collection $\{r_1,r_2,\ldots,r_m\}$ such that the following conditions are satisfied.
\begin{itemize}
\item[(S1)]$\sum_{l=1}^m{m\choose l}l r_l=0$.
\item[(S2)]For every even integer $0 \leq i\leq g-1$, $\sum_{l=1}^m{m\choose l}l^{2+i}r_l =0.$
\item[(S3)]$\sum_{l=N}^m{m-N\choose l-N}r_l\neq 0$.
\end{itemize}
We may slightly modify (S2) to include $i=-2$;
\begin{itemize}
\item[(S$2'$)]For every even integer $-2 \leq i\leq g-1$, $\sum_{l=1}^m{m\choose l}l^{2+i}r_l=0.$
\end{itemize}

Let $S:=\{1,2,\ldots,m\}$ and let $p_i:Y^m\to Y$ denote the $i$-th projection. Choose a base point $y_0\in Y$. For every non-empty subset $T\subset S$, define the morphism $\phi_T:Y\to Y^m$ to be the unique one such that $p_i \circ \phi_T (y) = y$ if $i \in T$ and $p_i \circ \phi_T (y) = y_0$ if $i \not \in T$.
Let $\Delta_T$ denote the cobordism cycle $[\phi_T: Y\to Y^m]$. Let $f:Y^m\to\Sym^mY$ be the natural quotient morphism. This is a projective morphism so that $f_*$ exists on cobordism cycles. On the other hand, by \cite[Proposition~4.1]{Kimura} and \cite{Schwarzenberger}, $\Sym^m Y$ is smooth since $m \geq 2g-2$. Thus, $f$ is also an \lci ~morphism, which implies $f^*$ exists for cobordism cycles by \cite[Section~6.5.4]{LM}.

Consider the cobordism cycle $\Delta_l:=f_*\Delta_T$ for some $T\subset S$ with $\#T=l$. Note that $\Delta_l$ does not depend on the choice of $T$. In fact, $\Delta_l={m \choose l}^{-1}\cdot \sum_{\#T=l}[f \circ \phi_T: Y \to \Sym^mY]$. 

\begin{lemma}\label{lem:Delta}
$f^*\Delta_l = l!(m-l)!\sum_{\#T=l}\Delta_T$ in $\Omega_1(Y^m)_\Q$.
\end{lemma}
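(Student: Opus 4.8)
The plan is to reduce the identity to a transfer formula for the quotient map $f$ together with an elementary subset count, and not to touch cobordism cycles directly at all. Since by construction $\Delta_l=f_*\Delta_T$ for any $T$ with $\#T=l$, we have $f^*\Delta_l=f^*f_*\Delta_T$, so it is enough to understand the operator $f^*f_*$ on $\Omega_1(Y^m)_\Q$. I expect the key step to be the identity
\[ f^*f_*(\beta)=\sum_{\sigma\in\Sigma_m}(f_\sigma)_*(\beta)\qquad\text{for all }\beta\in\Omega_1(Y^m)_\Q, \]
where $f_\sigma\colon Y^m\to Y^m$ is the automorphism attached to $\sigma\in\Sigma_m$. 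Granting this, apply it to $\beta=\Delta_T=[\phi_T\colon Y\to Y^m]$. One has $f_\sigma\circ\phi_T=\phi_{\sigma^{-1}(T)}$ (a direct check from the conventions), hence $(f_\sigma)_*\Delta_T=[f_\sigma\circ\phi_T]=\Delta_{\sigma^{-1}(T)}$; and as $\sigma$ runs through $\Sigma_m$, each $l$-element subset $T'\subset S$ occurs as $\sigma^{-1}(T)$ exactly $|{\rm Stab}_{\Sigma_m}(T)|=l!\,(m-l)!$ times. Therefore $f^*f_*\Delta_T=\sum_{\sigma}\Delta_{\sigma^{-1}(T)}=l!\,(m-l)!\sum_{\#T'=l}\Delta_{T'}$, which is the assertion of the lemma.

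To prove the transfer formula, first record that $\Sym^mY$ is smooth (by \cite[Proposition~4.1]{Kimura} and \cite{Schwarzenberger}, as $m>2g+2$), so $f$ is a morphism between smooth varieties, hence l.c.i., and it is projective and finite of degree $m!$; since $Y^m$ is Cohen--Macaulay and $\Sym^mY$ is regular with $f$ finite, $f$ is flat (miracle flatness). In particular $f$ is Tor-independent with itself over $\Sym^mY$, so the base-change formula of \S\ref{sec:functoriality} gives $f^*f_*=pr_{1*}\circ pr_2^*$, where $pr_1,pr_2\colon P:=Y^m\times_{\Sym^mY}Y^m\to Y^m$ are the two projections. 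Now $P$ is locally cut out in the smooth variety $Y^m\times Y^m$ by $m$ equations (the pullback of the graph of $f$) and is of pure dimension $m$, hence a local complete intersection and in particular Cohen--Macaulay; it is moreover generically reduced, because over the dense open locus of multiplicity-free divisors $f$ restricts to a $\Sigma_m$-torsor, so $P$ is smooth there. Being Cohen--Macaulay and generically reduced, $P$ is reduced, and its irreducible components are exactly the $m!$ graphs $\Gamma_\sigma:=\{(\underline x,f_\sigma(\underline x))\}\subset P$, each carried isomorphically onto $Y^m$ by $pr_1$, with $pr_2|_{\Gamma_\sigma}=f_\sigma$ under this identification.

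Because $P$ is a reduced union of its smooth components $\Gamma_\sigma$, each regularly embedded in the smooth variety $Y^m\times Y^m$, the l.c.i.\ pull-back of the fundamental class should decompose as $pr_2^*(\beta)=\sum_\sigma(j_\sigma)_*\big((pr_2\circ j_\sigma)^*\beta\big)$ with $j_\sigma\colon\Gamma_\sigma\hookrightarrow P$ — equivalently, the cobordism fundamental class $[P]\in\Omega_m(P)$ equals $\sum_\sigma[\Gamma_\sigma\hookrightarrow P]$. Pushing forward along $pr_1$ and using $pr_1\circ j_\sigma={\rm id}$, $pr_2\circ j_\sigma=f_\sigma$, and $f_\sigma^*=(f_{\sigma^{-1}})_*$ (valid since $f_\sigma$ is an isomorphism), one gets $f^*f_*(\beta)=\sum_\sigma f_\sigma^*(\beta)=\sum_\sigma(f_\sigma)_*(\beta)$, which is the transfer formula. (As a sanity check on where the numerical factor comes from: in the alternative bookkeeping using the Cartesian square $Y^m\times_{\Sym^mY}Y\to Y$ of $f$ against $f\circ\phi_T$, the fibre product is generically $l!\,(m-l)!$-fold nonreduced along each component $\phi_{T'}(Y)$ — precisely the ramification order of $f$ along $\phi_{T'}(Y)$ — which reproduces the same $l!\,(m-l)!$.)

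\textbf{The main obstacle} is the displayed decomposition of $pr_2^*$ over the components of $P$: one must justify that the l.c.i.\ (virtual) pull-back along the singular, reduced complete intersection $P\subset Y^m\times Y^m$ is ``local on $P$'' and assigns multiplicity one to each smooth component, i.e.\ that $[P]=\sum_\sigma[\Gamma_\sigma]$ in $\Omega_m(P)$. Over the complement of the pairwise intersection locus $\bigcup_{\sigma\ne\tau}(\Gamma_\sigma\cap\Gamma_\tau)$ this is immediate, since there $P$ is a disjoint union of smooth varieties and $pr_2^*$ is just restriction; the delicate point is to control the correction supported on that locus, which one treats via the localization sequence of Theorem~\ref{thm:localization} together with the comparison map to Chow (where $\phi([P])=\sum_\sigma[\Gamma_\sigma]$ holds), the discrepancy then lying in $\mathbb{L}_{>0}\cdot\Omega_m(P)$. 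Pinning this down carefully, within the (somewhat restricted) theory of fundamental classes of singular schemes in algebraic cobordism, is the only non-formal ingredient of the argument.
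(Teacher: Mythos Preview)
Your route is genuinely different from the paper's, and the obstacle you flag is real and not closed by your sketch.

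The paper never attempts the transfer identity $f^*f_*=\sum_\sigma(f_\sigma)_*$. Instead it argues in two moves. First it asserts (by $\Sigma_m$-symmetry of $f$) that $f^*\Delta_l=c\sum_{\#T=l}\Delta_T$ for some scalar $c$. Second, it determines $c$ indirectly: applying $f_*$ gives $f_*f^*\Delta_l=c\binom{m}{l}\Delta_l$, while the projection formula gives $f_*f^*\Delta_l=f_*(1_{Y^m})\cdot\Delta_l$; since $f$ is finite surjective with $[k(Y^m):k(\Sym^mY)]=m!$, \cite[Lemma~4.4.5]{LM} yields $\deg(f_*(1_{Y^m}))=m!$, and comparing forces $c\binom{m}{l}=m!$. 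No analysis of the fibre product $P$ enters at all.

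Your proposed fix for the decomposition of $pr_2^*$ does not work as stated. Knowing that $[P]-\sum_\sigma[\Gamma_\sigma\hookrightarrow P]$ maps to zero in Chow only places the discrepancy in $\mathbb{L}_{>0}\cdot\Omega_m(P)$; after $pr_{1*}$ this becomes an error term in $\mathbb{L}_1\cdot\Omega_0(Y^m)_\Q$, i.e.\ a combination $\sum_i\omega_i[\{p_i\}\to Y^m]$ with $\omega_i\in\mathbb{L}_1$, and nothing forces this to vanish. Your parenthetical alternative via $W=Y\times_{\Sym^mY}Y^m$ hits the same wall: $W$ is genuinely non-reduced (the fibre of $W\to Y$ over a generic $y\neq y_0$ has $\binom{m}{l}$ points but length $m!$, since it is the base change of the degree-$m!$ map $f$), so each component carries multiplicity $l!(m-l)!$---but turning that scheme-theoretic multiplicity into a cobordism identity, rather than a Chow one, is again precisely the unresolved point. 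The paper's detour through the projection formula and $\deg$ simply bypasses this; what you gain from your approach is conceptual clarity about where the factor $l!(m-l)!$ comes from, but at the cost of a technical step that is genuinely hard in $\Omega_*$ and that the paper's argument does not need.
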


\begin{proof}
Clearly, $f^*\Delta_l=c\sum_{\#T=l}\Delta_T$ for some $c\in\Z$. Applying $f_*$, we get $f_*f^*\Delta_l=c\sum_{\#T=l}f_*\Delta_T=c{m\choose l}\Delta_l$. 
We will show that $\deg(f_*f^*\Delta_l) =m! \deg( \Delta_l)$, where $\deg$ is the degree of cobordism cycles on $\Sym^mY$. This would imply $c=l!(m-l)!$ as needed.

Using the projection formula, we get that $f_*f^*\Delta_l=f_*(1_{Y^m}\cdot f^*\Delta_l)=f_*(1_{Y^m})\cdot\Delta_l$. Since $\deg:\Omega_*(\Sym^mY)\to\Omega_{*-m}(k)$ is a ring homomorphism by \cite[Definition~4.4.4]{LM}, we have $\deg(f_*f^*\Delta_l) = \deg(f_*(1_{Y^m}))\deg(\Delta_l)$. 
We also know from \cite[\hbox{\it loc. cit}]{LM} that $\deg$ factorizes as $\Omega_*(\Sym^mY)\overset{i^*}{\to}\Omega_{*-m}(k(\Sym^mY)/k)\xrightarrow{(p^*)^{-1}}\Omega_{*-m}(k)$ where $i:\eta\to\Sym^mY$ denotes the generic point of $\Sym^mY$, $i^*$ is the canonical homomorphism and $p^*$ is an isomorphism by \cite[Corollary~4.4.3]{LM}. 
Since $f$ is finite and surjective and $[k(Y^m):k(\Sym^mY)]=m!$, by \cite[Lemma~4.4.5]{LM}, we have 
$i^*(f_*(1_{Y^m}))=m!1_{\Sym^mY}\in\Omega_0(k(\Sym^mY)/k)$. This implies that $\deg(f_*(1_{Y^m}))=m!(p^*)^{-1}(1_{\Sym^mY})=m!1_k\in\Omega_0(k)$. Thus, $\deg(f_*f^*\Delta_l) =m!\deg(\Delta_l)$, which completes the proof. 
\end{proof}

\begin{definition}Define $\Gamma:=\sum_{l=1}^m{m\choose l}r_l\Delta_l$ in $\Omega_1(\Sym^mY)_\Q$.
\end{definition}

\begin{lemma}\label{lem:Gamma}
$\Gamma$ is smash-nilpotent.
\end{lemma}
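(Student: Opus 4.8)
The plan is to transport the problem to the Jacobian $J:=\Jac(Y)$, where skew cycles are already known to be smash-nilpotent (Proposition \ref{prop:skew}); conditions (S1) and (S2$'$) are precisely what force the relevant cycles to be skew. First I would fix the base point $y_0$ and let $a:\Sym^m Y\to J$ be the Abel--Jacobi morphism $D\mapsto \sO_Y(D-my_0)$. Since $m>2g+2\ge 2g-1$, Riemann--Roch shows $a$ is a Zariski-locally trivial $\P^{m-g}$-bundle, so writing $\Sym^m Y=\P(\sE)$ and $\xi=\widetilde c_1(\sO_{\P(\sE)}(1))$, the projective bundle formula (Theorem \ref{thm:PB}, cf.\ \cite[Theorem 3.5.4]{LM}) yields a unique expression $\Gamma=\sum_j \xi^{\,j}\cdot a^*\beta_j$ with $\beta_j\in\Omega_{1-(m-g)+j}(J)_\Q$. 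Because $\Omega_i(J)=0$ for $i<0$, only $j=m-g$ and $j=m-g-1$ survive, so $\Gamma=\xi^{\,m-g}a^*\beta_{m-g}+\xi^{\,m-g-1}a^*\beta_{m-g-1}$ with $\beta_{m-g}\in\Omega_1(J)_\Q$, $\beta_{m-g-1}\in\Omega_0(J)_\Q$. Since $a^*$, the operators $\widetilde c_1(\sO(1))$, finite sums, and products all preserve smash-nilpotence (the argument of \cite[Lemma 10.2]{KP}, together with Lemmas \ref{lem:smpullbk} and \ref{lem:chern class}), it then suffices to prove that $\beta_{m-g}$ and $\beta_{m-g-1}$ are smash-nilpotent.

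For the dominant term: fibre integration over $\P^{m-g}$ gives $a_*(\xi^{\,m-g}a^*\beta)=\beta$ and $a_*(\xi^{\,m-g-1}a^*\beta)=0$ for $\beta\in\Omega_0(J)_\Q$, whence $\beta_{m-g}=a_*\Gamma$. Now $\Delta_l=f_*\Delta_T$ for $\#T=l$, and $a\circ f\circ\phi_T$ equals the composite $Y\xrightarrow{\ \iota\ }J\xrightarrow{\ \langle l\rangle\ }J$ with $\iota(y)=\sO_Y(y-y_0)$; hence $a_*\Delta_l=\langle l\rangle_*C$ for $C:=[\iota:Y\to J]\in\Omega_1(J)_\Q$, and $a_*\Gamma=\sum_l\binom{m}{l}r_l\,\langle l\rangle_*C$. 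Using the decomposition $h_\Omega(J)=\bigoplus_i h_\Omega^i(J)$ of \cite[Corollary 8.2]{BH}, write $C=\sum_i C_i$ with $C_i=\pi_{i*}C$; one has $\langle l\rangle_*C_i=l^{\,2g-i}C_i$, and since $C$ is a $1$-cycle $C_i=0$ unless $g-1\le i\le 2g-2$. Therefore
\[ a_*\Gamma=\sum_{i=g-1}^{2g-2}\Bigl(\sum_l\binom{m}{l}l^{\,2g-i}r_l\Bigr)C_i, \]
where the exponents $2g-i$ run over $\{2,\dots,g+1\}$; for $i$ even the exponent is even, so the coefficient vanishes by (S2$'$). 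Hence $\langle -1\rangle^*(a_*\Gamma)=-a_*\Gamma$, i.e.\ $a_*\Gamma$ is a skew cycle on $J$, and it is smash-nilpotent by Proposition \ref{prop:skew}.

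For the subdominant term, multiplying $\Gamma$ by $\xi$ and applying $a_*$ gives $a_*(\xi\cdot\Gamma)=s_1(\sE)\cdot a_*\Gamma+\beta_{m-g-1}$, where $s_1(\sE)$ is the first Segre class, so $\beta_{m-g-1}=a_*(\xi\Gamma)-s_1(\sE)\cdot a_*\Gamma\in\Omega_0(J)_\Q$. Writing $\xi\cdot\Delta_l=[g_l:Y\to\Sym^m Y,\ g_l^*\sO(1)]$ with $g_l(y)=ly+(m-l)y_0$, the degree $\deg_Y g_l^*\sO(1)$ is affine-linear in $l$; combined with (S1) and the $i=-2$ case of (S2$'$) (which gives $\sum_l\binom{m}{l}r_l=0$), and with the fact that $s_1(\sE)$ is represented, modulo terms killed for degree reasons, by a symmetric divisor plus an algebraically trivial one, a short degree computation shows $\deg\beta_{m-g-1}=0$. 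A $0$-cycle of degree $0$ is algebraically trivial, hence smash-nilpotent by the cobordism analogue of Voevodsky's theorem on algebraically trivial cycles (e.g.\ \cite[Lemma 10.3]{KP}); alternatively, decomposing $\beta_{m-g-1}$ along $h_\Omega(J)$ as in the previous paragraph exhibits it directly as a skew $0$-cycle. Combining the three steps, $\Gamma$ is a finite $\Q$-combination of classes obtained from smash-nilpotent cycles on $J$ by $a^*$ and $\widetilde c_1(\sO(1))$, hence smash-nilpotent.

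The main obstacle is the subdominant term $\beta_{m-g-1}$: it is the part of $\Gamma$ that is \emph{fibral} for the bundle $a$ and is therefore invisible to $a_*$, so it must be pinned down separately (via the Segre-class identity and a degree computation, or via its own motivic decomposition). Everything else — the reduction via the projective bundle formula, the identification $\beta_{m-g}=a_*\Gamma$, and the passage from ``skew'' to ``smash-nilpotent'' — is a formal consequence of Proposition \ref{prop:skew}, the decomposition of $h_\Omega(J)$, and the closure properties of smash-nilpotence.
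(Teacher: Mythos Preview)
Your overall architecture matches the paper's: pass to the Jacobian via the $\P^{m-g}$-bundle $a:\Sym^mY\to J$, write $\Gamma=\xi^{m-g}a^*\beta_1+\xi^{m-g-1}a^*\beta_0$ with $\beta_i\in\Omega_i(J)_\Q$, and then show each $\beta_i$ is smash-nilpotent by reducing to skew cycles (Proposition~\ref{prop:skew}) and to algebraically trivial $0$-cycles. The reduction ``$\beta_i$ smash-nilpotent $\Rightarrow$ $\Gamma$ smash-nilpotent'' via $a^*$ and $\widetilde c_1$ is fine.

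The genuine gap is your extraction of $\beta_{m-g}$ and $\beta_{m-g-1}$ by fibre integration. You assert $a_*(\xi^{m-g-1}a^*\beta)=0$ and $a_*(\xi^{m-g}a^*\beta)=\beta$, hence $\beta_{m-g}=a_*\Gamma$. These identities hold in $\CH$ but \emph{fail in $\Omega$}. By the projection formula, $a_*(\xi^{m-g-1}a^*\beta)=a_*(\xi^{m-g-1})\cdot\beta$ with $a_*(\xi^{m-g-1})\in\Omega^{-1}(J)$, and $\Omega^{-1}$ is \emph{not} zero in cobordism: already for $J=pt$ one has $a_*(\xi^{n-1})=[\P^1]\in\bL_1\neq 0$. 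Likewise $a_*(\xi^{m-g})\in\Omega^0(J)$ equals $1_J$ only modulo $\bL_{>0}\cdot\Omega^{>0}(J)$. So $a_*\Gamma$ is a genuine mixture $\alpha_{-1}\beta_0+\alpha_0\beta_1$ with $\alpha_i:=a_*(\xi^{m-g+i})$ nontrivial, and your identification $\beta_{m-g}=a_*\Gamma$ is unjustified. The same objection applies to your Segre-class formula $a_*(\xi\Gamma)=s_1(\sE)\cdot a_*\Gamma+\beta_{m-g-1}$.

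This is exactly the difficulty the paper has to work around. It does \emph{not} solve for $\beta_0,\beta_1$ directly; instead it produces two smash-nilpotent combinations, $a_*\Gamma=\alpha_{-1}\beta_0+\alpha_0\beta_1$ (skew via (S$2'$), as you argue) and $a_*(\xi\Gamma)=\alpha_0\beta_0+\alpha_1\beta_1$ (here one must invoke Sebastian's Chow result via $\phi$ to see it is a degree-$0$ zero-cycle, hence algebraically trivial, hence smash-nilpotent). Then it expands each $\alpha_i$ and $\beta_i$ by the generalized degree formula, uses that the leading term of $\alpha_0$ is $[{\rm Id}_J]$, and bootstraps: $\phi(\beta_1)$ is smash-nilpotent in $\CH$ (again Sebastian), so its cobordism lift is smash-nilpotent up to $0$-cycles, and the cross-terms involving $\alpha_i$ for $i\ne 0$ are killed for degree reasons or by multiplying a known smash-nilpotent class. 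Only after this untangling do $\beta_0$ and $\beta_1$ individually come out smash-nilpotent. Your argument would go through verbatim for Chow motives; for cobordism you need to replace the clean fibre-integration identities by this degree-formula bookkeeping.
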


\begin{proof}
Since $m>2g-2$, the natural map $\Sym^mY\overset{\pi}{\to}\Jac(Y)$ is the projective bundle associated to a locally free sheaf over $\Jac(Y)$ (see \cite[Proposition~4.1]{Kimura} and \cite{Schwarzenberger}). Thus, by the definition of oriented cohomology theory (see \cite[Definition~1.1.2.$(PB)$]{LM}, we have $\Gamma=c_1(\sO(1))^{m-g-1}\cdot \pi^*\beta_0+ c_1(\sO(1))^{m-g}\cdot \pi^*\beta_1,$ for some $\beta_i\in\Omega_i(\Jac(Y))_\Q$. We first check that $\pi_*\Gamma$ is smash-nilpotent. Let $\psi:Y\to \Jac(Y)$ be the embedding using the base-point $y_0$. By \cite[Theorem~6.2]{BH}, we have a Beauville decomposition of $\Omega^{g-1}(\Jac(Y))_\Q$, giving $\psi_*(1_Y)=\sum_{-2\leqslant i\leqslant g-1}x_i$, such that $\langle{n}\rangle_*x_i=n^{2+i}x_i$, where $\langle{n}\rangle$ is the morphism $\times n$ on $\Jac(Y)$. Thus, 
\begin{equation}
\pi_*\Gamma =\sum_{l=1}^m{m\choose l}r_l\pi_*\Delta_l=\sum_{l=1}^m{m\choose l}r_l\langle{l}\rangle_*\psi_*(1_Y)=\sum_{i=-2}^{g-1}\Big(\sum_{l=1}^m{m\choose l}r_l l^{2+i}\Big)x_i.
\end{equation}
Since the $r_l$'s satisfy (S$2'$), we have that $\sum_{l=1}^m{m\choose l}r_l l^{2+i}=0$ for $i$ even. Thus, Proposition~\ref{prop:skew} implies that $\pi_*\Gamma$ is smash-nilpotent. Now, by the projection formula, $\pi_*\Gamma=\pi_*(c_1(\sO(1))^{m-g-1})\cdot \beta_0+ \pi_*(c_1(\sO(1))^{m-g})\cdot \beta_1.$
Let $\alpha_i=\pi_*(c_1(\sO(1))^{m-g+i})\in\Omega^i(\Jac(Y))_\Q$, so $\pi_*\Gamma=\alpha_{-1}\beta_0+\alpha_0\beta_1$. We also have $\pi_*\{\Gamma\cdot c_1(\sO(1))\}= \alpha_0\beta_0+\alpha_1\beta_1.$
Note that, $\phi[\pi_*\{\Gamma\cdot c_1(\sO(1))\}]=\pi_*[\phi(\Gamma)\cdot\phi \{ c_1(\sO(1)) \}],$
which is a $0$-cycle of degree $0$ by \cite[Proposition~3]{Sebastian}. Thus, by \cite[Lemma~4.5.10]{LM},  $\pi_*\{\Gamma\cdot c_1(\sO(1))\}$  is of the form $\sum n_i[\{p_i\}\to \Jac(Y)]$, where $n_i\in\Z$  and $\sum n_i=0$. But, on a smooth projective variety, such a cobordism cycle is algebraically trivial by \cite[Lemma on p.56]{Mumford} and \cite[Theorem~5.1]{KP}. Thus, $\pi_*(\Gamma\cdot c_1(\sO(1)))$ is smash-nilpotent by \cite[Theorem~10.3]{KP}.
We now use the degree formula (see \cite[Theorem~4.4.7]{LM}) to conclude that $\beta_0$ and $\beta_1$ (and hence $\Gamma$) are smash-nilpotent. The degree formula gives that
\begin{itemize}
\item $\beta_0=\sum n_i[\{p_i\}\to \Jac(Y)]$, where $n_i\in\Z$  and $p_i$'s points in $\Jac(Y)$. 
\item $\beta_1=\sum m_j[\widetilde{C_j}\to \Jac(Y)]+\sum \gamma_s[\{q_s\}\to \Jac(Y)]$, where $m_j\in\Z$, $\gamma_s\in\bL_1$, $q_s$'s are points in $\Jac(Y)$ and $\widetilde{C_j}$'s are smooth curves with projective birational morphisms $\widetilde{C_j}\to C_j\subset \Jac(Y)$.
\item $\alpha_i=\sum_{j=\max(0,-i)}^{\min(g,g-i)}\sum_{l\in K^j_i} \omega^l_{i,j}x^l_{i,j}$, where $\omega^l_{i,j}\in\bL_j$, $x^l_{i,j}\in\Omega^{i+j}(\Jac(Y))$ and $\vert K_i^j\vert <\infty$.
\end{itemize}
For $j=-i$, we have $K_i^j=\{1\}$, $x^1_{i,j}=[{\rm Id}_X]$ and it follows by \cite[Proposition~3.1(a)(i)]{Fulton} that $\omega^1_{0,0}=1$. We may write 
\begin{multline*}
\alpha_{-1}\beta_0+\alpha_0\beta_1
=(\omega^1_{-1,1}[{\rm Id}_X]+\sum_{j=2}^g\sum_{l\in K^j_{-1}}\omega^l_{-1,j}x^l_{-1,j})\sum n_i[\{p_i\}\to J] + \\
+([{\rm Id}_X]+\sum_{j=1}^g\sum_{l\in K^j_0}\omega^l_{0,j}x^l_{0,j})(\sum m_j[\widetilde{C_j}\to \Jac(Y)]+\sum \gamma_l[\{q_l\}\to \Jac(Y)])
\end{multline*}
\begin{multline*}
=\sum n_i\omega^1_{-1,1}[\{p_i\}\to J] + \sum \gamma_s[\{q_s\}\to \Jac(Y)] +\sum m_j[\widetilde{C_j}\to \Jac(Y)]+\\
\sum_j\sum_{l\in K^1_0} m_j\omega^l_{0,1}x^l_{0,1}[\widetilde{C_j}\to \Jac(Y)].
\end{multline*}
Now, by \cite[Lemma~4]{Sebastian}, $\phi(\beta_1)=\sum m_j[C_j]$ is smash-nilpotent. \cite[Lemma~4.5.3]{LM} gives us that $\sum m_j[\widetilde{C_j}\to \Jac(Y)]+\sum \mu_t[\{r_t\}\to \Jac(Y)]$ is smash-nilpotent for some points $r_l$ in $\Jac(Y)$ and $\mu_l\in\bL_1$. Multiplying with $\omega^l_{0,1}x^l_{0,1}$, we get that $\sum m_j\omega^l_{0,1}x^l_{0,1}[\widetilde{C_j}\to \Jac(Y)]$ is smash-nilpotent by \cite[Lemma~10.2(1)]{KP}. Hence, $\sum m_j\sum_{l\in K^1_0}\omega^l_{0,1}x^l_{0,1}[\widetilde{C_j}\to \Jac(Y)]$ is smash-nilpotent. Since $\alpha_{-1}\beta_0+\alpha_0\beta_1$ is smash-nilpotent, this gives $\omega^1_{-1,1}\beta_0+\beta_1$ is smash-nilpotent. Next, note that $
\alpha_0\beta_0+\alpha_1\beta_1=\beta_0+\sum_{l\in K^0_1}\omega^l_{1,0}x^l_{1,0}\sum m_j[\widetilde{C_j}\to \Jac(Y)]$ is smash nilpotent. Similarly as above, $\sum_{l\in K^0_1}\omega^l_{1,0}x^l_{1,0}\sum m_j[\widetilde{C_j}\to \Jac(Y)]$ is smash-nilpotent. This implies that $\beta_0$, and hence $\beta_1$ are smash-nilpotent, which completes the proof.
\end{proof}

\begin{definition}\label{defn:mod diag}
Define the modified diagonal cobordism cycle to be $\Delta_c:= (1/ m!) \cdot f^*\Gamma$. 
\end{definition}

By Lemma~\ref{lem:Delta}, $\Delta_c=\sum_{l=1}^m r_l (\sum_{T\subset S,\#T=l }\Delta_T ).$ Then, Lemma \ref{lem:Gamma} shows that
\begin{corollary}\label{cor:DeltaSm}
$\Delta_c$ is smash-nilpotent.
\end{corollary}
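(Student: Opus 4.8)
The plan is to obtain this immediately from Lemma~\ref{lem:Gamma} by pulling back along $f$, using that the l.c.i. pull-back on cobordism cycles respects external products. By Lemma~\ref{lem:Gamma} there is an integer $N\ge 1$ with $\Gamma^{\boxtimes N}=0$ in $\Omega_1\big((\Sym^m Y)^N\big)_\Q$. Both $Y^m$ and $\Sym^m Y$ are smooth (the latter because $m\ge 2g-2$), so $f\colon Y^m\to\Sym^m Y$ is an l.c.i. morphism, and so is its $N$-fold product $f^{\times N}\colon (Y^m)^N\to(\Sym^m Y)^N$, being a morphism between smooth varieties (equivalently, a product of l.c.i. morphisms). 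Using the multiplicativity $(f_1\times f_2)^*=f_1^*\times f_2^*$ of l.c.i. pull-backs with respect to external products (\cite[Theorems~6.5.11, 6.5.12, 6.5.13]{LM}), applied $N-1$ times, one gets $(f^{\times N})^*\big(\xi_1\times\cdots\times\xi_N\big)=f^*\xi_1\times\cdots\times f^*\xi_N$ for $\xi_i\in\Omega_*(\Sym^m Y)_\Q$; in particular $(f^*\Gamma)^{\boxtimes N}=(f^{\times N})^*\big(\Gamma^{\boxtimes N}\big)$.

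Hence $(f^*\Gamma)^{\boxtimes N}=(f^{\times N})^*(0)=0$, so $f^*\Gamma$ is smash-nilpotent; and therefore so is $\Delta_c=(1/m!)\cdot f^*\Gamma$, since $\Omega_1\big((Y^m)^N\big)_\Q$ is a $\Q$-vector space and $\Delta_c^{\boxtimes N}=(m!)^{-N}(f^*\Gamma)^{\boxtimes N}=0$. Alternatively, one may simply invoke \cite[Lemma~10.2]{KP}, which records that l.c.i. pull-backs preserve smash-nilpotence, applied to the l.c.i. morphism $f$ and the smash-nilpotent cycle $\Gamma$ from Lemma~\ref{lem:Gamma}.

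We do not expect any real obstacle here. The one point deserving a line of justification is that the l.c.i. pull-back along the product morphism $f^{\times N}$ decomposes as the $N$-fold external product of $f^*$; this is exactly the multiplicativity of l.c.i. pull-backs already used in \S\ref{sec:functoriality}, together with the (trivial) observation that $f^{\times N}$ is l.c.i. because its source and target are smooth. Everything else is immediate from Lemma~\ref{lem:Gamma} and the $\Q$-linearity of the external product.
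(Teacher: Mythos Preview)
Your proposal is correct and coincides with the paper's approach: the paper treats the corollary as an immediate consequence of Lemma~\ref{lem:Gamma}, since $\Delta_c=(1/m!)\,f^*\Gamma$ with $f$ l.c.i., and you have simply spelled out the (standard) reason that l.c.i.\ pull-backs preserve smash-nilpotence. Your invocation of \cite[Lemma~10.2]{KP} is exactly the intended shortcut.
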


Now, let $X:=C_1\times C_2\times\cdots\times C_N$ be a product of $N$ smooth projective curves. Let $Y$ be a smooth projective curve with a morphism $j:Y\to X$. Let $q_i:X\to C_i$ denote the projection onto the $i$-th factor. Define a morphism $\psi:Y^m\to X$ as 
$ Y^m\overset{pr}{\to} Y^N\overset{j^N}{\to} X^N\xrightarrow{q_1\times\cdots\times q_N}X$,
where $pr$ is the projection to the first $N$ coordinates. Let $S_0:=\{1,2,\ldots,N\}$. For a closed point $v=(v_1,\ldots,v_N)$ of $X$ and a subset $T\subset S_0$, we define $\zeta^v_{T}:X\to X$ to be the unique morphism such that $q_i \circ \zeta^v_T (x) = q_i (x)$ if $i \in T$, and $q_i \circ \zeta^v_T (x) = v_i$ if $i \not \in T$.
Note that $\zeta^v_{T}={\rm Id}$ if and only if $T=S_0$. It is also clear from the definition that, for $T\subset S$ we have $\psi\circ\phi_T=\zeta^{j(y_0)}_{T\cap S_0}\circ j.$

\begin{lemma}\label{lem:alg}
For two closed points $v$ and $v'$ in $X$, and $\alpha\in\Omega_*(X)$, two cobordism cycles $\zeta^v_{T*}(\alpha)$ and $\zeta^{v'}_{T*}(\alpha)$ are algebraically equivalent.
\end{lemma}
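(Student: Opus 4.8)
The plan is to realize each algebraic equivalence by an explicit one-parameter family, after reducing to the case where $v$ and $v'$ differ in a single coordinate, and then to invoke the presentation \eqref{eqn:stpdabalg} of $\Omega_*^{\alg}$.

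First I would reduce to the one-coordinate case. Since $\zeta^v_T$ depends only on the coordinates $(v_i)_{i\in S_0\setminus T}$, and since $X=C_1\times\cdots\times C_N$, any two closed points $v,v'$ are joined by a chain $v=v^{(0)},v^{(1)},\dots,v^{(N)}=v'$ in which $v^{(j-1)}$ and $v^{(j)}$ differ in exactly one coordinate. Because the image of $i_1^*-i_2^*$ in \eqref{eqn:stpdabalg} is a subgroup of $\Omega_*(X)$, it suffices to prove that $\zeta^v_{T*}(\alpha)$ and $\zeta^{v'}_{T*}(\alpha)$ are algebraically equivalent when $v$ and $v'$ agree outside a single index $i_0$; if $i_0\in T$ or $v_{i_0}=v'_{i_0}$ there is nothing to do, so I may assume $i_0\in S_0\setminus T$ and $a:=v_{i_0}\neq b:=v'_{i_0}$ in $C_{i_0}$.

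Next I would build the family. Put $B:=X\times C_{i_0}$ with projections $pr_1:B\to X$, $pr_2:B\to C_{i_0}$, define $\Theta:B\to X$ by $q_i\circ\Theta=q_i\circ pr_1$ for $i\in T$, $\;q_{i_0}\circ\Theta=pr_2$, and $q_i\circ\Theta$ the constant $v_i=v'_i$ for $i\in S_0\setminus(T\cup\{i_0\})$, and set $\widetilde\Theta:=(\Theta,pr_2):B\to B$, which is projective. Let
$$\gamma:=\widetilde\Theta_*\bigl(pr_1^*\alpha\bigr)\in\Omega_*(B),$$
using that $pr_1$ is smooth (so $pr_1^*$ exists) and $\widetilde\Theta$ is projective (so $\widetilde\Theta_*$ exists). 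For a closed point $t\in C_{i_0}$ write $i_t:X\times\{t\}\hookrightarrow B$ for the inclusion of the fiber $pr_2^{-1}(t)$, a regular embedding since $pr_2$ is smooth, and let $v(t)$ be $v$ with its $i_0$-coordinate replaced by $t$. Since $pr_2\circ\widetilde\Theta=pr_2$, one has $\widetilde\Theta^{-1}(X\times\{t\})=X\times\{t\}$, on which $\widetilde\Theta$ restricts to $\zeta^{v(t)}_T$ under the identification $X\times\{t\}\cong X$; this produces a Cartesian square with $i_t$ on both horizontal edges, $\widetilde\Theta$ on the right and $\zeta^{v(t)}_T$ on the left, and it is Tor-independent because it arises by pulling back the Cartier divisor $\{t\}\subset C_{i_0}$ along the flat maps $pr_2$ and $pr_2\circ\widetilde\Theta=pr_2$ (as in \cite[Lemma 6.3]{KP}). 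By \cite[Theorem 6.5.12]{LM} this yields $i_t^*\widetilde\Theta_*=\zeta^{v(t)}_{T*}\circ i_t^*$, and since $pr_1\circ i_t={\rm Id}_X$ we get $i_t^*\gamma=\zeta^{v(t)}_{T*}(\alpha)$. Taking $t=a$ and $t=b$, the difference $\zeta^v_{T*}(\alpha)-\zeta^{v'}_{T*}(\alpha)=i_a^*\gamma-i_b^*\gamma$ lies in the image of $i_1^*-i_2^*$ for the triple $(C_{i_0},a,b)$, hence is algebraically trivial; summing over the chain $v^{(0)},\dots,v^{(N)}$ then completes the proof.

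The only point that requires genuine care is the base-change identity $i_t^*\widetilde\Theta_*=\zeta^{v(t)}_{T*}\circ i_t^*$: one must check that the square above is Cartesian with the expected reduced, regularly embedded fiber and that it is Tor-independent, which hinges on the flatness of $pr_2$. Once this is settled, the rest is formal from the functoriality of $\Omega_*$ and the definition of algebraic equivalence recorded in \eqref{eqn:stpdabalg}.
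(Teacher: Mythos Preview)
Your proof is correct. Both you and the paper reduce to changing one coordinate at a time, but the single-coordinate step is handled differently. The paper factors $\zeta^v_T$ as the projection $p:X\to X_T=\prod_{i\in T}C_i$ followed by inclusions $\iota_j$ that insert the point $v_{a_j}$ in one new curve factor; it then observes that $\iota_{j*}(\beta)=c_1\bigl(\sO_{X'\times C}(X'\times\{v_{a_j}\})\bigr)\cdot pr_{X'}^*\beta$ by \cite[Proposition~3.1.9]{LM} and the projection formula, and appeals to the definition of algebraic equivalence in \cite{KP} via algebraically equivalent line bundles to conclude $\iota_{j*}(\beta)\sim_{\alg}\iota'_{j*}(\beta)$. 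Your argument instead stays on $X$, builds an explicit one-parameter family $\gamma\in\Omega_*(X\times C_{i_0})$, and reads off the two specializations directly from the presentation \eqref{eqn:stpdabalg} after a Tor-independent base-change. The paper's route is shorter because it leverages the line-bundle characterization already established in \cite{KP}; your route is more self-contained, needing only the defining exact sequence \eqref{eqn:stpdabalg} and the l.c.i.\ base-change formula \cite[Theorem~6.5.12]{LM}, at the cost of having to verify the Tor-independence (which, as you note, follows because $pr_2\circ\widetilde\Theta=pr_2$ keeps the fibre a Cartier divisor).
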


\begin{proof}
Let $X_T=\prod_{i\in T}C_i$ and $pr_T:X\to X_T$ be the projection. Let $l=m-|T|$ and $\{1,2,\ldots,m\}\setminus T=\{a_1,\ldots,a_l\}$. Then, $\zeta^v_{T}=\iota_l\circ\cdots\circ\iota_1\circ p$ where $\iota_j:X_{T\cup\{a_1,\ldots,a_{j-1}\}}\to X_{T\cup\{a_1,\ldots,a_{j}\}}$ be the inclusion of $v_{a_j}$. Similarly, $\zeta^{v'}_{T}=\iota'_l\circ\cdots\circ\iota'_1\circ p$ where $\iota'_j$ is the inclusion of $v'_{a_j}$.

By \cite[Proposition~3.1.9]{LM}, for $X'$ in $\Sm_k$ and a smooth projective curve $C$ with a closed point $\{p\}$, we have $[X'\times\{p\}\to X'\times C]=[\sO_{X'\times C}(X'\times\{p\})]$. For another closed point $\{p'\}$ in $C$, the line bundles $\sO_{X'\times C}(X'\times\{p\})$ and $\sO_{X'\times C}(X'\times\{p'\})$ are algebraically equivalent so that $[X'\times\{p\}\to X'\times C]\sim_\alg [X'\times\{p'\}\to X'\times C]$ by the definition of algebraic equivalence as in \cite{KP}. Thus, for any cobordism cycle $\beta_j\in\Omega^*(X_{T\cup\{a_1,\ldots,a_{j-1}\}})$, we have $\iota_{j*}(\beta) \sim_\alg \iota'_{j*}(\beta)$ for any $j$, thereby implying $\zeta^{v}_{T*}(\alpha)\sim_\alg\zeta^{v'}_{T*}(\alpha)$.
\end{proof}

\begin{definition}
Define $\kappa:=\sum_{l=N}^m{m-N\choose l-N} r_l$, which is nonzero as the $r_l$'s satisfy (S3).
\end{definition}

\begin{lemma}\label{lem:smaller}
The cobordism $1$-cycle $\alpha=[j: Y\to X]\in\Omega_1(X)_\Q$ is smash-equivalent to  a cycle coming from a smaller product of curves. 
\end{lemma}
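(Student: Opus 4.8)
The plan is to push the modified diagonal cobordism cycle $\Delta_c$ forward along the morphism $\psi\colon Y^m\to X$ and to solve for $\alpha$ in the resulting identity. Since $\Delta_c=\sum_{l=1}^m r_l\bigl(\sum_{T\subset S,\,\#T=l}\Delta_T\bigr)$ with $\Delta_T=[\phi_T\colon Y\to Y^m]$, and since $\psi\circ\phi_T=\zeta^{j(y_0)}_{T\cap S_0}\circ j$, functoriality of push-forward gives
\[
\psi_*\Delta_c=\sum_{l=1}^m r_l\sum_{\#T=l}\bigl[\zeta^{j(y_0)}_{T\cap S_0}\circ j\colon Y\to X\bigr]=\sum_{l=1}^m r_l\sum_{\#T=l}\zeta^{j(y_0)}_{T\cap S_0\,*}(\alpha).
\]
The first step is the combinatorial regrouping of this double sum according to the trace $U:=T\cap S_0\subseteq S_0$. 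For a fixed $U\subseteq S_0$ with $|U|=u$, the subsets $T\subseteq S$ with $\#T=l$ and $T\cap S_0=U$ are exactly $T=U\sqcup V$ with $V\subseteq S\setminus S_0$ and $\#V=l-u$, of which there are $\binom{m-N}{l-u}$; since the summand depends only on $U$, this yields
\[
\psi_*\Delta_c=\sum_{U\subseteq S_0}c_U\,\zeta^{j(y_0)}_{U\,*}(\alpha),\qquad c_U:=\sum_{l=1}^m r_l\binom{m-N}{l-|U|}.
\]

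Next I would single out the term $U=S_0$. Because $\zeta^{v}_{S_0}={\rm Id}_X$ for every $v$, that term is $c_{S_0}\alpha$, and
\[
c_{S_0}=\sum_{l=1}^m r_l\binom{m-N}{l-N}=\sum_{l=N}^m r_l\binom{m-N}{l-N}=\kappa\neq 0
\]
by condition (S3) (equivalently, by the definition of $\kappa$). Working with $\Q$-coefficients we may divide by $\kappa$ and rearrange:
\[
\alpha=\frac{1}{\kappa}\,\psi_*\Delta_c-\frac{1}{\kappa}\sum_{U\subsetneq S_0}c_U\,\zeta^{j(y_0)}_{U\,*}(\alpha).
\]

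It then remains to identify the two summands. The map $\psi$ is a morphism of smooth projective varieties, hence projective, and push-forward commutes with external products, so $\psi_*$ carries smash-nilpotent cycles to smash-nilpotent cycles; thus $\psi_*\Delta_c$ is smash-nilpotent by Corollary~\ref{cor:DeltaSm}. On the other hand, for $U\subsetneq S_0$ write $X_U:=\prod_{i\in U}C_i$, let $pr_U\colon X\to X_U$ be the projection and $\iota_U\colon X_U\hookrightarrow X$ the closed immersion placing the coordinates outside $U$ at $j(y_0)$; then $\zeta^{j(y_0)}_U=\iota_U\circ pr_U$, so $\zeta^{j(y_0)}_{U\,*}(\alpha)=\iota_{U\,*}\bigl([\,pr_U\circ j\colon Y\to X_U\,]\bigr)$ is the push-forward along $\iota_U$ of a $1$-cycle on the product $X_U$ of $|U|<N$ curves. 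Hence, modulo smash-nilpotent cycles, $\alpha$ is a $\Q$-linear combination of classes pushed forward from products of fewer than $N$ curves, i.e. $\alpha$ is smash-equivalent to a cycle coming from a smaller product of curves, which is the assertion. The only genuine computation is the regrouping in the second display and the verification that the coefficient attached to $U=S_0$ is precisely $\kappa$; the remaining inputs — that push-forward preserves smash-nilpotence and that $\zeta^{j(y_0)}_U$ factors through $X_U$ — are purely formal and are already in place.
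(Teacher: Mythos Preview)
Your proof is correct and follows the same strategy as the paper: push $\Delta_c$ forward along $\psi$, use $\psi\circ\phi_T=\zeta^{j(y_0)}_{T\cap S_0}\circ j$ to express $\psi_*\Delta_c$ in terms of the $\zeta^{j(y_0)}_{U*}(\alpha)$, isolate the coefficient $\kappa$ of the term $U=S_0$, and solve for $\alpha$. Your regrouping by the trace $U=T\cap S_0$ is in fact a bit cleaner than the paper's three-way split (and sidesteps the paper's assertion that $\psi_*\Delta_T=0$ when $T\cap S_0=\varnothing$, which you correctly absorb into the $U\subsetneq S_0$ sum with $X_\varnothing=\Spec k$).
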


\begin{proof}
Let $v=j(y_0)$. Then,
\[\psi_*\Delta_c=\sum_{l=1}^m r_l \sum_{T\subset S,\#T=l}\psi_*\Delta_T =\sum_{l=1}^m r_l \sum_{T\subset S,\#T=l}\zeta^{v}_{T\cap S_0*}\alpha.\]
Note that if $T\supset S_0$, $\zeta^{v}_{T\cap S_0}={\rm Id}$ and $\psi_*\Delta_T=0$ if $T\cap S_0=\varnothing$. Let $\sS$ be the set of all subsets of $S$ that intersect $S_0$, $\sU$ be the set of all subsets of $S$ that contain $S_0$. Then,
\begin{multline*}
\psi_*\Delta_c=\sum_{l=1}^m r_l \sum_{T\in\sU,\#T=l}\alpha +\sum_{l=1}^m r_l \sum_{T\in\sS\setminus\sU,\#T=l}\zeta^{v}_{T\cap S_0*}\alpha \\
=\alpha \sum_{l=N}^m r_l{m-N\choose l-N} +\sum_{l=1}^m r_l \sum_{T\in\sS\setminus\sU,\#T=l}\zeta^{v}_{T\cap S_0*}\alpha.
\end{multline*}
Thus, by Corollary~\ref{cor:DeltaSm}, $\alpha+ (1 / \kappa) \cdot \sum_{l=1}^m r_l (\sum_{T\in\sS\setminus\sU,\#T=l}\zeta^{v}_{T\cap S_0*}\alpha)$ is smash-nilpotent. Note that $\zeta^{v}_{T\cap S_0}$ is a projection to a smaller product of curves, followed by an inclusion to $X$. This proves the lemma.
\end{proof}

\begin{theorem}\label{thm:Voev}
Let $\alpha$ be a numerically trivial cobordism $1$-cycle on $X$. Then, $\alpha$ is smash-nilpotent.
\end{theorem}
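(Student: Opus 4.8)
The plan is to induct on the number $N$ of curves in the product $X=C_1\times\cdots\times C_N$.

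\textbf{Base case $N=1$.} When $X$ is a single smooth projective curve, I would use the generalized degree formula \cite[Theorem~4.4.7]{LM} to write $\alpha=\deg(\alpha)[1_X]+a_1 z$ with $z\in\Omega_0(X)_{\mathbb{Q}}=\CH_0(X)_{\mathbb{Q}}$ (the terms of positive codimension can only come from $0$-dimensional $Z$, whose coefficients lie in $\mathbb{L}_1=\mathbb{Z}a_1$). Applying the canonical map $\phi$, which kills $\mathbb{L}_{>0}$ and satisfies $\phi(\mathcal{N}_*(X))\subseteq{\rm Num}_*(X)$, gives $\deg(\alpha)[X]=\phi(\alpha)\in{\rm Num}_1(X)=0$, hence $\deg(\alpha)=0$; then $\pi_*(\alpha)=a_1\deg(z)$, and numerical triviality of $\alpha$ forces $\deg(z)=0$ because $\mathbb{L}$ is a domain. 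So $\alpha=a_1 z$ with $z$ a degree-zero $0$-cycle on a curve, which is algebraically trivial by \cite[Lemma on p.~56]{Mumford} and \cite[Theorem~5.1]{KP}, hence smash nilpotent by \cite[Theorem~10.3]{KP}; therefore $\alpha=a_1 z$ is smash nilpotent by \cite[Lemma~10.2]{KP}.

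\textbf{Inductive step, $N\geq 2$.} First I would observe that, rationally, $\Omega_1(X)_{\mathbb{Q}}$ is spanned by classes $[j\colon Y\to X]$ with $Y$ a smooth projective \emph{curve}: by \cite[Theorem~4.4.7]{LM} any $\alpha\in\Omega_1(X)_{\mathbb{Q}}$ is an $\mathbb{L}$-combination of $[\widetilde Z\to X]$ with $\codim_X Z>0$, and since $\dim X\geq 2$ only $\dim Z\in\{0,1\}$ contributes — the $\dim Z=1$ terms being curve classes, while a $\dim Z=0$ term has the shape $a_1[\{q\}\to X]=\pm[\mathbb{P}^1\xrightarrow{\mathrm{const}=q}X]$, again a curve class. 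Write $\alpha=\sum_i n_i[j_i\colon Y_i\to X]$, a finite sum. Next I would run the reduction of Lemma~\ref{lem:smaller} (and the lemmas leading up to it) \emph{with a single choice of $m$}, taken larger than $N$ and than $2g_i+2$ for every genus $g_i$ of the $Y_i$, and a single family $\{r_l\}$ satisfying (S1), (S3) and (S$2'$) for $\max_i g_i$; this family then satisfies the required conditions for every $Y_i$ at once. With this common data, each $[j_i\colon Y_i\to X]$ is smash equivalent to $-\tfrac1\kappa\sum_l r_l\sum_{T\in\mathcal{S}\setminus\mathcal{U},\,\#T=l}\zeta^{v_i}_{T\cap S_0*}[j_i\colon Y_i\to X]$ with the same $\kappa,m,\{r_l\}$, where $v_i=j_i(y_0)$; by Lemma~\ref{lem:alg} the cycles $\zeta^{v}_{T'*}[j_i\colon Y_i\to X]$ for different base points $v$ differ by algebraically trivial, hence by \cite[Theorem~10.3]{KP} smash nilpotent, cycles, so modulo smash nilpotence we may use one fixed base point $v_0$. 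Collecting multiplicities — the number of $T$ with $T\cap S_0=T'$ and $\#T=l$ being $\binom{m-N}{l-\#T'}$ — the coefficient $d_{\#T'}$ of $\zeta^{v_0}_{T'*}(-)$ depends only on $\#T'$ and not on $i$, so summing over $i$ and using linearity of push-forward gives
\[
\alpha\;\equiv\;\sum_{\emptyset\neq T'\subsetneq S_0} d_{\#T'}\;\zeta^{v_0}_{T'*}\alpha \pmod{\text{smash nilpotent}} .
\]

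It then remains to show each $\zeta^{v_0}_{T'*}\alpha$ is smash nilpotent. Since $\zeta^{v_0}_{T'}$ factors as the projection $pr_{T'}\colon X\to X_{T'}$ followed by the inclusion of the slice $X_{T'}\hookrightarrow X$ over $v_0$, we have $\zeta^{v_0}_{T'*}\alpha=\iota_*(pr_{T'*}\alpha)$. As $pr_{T'}$ is a projective morphism of smooth projective varieties, $pr_{T'*}\alpha$ is a numerically trivial cobordism $1$-cycle on $X_{T'}$ by Lemma~\ref{lem:sm2pushfwd} (equivalently Lemma~\ref{lem:projpushfwd}); since $X_{T'}$ is a product of $\#T'<N$ curves, the inductive hypothesis makes $pr_{T'*}\alpha$ smash nilpotent, and then so is $\iota_*(pr_{T'*}\alpha)$ because push-forward commutes with external products (cf. \cite[Lemma~10.2]{KP}). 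Hence $\alpha$ is smash nilpotent, closing the induction. \emph{The main obstacle is exactly this uniformity step}: one must perform the modified-diagonal reduction of Lemma~\ref{lem:smaller} with data ($m$, $\{r_l\}$, base point $v_0$) chosen independently of the finitely many curves $Y_i$, so that passing to the $\mathbb{Q}$-linear combination lands on $pr_{T'*}\alpha$ itself rather than on some $i$-dependent recombination — only then does numerical triviality of $\alpha$ descend, along the projections, to the smaller products where induction applies. A subsidiary point, needed to keep this clean, is the remark above that rationally $1$-cycles on a product of at least two curves are generated by images of curves, so that no non-smash-nilpotent ``Lazard-theoretic'' classes intervene.
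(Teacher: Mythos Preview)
Your argument follows the same overall strategy as the paper --- induction on $N$, Lemma~\ref{lem:smaller} to push each curve class to smaller products, Lemma~\ref{lem:alg} to reconcile base points, then apply the inductive hypothesis to $pr_{T'*}\alpha$ --- and your explicit uniformity step (choosing $m$ and $\{r_l\}$ once for all $Y_i$, using $\max_i g_i$) is a point the paper leaves implicit. Your device of absorbing the $\mathbb{L}_1$-part as $a_1[\{q\}\to X]=\pm[\mathbb{P}^1\xrightarrow{\mathrm{const}}X]$ is also a clean way to avoid the paper's separate bookkeeping of the zero-cycle contribution.

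There is, however, a genuine gap at $N=2$. The modified-diagonal machinery leading to Lemma~\ref{lem:smaller} is set up in the paper under the hypothesis $N\geq 3$, and this is not merely a convention: for $N=2$ the conditions (S$2'$) and (S3) are incompatible as soon as some $Y_i$ has genus $\geq 1$. Indeed, (S1) gives $\sum_l\binom{m-1}{l-1}r_l=0$, and then (S$2'$) with $i=0$ yields $m(m-1)\sum_l\binom{m-2}{l-2}r_l=0$; but (S3) for $N=2$ demands exactly $\sum_l\binom{m-2}{l-2}r_l\neq 0$. So no family $\{r_l\}$ exists, and your inductive step cannot start at $N=2$. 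The paper handles $N=2$ separately: on a surface, numerical and algebraic equivalence of divisors coincide rationally, so $\phi(\alpha)=0$ in $\CH^{\alg}_1(X)_{\mathbb{Q}}$, and then \cite[Lemma~4.5.3]{LM} reduces $\alpha$ to a cobordism $0$-cycle, which is dispatched as before. You should insert this $N=2$ base case (or an equivalent argument) and begin the inductive step at $N\geq 3$.
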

\begin{proof}
By the degree formula \cite[Theorem 4.4.7]{LM}, we have $\alpha=\sum n_i[j_i: \widetilde{Y_i}\to X] +\sum \gamma_j[\{p_j\}\to X]$,
where $n_i\in\Z$, $\gamma_j\in\bL_1$, $p_j$'s are points  in $X$, $\widetilde{Y_i}$'s are smooth projective curves, and $j_i$ is the composition of a birational morphism $\widetilde{Y_i}\to Y_i$ with the inclusion $Y_i\to X$ for a closed irreducible $Y_i\subset X$. Since $\alpha$ is numerically trivial, $\phi(\alpha)=\sum n_i[Y_i]$ is numerically trivial in $\CH_*(X)$. By Theorem~\ref{thm:num}, $\sum n_i[\widetilde{Y'_i}\to X]$ is numerically trivial for some $\widetilde{Y'_i}$s in $\Sm_k$ with  projective birational morphisms $\pi'_i:\widetilde{Y'_i}\to Y_i$. Using \cite[Lemma~4.5.3]{LM}, we have
$
\sum n_i[\widetilde{Y'_i}\to X]=\sum n_i[j_i: \widetilde{Y_i}\to  X] +\sum \beta_l[\{p'_l\}\to X]. 
$
This implies that $\sum \gamma_j[\{p_j\}\to X]-\sum \beta_l[\{p'_l\}\to X]$ is numerically trivial. Note that $[\{q\}\to X]\sim_\alg [\{p\}\to X]$ for any two points $p$ and $q$ in $X$. Thus, going modulo algebraic equivalence, $\omega[\{p\}\to X]$ is numerically trivial for some $\omega\in\bL_1$, implying that $[\{p\}\to X]$ is numerically trivial. But $\sum \gamma_j[\{p_j\}\to X]\sim_\alg\omega'[\{p\}\to X]$ for some $\omega'\in\bL_1$. Thus, $\sum \gamma_j[\{p_j\}\to X]$ is numerically trivial modulo algebraic equivalence. As we have observed in the proof of Lemma~\ref{lem:Gamma}, a numerically trivial cobordism $0$-cycle is smash-nilpotent. Thus, we only need to show that $\sum n_i[j_i: \widetilde{Y_i}\to X]$ is smash-nilpotent. We already know that, modulo algebraic equivalence, $\sum n_i[j_i: \widetilde{Y_i}\to X]$ is numerically trivial.

We proceed by induction on $N$. Suppose any numerically trivial cobordism $1$-cycle on a product of $l$ curves is smash-nilpotent, for $l<N$. By Lemma~\ref{lem:smaller}, $\sum n_i[j_i: \widetilde{Y_i}\to X]$ is smash-equivalent to 
$
 (1/\kappa) \sum n_i\sum_{l=1}^m r_l \sum_{T\in\sS\setminus\sU,\#T=l}\zeta^{v^i}_{T\cap S_0*}[j_i: \widetilde{Y_i}\to  X],
$
where $v^i=j_i(y^i_0)$, $y^i_0$ being a chosen base point of $\widetilde{Y_i}$. However, by Lemma~\ref{lem:alg}, $\zeta^{v^i}_{T\cap S_0*}[j_i: \widetilde{Y_i}\to X]\sim_\alg \zeta^{v^1}_{T\cap S_0*}[j_i: \widetilde{Y_i}\to X]$. Thus, modulo algebraic equivalence,
$
\sum n_i[j_i: \widetilde{Y_i}\to X]\sim_{\mathrm{smash}} (1/\kappa) \sum_{l=1}^m r_l \sum_{T\in\sS\setminus\sU,\#T=l}\zeta^{v^1}_{T\cap S_0*}\sum n_i[j_i: \widetilde{Y_i}\to X]. 
$
Since $\zeta^{v^1}_{T\cap S_0*}(\sum n_i[j_i: \widetilde{Y_i}\to X])$ is numerically trivial, it is smash-nilpotent by the induction hypothesis. Hence, $\sum n_i[j_i: \widetilde{Y_i}\to  X]$ is smash-nilpotent. It remains to check the cases $N=1,2$. The case of a curve is trivial as algebraic and numerical equivalence coincide for cobordism $1$-cycles on curves, by \cite[Theorem~9.6.(1)]{KP}. Now consider the case where $X$ is a surface. Let $\alpha=\sum n_i[j_i: \widetilde{Y_i}\to X] +\sum \gamma_j[\{p_j\}\to X]$ be a numerically trivial cobordism $1$-cycle in $\Omega^\alg_1(X)_\Q$. Since $\CH^\alg_*(X)$ coincides with $\CH^\num_*(X)$, we have $\phi(\alpha)=\sum n_i[Y_i]=0$, whence \cite[Lemma~4.5.3]{LM} implies, $\sum n_i[j_i: \widetilde{Y_i}\to X]=\sum \beta_l[\{p'_l\}\to X]$. Thus, $\alpha=\omega[\{p\}\to X]\in\Omega^\alg_1(X)_\Q$. This shows $[\{p\}\to X]$ is numerically trivial, hence smash-nilpotent by the argument above. Therefore, $\alpha$ is smash-nilpotent.
\end{proof}

\begin{corollary}\label{cor:voefsab}
Let $Y$ be a smooth projective variety and let $h: X=C_1\times C_2\times\cdots\times C_N\to Y$ be a dominant morphism. Then, numerical equivalence and smash equivalence coincide for cobordism $1$-cycles on $Y$.
\end{corollary}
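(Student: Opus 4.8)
The plan is to reduce Corollary~\ref{cor:voefsab} to Theorem~\ref{thm:Voev} by a multisection transfer along $h$. By Theorem~\ref{thm:snnt} a smash-nilpotent $1$-cycle is numerically trivial, so it suffices to prove the converse: if $\alpha\in\Omega_1(Y)_\Q$ is numerically trivial, then $\alpha$ is smash-nilpotent. One may assume $d:=\dim Y\geq 1$, since when $\dim Y=0$ Remark~\ref{rmk:point} gives $\mathcal{N}_*(Y)=0$ and there is nothing to prove. As $h$ is dominant and $X$, $Y$ are projective, $h$ is surjective.

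First I would produce a smooth projective multisection. Fix a projective embedding of $X=C_1\times\cdots\times C_N$ and let $X'\subset X$ be the intersection of $N-d$ general very ample divisors; by Bertini in characteristic $0$, $X'$ is smooth and irreducible of dimension $d$. I would then verify that $g:=h|_{X'}\colon X'\to Y$ is still surjective: at each intermediate stage the fibres of the restricted map have positive dimension until the final cut, and a positive-dimensional projective subvariety meets every ample divisor, so no fibre becomes empty. Thus $g$ is a surjective, generically finite morphism of smooth projective varieties, of some degree $\delta\geq 1$. Since $\dim X'=\dim Y$, the l.c.i.\ pull-back $g^*$ carries the $1$-cycle $\alpha$ to a $1$-cycle $g^*\alpha$ on $X'$, and push-forward along the closed immersion $\iota\colon X'\hookrightarrow X$ carries it to a $1$-cycle $\beta:=\iota_* g^*\alpha$ on the product of curves $X$.

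Next I would run the transfer. By Lemma~\ref{stpdab1}, $g^*\alpha\in\mathcal{N}_*(X')$, and by Lemma~\ref{stpdab1'}, $\beta\in\mathcal{N}_*(X)$; hence Theorem~\ref{thm:Voev} applies on $X$ and $\beta$ is smash-nilpotent. Because push-forward respects external products, $(h_*\beta)^{\boxtimes n}=(h^{\times n})_*(\beta^{\boxtimes n})$, so $h_*\beta$ is smash-nilpotent on $Y$ (\emph{cf.}\ \cite[Lemma~10.2]{KP}). On the other hand $h\circ\iota=g$, so $h_*\beta=g_*g^*\alpha$, which by the projection formula equals $g_*(1_{X'})\cdot\alpha$. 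The generalized degree formula \cite[Theorem~4.4.7]{LM} writes $g_*(1_{X'})=\delta\cdot 1_Y+\sum_{\codim_Y Z>0}\omega_Z\,[\widetilde{Z}\to Y]$ with $\omega_Z\in\mathbb{L}_{>0}$; exactly as in the proof of Lemma~\ref{stpdab2} the correction term is nilpotent, so $u:=g_*(1_{X'})$ is a unit in $\Omega^*(Y)_\Q$. Therefore $\alpha=u^{-1}\cdot h_*\beta$ is the product of a smash-nilpotent cycle with a cobordism class, hence smash-nilpotent by \cite[Lemma~10.2(1)]{KP}. This proves the corollary.

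I expect the main obstacle to be the geometric step: arranging that a general linear section $X'$ of $X$ of codimension $N-d$ still dominates $Y$ (so that $g$ is generically finite, $\delta\geq 1$) while staying smooth and irreducible. Everything afterwards is formal and rests only on Lemmas~\ref{stpdab1} and~\ref{stpdab1'}, Theorem~\ref{thm:Voev}, the projection formula, and the nilpotence/unit trick already used for Lemma~\ref{stpdab2}. The one piece of bookkeeping that must not be overlooked is that $\dim X'=\dim Y$ is precisely what makes $g^*$ send $1$-cycles to $1$-cycles, which is what allows Theorem~\ref{thm:Voev} to be invoked for $\beta$ on $X$.
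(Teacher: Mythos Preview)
Your proof is correct, and the overall shape---pull $\alpha$ back to the product of curves, cut down to a $1$-cycle, invoke Theorem~\ref{thm:Voev}, then push forward and use the projection formula with the generalized degree formula to recover a nonzero multiple of $\alpha$---is exactly the paper's strategy. The difference is in how the ``cutting down'' is done.

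The paper works entirely inside cobordism: it chooses a relatively $h$-ample line bundle $\sL$ and uses $c_1(\sL)^r$ (with $r=N-\dim Y$) as the cutting operator, so the relevant $1$-cycle on $X$ is $c_1(\sL)^r\cdot h^*\alpha$. Pushing forward gives $h_*(c_1(\sL)^r)\cdot\alpha$, and the degree formula yields $d\alpha$ plus correction terms $\omega_Z[\widetilde Z\to Y]\cdot\alpha$ which are numerically trivial $0$-cycles, hence smash-nilpotent directly. You instead realize the cut geometrically via Bertini, producing an actual smooth multisection $X'\subset X$ with $g=h|_{X'}$ generically finite; your transfer class is $g_*(1_{X'})$, which you then invert using the nilpotence-of-correction trick from Lemma~\ref{stpdab2}. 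The two approaches are cobordism-theoretic versus geometric incarnations of the same idea: $c_1(\sL)^r$ is the cobordism avatar of intersecting with $r$ general hyperplanes. The paper's route avoids the (admittedly routine) Bertini verification that surjectivity persists under successive ample cuts, and handles the correction terms without needing to invert; your route has the pleasant feature that the unit trick dispatches all correction terms at once. Either way the argument goes through.
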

\begin{proof}
Let $\sL$ be a relatively $h$-ample line bundle on $X$. Let $r:=N-\dim(Y)$ be the relative dimension of $h$. Now, consider $h_*\left(c_1(\sL)^r\right)\in \Omega^0(Y)_\Q$. Note that since $\deg\left(h_*(c_1(\sL)^r)\right)\in\Z$, if $\deg\left(h_*(c_1(\sL)^r)\right)=0$, then $\phi\left(h_*(c_1(\sL)^r)\right)=0\in \CH^0(Y)$, which is not the case since $\sL$ is relatively $h$-ample. Denote $d:=\deg\left(h_*(c_1(\sL)^r)\right)$. Thus, by the degree formula \cite[Theorem 4.4.7]{LM},
\[h_*\left(c_1(\sL)^r\right)=d[{\rm Id}_Y]+\sum_{ \codim_Y Z>0
}\omega_Z[\widetilde{Z}\to Y]\text{ with }\widetilde{Z}\text{ smooth, and birational over }Z.\]
Now, let $\alpha\in\Omega_1(Y)_\Q$ be numerically equivalent to 0. Since $X$ and $Y$ are smooth, $h$ is \lci Thus, we may consider the pullback
$h^*\alpha$. Note that by the projection formula,
\[ 
h_*\left(c_1(\sL)^r\cdot h^*\alpha\right)=h_*\left(c_1(\sL)^r\right)\alpha=d\alpha+\sum_{
\codim_YZ=1, \omega_Z\in\bL_1
}\omega_Z[\widetilde{Z}\to Y]\cdot\alpha.
\]
But, $[\widetilde{Z}\to Y]\cdot\alpha\in\Omega_0(Y)_\Q$ and is numerically trivial. We observed in the proof of Lemma~\ref{lem:Gamma} that a numerically trivial cobordism $0$-cycle on a smooth projective variety is smash-nilpotent. Also, $c_1(\sL)^r\cdot h^*\alpha$ being a numerically trivial cobordism $1$-cycle on $X$, is smash-nilpotent by Theorem~\ref{thm:Voev}. Thus, $h_*\left(c_1(\sL)^r\cdot h^*\alpha\right)$ is smash-nilpotent, which implies $d\alpha$, and hence $\alpha$, is smash-nilpotent since $d\neq 0$.
\end{proof}

\begin{ack}\label{ack}JP was supported by National Research Foundation of Korea (NRF) grant funded by the Korean government (MSIP) (No. 2013042157), Korea Institute for Advanced Study (KIAS) grant funded by the Korean government (MSIP), and the TJ Park Junior Faculty Fellowship funded by POSCO TJ Park Foundation. AB was supported by the postdoctoral associateship of KAIST under the mentorship of JP.

\end{ack}

\end{document}